\newtheorem{theorem}{Theorem}[section]
\newtheorem{prop}[theorem]{Proposition}
\newtheorem{definition}[theorem]{Definition}
\newtheorem{lemma}[theorem]{Lemma}
\newtheorem{remark}[theorem]{Remark}
\newtheorem{assum}[theorem]{Assumption}
\newtheorem{notation}[theorem]{Notations}
\numberwithin{equation}{section}
\newcommand{\real}{\mathbb{R}}
\newcommand{\comp}{\mathbb{C}}
\newcommand{\inte}{\mathbb{Z}}
\newcommand{\pd}{\partial}
\newcommand{\pdb}{\bar{\partial}}
\newcommand{\dd}[1]{\frac{\partial}{\partial #1}}
\newcommand{\half}{\frac{1}{2}}
\newcommand{\ddd}[2]{\frac{\partial#1}{\partial{#2}}}
\newcommand{\pdpd}[3]{\frac{\partial^2 #1}{\partial #2\partial #3}}
\newcommand{\ppd}[1]{\frac{\partial^2}{\partial #1^2}}
\newcommand{\Hess}{\nabla^2}
\newcommand{\bmc}{z}
\newcommand{\facs}{\varphi}
\newcommand{\ang}{\theta}
\newcommand{\incoming}{\Pi}
\newcommand{\onewall}{\Pi}
\newcommand{\eincoming}{\breve{\Pi}}
\newcommand{\emc}{\varPsi}
\newcommand{\uperp}{u_{m,\perp}}
\newcommand{\rhoperp}{\rho_{m,\perp}}
\newcommand{\tree}[1]{\mathbb{T}_{#1}}
\newcommand{\rtree}[1]{\mathbb{R}\mathbb{T}_{#1}}
\newcommand{\tr}{T}
\newcommand{\rtr}{\mathbf{T}}
\newcommand{\ltr}{\mathsf{T}}
\newcommand{\lrtr}{\mathcal{T}}
\newcommand{\ltree}[1]{\mathtt{L}\mathtt{T}^{#1}}
\newcommand{\lrtree}[1]{\mathtt{L}\mathtt{R}\mathtt{T}^{#1}}
\newcommand{\hp}{\hslash}  %-- h everywhere
\newcommand{\he}{\hslash}  %-- h in appendix, approximation
\newcommand{\filt}{\mathcal{W}}
\begin{document}

\title[Scattering diagrams from Maurer-Cartan equations]{Scattering diagrams from asymptotic analysis\\ on Maurer-Cartan equations}
\author[Chan]{Kwokwai Chan}
\address{Department of Mathematics\\ The Chinese University of Hong Kong\\ Shatin\\ Hong Kong}
\email{kwchan@math.cuhk.edu.hk}
\author[Leung]{Naichung Conan Leung}
\address{The Institute of Mathematical Sciences and Department of Mathematics\\ The Chinese University of Hong Kong\\ Shatin \\ Hong Kong}
\email{leung@math.cuhk.edu.hk}
\author[Ma]{Ziming Nikolas Ma}
\address{The Institute of Mathematical Sciences and Department of Mathematics\\ The Chinese University of Hong Kong\\ Shatin \\ Hong Kong}
\email{zmma@ims.cuhk.edu.hk}

\begin{abstract}
Let $\check{X}_0$ be a semi-flat Calabi-Yau manifold equipped with a Lagrangian torus fibration $\check{p}:\check{X}_0 \rightarrow B_0$.
We investigate the asymptotic behavior of Maurer-Cartan solutions of the Kodaira-Spencer deformation theory on $\check{X}_0$
by expanding them into Fourier series along fibres of $\check{p}$ over a contractible open subset $U\subset B_0$, following a program set forth by Fukaya \cite{fukaya05} in 2005.
We prove that semi-classical limits (i.e. leading order terms in asymptotic expansions) of the Fourier modes of a specific class of Maurer-Cartan solutions
naturally give rise to consistent scattering diagrams, which are tropical combinatorial objects that have played a crucial role in works of Kontsevich-Soibelman \cite{kontsevich-soibelman04} and Gross-Siebert \cite{gross2011real} on the reconstruction problem in mirror symmetry.
%This proves a modified version of a conjecture by Fukaya, which was a major step in his 2005 proposal 
\end{abstract}

\maketitle

\section{Introduction}

\subsection{Background}

The celebrated Strominger-Yau-Zaslow (SYZ) conjecture \cite{syz96} asserts that mirror symmetry is a {\em T-duality}, meaning that a mirror pair of Calabi-Yau manifolds should admit fibre-wise dual (special) Lagrangian torus fibrations to the same base. This immediately suggests a construction of the mirror (as a complex manifold):
Given a Calabi-Yau manifold $X$, one first looks for a Lagrangian torus fibration $p: X \to \check{B}$.
The base $\check{B}$ is then an integral affine manifold with singularities.
Letting $\check{B}_0 \subset \check{B}$ be the smooth locus and setting
$$\check{X}_0 := T\check{B}_0 / \Lambda_{\check{B}_0},$$
where $\Lambda_{\check{B}_0} \subset T\check{B}_0$ denotes the natural lattice locally generated by affine coordinate vector fields, yields a torus bundle $\check{p}: \check{X}_0 \to \check{B}_0$ which admits a natural complex structure $\check{J}_0$, called the {\em semi-flat complex structure}. This would not produce the correct mirror in general,\footnote{Except in the semi-flat case when $\check{B} = \check{B}_0$ where there are no singular fibres; see \cite{Leung05}.} simply because $\check{J}_0$ cannot be extended across the singular points $\check{B}^{\text{sing}}$. But the SYZ proposal suggests that the mirror is given by deforming $\check{J}_0$ using {\em quantum corrections} coming from holomorphic disks in $X$ with boundary on the Lagrangian torus fibres of $p$.

The precise mechanism of such a mirror construction was first depicted by Kontsevich-Soibelman \cite{kontsevich00} using rigid analytic geometry and then by Fukaya \cite{fukaya05} using asymptotic analysis.
In Fukaya's proposal, he described how instanton corrections would arise near the large volume limit given by scaling of the symplectic structure on $X$ by $\hp \in \real_{>0}$, which is mirrored to scaling of the complex structure $\check{J}_0$ on $\check{X}_0$. It was conjectured that the desired deformations of $\check{J}_0$ were given by a specific class of solutions to the Maurer-Cartan equation of the Kodaira-Spencer deformation theory of complex structures on $\check{X}_0$, whose expansions into Fourier modes along torus fibres of $\check{p}$ would have semi-classical limits (i.e. leading order terms in asymptotic expansions as $\hp \to 0$) concentrated along gradient flow trees of a canonically defined multi-valued Morse function on $\check{B}_0$ \cite[Conjecture 5.3]{fukaya05}. On the mirror side, holomorphic disks in $X$ with boundary on fibres of $p$ were conjectured to collapse to gradient flow trees emanating from the singular points $\check{B}^{\text{sing}} \subset \check{B}$ \cite[Conjecture 3.2]{fukaya05}.
From this one sees directly how the mirror complex structure is determined by quantum corrections.
Unfortunately, the arguments in \cite{fukaya05} were only heuristical and the analysis involved to make them precise seemed intractable at that time.

These ideas were later exploited by Kontsevich-Soibelman \cite{kontsevich-soibelman04} (for dimension 2) and Gross-Siebert \cite{gross2011real} (for general dimensions) to construct families of rigid analytic spaces and formal schemes respectively from integral affine manifolds with singularities, thereby solving the very important {\em reconstruction problem} in SYZ mirror symmetry. They cleverly got around the analytical difficulties, and instead of solving the Maurer-Cartan equation, used gradient flow trees in $\check{B}_0$ \cite{kontsevich-soibelman04} or tropical trees in the Legendre dual $B_0$ \cite{gross2011real} to encode the modified gluing maps between charts in constructing the mirror family. A key notion in their constructions is that of {\em scattering diagrams}, which are combinatorial structures encoding possibly very complicated gluing data. It has also been understood (by works of these authors and their collaborators, notably \cite{gross2010tropical}) that these scattering diagrams encode Gromov-Witten data as well.

In this paper, we revisit Fukaya's original ideas and apply asymptotic analysis motivated by Witten-Morse theory \cite{witten82}. Our primary goal is to connect consistent scattering diagrams to the asymptotic behavior of a specific class of solutions of the Maurer-Cartan equation. In particular we prove a modified version of (the ``scattering part'' of) Fukaya's original conjecture in \cite{fukaya05}. As pointed out by Fukaya himself, understanding scattering phenomenon is vital to a general understanding of quantum corrections in mirror symmetry.

We start with a Calabi-Yau manifold $X$ (regarded as a symplectic manifold) equipped with a Lagrangian torus fibration which admits a Lagrangian section $\mathtt{s}$
$$
\xymatrix@1{ (X,\omega,J) \ar[rr]_{p} &  &\check{B} \ar@(ul,ur)[ll]^{s} }
$$
and whose discriminant locus is given by $\check{B}^{\text{sing}} \subset \check{B}$, over which the integral affine structure develops singularities.
Restricting $p$ to the smooth locus $\check{B}_0 = \check{B} \setminus \check{B}^{\text{sing}}$, we obtain a semi-flat symplectic Calabi-Yau manifold
$X_0 \hookrightarrow X,$
which, by Duistermaat's action-angle coordinates \cite{Duistermaat80}, can be identified as a quotient of the cotangent bundle of the base
$ X_0 \cong T^*\check{B}_0 / \Lambda^\vee_{\check{B}_0},$
where $\Lambda^\vee_{\check{B}_0} \subset T^*\check{B}_0$ is the natural lattice (dual to $\Lambda_{\check{B}_0}$) locally generated by affine coordinate 1-forms. We then have a pair of fibre-wise dual torus bundles over the same base:
$$
\xymatrix{
X_0 = T^*\check{B}_0 / \Lambda^\vee_{\check{B}_0} \ar[dr]_{p} & & \check{X}_0 = T\check{B}_0 / \Lambda_{\check{B}_0} \ar[dl]^{\check{p}}\\
%&   & \\
& \check{B}_0 &}
$$

We scale both the complex structure on $\check{X}_0$ and the symplectic structure on $X_0$ by introducing a $\real_{>0}$-valued parameter $\hp$ (so that $\hp \to 0$ give the respective large structure limits) and consider the family of spaces (as well as the associated dgLa's) parametrized by $\hp$.

As suggested by Fukaya \cite{fukaya05} (and motivated by the relation between Morse theory and de Rham theory \cite{witten82, HelSj4, klchan-leung-ma}), we consider the Fourier expansion (see Definition \ref{def:fourier_transform}) of the Kodaira-Spencer differential graded Lie algebra (dgLa)
$(KS_{\check{X}_0} = \Omega^{0,*}({\check{X}_0},T^{1,0}\check{X}_0), \bar{\partial}, [\cdot,\cdot])$
associated to $\check{X}_0$ along fibres of $\check{p}$, and try to solve the Maurer-Cartan (abbrev. MC) equation
\begin{equation}\label{eqn:MC_eqn_A-side}
\pdb \Phi + \half \left[ \Phi, \Phi \right] = 0.
\end{equation}

\begin{remark}
The idea that Fourier-type transforms should be responsible for the interchange between symplectic-geometric data on one side and complex-geometric data on the mirror side (i.e. T-duality) came from the original SYZ proposal \cite{syz96}. This has been applied successfully in the toric case: see
\cite{Hori-Vafa00, kontsevich00, cho05, Cho-Oh06, chanleung10, chanleung08, FOOO-toricI, FOOO-toricII, FOOO-toricIII, Abouzaid06, Abouzaid09, Fang08, FLTZ12}
for compact toric varieties
and
\cite{Leung-Vafa98, HIV00, Gross01, Gross-Inventiones, Auroux07, Auroux09, AAK12, cll12, CCLT13, Gross-Siebert_ICM, Lau14}
for toric Calabi-Yau varieties.
Nevertheless, no scattering phenomenon was involved in those examples.
\end{remark}

\subsection{Main results}

Before describing our main results, we first choose a Hessian type metric (see Definition \ref{Hessian_type_g}) on the affine manifold $\check{B}_0$ which allows us to apply the Legendre transform (see Section \ref{semiflat_kahler}) and work with the {\em Legendre dual} $B_0$.%\footnote{In terms of the Legendre dual $B_0$, we can write $X_0 = TB_0 / \Lambda_{B_0}$ and $\check{X}_0 = T^*B_0 / \Lambda^\vee_{B_0}$.} 
This originates from an idea of Gross-Siebert \cite{gross2011real} who suggested that, while tropical trees on $B_0$ correspond to Morse gradient flow trees on $\check{B}_0$ under the Legendre transform, the former are easier to work with because of their linear nature.

We will also choose a convex open subset $U \subset B_0$, fix a codimension $2$ tropical affine subspace $Q \subset U$ and work locally around $Q$.\footnote{In the language of the Gross-Siebert program \cite{gross2011real}, we are working locally near a {\em joint} (i.e. a codimension 2 cell) in a polyhedral decomposition of the singular set $\text{Sing}(\mathscr{D})$ of a scattering diagram $\mathscr{D}$.}
In $U$, a scattering diagram can be viewed schematically as the process of how new walls are being created from the transversal intersection between non-parallel walls supported on tropical hyperplanes in $U$. The combinatorics of this process is governed by the algebra of the {\em tropical vertex group} \cite{gross2010tropical}, which will be reviewed in Section \ref{recallscattering}.

We work with dgLa's over the {\em formal} power series ring $R = \comp[[t]]$ where $t$ is a formal deformation variable.
Our goal is to investigate the relation between the scattering process and solutions of the MC equation of the Kodaira-Spencer dgLa $KS_{\check{X}_0}[[t]]$.
\footnote{There are other approaches to the scattering process or wall-crossing formulas such as \cite{BTL12, FGFS17, Reineke10}.}

To begin with, let $\mathbf{w} = (P, \Theta)$ be a single wall
supported on a tropical hyperplane $P \subset U$ containing $Q$ (although $Q$ does not play any role in this single wall case) and equipped with a wall-crossing factor $\Theta$ (as an element in the tropical vertex group). Our first aim is to see how $\Theta$ is related to solutions of the MC equation \eqref{eqn:MC_eqn_A-side}.

Recall that in Witten-Morse theory \cite{witten82, HelSj4, klchan-leung-ma}, the shrinking of a fibre-wise loop $m \in \pi_1(p^{-1}(x),s(x))$ towards a singular fibre indicates the presence of a critical point of the symplectic area function $f_m$ in the singular locus (in $B$), and the union of gradient flow lines emanating from the singular locus should be interpreted as a stable submanifold associated to that critical point. Furthermore, this codimension one stable submanifold should correspond to a bump differential $1$-form with support concentrated along $P$ (see \cite{klchan-leung-ma}).

Inspired by this, given a wall $\mathbf{w}$, we are going to write down an ansatz
$\incoming \in KS^1_{\check{X}_0}[[t]]$
solving \eqref{eqn:MC_eqn_A-side}; %\footnote{In fact, both terms on the left-hand side of the MC equation \eqref{eqn:MC_eqn_A-side} are zero for this solution.}
see Definition \ref{ansatz} for the precise formula. Since $\check{X}_0(U) := \check{X}_0\times_{B_0} U$ does not admit any non-trivial deformations, the MC solution $\incoming$ is gauge equivalent to $0$, i.e. there exists $\varphi \in KS^0_{\check{X}_0}[[t]]$ such that $e^{\varphi} * 0 = \incoming;$
we further use a gauge fixing condition ($\hat{P}\varphi = 0$) to uniquely determine the gauge $\varphi$.

In Proposition \ref{prop:MC_sol_one_wall}, we demonstrate how the semi-classical limit (as $\hp \rightarrow 0$) of $\facs$ determines the wall-crossing factor $\Theta$ (or more precisely, $\text{Log}(\Theta)$); see the introduction of Section \ref{onewall} for a more detailed description.
Moreover, the support of the bump-form-like MC solution $\incoming$ (see Figure \ref{fig:delta_function}) is more and more concentrated along $P$ as $\hp \rightarrow 0$. In Definition \ref{asypmtotic_support_def}, we make precise the key notion of having {\em asymptotic support on $P$} to describe such asymptotic behavior. We further show that {\em any} MC solution $\incoming$ with asymptotic support on $P$ would give rise to the {\em same} wall crossing factor $\Theta$ in Section \ref{sec:asy_support} (see Remark \ref{general_input}).

%we abstract the behavior of our ansatz $\incoming_{\mathbf{w}}$ which is required to be concentrating along $P$ as $\hp \rightarrow 0$ to a Definition \ref{asypmtotic_support_def} saying that $\incoming_{\mathbf{w}}$ has asymptotic support on $P$, and develop the machinery in Section \ref{sec:asy_support} to show that any $\incoming_{\mathbf{w}}$ with such a behavior (see Remark \ref{general_input}) will give rise to the wall crossing factor $\Theta$.

At this point we are ready to explain the main results of this paper.
From now on, unlike the case of a single wall, we will be solving the Maurer-Cartan equation {\em only up to error terms with exponential order in $\hp^{-1}$}, i.e. terms of the form $O(e^{-c/\hp})$.
%This is because our construction of Maurer-Cartan solutions uses Kuranishi's method with a specific gauge fixing condition.
This is sufficient for our purpose because those error terms tend to zero as one approaches the large volume/complex structure limits when $\hp \to 0$, and thus they do not contribute to the semi-classical limits of the MC solutions and the associated scattering diagrams.\footnote{This point was also anticipated by Fukaya in \cite{fukaya05}.}
To make this precise, we introduce in Section \ref{sec:MC_error} a dgLa $\widehat{\mathbf{g}^*/\mathcal{E}^*}(U)$ which is a quotient of a sub-dgLa of $KS_{\check{X}_0}(U)[[t]]$, and we will work with and construct MC solutions of $\widehat{\mathbf{g}^*/\mathcal{E}^*}(U)$.

Our first main result relates a specific class of MC solutions (satisfying the two assumptions described below) to {\em consistent} scattering diagrams (see Definition \ref{consistent_def} for the precise meaning of consistency).
Suppose that we have a countable collection $\{P_a\}_{a \in \mathbb{W}}$ of tropical half-hyperplanes (supports of the walls) sharing the codimension $2$ tropical affine subspace $Q$ as their common boundary, as shown in Figure \ref{fig:introduction_collection_of_walls}.
\begin{figure}[h]
\centering
\includegraphics[scale=0.26]{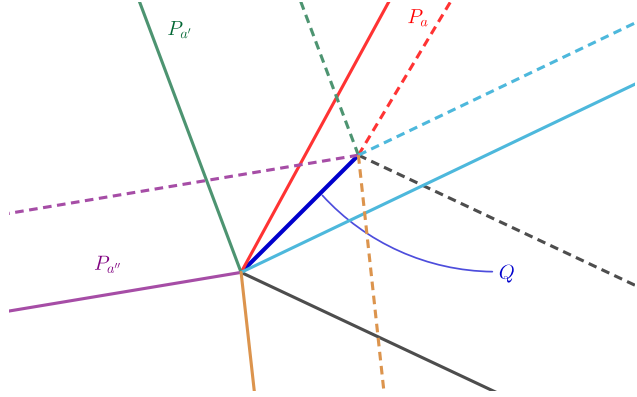}
\caption{A collection of walls sharing a common boundary $Q$}
\label{fig:introduction_collection_of_walls}
\end{figure}
We consider a Maurer-Cartan solution $\Phi \in \widehat{\mathbf{g}^*/\mathcal{E}^*}(U) \otimes_R \mathbf{m}$ which admits a Fourier decomposition
\begin{equation}\label{eqn:Phi_decomposition}
\Phi = \sum_{a \in \mathbb{W}} \Phi^{(a)},
\end{equation}
where the sum is finite modulo $\mathbf{m}^{N+1}$ for every $N \in \inte_{>0}$ (here $\mathbf{m}$ is the maximal ideal in $R = \comp[[t]]$).

\noindent {\bf Assumption I} (see Assumption \ref{asy_assumption_1} for the precise statement):
Each summand $\Phi^{(a)}$ has asymptotic support on the corresponding half-hyperplane $P_a$ (intuitively meaning that the support of $\Phi^{(a)}$ is more and more concentrated along $P_a$ as $\hp\rightarrow 0$) and has asymptotic expansion (as $\hp\rightarrow 0$) of the form
$$\Phi^{(a)} = \emc^{(a)} + \digamma^{(a)},$$
where $\emc^{(a)}$ is the leading order term consisting of terms with the leading $\hp$ order and $\digamma^{(a)}$ is the error term consisting of terms with higher $\hp$ orders.

From this assumption, we deduce that:
\begin{lemma}[=Lemma \ref{split_MC}]\label{theorem_support}
For each $a \in \mathbb{W}$, the summand $\Phi^{(a)}$ is a solution of the Maurer-Cartan equation \eqref{eqn:MC_eqn_A-side} over $U \setminus Q$.
\end{lemma}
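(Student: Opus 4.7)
The plan is to exploit the locality of the Maurer-Cartan equation and the geometric fact that the half-hyperplanes $\{P_a\}_{a \in \mathbb{W}}$ share $Q$ as their common boundary, so any two distinct $P_a, P_b$ intersect only inside $Q$. Consequently, every point $x_0 \in U \setminus Q$ admits an open neighborhood $V \subset U \setminus Q$ that meets at most one of the half-hyperplanes; call it $P_{a_0}$ (taking $a_0$ arbitrary when $x_0$ lies in none of the $P_a$'s). The idea is that, localized on such a $V$, all the Fourier summands $\Phi^{(b)}$ with $b \neq a_0$ should drop out of the picture by the asymptotic support hypothesis, leaving the MC equation for $\Phi^{(a_0)}$ alone.

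First I would fix a truncation order $N \in \mathbb{Z}_{>0}$ and pass to $\widehat{\mathbf{g}^*/\mathcal{E}^*}(U) \otimes_R \mathbf{m}/\mathbf{m}^{N+1}$. By hypothesis the decomposition \eqref{eqn:Phi_decomposition} becomes a finite sum
\[
\Phi \equiv \sum_{a \in \mathbb{W}_N} \Phi^{(a)} \pmod{\mathbf{m}^{N+1}},
\]
with $\mathbb{W}_N \subset \mathbb{W}$ finite, which removes any worry about convergence when we localize. Next, using Assumption I, for every $b \in \mathbb{W}_N \setminus \{a_0\}$ the summand $\Phi^{(b)}$ has asymptotic support on $P_b$, and since $V$ is disjoint from $P_b$ (with positive distance, after shrinking $V$ if necessary), its restriction to $V$ is of order $O(e^{-c_b/\hp})$ for some $c_b > 0$. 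By the very definition of $\widehat{\mathbf{g}^*/\mathcal{E}^*}$, such exponentially small terms belong to $\mathcal{E}^*$ and hence vanish in the quotient. Thus in $\widehat{\mathbf{g}^*/\mathcal{E}^*}(V)$ we have the identity
\[
\Phi|_V \equiv \Phi^{(a_0)}|_V \pmod{\mathbf{m}^{N+1}}.
\]

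Plugging this into the given MC equation $\bar{\partial}\Phi + \tfrac{1}{2}[\Phi,\Phi] = 0$ restricted to $V$, every cross bracket $[\Phi^{(a)},\Phi^{(b)}]$ with $\{a,b\} \not\subset \{a_0\}$ contains a factor that is zero in the quotient on $V$, and therefore vanishes; similarly $\bar{\partial}\Phi^{(b)}$ vanishes on $V$ for $b \neq a_0$. What remains is
\[
\bar{\partial}\Phi^{(a_0)} + \tfrac{1}{2}\bigl[\Phi^{(a_0)},\Phi^{(a_0)}\bigr] \equiv 0 \pmod{\mathbf{m}^{N+1}} \quad \text{on } V.
\]
For $a \neq a_0$, the same equation for $\Phi^{(a)}$ on $V$ is trivially satisfied, since $\Phi^{(a)}|_V$ itself vanishes in the quotient. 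Since $x_0 \in U \setminus Q$ and $N$ were arbitrary, varying $x_0$ covers $U \setminus Q$ by such neighborhoods $V$; sheaf-theoretic locality of $\widehat{\mathbf{g}^*/\mathcal{E}^*}$ (or a partition-of-unity argument, as $V$ was chosen to avoid the supports of the other $\Phi^{(b)}$'s) yields the desired identity for each $\Phi^{(a)}$ over $U \setminus Q$, modulo every $\mathbf{m}^{N+1}$, and hence in the completion.

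The one delicate point will be the step that reads ``$\Phi^{(b)}|_V$ is exponentially small, therefore zero in the quotient'': this relies crucially on the precise form of the asymptotic support notion in Definition \ref{asypmtotic_support_def} and the construction of $\mathcal{E}^*$ in Section \ref{sec:MC_error}, and in particular on having a positive lower bound for the distance from $V$ to $P_b$ that is uniform in $b$ over any finite $\mathbb{W}_N$. Once that is unpacked, the rest is the straightforward locality argument sketched above; no interaction between distinct walls can survive away from $Q$, which is exactly why scattering is forced to occur only on the joint $Q$.
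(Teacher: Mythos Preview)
Your argument is correct and follows essentially the same idea as the paper's proof: both rely on the fact that distinct half-hyperplanes $P_a, P_{a'}$ meet only along $Q$, so away from $Q$ the asymptotic-support hypothesis forces interactions between different summands to lie in $\mathcal{E}^*$ and hence vanish in the quotient. Your version localizes to a neighborhood $V$ meeting at most one wall and reads off the MC equation for the surviving summand, whereas the paper works globally on $A=U\setminus Q$ and simply observes that each cross-bracket $[\Phi^{(a)},\Phi^{(a')}]$ lies in $\mathcal{E}^*$ (and, via Lemma~\ref{filtrationlemma}, that the self-bracket $[\Phi^{(a)},\Phi^{(a)}]$ vanishes too, yielding the slightly stronger conclusion $\bar\partial\Phi^{(a)}=[\Phi^{(a)},\Phi^{(a)}]=0$). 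These are two packagings of the same mechanism.
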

%Hence the semi-classical limit of each summand $\Phi^{(a)}$ (or rather the corresponding gauge) produces a wall-crossing factor just as in the single wall case, so the solution $\Phi$ gives rise to a scattering diagram.

%From the assumption on asymptotic support, such a solution $\Phi$ should be related to scattering diagrams as we have:

Now we delete $Q$ from $U$ and work over $A := U \setminus Q$. We also choose an open set $\tilde{A}_0$ in the universal cover $\tilde{A}$ of $A$ and consider the covering map $\mathtt{p}: \tilde{A}_0 \rightarrow A$.

\noindent {\bf Assumption II} (see Assumption \ref{asy_assumption_2} for the precise statement):
Applying the homotopy operator $\hat{\mathcal{H}}$ (defined by integration over a homotopy $h : \real \times \tilde{A}_0 \rightarrow \tilde{A}_0$ contracting $\tilde{A}_0$ to a point in \eqref{homotopy_equation}) to the pullback of the leading order term $\emc^{(a)}$ by $\mathtt{p}$ gives a step function which jumps across the lift of $P_a$ in $\tilde{A}_0$ and whose restriction to the affine half space $\hat{\mathbb{H}}(P_a)\setminus P_a$ produces an element $Log(\Theta_a)$ in the tropical vertex Lie-algebra $\mathfrak{h}$ (defined in Definition \ref{trop_lie_algebra}). Figure \ref{fig:introduction_theorem1} illustrates the situation in a slice of a tubular neighborhood around $Q$.
\begin{figure}[h]
\centering
\includegraphics[scale=0.3]{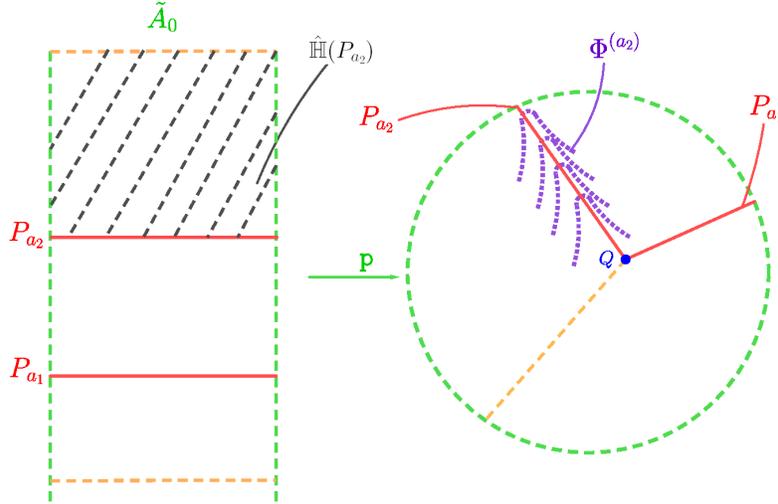}
\caption{A slice in a tubular neighborhood around $Q$}
\label{fig:introduction_theorem1}
\end{figure}

Since $\check{X}_0(U) \times_{A} \tilde{A}_0$ does not admit any non-trivial deformations, each summand $\Phi^{(a)}$ in \eqref{eqn:Phi_decomposition} is gauge equivalent to $0$, so there exists a unique solution $\varphi_a$ to
$e^{\varphi_a} * 0 = \Phi^{(a)}$
satisfying the gauge fixing condition $\hat{\mathcal{P}}\varphi_a = 0$. We carefully estimate the orders of the parameter $\hp$ in the asymptotic expansion of the gauge $\varphi_a$ as in the single wall case above, and obtain the following:
\begin{lemma}[=Lemma \ref{asymptoticexpansion}]\label{theorem_asymptotic_expansion}
The asymptotic expansion of the gauge $\varphi_a$ is of the form (see Notations \ref{O_loc} for the precise meaning of $O_{loc}(\hp^{1/2})$):
$$\varphi_a = \psi_a + O_{loc}(\hp^{1/2}),$$
where $\psi_a$, the semi-classical limit of $\varphi_a$ as $\hp \to 0$, is a step function which jumps across the half-hyperplane $P_a$ and is related to an element $\Theta_a$ of the tropical vertex group by the formula
$$\text{Log}(\Theta_a) =\psi_a|_{\hat{\mathbb{H}}(P_a) \setminus P_a};$$
here $\hat{\mathbb{H}}(P_a) \setminus P_a \subset \tilde{A}_0$ is the open half-space (defined in Notation \ref{order_N_neighborhood}) which contains the support of $\psi_a$.
\end{lemma}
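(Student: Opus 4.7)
My plan is to convert the defining gauge equation $e^{\varphi_a} \ast 0 = \Phi^{(a)}$ into a fixed-point equation for $\varphi_a$ via the homotopy operator $\hat{\mathcal{H}}$, solve it inductively in the $\mathbf{m}$-adic filtration on $R = \comp[[t]]$, and then separate the leading-order and error contributions by inserting the asymptotic decomposition $\Phi^{(a)} = \emc^{(a)} + \digamma^{(a)}$ from Assumption I. Pulling back to $\tilde{A}_0$ and expanding the gauge action in the pro-nilpotent dgLa $\widehat{\mathbf{g}^\ast/\mathcal{E}^\ast}(U)$ gives a relation of the form
\[
\pdb \varphi_a + \tfrac{1}{2}[\varphi_a, \pdb \varphi_a] + \cdots \;=\; -\,\Phi^{(a)},
\]
and the gauge fixing $\hat{\mathcal{P}}\varphi_a = 0$ combined with the Hodge-type identity $\mathrm{id} = \hat{\mathcal{H}}\pdb + \pdb\hat{\mathcal{H}} + \hat{\mathcal{P}}$ implicit in the homotopy setup of \eqref{homotopy_equation} converts this into the fixed-point equation
\[
\varphi_a \;=\; -\,\hat{\mathcal{H}}(\Phi^{(a)}) \;+\; \hat{\mathcal{H}}\bigl(R(\varphi_a)\bigr),
\]
where $R(\varphi_a)$ collects bracket terms that are quadratic or higher in $\varphi_a$.

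I would then solve this equation order by order in the $\mathbf{m}$-adic filtration. Since $\Phi^{(a)} \in \widehat{\mathbf{g}^\ast/\mathcal{E}^\ast}(U)\otimes_R \mathbf{m}$ and the decomposition \eqref{eqn:Phi_decomposition} is finite modulo $\mathbf{m}^{N+1}$ for every $N$, only finitely many brackets contribute at each order, so the recursion is well-defined and, together with $\hat{\mathcal{P}}\varphi_a=0$, determines $\varphi_a$ uniquely. Substituting $\Phi^{(a)} = \emc^{(a)} + \digamma^{(a)}$ into the linear term, the leading piece of $\varphi_a$ is (up to the sign convention for the gauge action)
\[
\psi_a \;:=\; -\,\hat{\mathcal{H}}\bigl(\mathtt{p}^\ast \emc^{(a)}\bigr),
\]
which by Assumption II is precisely the step function across the lift of $P_a$ whose restriction to $\hat{\mathbb{H}}(P_a)\setminus P_a$ equals $\text{Log}(\Theta_a)$ in the tropical vertex Lie algebra $\mathfrak{h}$. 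It remains to show that the two remaining contributions, namely the linear remainder $-\hat{\mathcal{H}}(\mathtt{p}^\ast \digamma^{(a)})$ and the nonlinear correction $\hat{\mathcal{H}}(R(\varphi_a))$, are both $O_{loc}(\hp^{1/2})$.

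For the first contribution, the definition of $\digamma^{(a)}$ as the subleading-$\hp$-order piece of $\Phi^{(a)}$, together with the boundedness of $\hat{\mathcal{H}}$ on forms with asymptotic support on $P_a$ (already established in the single-wall analysis leading to Proposition \ref{prop:MC_sol_one_wall}), immediately converts the $O_{loc}(\hp^{1/2})$ bound on $\digamma^{(a)}$ into the same bound on its $\hat{\mathcal{H}}$-image. For the nonlinear contribution I would proceed by induction on the $\mathbf{m}$-adic order: at the first nontrivial order in $t$, the bracket $[\varphi_a,\varphi_a]$ already involves two factors each carrying the transverse ``bump'' profile of an asymptotic-support form along $P_a$, so the product is supported in a shrinking transverse slab, and integrating along the contracting homotopy $h:\real\times\tilde{A}_0 \to \tilde{A}_0$ against this profile produces an extra half-power of $\hp$ by a Gaussian/stationary-phase estimate completely parallel to the one-wall case. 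The inductive step then propagates this gain through all higher brackets, since each new bracket either pairs a step-function factor with an $O_{loc}(\hp^{1/2})$ factor or iterates the above mechanism. The main obstacle I anticipate is precisely this bracket estimate: a single factor of $\varphi_a$ is only $O_{loc}(1)$ (it is a step function), so the improvement to $O_{loc}(\hp^{1/2})$ for $\hat{\mathcal{H}}([\varphi_a,\varphi_a])$ cannot be read off from sup-norm bounds alone and forces one to exploit the precise transverse Gaussian-like profile of $\psi_a$ near $P_a$ in combination with the explicit integration kernel of $\hat{\mathcal{H}}$, in direct analogy with the semi-classical computation that produced $\text{Log}(\Theta)$ in the single-wall setting of Proposition \ref{prop:MC_sol_one_wall}.
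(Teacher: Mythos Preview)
Your setup matches the paper: the fixed-point equation for $\varphi_a$ derived from $e^{\varphi_a}\ast 0 = \Phi^{(a)}$ under the gauge fixing $\hat{\mathcal{P}}\varphi_a=0$, solved order by order in $t$, with leading piece $-\hat{\mathcal{H}}(\mathtt{p}^*\emc^{(a)})$ whose semi-classical limit is $\psi_a$ by Assumption~II. The control on the linear remainder $-\hat{\mathcal{H}}(\mathtt{p}^*\digamma^{(a)})$ is also fine.

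The gap is in your mechanism for the nonlinear terms. The relevant brackets in the iteration are $\mathrm{ad}_{\varphi_a}^l(\pdb\varphi_a)$, not $[\varphi_a,\varphi_a]$, and $\varphi_a$ is a degree-$0$ step function, not a bump form---only $\pdb\varphi_a$ carries the transverse Gaussian profile. So $[\varphi_a,\pdb\varphi_a]$ contains a \emph{single} bump factor, and integrating it via $\hat{\mathcal{H}}$ does not, on analytic grounds alone, gain an extra $\hp^{1/2}$: naively one lands back in $O_{loc}(1)$.

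The actual mechanism is algebraic, not analytic. The leading-order parts of both $\varphi_a$ and $\pdb\varphi_a$ are sums of terms of the form $(\text{form})\,\bmc^{km_a}\check\partial_{n_a}$ with $n_a\perp m_a$ (this is precisely what Assumption~I, i.e.\ Theorem~\ref{asy_support_theorem}, guarantees for $\emc^{(a)}$). By the tropical-vertex formula~\eqref{vertex_lie_algebra} one has $[\bmc^{km_a}\check\partial_{n_a},\bmc^{k'm_a}\check\partial_{n_a}]=0$, so the leading contribution to $[\varphi_a,\pdb\varphi_a]$ vanishes identically. What survives are the derivative terms in the bracket (the $\sharp,\flat$ pieces), which drop the filtration index by one: in the paper's filtration the bracket lands in $\filt^0_{P_a}$ rather than $\filt^1_{P_a}$, and then $\hat{\mathcal{H}}$ sends this to $\filt^{-1}_{\hat{\mathbb{H}}(P_a)}\subset O_{loc}(\hp^{1/2})$. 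This is exactly Lemma~\ref{lem:loc_constant_coeff} (the two-wall analogue of Lemma~\ref{leadingorderlemma}), proved by induction using Lemmas~\ref{filtrationlemma} and~\ref{integral_lemma}. Your proposed Gaussian/stationary-phase argument cannot substitute for this algebraic cancellation.
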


Thus, each $\Phi^{(a)}$, or more precisely, the gauge $\varphi_a$, determines a wall
$\mathbf{w}_a = (P_a, \Theta_a)$
supported on a tropical half-hyperplane $P_a$ and equipped with a wall crossing factor $\Theta_a$.
Hence the Fourier decomposition \eqref{eqn:Phi_decomposition} of the Maurer-Cartan solution $\Phi$ defines a scattering diagram $\mathscr{D}(\Phi)$ consisting of the walls $\{\mathbf{w}_a\}_{a \in \mathbb{W}}$. Our first main result is the following:
\begin{theorem}[=Theorem \ref{scatteringtheorem1}]\label{theorem1}
If $\Phi$ is any solution to the Maurer-Cartan equation of $\widehat{\mathbf{g}^*/\mathcal{E}^*}(U)$ satisfying both Assumptions I and II (or more precisely Assumptions \ref{asy_assumption_1} and \ref{asy_assumption_2}), then the associated scattering diagram $\mathscr{D}(\Phi)$ is consistent, meaning that we have the following identity
$
\Theta_{\gamma, \mathscr{D}(\Phi)} = \text{Id},
$
where the left-hand side is the path ordered product (whose definition will be reviewed in Section \ref{analytic_continuation}) along any embedded loop $\gamma$ in $U \setminus \text{Sing}(\mathscr{D}(\Phi))$ intersecting $\mathscr{D}(\Phi)$ generically; here $\text{Sing}(\mathscr{D}(\Phi)) = Q$ is the singular set of the scattering diagram $\mathscr{D}(\Phi)$.
\end{theorem}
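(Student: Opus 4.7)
The plan is to work on the universal cover $\tilde{A}_0$ of $A = U \setminus Q$, combine the individual gauges $\varphi_a$ provided by Lemma \ref{theorem_asymptotic_expansion} into a single gauge trivializing the full Maurer--Cartan solution $\Phi$, and then identify the monodromy of this combined gauge around the loop $\gamma$ with the path-ordered product $\Theta_{\gamma, \mathscr{D}(\Phi)}$. The underlying principle is that $\Phi$ is pulled back from $A$ and hence deck-invariant on $\tilde A_0$, so any gauge trivializing $\Phi$ there has trivial monodromy modulo exponential error; comparing this with the composition of wall-crossing jumps forces $\Theta_{\gamma, \mathscr{D}(\Phi)} = \text{Id}$.

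Concretely, I would fix an embedded loop $\gamma \subset A$ transverse to $\mathscr{D}(\Phi)$, lift it to a path $\tilde\gamma: [0,1] \to \tilde{A}_0$ whose endpoints project to the same point of $A$ but differ by a deck transformation, and enumerate the walls $\mathbf{w}_{a_1}, \ldots, \mathbf{w}_{a_k}$ crossed by $\gamma$ in the order they are encountered. Working modulo $\mathbf{m}^{N+1}$ (so that only finitely many summands of \eqref{eqn:Phi_decomposition} contribute), I would then assemble the Baker--Campbell--Hausdorff product $e^{\varphi_{a_1}} \cdot e^{\varphi_{a_2}} \cdots e^{\varphi_{a_k}}$ and show, using Lemma \ref{theorem_support} together with the asymptotic support assumption (Assumption I), that the pairwise brackets $[\Phi^{(a)}, \Phi^{(b)}]$ for $a\neq b$ vanish to exponential order on $A$ because the supports of distinct $\Phi^{(a)}$'s only concentrate jointly near $Q$. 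Consequently the ordered product acts on $0$ to produce $\Phi$ modulo terms of exponential order in $\hp^{-1}$, i.e.\ modulo $\mathcal{E}^*$.

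The next step is to pass to the semi-classical limit. By Lemma \ref{theorem_asymptotic_expansion} each gauge has expansion $\varphi_{a_i} = \psi_{a_i} + O_{loc}(\hp^{1/2})$, with $\psi_{a_i}$ a step function jumping across the lift of $P_{a_i}$ by $\text{Log}(\Theta_{a_i})$ on the appropriate half-space $\hat{\mathbb{H}}(P_{a_i})\setminus P_{a_i}$. Evaluating the BCH product along $\tilde\gamma$ and taking $\hp \to 0$, each crossing of $\tilde P_{a_i}$ contributes a factor $\Theta_{a_i}^{\pm 1}$ in the tropical vertex group, with the exponent dictated by the orientation of the crossing; composing these contributions in the order dictated by $\tilde\gamma$ reproduces exactly the path-ordered product $\Theta_{\gamma, \mathscr{D}(\Phi)}$ as defined in Section \ref{analytic_continuation}. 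On the other hand, uniqueness of the gauge satisfying $\hat{\mathcal{P}}\varphi = 0$, combined with the deck-invariance of $\Phi$, forces the combined trivializing gauge to take equal values at $\tilde\gamma(0)$ and $\tilde\gamma(1)$ up to exponential error; hence the composed jump is trivial, giving $\Theta_{\gamma, \mathscr{D}(\Phi)} = \text{Id}$.

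The main obstacle is making the BCH composition argument rigorous inside the quotient dgLa $\widehat{\mathbf{g}^*/\mathcal{E}^*}(U)$: at each order of the BCH expansion one must verify that the nested commutators produced by iteratively applying $e^{\varphi_{a_i}}\ast(-)$ to the subsequent $\Phi^{(a_j)}$ either reproduce terms already present in the Fourier decomposition \eqref{eqn:Phi_decomposition} or lie in $\mathcal{E}^*$, which requires a careful stationary phase / asymptotic support bookkeeping using Assumption I away from $Q$. A related subtlety is matching the Lie-theoretic composition of step-function jumps along $\tilde\gamma$ with the group-level definition of the path-ordered product in the tropical vertex group, and ensuring that the identifications respect the gauge fixing condition $\hat{\mathcal{P}}\varphi_a = 0$ used in Lemma \ref{theorem_asymptotic_expansion}.
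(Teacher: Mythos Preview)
Your proposal is correct and follows essentially the same route as the paper: work on the universal cover, show the ordered product $\prod_a e^{\varphi_a}$ trivializes $\mathtt{p}^*(\Phi)$ (this is the paper's Lemma \ref{lem:consistency}), compare with a global gauge to deduce monodromy-freeness, and pass to the leading $\hp$-order via Lemma \ref{asymptoticexpansion} to extract the tropical-vertex identity. The only notable technical differences are that the paper proves the composition step using $[\varphi_{a'}, \tilde\Phi^{(a)}] = 0$ for $\theta_a < \theta_{a'}$ (coming from $\varphi_{a'}\equiv 0$ outside $\hat{\mathbb{H}}(P_{a'})$) rather than your $[\Phi^{(a)},\Phi^{(b)}]=0$, and deduces monodromy-freeness by pulling back a gauge $\varphi$ defined on all of the contractible $U$ rather than by a deck-invariance argument---this sidesteps the subtlety you would otherwise face, namely that the gauge-fixing base point $\hat b_0$ is not preserved by the deck action.
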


Our second main result studies how a scattering process starting with two non-parallel walls $\mathbf{w}_1 = (P_1, \Theta_1), \mathbf{w}_2 = (P_2, \Theta_2)$ intersecting transversally at $Q = P_1 \cap P_2$ gives rise to a MC solution of $\widehat{\mathbf{g}^*/\mathcal{E}^*}(U) $ satisfying both Assumptions I and II, thereby producing a consistent scattering diagram via Theorem \ref{theorem1}.\footnote{Indeed Assumptions I and II (or more precisely Assumptions \ref{asy_assumption_1} and \ref{asy_assumption_2}) are extracted from properties of the MC solutions we constructed.}
%and supported on tropical hyperplanes $P_1, P_2 \subset U$ respectively, and adding new walls to obtain a consistent scattering diagram, to the process of solving MC equation in $L_{X_0}^*(U)[[t]]$.

In this case, there are two solutions to the MC equation \eqref{eqn:MC_eqn_A-side} $\incoming_{\mathbf{w}_i} \in KS_{\check{X}_0}^1(U)[[t]]$, $i=1,2$ associated to the two initial walls $\mathbf{w}_1, \mathbf{w}_2$ respectively (e.g. those provided by our ansatz), but their sum
$\incoming := \incoming_{\mathbf{w}_1} + \incoming_{\mathbf{w}_2} \in KS_{\check{X}_0}(U)[[t]]$
does {\em not} solve \eqref{eqn:MC_eqn_A-side}, even up to error terms with exponential order in $\hp^{-1}$.
Nevertheless, a method of Kuranishi \cite{Kuranishi65} allows us to, after fixing the gauge using an explicit homotopy operator (introduced in Definition \ref{pathspacehomotopy}), write down a solution $\Phi = \incoming + \cdots$, as a sum over trees \eqref{eqn:MC_sol_Phi} with input $\incoming$, of the equation \eqref{eqn:MC_eqn_A-side} up to error terms with exponential order in $\hp^{-1}$, or more precisely, of the MC equation of the dgLa $\widehat{\mathbf{g}^*/\mathcal{E}^*}(U)$.

The MC solution $\Phi$ has a Fourier decomposition as in \eqref{eqn:Phi_decomposition} of the form
$
\Phi = \incoming + \sum_{a \in \mathbb{W}} \Phi^{(a)},
$
where the sum is over $a = (a_1, a_2) \in \mathbb{W} := \left(\inte_{>0}^2\right)_{\text{prim}}$ which parametrizes the tropical half-hyperplanes $P_a$'s containing $Q$ and lying in-between $P_1$ and $P_2$, as shown in Figure \ref{fig:high_dimensional_pic}.
\begin{figure}[h]
\centering
\includegraphics[scale=0.25]{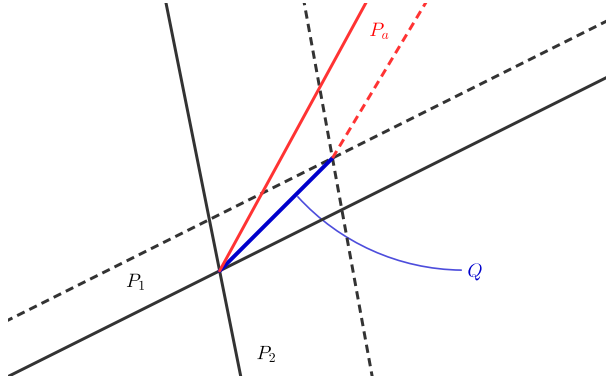}
\caption{Scattered walls $P_a$'s from two initial walls}
\label{fig:high_dimensional_pic}
\end{figure}
Our second main result is the following:
\begin{theorem}[=Theorem \ref{scatteringtheorem2}]\label{theorem2}
The Maurer-Cartan solution $\Phi$ satisfies both Assumptions I and II (or more precisely Assumptions \ref{asy_assumption_1} and \ref{asy_assumption_2}) in Theorem \ref{theorem1}, and hence the scattering diagram $\mathscr{D}(\Phi)$ associated to $\Phi$ is consistent, meaning that we have the following identity\footnote{Another common way to write this identity is as a formula for the commutator of two elements in the tropical vertex group: $\Theta_2^{-1}\Theta_1 \Theta_2 \Theta_1^{-1} = \prod^{\gamma}_{a \in \mathbb{W}} \Theta_a$.}
\begin{equation*}
\Theta_{\gamma, \mathscr{D}(\Phi)} = \Theta_1^{-1}\Theta_2\left(\prod^{\gamma}_{a \in \mathbb{W}} \Theta_a \right) \Theta_1 \Theta_2^{-1} = \text{Id},
\end{equation*}
along any embedded loop $\gamma$ in $U \setminus \text{Sing}(\mathscr{D}(\Phi))$ which intersects $\mathscr{D}(\Phi)$ generically; here $\text{Sing}(\mathscr{D}(\Phi)) = Q = P_1 \cap P_2$.
\end{theorem}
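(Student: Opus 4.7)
The plan is to verify that the Kuranishi-type solution $\Phi = \incoming + \sum_{a \in \mathbb{W}} \Phi^{(a)}$ satisfies both Assumptions \ref{asy_assumption_1} and \ref{asy_assumption_2}, after which Theorem \ref{theorem1} automatically delivers the consistency identity; the commutator form in the footnote is then obtained by computing the path-ordered product along a small loop encircling $Q = P_1 \cap P_2$ (crossing $P_1$, then $P_2$, then all the scattered walls $P_a$, then $P_2$ back, then $P_1$ back).

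The first stage sets up the Fourier decomposition. Feed the two-wall input $\incoming = \incoming_{\mathbf{w}_1} + \incoming_{\mathbf{w}_2}$ into the Kuranishi iteration with the homotopy operator $\hat{\mathcal{H}}$ from Definition \ref{pathspacehomotopy}. The resulting MC solution of $\widehat{\mathbf{g}^*/\mathcal{E}^*}(U)$ is a formal sum over binary trees whose leaves carry either $\incoming_{\mathbf{w}_1}$ or $\incoming_{\mathbf{w}_2}$, with each internal node applying $\hat{\mathcal{H}}\circ [\cdot,\cdot]$. For a primitive pair $a=(a_1,a_2)\in \mathbb{W}$, declare $\Phi^{(a)}$ to be the sum, over all $k\geq 1$ and over all trees whose leaf weights are $(ka_1, ka_2)$, of the corresponding tree contributions. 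Since each $\incoming_{\mathbf{w}_i}$ carries a positive power of the formal variable $t$, this sum is finite modulo $\mathbf{m}^{N+1}$ for every $N$, so it makes sense and produces the decomposition \eqref{eqn:Phi_decomposition}.

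The second stage is the asymptotic analysis, where I verify Assumption \ref{asy_assumption_1}. The key inputs are Gaussian-type bump $1$-forms concentrated on $P_1$ and $P_2$ (as in the single-wall case of Proposition \ref{prop:MC_sol_one_wall}), and a Laplace/stationary-phase analysis of the operation $\hat{\mathcal{H}}\circ [\cdot,\cdot]$. The Lie bracket of two bump forms concentrated on transverse hyperplanes localizes to the codimension $2$ intersection, and the homotopy operator, by integrating along straight-line paths from a fixed base point, smears the result along the half-hyperplane spanned by the sum of the monodromy directions — producing a new bump form with asymptotic support on a half-hyperplane whose direction equals $a_1 v_1 + a_2 v_2$ when the total leaf weight is $(a_1,a_2)$. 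Iterating this stationary-phase step along a tree of weight $(ka_1,ka_2)$ shows that every such tree contributes a form with asymptotic support on $P_a$ in the sense of Definition \ref{asypmtotic_support_def}, that the leading $\hp$-order is the same for all such trees, and that all other terms fall into $O_{loc}(\hp^{1/2})$. Summing gives the leading-order/error decomposition $\Phi^{(a)} = \emc^{(a)} + \digamma^{(a)}$ demanded in Assumption \ref{asy_assumption_1}, with $\emc^{(a)}$ an explicit Gaussian bump form on $P_a$.

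The third stage verifies Assumption \ref{asy_assumption_2} and identifies the wall-crossing factor. Pulling back $\emc^{(a)}$ along $\mathtt{p}\colon \tilde{A}_0 \to A$ and applying $\hat{\mathcal{H}}$ reduces, by the same stationary-phase calculation used in the single-wall analysis, to integrating a Gaussian concentrated on the lift of $P_a$; the $\hp\to 0$ limit is a step function that jumps across that lift. On each tree the jump is a nested commutator of the Lie-algebra elements $\text{Log}(\Theta_1)$ and $\text{Log}(\Theta_2)$ dictated by the tree shape, and summing over all trees of total weight $(ka_1,ka_2)$ for $k\geq 1$ produces a well-defined element of $\mathfrak{h}$ which we call $\text{Log}(\Theta_a)$. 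This matches $\psi_a|_{\hat{\mathbb{H}}(P_a)\setminus P_a}$ and exhibits the associated $\Theta_a$ in the tropical vertex group, finishing the verification of Assumption \ref{asy_assumption_2}. Theorem \ref{theorem1} then applies and yields consistency of $\mathscr{D}(\Phi)$.

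The main obstacle is the uniform control required in the second stage: one must show that the interaction between the Lie bracket (which concentrates forms onto lower-dimensional strata) and the homotopy operator $\hat{\mathcal{H}}$ (which smears them along rays) behaves well under iteration across trees of unbounded depth, so that the remainder $\digamma^{(a)}$ really lives in $O_{loc}(\hp^{1/2})$ and $\emc^{(a)}$ is a clean Gaussian profile. This demands sharp stationary-phase estimates with explicit, composable Gaussian remainders — essentially a "propagator calculus" tailored to the homotopy operator of Definition \ref{pathspacehomotopy} — and is the technical heart of the argument.
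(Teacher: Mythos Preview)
Your overall strategy matches the paper's: verify Assumption~\ref{asy_assumption_1} via Theorem~\ref{asy_support_theorem} and Assumption~\ref{asy_assumption_2} via Lemma~\ref{iteratedintegral}/Lemma~\ref{lem:semi_classical_integral}, then invoke Theorem~\ref{theorem1}. A few technical points where the paper's implementation differs from your sketch are worth flagging.

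First, the homotopy operator used to build $\Phi$ is the mode-dependent $H=\bigoplus_m H_m$ of Definition~\ref{pathspacehomotopy}, which integrates along flow lines of $-m$ (with the reference hyperplane $U_m^\perp$ chosen so that $Q\subset U_m^+$). It is \emph{not} integration along paths from a single base point, and this mode-dependence is precisely what forces each tree contribution to have asymptotic support on $P_a=Q-\real_{\geq 0}m_a$ rather than on some arbitrary ray; the operator $\hat{\mathcal{H}}$ you name only enters later, on $\tilde{A}_0$, when solving for the gauge $\varphi_a$.

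Second, the ``propagator calculus'' you anticipate is exactly the filtration $\filt^s_P$ of Definition~\ref{asypmtotic_support_def}: rather than tracking explicit Gaussian remainders, the paper proves once and for all that $\filt^s_{P_1}\wedge\filt^r_{P_2}\subset\filt^{r+s}_{P_1\cap P_2}$ (Lemma~\ref{support_product}) and $I(\filt^s_P)\subset\filt^{s-1}_{I(P)}$ (Lemma~\ref{integral_lemma}), so composability across arbitrary tree depth is automatic. In particular $\emc^{(a)}$ is not shown to be a literal Gaussian but only an element of $\filt^1_{P_a}(U)$.

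Third, to isolate the leading term the paper decomposes the Lie bracket as $\natural+\sharp+\flat$ (Definition~\ref{lie_bracket_decomp}) and splits labeled trees into $\lrtree{k}_0$ (all vertices $\natural$) versus $\lrtree{k}_1$; the leading piece $\emc^{(a)}$ comes only from $\lrtree{k}_0$ applied to the leading inputs $\eincoming^{(i)}$, and \emph{this} is what guarantees $\emc^{(a)}$ lands in the tropical vertex Lie algebra (i.e.\ carries $\check\partial_{n_a}$ with $n_a\perp P_a$), which is essential for Lemma~\ref{lem:loc_constant_coeff} to go through. Your description of the jump as a ``nested commutator of $\mathrm{Log}(\Theta_i)$'' is the right heuristic, but the $\natural/\sharp/\flat$ split is the mechanism that makes it rigorous.
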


%This can be viewed as a  because it says that the solution $\Phi$ of the MC equation \eqref{eqn:MC_eqn_A-side} we constructed differs from an approximate solution (i.e. the semi-classical limit of $\Phi$), written down by data from scattering diagrams, by terms of the order $e^{-c/\hp}$.

The proofs that $\Phi$ satisfies both Assumptions \ref{asy_assumption_1} and \ref{asy_assumption_2} occupy Sections \ref{sec:leading_order_MC} and \ref{sec:tropical_leading_order}; Assumption \ref{asy_assumption_1} will be handled in Theorem \ref{asy_support_theorem} in Section \ref{sec:leading_order_MC} while Assumption \ref{asy_assumption_2} will be handled in Lemma \ref{iteratedintegral} in Section \ref{sec:tropical_leading_order}.

\begin{remark}
Notice that the scattering diagram $\mathscr{D}(\Phi)$ is the unique (by passing to a minimal scattering diagram if necessary) consistent extension, determined by Kontsevich-Soibelman's Theorem \ref{KSscatteringtheorem}, of the scattering diagram consisting of two initial walls $\mathbf{w}_1$ and $\mathbf{w}_2$.
\end{remark}

\subsection{A reader's guide}

The rest of this paper is organized as follows.

In Section \ref{dgladeformation}, we review the Kodaira-Spencer dgLa $KS_{\check{X}_0}$ associated to the semi-flat Calabi-Yau manifold $\check{X}_0$, followed by a brief review of the Legendre and Fourier transforms.

In Section \ref{recallscattering}, we review the tropical vertex group and the theory of scattering diagrams (in particular a theorem due to Kontsevich-Soibelman) following the exposition in \cite{gross2010tropical}.

Section \ref{onewall} is about the single wall scenario.
In Section \ref{sec:ansatz_one_wall}, we write down an ansatz associated to a given single wall solving the MC equation.
In Section \ref{sec:asy_support}, we formulate the key notion of {\em asymptotic support on a tropical polyhedral subset} which allows us to define a filtration \eqref{filtration} to keep track of the $\hp$ orders. We also prove two key results, namely, Lemma \ref{support_product} (and its extension Lemma \ref{filtrationlemma}) and Lemma \ref{integral_lemma}, which form the basis for the subsequent asymptotic analysis. Applying them, we prove the main results Lemma \ref{leadingorderlemma} and Proposition \ref{prop:MC_sol_one_wall} for the single wall case.
Except Definition \ref{asypmtotic_support_def} and the statements of Lemmas \ref{support_product} and \ref{integral_lemma}, the reader may skip the rather technical Section \ref{sec:relating_wall_crossing_fac} at first reading.

Section \ref{twowalls} is the heart of this paper where we study the scattering process which starts with two initial walls.
In Section \ref{algebraicsolveMC}, Kuranishi's method of solving the MC equation of a dgLa is reviewed.
In Section \ref{sec:solve_MC_two_walls}, we introduce the dgLa $\widehat{\mathbf{g}^*/\mathcal{E}^*}(U)$ by which we make precise the meaning of {\em solving the MC equation of $KS_{\check{X}_0}(U)[[t]]$ up to error terms with exponential order in $\hp^{-1}$}. We then begin the asymptotic analysis of the MC solutions of $\widehat{\mathbf{g}^*/\mathcal{E}^*}(U)$; the key results here are Theorem \ref{asy_support_theorem} and Lemma \ref{lem:semi_classical_integral}.
In Section \ref{sec:key_lemmas}, we apply the results obtained in Section \ref{sec:solve_MC_two_walls} to prove Lemmas \ref{lem:loc_constant_coeff} and \ref{asymptoticexpansion} (which are parallel to Lemma \ref{leadingorderlemma} and Proposition \ref{prop:MC_sol_one_wall} in Section \ref{onewall}), from which we deduce Theorems \ref{theorem1} and \ref{theorem2}.

\begin{comment}
Section \ref{sec:tropical_counting} is about the relation between Maurer-Cartan solutions and tropical counting, and we will restrict ourselves to the 2-dimensional case.
In Section \ref{sec:tropical_counting_defn}, we define tropical disks following \cite{Mikhalkin05} and review the relation between scattering diagrams and counting of tropical disks proved by Gross-Pandharipande-Siebert \cite{gross2010tropical}.
In Section \ref{sec:MC_vs_tropical_count}, applying straightforward extensions of results in Sections \ref{sec:solve_MC_two_walls} and \ref{sec:key_lemmas}, we prove Theorem \ref{theorem3}.
\end{comment}

\section*{Acknowledgement}
We thank Si Li, Marco Manetti and Matt Young for various useful conversations when we were preparing the first draft of this paper. We are also heavily indebted to the anonymous referees for many critical yet very constructive comments and suggestions, which lead to a significant and substantial improvement in the exposition including the introduction of the notion of ``asymptotic support'' and an illuminating reorganization of many of the arguments in the proofs of our main results. Finally we would like to thank Mark Gross for his interest in our work.

The work of K. Chan was supported by a grant from the Hong Kong Research Grants Council (Project No. CUHK14302015) and direct grants from CUHK. The work of N. C. Leung was supported by grants from the Hong Kong Research Grants Council (Project No. CUHK402012 $\&$ CUHK14302215) and direct grants from CUHK. The work of Z. N. Ma was supported by a Professor Shing-Tung Yau Post-Doctoral Fellowship, Center of Mathematical Sciences and Applications at Harvard University, Department of Mathematics at National Taiwan University, Yau Mathematical Sciences Center at Tsinghua University, and Institute of Mathematical Sciences and Department of Mathematics at CUHK. 
\section{The Kodaira-Spencer dgLa in the semi-flat case}\label{dgladeformation}

In this section, we review the classical Kodaira-Spencer deformation theory of complex structures and the associated dgLa \cite{Morrow-Kodaira_book, manetti2005differential} in the semi-flat setting, as well as the Legendre and Fourier transforms \cite{Hitchin97, Leung05} which play important roles in semi-flat SYZ mirror symmetry.

%Throughout this paper, we take tropical affine manifold $\check{B}_0$ as follows (see e.g. \cite[Chapter 6]{dbrane}).

\subsection{The semi-flat Calabi-Yau manifold $\check{X}_0$}\label{sec:semi_flat_complex_manifold}

We let $\text{Aff}(\real^n) = \real^n  \rtimes \text{GL}_n(\real)$ be the group of affine linear transformations of $\real^n$%, which consists of linear maps $T$ of the form $T(x) = Ax + b$ where $A \in \text{GL}_n(\real)$ and $b \in \real^n$,
and consider the subgroup $\text{Aff}_\real(\inte^n)_0 := \real^n \rtimes \text{SL}_n(\inte)$.
\begin{definition}[\cite{gross2010tropical}]
An $n$-dimensional smooth manifold $B$ is called {\em tropical affine} if it admits an atlas $\{(U_i, \psi_i)\}$ of coordinate charts $\psi_i : U_i \rightarrow \real^n$ such that $\psi_i \circ \psi_j^{-1} \in \text{Aff}_\real(\inte^n)_0$ for all $i, j$.
\end{definition}
Given a (possibly non-compact) tropical affine manifold $\check{B}_0$, we set
$$\check{X}_0 := T\check{B}_0 / \Lambda_{\check{B}_0},$$
where the lattice subbundle $\Lambda_{\check{B}_0} \subset T\check{B}_0$ is locally generated by the coordinate vector fields $\dd{x^1}, \ldots, \dd{x^n}$ for a given choice of local affine coordinates $\check{x} = (x^1,\dots ,x^n)$ in a contractible open subset $\check{U} \subset \check{B}_0$.
%It is well defined since the transition functions lie in $Aff_\real(\inte^n)_0$.
Then the natural projection map
$\check{p}: \check{X}_0 \rightarrow \check{B}_0$
is a torus fibration. We also let $y^j$'s be the canonical coordinates on the fibres of $\check{p}$ over $\check{U}$ with respect to the frame $\dd{x^1}, \ldots, \dd{x^n}$ of $T\check{B}_0$.

Choosing $\beta = \sum_{i, j = 1}^n \beta^j_i(\check{x}) dx^i \otimes \dd{x^j} \in \Omega^1(T\check{B}_0)$ satisfying
$\nabla \beta = 0 \in \Omega^2(T\check{B}_0)$, where $\nabla$ is the natural affine flat connection on $\check{B}_0$, we get a one-parameter family of complex structures parametrized by $\hp \in \real_{>0}$ defined by the family of matrices
\begin{equation}\label{eqn:complex_structure_J_beta}
\check{J}_\beta
= \left(
\begin{array}{cc}
- \hp\beta & \hp I   \\
- \hp^{-1}(I+\hp^2\beta^2)  & \hp\beta
\end{array} \right)
\end{equation}
with respect to the local frame $\dd{x^1}, \ldots, \dd{x^n}, \dd{y^1}, \ldots, \dd{y^n}$, where we write $\beta$ as a matrix with respect to the frame $\dd{x^1}, \ldots, \dd{x^n}$. Locally, the corresponding holomorphic volume form is given by
\begin{equation}
\check{\Omega}_{\beta} = \bigwedge_{j = 1}^n \left( ( dy^j -\sum_{k = 1}^n \beta^j_k dx^k) + i \hp^{-1}dx^j \right),
\end{equation}
and a holomorphic frame of $T^{1,0}\check{X}_0$ can be written as
\begin{equation}\label{local_holo_vector}
\check{\partial}_{j} := \dd{\log \bmc^j} = \frac{i}{4\pi} \left( \dd{y^j}- i \hp \left( \sum_{k = 1}^n \beta_j^k \dd{y^k} + \dd{x^j} \right) \right),
\end{equation}
for $j = 1,\dots,n$. So the local complex coordinates are given by
\begin{equation}\label{local_holo_coord}
\bmc^j = \exp\left( -2\pi i \left( y^j - \sum_k \beta^j_k x^k + i \hp^{-1} x^j \right) \right).
\end{equation}
The condition that $\sum_{k = 1}^n \beta^j_k(\check{x}) dx^k$ being closed for each $j = 1, \ldots, n$ is equivalent to integrability of the almost complex structure $\check{J}_\beta$.

\subsection{The Kodaira-Spencer dgLa}\label{sec:def_cpx_str}

For a complex manifold $\check{X}_0$, the Kodaira-Spencer complex is the space $KS_{\check{X}_0} := \Omega^{0,*}({\check{X}_0},T^{1,0}{\check{X}_0})$ of $T^{1,0}{\check{X}_0}$-valued $(0,*)$-forms, which is equipped with the Dolbeault differential $\pdb$ and a Lie bracket defined in local holomorphic coordinates $z_1, \ldots, z_n \in \check{X}_0$ by $[ \phi d\bar{z}^I , \psi d\bar{z}^J] = [\phi, \psi] d\bar{z}^I \wedge d\bar{z}^J$,
where $\phi, \psi \in \Gamma(T^{1,0}{\check{X}_0})$. The triple
$$(KS_{\check{X}_0}, \pdb, [\cdot, \cdot])$$
defines the \textit{Kodaira-Spencer differential graded Lie algebra (abbrev. dgLa)}, which governs the deformation theory of complex structures on $\check{X}_0$. Given an open subset $\check{U} \subset \check{X}_0$, we may also talk about the local Kodaira-Spencer complex $KS_{\check{X}_0}(\check{U})$.
%\footnote{A dgLa is a triple $(L^*,d,[\cdot,\cdot])$ such that $(L^*,[\cdot,\cdot])$ is a graded Lie algebra, and $d : L^* \rightarrow L^{*+1}$ is a differential such that the Leibniz rule $d[a,b]= [da,b] + (-1)^{\bar{a}} [a,db]$ holds for any homogeneous element $a$. Readers may see \cite{manetti2005differential} for more details.}

\begin{notation}
We let $R = \comp[[t]]$ to be the ring of formal power series and $\mathbf{m} = (t)$ denote the maximal ideal generated by $t$, and consider dgLa's over $R$ to avoid convergence issues.
\end{notation}

An element $\varphi \in \Omega^{0,1}(\check{X}_0, T^{1,0}{\check{X}_0})\otimes \mathbf{m}$ defines a formal deformation of complex structures if and only if it is a solution to the Maurer-Cartan equation \eqref{eqn:MC_eqn_A-side}. The exponential group $KS_{\check{X}_0}^0\otimes \mathbf{m}$ acts on the set of Maurer-Cartan solutions $MC_{KS_{\check{X}_0}}(R)$ as automorphisms of the formal family of complex structures over $R$, and therefore one can define the space of deformations of $\check{X}_0$ over $R$ by $Def_{KS_{\check{X}_0}}(R) := MC_{KS_{\check{X}_0}}(R) / \exp(KS_{\check{X}_0}^0\otimes \mathbf{m})$ via the dgLa $KS_{\check{X}_0}$.

\subsection{The Legendre transform}\label{semiflat_kahler}

To define the Legendre dual $B_0$ of $\check{B}_0$ so that we can work in the tropical world, we need a metric $g$ on $\check{B}_0$ of Hessian type (see, e.g. \cite[Chapter 6]{dbrane}):
\begin{definition}\label{Hessian_type_g}
A Riemannian metric $g = (g_{ij})_{i,j}$ on $\check{B}_0$ is said to be {\em Hessian type} if it is locally given by $g = \sum_{i,j}\pdpd{\check{\phi}}{x^i}{x^j} dx^i \otimes dx^j$ in local affine coordinates $x^1, \ldots, x^n$ for some convex function $\check{\phi}$.
\end{definition}
To construct K\"ahler structures, we further need a compatibility condition between $g$ and $\beta$ in \eqref{eqn:complex_structure_J_beta}, namely, we assume that
\begin{equation}\label{Bfield_assumption}
\sum_{i,j,k} \beta^j_i g_{jk} dx^i\wedge dx^k = 0,
\end{equation}
when we write $\beta = \sum_{i,j}\beta^j_i(\check{x})dy_j \wedge dx^i $ in the local coordinates $x^1, \ldots, x^n, y_1, \ldots, y_n$.
Given such a Hessian type metric $g$, a K\"ahler form on $\check{X}_0$ is given by $\check{\omega}  = 2i \pd\pdb \check{\phi}= \sum_{j, k} g_{jk}  dy^j \wedge dx^k$.

%\subsection{}\label{legendre}
We can now introduce the Legendre transform following Hitchin \cite{Hitchin97}; see also \cite[Chapter 6]{dbrane}. Given a strictly convex smooth function $\check{\phi} : \check{U} (\subset \check{B}_0) \rightarrow \real$, we trivialize $T^*\check{U} \cong \check{U} \times \real^n$ via affine frames and define the \textit{Legendre transform} $L_{\check{\phi}}: \check{U} \to \real^n$ by
$x = L_{\check{\phi}}(\check{x}) := d \check{\phi} (\check{x})  \in \real^n$,
or equivalently, by $x_j = \ddd{\check{\phi}}{x^j}$, where $x = (x_1,\dots,x_n) \in \real^n$ denote the dual coordinates.
The image $U := L_{\check{\phi}}(\check{U}) \subset \real^n$ is an open subset and $L_{\check{\phi}}$ is a diffeomorphism.
The Legendre dual $\phi : U \rightarrow \real$ of $\check{\phi}$ is defined by the equation $\phi(x) := \sum_{j = 1}^n x_j x^j - \check{\phi}(\check{x})$,
and the dual transform $L_\phi$ is inverse to $L_{\check{\phi}}$.

If $\check{\phi}$ is the semi-flat potential in Definition \ref{Hessian_type_g} which defines a Hessian type metric, then the dual coordinate charts
$U = L_{\check{\phi}}(\check{U})$
actually glue to give another tropical affine manifold $B_0$, which we call the {\em Legendre dual} of $\check{B}_0$, whose underlying smooth manifold is same as that of $\check{B}_0$ (see \cite[Chapter 6]{dbrane}.
The lattice bundles $\Lambda_{B_0} \cong \Lambda^\vee_{\check{B}_0}$ are interchanged in this process, so that we can write $\check{X}_0 = T^*B_0 / \Lambda^\vee_{B_0}$, and using the affine coordinates $(x_1,\dots,x_n)$ on $B_0$, we can write 
$$\check{\Omega}_\beta = \bigwedge_{k = 1}^n \left(dy^k - \sum_{j = 1}^n \left( \beta^{jk} - i\hp^{-1} g^{jk} \right) dx_j \right), \quad \check{\omega} = \sum_{k = 1}^n dy^k\wedge dx_k$$

\subsection{The Fourier transform}\label{sec:fourier_transform}
\begin{definition}\label{def:sheaf_of_affine_function}
The {\em sheaf of integral affine functions} $\text{Aff}_{\check{B}_0}^\inte$, as a sheaf over $B_0$ (which is the same as $\check{B}_0$ as a smooth manifold), is the subsheaf of the sheaf of smooth functions over $B_0$ whose local sections $\text{Aff}_{\check{B}_0}^\inte(U)$ over a contractible open set $U \subset B_0$ are defined to be affine linear functions of the form $m(\check{x}) = m_1 x^1 + \cdots + m_n x^n + b$ for some $m_i \in \inte$ and $b \in \real$, in local affine coordinates on $\check{B}_0$ ({\em caution: not $B_0$}).
This sheaf fits into the following exact sequence of sheaves over $B_0$
$$0 \rightarrow \underline{\real} \rightarrow \text{Aff}^\inte_{\check{B}_0} \rightarrow \Lambda_{B_0}\rightarrow 0.$$
\end{definition}

Since $\check{X}_0 = T\check{B}_0 / \Lambda_{\check{B}_0}$, exponentiation of complexification of local affine linear functions on $\check{B}_0$ give local holomorphic functions on $\check{X}_0$ as follows.
\begin{definition}
Given $m \in \text{Aff}_{\check{B}_0}^\inte(U)$, expressed locally as $m(x) = \sum_j m_j x^j + b$, we let
$\bmc^m := e^{\frac{2\pi b}{\hp}} (\bmc^1)^{m_1} \cdots (\bmc^n)^{m_n} \in \mathcal{O}_{\check{X}_0}(\check{p}^{-1}(U)),$
where $\bmc^j$ is given in equation \eqref{local_holo_coord}. This defines an embedding
$\text{Aff}_{\check{B}_0}^\inte(U)\hookrightarrow \mathcal{O}_{\check{X}_0}(\check{p}^{-1}(U))$, and we denote the image subsheaf by $\mathcal{O}^{\text{aff}}$, as a sheaf over $B_0$.
\end{definition}

We can embed the lattice bundle $\Lambda_{B_0}^\vee \hookrightarrow \check{p}_* T^{1,0}\check{X}_0 $ into the push forward of the sheaf of holomorphic vector fields; in local coordinates $U$, it is given by (cf. equation \eqref{local_holo_vector})
\begin{equation}\label{holo_vector_field_2}
n = (n^j) \mapsto \check{\partial}_n  :=  \sum_j n^j \dd{\log \bmc^j} = \frac{i}{4\pi} \sum_j n^j \left( \dd{y^j}- i\hp \sum_k \left( \beta_j^k \dd{y^k} +  g_{jk} \dd{x_k} \right) \right)
\end{equation}
for a local section $n\in \Lambda_{B_0}^\vee(U)$. This embedding is globally defined, and by abuse of notations, we will write $\Lambda_{B_0}^\vee$ to stand for its image subsheaf. For later purpose, we introduce the notation
\begin{equation}\label{affine_vector_field}
\partial_n := \frac{\hp}{4\pi} \sum_j n^j g_{jk} \dd{x_k}.
\end{equation}

\begin{notation}\label{not:local_lattice}
Since we work in a contractible open coordinate chart $U$, we will fix a rank $n$ lattice $M \cong \inte^n$ and its dual $N = \text{Hom}(M, \inte)$, and identify $U \subset M_\real := M \otimes_\inte \real \cong \real^n$ as an open subset containing the origin $0$ and write $N_\real := N \otimes_\inte \real$. We also trivialize $\Lambda_{B_0}|_{U} \cong \underline{M}$ and $\Lambda_{B_0}^\vee|_{U} \cong \underline{N}$ and identify $ \check{X}_0(U) = \check{p}^{-1}(U) \cong U \times(N_\real/N)$. Since $TU \cong \Lambda_{B_0} \otimes_\inte \real$, a local section $m \in \underline{M}(U)$ naturally corresponds to an affine integral vector field over $U$, which will be denoted by $m$ as well. The exact sequence in Definition \ref{def:sheaf_of_affine_function} splits and we will call $m$'s or the associated $\bmc^{m}$'s the {\em Fourier modes}.
\end{notation}

\begin{definition}
We consider the sheaf $\mathcal{O}^{\text{aff}} \otimes_\inte \Lambda_{B_0}^\vee$ over $B_0$ and define a Lie bracket $[\cdot,\cdot]$ on it by restriction of the usual Lie bracket on $\check{p}_*\mathcal{O}(T^{1,0}\check{X}_0)$.
\end{definition}

Notice that the Lie bracket on $\mathcal{O}^{\text{aff}} \otimes_\inte \Lambda_{B_0}^\vee$ is well defined because in a small enough affine coordinate chart, we have the following formula from \cite{gross2010tropical}
\begin{equation}\label{vertex_lie_algebra}
\left[ \bmc^m \otimes \check{\partial}_n , \bmc^{m'} \otimes \check{\partial}_{n'} \right]= \bmc^{m+m'} \check{\partial}_{( m',n ) n' - ( m ,n' ) n},%\footnote{Here $(\cdot,\cdot)$ stands for the natural pairing between $\Lambda_{B_0}$ and $\Lambda_{B_0}^\vee$, as explained in Notation \ref{lattice_pairing} below.}
\end{equation}
which shows that $\mathcal{O}^{\text{aff}} \otimes_\inte \Lambda_{B_0}^\vee$ is closed under the Lie bracket on $\check{p}_*\mathcal{O}(T^{1,0}\check{X}_0)$.

\begin{notation}\label{lattice_pairing}
The pairing $(m, n)$ in \eqref{vertex_lie_algebra} is the natural pairing between $m \in \Lambda_{B_0}(U)$ and $n \in \Lambda_{B_0}^\vee(U)$. Given a local section $m \in \Lambda_{B_0}(U)$, we let $m^\perp \subset \Lambda_{B_0}^\vee(U)$ be the sub-lattice perpendicular to $m$ with respect to $(\cdot,\cdot)$.
\end{notation}

\begin{definition}\label{def:fourier_transform}
On a contractible open subset $U \subset B_0 \cong \check{B}_0$, the {\em Fourier transform}
$$\mathcal{F}: \mathbf{G}^*(U):=\bigoplus_{m \in \Lambda_{B_0}(U)} \Omega^*(U) \cdot \bmc^{m} \otimes_\inte N \hookrightarrow KS_{\check{X}_0}(U)$$
is defined by sending $\alpha \in \Omega^*(U)$ to $\big( \check{p}^{*}(\alpha)\big)^{0,1} $ (where $\big( \cdot \big)^{0,1}$ denotes the $(0,1)$-part of the $1$-form) and $n \in N$ to $\check{\partial}_n$.
$\mathcal{F}$ is injective and hence induces a dgLa structure on $\mathbf{G}^*(U)$ from that on $KS_{\check{X}_0}(U)$.\footnote{Direct computation shows that we have the formula $\pdb(\sum_m \bmc^m \alpha_m^n \check{\partial}_n) = \sum_m \bmc^m (d\alpha) \check{\partial}_n$.} We also let $\mathbf{G}^*_N(U) := \mathbf{G}^*(U) \otimes R/\mathbf{m}^{N+1}$ and $\widehat{\mathbf{G}}^*(U) := \varprojlim_N \mathbf{G}^*_N(U)$.
\end{definition}

\begin{remark}
For more details on how Fourier (or SYZ) transforms can be applied to understand (semi-flat) SYZ mirror symmetry, we refer the readers to Fukaya's original paper \cite{fukaya05} and a recent survey article \cite{ma_survey_cma} by the third named author.
%For detailed description on relating this to a fiberwise Fourier series of the torus bundle $\check{p} : \check{X}_0 \rightarrow B_0$, readers may see Fukaya's original paper \cite{fukaya05} and a recent survey article \cite{ma_survey_cma} by the third named author.
\end{remark} 
\section{Scattering diagrams}\label{recallscattering}

In the section, we review the notion of scattering diagrams introduced in \cite{kontsevich-soibelman04, gross2011real}. We will adopt the setting and notations from \cite{gross2010tropical} with slight modifications to fit into our context.

\subsection{The sheaf of tropical vertex groups}
We start with the same set of data $(B_0, g, \beta)$ as in Section \ref{semiflat_kahler}, and use $x = (x_1,\dots,x_n)$ as local affine coordinates on $B_0$ and $\check{x} = (x^1,\dots,x^n)$ as local affine coordinates on $\check{B}_0$ as before. Given the formal power series ring $R = \comp[[t]]$ and its maximal ideal $\mathbf{m}$, we consider the sheaf of Lie algebras $\mathfrak{g} := \left(\mathcal{O}^{\text{aff}} \otimes_\inte \Lambda_{B_0}^\vee\right) \widehat{\otimes}_\comp R$ over $B_0$.

\begin{definition}\label{trop_lie_algebra}
The subsheaf $\mathfrak{h} \hookrightarrow \mathfrak{g}$ of Lie algebras is defined as the image of the embedding $\left(\bigoplus_{m \in \Lambda_{B_0}(U)} \comp \cdot \bmc^m \otimes_\inte (m^\perp)\right) \widehat{\otimes}_\comp R \rightarrow \mathfrak{g}(U)$ over each affine coordinate chart $U \subset B_0$.\footnote{It is a subsheaf of Lie subalgebras of $\mathfrak{g}$ as can be seen from the formula \eqref{vertex_lie_algebra}.} The {\em sheaf of tropical vertex groups} over $B_0$ is defined as the sheaf of exponential groups $\exp(\mathfrak{h}\otimes_R\mathbf{m})$ which act as automorphisms on $\mathfrak{h}$ and $\mathfrak{g}$.
\end{definition}

%--------------------------------------------------------------------------------------------------------
\subsection{Kontsevich-Soibelman's wall crossing formula}\label{2dscattering}
This formulation of the wall crossing formula originated from \cite{kontsevich-soibelman04} but we will mostly follow \cite{gross2010tropical} as we want to work on $B_0$ instead of $\check{B}_0$. From now on, we will work locally in a contractible coordinate chart $U \subset B_0$. We use the same notations as in Section \ref{dgladeformation}.

\begin{definition}
Given $m \in M \setminus\{0\}$ and $n \in m^\perp$, we let $\mathfrak{h}_{m,n} := (\comp[\bmc^m]\cdot \bmc^m)\check{\partial}_n \widehat{\otimes}_\comp \mathbf{m}  \hookrightarrow \mathfrak{g}$ whose general elements are of the form
$\sum_{j,k \geq 1} a_{jk} \bmc^{km}\check{\partial}_{n} t^{j}$, where $a_{jk} \neq 0$ for only finitely many $k$'s for each fixed $j$.
This defines an abelian Lie subalgebra of $\mathfrak{g}$ by the formula \eqref{vertex_lie_algebra}.
\end{definition}

\begin{definition}\label{wall}
A {\em wall} $\mathbf{w}$ in $U$ is a triple $(m, P, \Theta)$, where
\begin{itemize}
\item
$m \in M \setminus \{0\}$ parallel to $P$,
\item
$P$ is a connected oriented codimension one convex tropical polyhedral subset of $U$ (by a convex tropical polyhedral subset we mean a convex subset which is locally defined by affine linear equations and inequalities defined over $\mathbb{Q}$),
\item
$\Theta \in \exp(\mathfrak{h}_{m,n})|_P$ is a germ of sections near $P$, where
$n \in \Lambda_{B_0}^\vee(U) \cong N$ is the unique primitive element satisfying $n \in (TP)^\perp$ and $(\nu_P, n) < 0$, and $\nu_P \in TU \cong U \times M_{\real}$ here is a vector normal to $P$ such that the orientation of $TP \oplus \real \cdot \nu_P$ agrees with that of $U$.
\end{itemize}
\end{definition}

\begin{definition}\label{scattering_diagram_def}
A {\em scattering diagram} $\mathscr{D}$ is a set of walls $\left\{ ( m_\alpha, P_\alpha, \Theta_\alpha) \right\}_{\alpha }$ such that there are only finitely many $\alpha$'s with $\Theta_\alpha \neq id $ $(\text{mod $\mathbf{m}^N$})$ for every $N \in \inte_{>0}$. We define the {\em support} of $\mathscr{D}$ to be $
\text{supp}(\mathscr{D}) := \bigcup_{\mathbf{w} \in \mathscr{D}} P_{\mathbf{w}}$, and the {\em singular set} of $\mathscr{D}$ to be $
\text{Sing}(\mathscr{D}) := \bigcup_{\mathbf{w} \in \mathscr{D}} \partial P_{\mathbf{w}} \cup \bigcup_{\mathbf{w}_1\pitchfork \mathbf{w}_2} P_{\mathbf{w}_1} \cap P_{\mathbf{w}_2}$,
where $\mathbf{w}_1 \pitchfork \mathbf{w}_2$ means transversally intersecting walls.\footnote{There is a natural (possibly up to further subdivisions) polyhedral decomposition of $\text{Sing}(\mathscr{D})$ whose codimension $2$ cells are called {\em joints} in the Gross-Siebert program \cite{gross2011real}.}
\end{definition}

%------------------------------------------------------------------------------------------------------
\subsubsection{Path ordered products}\label{analytic_continuation}
%Following \cite{gross2010tropical}, we define path ordered products.

An embedded path $\gamma: [0,1] \rightarrow B_0 \setminus \text{Sing}(\mathscr{D})$
is said to be {\em intersecting $\mathscr{D}$ generically}
if $\gamma(0), \gamma(1) \notin \text{supp}(\mathscr{D})$, $\text{Im}(\gamma) \cap \text{Sing}(\mathscr{D}) = \emptyset$ and it intersects all the walls in $\mathscr{D}$ transversally.
Given such an embedded path $\gamma$, we define the {\em path ordered product} along $\gamma$ as an element of the form
$\Theta_{\gamma, \mathscr{D}} = \prod^{\gamma}_{\mathbf{w} \in \mathscr{D}} \Theta_{\mathbf{w}} \in \exp(\mathfrak{h}\otimes_R \mathbf{m})_{\gamma(1)}$ in the stalk of $\exp(\mathfrak{h}\otimes_R\mathbf{m})$ at $\gamma(1)$, following \cite{gross2010tropical}.
More precisely, for each $k \in \inte_{>0}$, we define $\Theta_{\gamma, \mathscr{D}}^k \in \exp(\mathfrak{h}\otimes_R (\mathbf{m}/\mathbf{m}^{k+1}))_{\gamma(1)}$ and let $
\Theta_{\gamma, \mathscr{D}} := \lim_{k \rightarrow +\infty} \Theta^k_{\gamma, \mathscr{D}}$, where $\Theta^k_{\gamma, \mathscr{D}}$ is defined as follows.

Given $k$, there is a finite subset $\mathscr{D}^k \subset \mathscr{D}$ consisting of walls $\mathbf{w}$ with $\Theta_{\mathbf{w}} \neq \text{Id}$ $(\text{mod $\mathbf{m}^{k+1}$})$ from Definition \ref{scattering_diagram_def}. We then have a sequence of real numbers $0=t_0 < t_1< t_2<\cdots<t_s<t_{s+1} = 1$ such that $\left\{ \gamma(t_1), \ldots, \gamma(t_s) \right\} = \gamma \cap \text{supp}(\mathscr{D}^k)$. For each $1 \leq i \leq s$, there are walls $\mathbf{w}_{i,1}, \ldots, \mathbf{w}_{i,l_i}$ in $\mathscr{D}^k$ such that $\gamma(t_i) \in P_{i,j} := \text{supp}(\mathbf{w}_{i,j})$ for all $j = 1, \dots, l_i$. Since $\gamma$ does not hit $\text{Sing}(\mathscr{D})$, we have $\text{codim}(\text{supp}(\mathbf{w}_{i,j_1} )\cap \text{supp}(\mathbf{w}_{i,j_2} )) = 1$ for any $j_1, j_2$, i.e. the walls $\mathbf{w}_{i,1}, \ldots, \mathbf{w}_{i,l_i}$ are overlapping with each other and contained in a common tropical hyperplane. Then we have an element
$\Theta_{\gamma(t_i)} := \prod_{j=1}^k \Theta^{\sigma_j}_{\mathbf{w}_{i,j}}$,
where $\sigma_j =1$ if orientation of $P_{i,j} \oplus \real \cdot \gamma'(t_i)$ agree with that of $B_0$ and $\sigma_j = -1$ otherwise. (Note that this element is well defined without prescribing the order of the product since the elements $\Theta_{\mathbf{w}_{i,j}}$'s are commuting with each other.)
We treat $\Theta_{\gamma(t_i)}$ as an element in $\exp(\mathfrak{h}\otimes_R\mathbf{m})_{\gamma(1)}$ by parallel transport and take the ordered product along the path $\gamma$ as $\Theta^{k}_{\gamma, \mathscr{D}} := \Theta_{\gamma(t_s)}\cdots \Theta_{\gamma(t_i)} \cdots \Theta_{\gamma(t_1)}$.

\begin{definition}\label{consistent_def}
A scattering diagram $\mathscr{D}$ is said to be {\em consistent} if we have $\Theta_{\gamma,\mathscr{D}} = \text{Id}$, for any embedded loop $\gamma$ intersecting $\mathscr{D}$ generically. Two scattering diagrams $\mathscr{D}$ and $\tilde{\mathscr{D}}$ are said to be {\em equivalent} if $\Theta_{\gamma,\mathscr{D}} = \Theta_{\gamma,\tilde{\mathscr{D}}}$ for any embedded path $\gamma$ intersecting both $\mathscr{D}$ and $\tilde{\mathscr{D}}$ generically.
\end{definition}

\begin{remark}
Given a scattering diagram $\mathscr{D}$, there is a unique representative $\mathscr{D}_{min}$ from its equivalence class which is {\em minimal}. First, we may remove those walls with trivial automorphisms $\Theta$ as they do not contribute to the path ordered product. Second, if two walls $\mathbf{w}_1$ and $\mathbf{w}_2$ share the same $P$ and $m$, we can simply take the multiplication $\Theta = \Theta_1 \circ \Theta_2$ and define a single wall $\mathbf{w}$. After doing so, we obtain a minimal scattering diagram equivalent to $\mathscr{D}$.
\end{remark}

\subsubsection{The wall crossing formula}\label{sec:wall_crossing_formula}

Next, we consider the case where $\mathscr{D}$ is a scattering diagram consisting of only two walls $\mathbf{w}_1 = (m_1, P_1, \Theta_1)$ and $\mathbf{w}_2 = (m_2, P_2, \Theta_2)$ where the supports $P_i$'s are tropical hyperplanes of the form $P_i = Q - \real \cdot m_i$ intersecting transversally in a codimension two tropical subspace $Q := P_1 \cap P_2 \subset U$.
%If $P = Q - \real_{\geq 0 } \cdot m$ is a half-hyperplane, we call $Init(\mathbf{w}) := Q$ the {\em initial subset} of $\mathbf{w}$.
In this case, we have the following theorem due to Kontsevich-Soibelman \cite{kontsevich-soibelman04}:%; for our purposes, we state it in the form as in \cite{gross2010tropical}.
\begin{theorem}[Kontsevich and Soibelman \cite{kontsevich-soibelman04}]\label{KSscatteringtheorem}
Given a scattering diagram $\mathscr{D}$ consisting of two walls $\mathbf{w}_1 = (m_1, P_1, \Theta_1)$ and $\mathbf{w}_2 = (m_2, P_2, \Theta_2)$ supported on tropical hyperplanes $P_1, P_2$ intersecting transversally in a codimension two tropical subspace $Q := P_1 \cap P_2$, there exists a unique minimal consistent scattering diagram $\mathcal{S}(\mathscr{D}) \supset \mathscr{D}_{min}$, obtained by adding walls to $\mathscr{D}_{min}$ supported on tropical half-hyperplanes of the form $Q - \real_{\geq 0 } \cdot (a_1 m_1 + a_2 m_2)$ for $a = (a_1, a_2) \in \left(\inte_{>0}^2\right)_{\text{prim}}$.
\end{theorem}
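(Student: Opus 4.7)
The plan is to prove this by induction on the order $N$, working modulo $\mathbf{m}^{N+1}$, and adding walls one primitive direction at a time to kill the obstruction to consistency. We start by observing that, since the two initial walls already determine their images in $\mathcal{S}(\mathscr{D})$, the only freedom in adding further walls lies in prescribing wall-crossing factors $\Theta_a \in \exp(\mathfrak{h}_{a_1m_1+a_2m_2,n_a})$ on each candidate half-hyperplane $P_a = Q - \real_{\geq 0}\cdot(a_1m_1+a_2m_2)$ for $a \in (\inte_{>0}^2)_{\text{prim}}$, where $n_a$ is the corresponding primitive normal. Choose once and for all a small loop $\gamma$ around $Q$ in a 2-plane transverse to $Q$ that crosses $P_1$, $P_2$, and all the $P_a$'s generically and exactly once. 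Consistency then reduces to the single identity $\Theta_{\gamma,\mathcal{S}(\mathscr{D})} = \text{Id}$ in $\exp(\mathfrak{h}\widehat{\otimes}_R\mathbf{m})$.

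For the base case, at order $1$ in $\mathbf{m}$, the path-ordered product $\Theta_{\gamma,\mathscr{D}_{min}}$ reduces to the commutator of $\text{Log}(\Theta_1)$ and $\text{Log}(\Theta_2)$ modulo $\mathbf{m}^2$, which by \eqref{vertex_lie_algebra} is a sum of elements lying in the subspaces $\mathfrak{h}_{a_1m_1+a_2m_2,n_a}$ with $a_1+a_2=2$. Since the $M$-grading of $\mathfrak{h}$ is preserved by the Lie bracket and each graded piece $\mathfrak{h}_{m,n}$ is abelian, the obstruction in each graded piece must be killed by the single new wall $\mathbf{w}_a$ supported on $P_a$, which uniquely determines $\Theta_a \pmod{\mathbf{m}^2}$ (the sign/inverse is dictated by the orientation convention in the definition of path ordered product). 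For the inductive step, assume we have uniquely determined the wall-crossing factors $\{\Theta_a \pmod{\mathbf{m}^{N+1}}\}_{a}$ so that $\Theta_{\gamma,\mathcal{S}(\mathscr{D})} \equiv \text{Id} \pmod{\mathbf{m}^{N+1}}$. Write the order-$(N+1)$ discrepancy as $\Xi \in (\mathfrak{h}\widehat{\otimes}_R\mathbf{m}^{N+1})/(\mathfrak{h}\widehat{\otimes}_R\mathbf{m}^{N+2})$. Using the $M$-grading, decompose $\Xi = \sum_{a} \Xi_a$ where each $\Xi_a \in \mathfrak{h}_{a_1m_1+a_2m_2, n_a}$. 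For each primitive $a$, adjust $\Theta_a$ by multiplying on the appropriate side by $\exp(-\Xi_a)$ (with sign determined by the orientation); because the pieces commute within a fixed order and $P_a$ appears only once on the loop $\gamma$, this modification changes $\Theta_{\gamma,\mathcal{S}(\mathscr{D})}$ by exactly $-\Xi$ at order $N+1$ while leaving earlier orders untouched.

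For uniqueness of the minimal diagram, observe that any consistent extension of $\mathscr{D}_{min}$ must have its additional walls supported in $Q$'s tangent cone, and after passing to the minimal representative, each primitive ray $P_a$ carries at most one wall with factor in $\exp(\mathfrak{h}_{a_1m_1+a_2m_2,n_a})$. The $M$-graded decomposition argument above shows that the correction needed at each order is uniquely determined by the obstruction in each graded piece, so $\Theta_a$ is uniquely determined modulo $\mathbf{m}^{N+1}$ at each step; taking inverse limit yields a unique $\Theta_a \in \exp(\mathfrak{h}_{a_1m_1+a_2m_2,n_a}\widehat{\otimes}_R\mathbf{m})$. The countable support of the resulting diagram is admissible since, for each fixed $N$, only finitely many $a \in (\inte_{>0}^2)_{\text{prim}}$ can contribute nontrivially modulo $\mathbf{m}^{N+1}$: indeed, an element of $\mathfrak{h}_{a_1m_1+a_2m_2,n_a}$ arising from iterated brackets of $\text{Log}(\Theta_1), \text{Log}(\Theta_2)$ of total bracket-length $\leq N$ forces $a_1+a_2 \leq N$.

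The main obstacle is the bookkeeping in the inductive step: one must verify that the $M$-graded pieces of the path-ordered product around $\gamma$ really are independent, so that adjusting one $\Theta_a$ does not disturb the cancellations already achieved for other $a'$. This is where the abelian-ness of each $\mathfrak{h}_{m,n}$ (and the fact that $\gamma$ crosses each $P_a$ exactly once) is essential, since it allows us to linearize the exponential group elements at a given order and treat the obstruction as an element of the graded Lie algebra $\mathfrak{h}$ itself. Once this independence is established, the inductive argument goes through cleanly and both existence and uniqueness follow in tandem.
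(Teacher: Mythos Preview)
The paper does not give its own proof of this theorem: it is stated as a result of Kontsevich--Soibelman \cite{kontsevich-soibelman04} and cited as background in Section~\ref{2dscattering}. So there is no ``paper's proof'' to compare against directly. Your argument is the standard order-by-order inductive construction (as in \cite{kontsevich-soibelman04} and the exposition in \cite{gross2010tropical}): compute the obstruction $\Xi$ to consistency modulo $\mathbf{m}^{N+2}$, decompose it by the $M$-grading on $\mathfrak{h}$, and cancel each graded piece by adjusting the unique wall $\Theta_a$ on the corresponding half-hyperplane. This is correct in outline, and the key mechanism you identify---that a correction of order $N{+}1$ is central modulo $\mathbf{m}^{N+2}$, so inserting it at the single crossing of $P_a$ shifts $\Theta_{\gamma}$ linearly---is exactly right.

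Two small points of imprecision are worth tightening. First, your base case is slightly off: modulo $\mathbf{m}^2$ the commutator $[\mathrm{Log}(\Theta_1),\mathrm{Log}(\Theta_2)]$ already lies in $\mathbf{m}^2$, so the initial diagram is automatically consistent at order~$1$ and no new walls are needed there; the first nontrivial corrections appear at order~$2$. Second, you should say explicitly why every graded piece $\Xi_a$ lands in a direction $a_1m_1+a_2m_2$ with $a_1,a_2>0$ (rather than, say, $a_1\le 0$): this is because the inputs $\mathrm{Log}(\Theta_i)$ involve only positive multiples of $m_i$, and the bracket formula \eqref{vertex_lie_algebra} is additive in $m$. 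Without this, one has not justified that half-hyperplanes of the stated form suffice.

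It is worth noting that the paper's own Theorems~\ref{theorem1} and~\ref{theorem2} furnish an \emph{alternative} proof of the existence half of this statement, by constructing a Maurer-Cartan solution $\Phi$ whose associated diagram $\mathscr{D}(\Phi)$ is consistent; uniqueness, however, is still taken from the Kontsevich--Soibelman argument you have sketched.
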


\begin{remark}
Interesting relations between these wall crossing factors and relative Gromov-Witten invariants of weighted projective planes were established in \cite{gross2010tropical}. In general it is expected that these wall crossing factors encode counts of holomorphic disks on the mirror A-side, which was conjectured by Fukaya in \cite[Section 3]{fukaya05} to be closely related to Witten's Morse theory.
\end{remark} 
\section{Single wall diagrams as deformations}\label{onewall}

As before, we will work with a contractible open coordinate chart $U \subset B_0$. In this section, we consider a scattering diagram with only one wall $\mathbf{w} = (m, P, \Theta)$,
where $P$ is a connected oriented tropical hyperplane in $U$. Recall that we can write
\begin{equation}\label{wallcrossingfactor}
\text{Log}(\Theta) = \sum_{j, k \geq 1} a_{jk} \bmc^{km}\check{\partial}_{n} t^{j},
\end{equation}
where $a_{jk} \neq 0$ for only finitely many $k$'s for each fixed $j$.

The hyperplane $P$ divides the base $U$ into two half spaces $H_+$ and $H_-$ according to the orientation of $P$,
meaning that $\nu_P$ should be pointing into $H_+$ where $\nu_P \in TU$ is the normal to $P$ we choose so that the orientation of $TP \oplus \real \cdot \nu_P$ agrees with that of $U$.
We consider a step-function-like section $\psi \in \Omega^{0,0}(\check{X}_0(U) \setminus \check{p}^{-1}(P), T^{1,0}\check{X}_0)[[t]]$ of the form
\begin{equation}\label{eqn:step_function_gauge}
\psi
= \left\{
\begin{array}{ll}
\text{Log}(\Theta) & \text{on $H_+$},\\
0& \text{on $H_-$}.
\end{array}\right.
\end{equation}
Our goal is to write down an ansatz $\onewall = \onewall_{\hp}$ (depending on $\hp$) solving the Maurer-Cartan equation \eqref{eqn:MC_eqn_A-side} such that
$\onewall = e^{\facs} * 0 \in \Omega^{0,1}(\check{X}_0(U), T^{1,0})$
represents a smoothing of $e^{\psi} * 0$ (which is delta-function-like and not well defined by itself),
and show that the semi-classical limit of $\facs$ is precisely $\psi$ as $\hp \rightarrow 0$.

\subsection{Ansatz corresponding to a single wall}\label{sec:ansatz_one_wall}
We are going to use the Fourier transform $\mathcal{F}: \widehat{\mathbf{G}}^*(U) \rightarrow KS_{\check{X}_0}(U)[[t]]$ defined in Definition \ref{def:fourier_transform} to obtain an element $\onewall \in \widehat{\mathbf{G}}^*(U)$, and perform all the computations on $\mathbf{G}^*_N(U)$  or $\widehat{\mathbf{G}}^*(U)$ following Fukaya's ideas \cite{fukaya05}. We will omit the Fourier transform $\mathcal{F}$ in our notations, and we will work with tropical geometry on $B_0$ instead of Witten-Morse theory on $\check{B}_0$ following Gross-Siebert's idea \cite{gross2011real}. We start by choosing some convenient affine coordinates $u_{m,i}$'s (or simply $u_i$'s, if there is no confusion) on $U$ for each Fourier mode $m$.

\begin{notation}\label{orthonormalcoordinates}
For each Fourier mode $m \in M \setminus \{0\}$, we choose affine coordinates $(u_1, \dots, u_n)$ for $U$ with the properties that $u_1$ is along $-m$. We will denote the remaining coordinates by $\uperp := (u_2, \ldots, u_n)$. We further require that the coordinates $u_i$'s for $m$ and $km$ is the same with $k\in \inte_+$ for convenience.
\end{notation}

Given a wall $\mathbf{w} = (m, P, \Theta)$ as above, we choose $u_2$ to be the coordinate normal to $P$ and pointing into $H_+$. We consider a $1$-form depending on $\hp \in \real_{>0}$ given by
\begin{equation}\label{deltadefinition}
\delta_{m} = \delta_{m,\hp} := \left( \frac{1}{\hp \pi} \right)^{\half} e^{-\frac{u_2^2}{\hp}} du_2,
\end{equation}
which has the property that $\int_{L} \delta_{m} = 1 + O(e^{-c/\hp})$ for any line $L \cong \real$ intersecting $P$ transversally. This gives a bump form which can be viewed as a smoothing of the delta 1-form over $P$, as shown in Figure \ref{fig:delta_function}.
We will sometimes write $\delta_{m} = e^{-\frac{u_2^2}{\hp}} \mu_{m}$, where $\mu_{m} := \left(\frac{1}{\hp \pi} \right)^{\half} du_2$, to avoid repeated appearances of the constant $\left(\frac{1}{\hp \pi} \right)^{\half}$.

\begin{figure}[h]
\begin{center}
\includegraphics[scale=0.35]{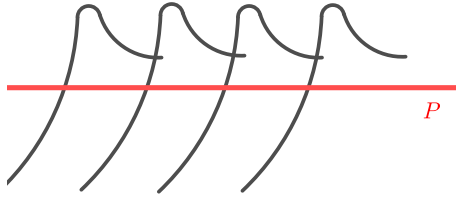}
\end{center}
\caption{$\delta_{m}$ concentrating along $P$}\label{fig:delta_function}
\end{figure}

\begin{definition}\label{ansatz}
Given a wall $\mathbf{w}= ( m ,P, \Theta)$ where $\text{Log}(\Theta)$ is as in \eqref{wallcrossingfactor}, we let
\begin{equation}
\onewall := - \delta_m \cdot \text{Log}(\Theta) = - \delta_m \sum_{j, k \geq 1} a_{jk} \bmc^{km} \check{\partial}_{n} t^{j} \in \widehat{\mathbf{G}}^1(U)
\end{equation}
be the {\em ansatz} associated to the wall $\mathbf{w}$, by viewing $\mathbf{G}^*$ as a module over $\Omega^*(U)$.
\end{definition}

\begin{remark}
Our ansatz depends on the choice of the affine coordinates $(u_1, \dots, u_n)$ because $\delta_m$ does so, but the property that it has support concentrated along the tropical hyperplane $P$ is an abstract notion which does not depend on the choice of coordinates, as we will see shortly.
\end{remark}

\begin{prop}\label{A_mcequation}
The ansatz $\onewall$ satisfies the Maurer-Cartan (MC) equation 
$\pdb \onewall + \half \left[\onewall, \onewall \right] = 0$.
\end{prop}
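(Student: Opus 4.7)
The plan is to verify the two terms of the Maurer-Cartan equation separately, and show that each vanishes for essentially structural reasons rooted in the definition of $\delta_m$ and the choice $n \in m^\perp$ in \eqref{wallcrossingfactor}.

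First I would work on $\widehat{\mathbf{G}}^*(U)$ rather than on $KS_{\check{X}_0}(U)[[t]]$ directly, using the Fourier transform $\mathcal{F}$ together with the formula for $\pdb$ recorded in the footnote of Definition \ref{def:fourier_transform}, namely $\pdb\bigl(\bmc^m \alpha \, \check{\partial}_n\bigr) = \bmc^m (d\alpha)\, \check{\partial}_n$. Applied to the ansatz we get
\[
\pdb \onewall \;=\; -\sum_{j,k\geq 1} a_{jk}\, \bmc^{km} (d\delta_m)\, \check{\partial}_n\, t^j.
\]
Since $\delta_m = \bigl(\tfrac{1}{\hp\pi}\bigr)^{1/2} e^{-u_2^2/\hp}\, du_2$ depends only on the single coordinate $u_2$ with which it is wedged, we have $d\delta_m = 0$ identically, so $\pdb \onewall = 0$.

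Next I would handle the bracket $[\onewall,\onewall]$. The Lie bracket on $\mathbf{G}^*(U)$, induced from that on $KS_{\check{X}_0}(U)[[t]]$, is built by wedging the $\Omega^*(U)$-parts together with the bracket in $\mathcal{O}^{\text{aff}}\otimes_\inte \Lambda_{B_0}^\vee$ described by formula \eqref{vertex_lie_algebra}. Because every summand of $\onewall$ carries the single $1$-form $\delta_m$ on $U$, every summand in $[\onewall,\onewall]$ contains a factor $\delta_m \wedge \delta_m = 0$, and the bracket vanishes. As a sanity check, the same conclusion follows directly at the Fourier-mode level: since $n$ is taken to lie in $m^\perp$ (see Definition \ref{trop_lie_algebra}), formula \eqref{vertex_lie_algebra} gives
\[
[\bmc^{km}\check{\partial}_n,\, \bmc^{k'm}\check{\partial}_n] \;=\; \bmc^{(k+k')m}\, \check{\partial}_{(k'm,n)n - (km,n)n} \;=\; 0,
\]
because $(m,n)=0$.

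Combining the two vanishings yields $\pdb \onewall + \tfrac{1}{2}[\onewall,\onewall]=0$, which is the Maurer-Cartan equation. There is no real obstacle here; the proof rests on two trivial facts (closedness of $\delta_m$ and $n\perp m$), which in turn explain why the ansatz is so carefully designed. The more substantive work, which I expect will be the core of subsequent arguments (not of this Proposition), is to relate the asymptotic behavior of the associated gauge $\varphi$ back to $\text{Log}(\Theta)$.
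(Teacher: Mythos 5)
Your overall strategy mirrors the paper's exactly: closedness of $\delta_m$ kills $\pdb\onewall$, and degeneracy of $du_2\wedge du_2$ kills $[\onewall,\onewall]$. The first half is fine. In the second half, however, your description of the bracket on $\mathbf{G}^*(U)$ as simply ``wedging the $\Omega^*(U)$-parts together with the bracket in $\mathcal{O}^{\text{aff}}\otimes_\inte\Lambda_{B_0}^\vee$'' is not accurate: the induced bracket also contains derivative terms (made explicit later in Definition~\ref{lie_bracket_decomp} as the operators $\sharp$ and $\flat$), in which one input's form coefficient is hit by $\nabla_{\partial_n}$ before being wedged. So the summands of $[\onewall,\onewall]$ are not literally multiples of $\delta_m\wedge\delta_m$; they look like $\delta_m\wedge\nabla_{\partial_n}\delta_m$. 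For the same reason, your ``sanity check'' via $[\bmc^{km}\check{\partial}_n,\bmc^{k'm}\check{\partial}_n]=0$ only disposes of the $\natural$ part of the bracket and is not an independent alternative argument.

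The conclusion does survive, because $\delta_m = e^{-u_2^2/\hp}\,\mu_m$ with $\mu_m=(\hp\pi)^{-1/2}du_2$ covariant constant, so $\nabla_{\partial_n}\delta_m$ is again a scalar multiple of $du_2$ and the wedge $\delta_m\wedge\nabla_{\partial_n}\delta_m$ still vanishes. This covariant-constancy observation is precisely what the paper's proof isolates: it factors out $\mu_m$ from both entries of the bracket and concludes from $\mu_m\wedge\mu_m=0$, which handles the $\sharp$ and $\flat$ contributions correctly. You should make that step explicit rather than asserting a $\delta_m\wedge\delta_m$ factor that, taken literally, only accounts for the $\natural$ part.
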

\begin{proof}
In fact we will show that both terms $\pdb\onewall$ and $\half [\onewall, \onewall ]$ vanish. First we have $\pdb ( \delta_{m} \bmc^{km} \check{\partial}_n) =(d\delta_{m}) \bmc^{km} \check{\partial}_n=  0$ from the fact that $d (\delta_{m}) = 0$ which is obvious from \eqref{deltadefinition}. Next we show that $\left[ \delta_{m} \bmc^{k_1m} \check{\partial}_n, \delta_{m} \bmc^{k_2m} \check{\partial}_n \right] =0$ for any $k_1, k_2$. This is simply because $\delta_{m} = e^{-\frac{u_2^2}{\hp}} \mu_{m}$ and $\mu_{m}$ is a covariant constant form (with respect to the affine connection), so we have
$
\left[ \delta_{m} \bmc^{k_1m} \check{\partial}_n, \delta_{m} \bmc^{k_2m} \check{\partial}_n \right]
= \mu_{m} \wedge \mu_{m} \left[e^{-\frac{u_2^2}{\hp}}\bmc^{k_1m} \check{\partial}_n, e^{-\frac{u_2^2}{\hp}}  \bmc^{k_2m} \check{\partial}_n \right]= 0.
$
\end{proof}

%---------------------------------------------------------------------------------------------------------
\subsection{Relation with the wall crossing factor}\label{sec:relating_wall_crossing_fac}

Since $\check{X}_0(U) \cong U \times T^n$ has no non-trivial deformations, the element $\check{\onewall}$ must be gauge equivalent to $0$. In this subsection, we will explain how the semi-classical limit of the gauge is related to the wall crossing factor $\text{Log}(\Theta)$.

\subsubsection{Solving for the gauge $\varphi$}

So we are going to solve the equation $e^{\facs} * 0 = \onewall$
for $\facs \in \widehat{\mathbf{G}}^*(U)$ with desired asymptotic behavior. Using the definition in \cite[Section 1]{manetti2005differential} for gauge action, we are indeed solving
\begin{equation}\label{gaugeequation}
- \left( \frac{e^{ad_\facs}- \text{Id}}{ad_\facs} \right) \pdb \facs = \onewall.
\end{equation}
Solutions $\facs$ to \eqref{gaugeequation} is not unique. We will make a choice by choosing a homotopy operator $\hat{H}$. Since $\mathbf{G}^*(U) = \bigoplus_m (\Omega^*(U) \bmc^m) \otimes_\inte N$ is a tensor product of $N$ with a direct sum, it suffices to define a homotopy operator $\hat{H}_m$ for each Fourier mode $m$ contracting $ \Omega^*(U)$ to its cohomology $H^*(U) \cong \comp$.

\begin{definition}\label{real2homotopy}
We fix a based point $q \in H_-$. By contractibility, we have the map $\rho_q : [0,1] \times U \rightarrow U$ satisfying $\rho_q(0,\cdot) = q$ and $\rho_q(1,\cdot) = \text{Id}$, which contracts $U$ to $\{q\}$.
%We fix a based point $q \in H_-$ and a homotopy $\rho_q : [0,1] \times U \rightarrow U$ which contracts $U$ to $\{q\}$.

This defines a homotopy operator $\hat{H}_m : \Omega^*(U) \bmc^m \rightarrow \Omega^*(U)[-1] \bmc^m$ by $\hat{H}_m (\alpha \bmc^m):= \int_{0}^1 \rho_q^*(\alpha) \bmc^m$. We also define the projection $\hat{P}_m: \Omega^*(U) \bmc^m \rightarrow H^*(U) \bmc^m$ by setting $\hat{P}_m (\alpha \bmc^m) = \alpha|_q \bmc^m$ for $\alpha \in \Omega^0(U)$ and $0$ otherwise,
and $\iota_m: H^*(U) \bmc^m  \rightarrow \Omega^*(U) \bmc^m$ by setting
$\iota_m: H^*(U) \bmc^m  \rightarrow \Omega^*(U) \bmc^m$ to be the embedding of constant functions on $U$ at degree $0$ and $0$ otherwise.

These operators can be put together to define operators on $\mathbf{G}^*(U)$, $\mathbf{G}^*_N(U)$ or $\widehat{\mathbf{G}}^*(U)$ and they are denoted by $\hat{H}$, $\hat{P}$ and $\iota$ respectively.
\end{definition}

\begin{remark}
The based point $q$ is chosen so that the semi-classical limit of the gauge $\facs_0$ (as $\hp \to 0$) behaves like a step-function across the wall $P$. There are many possible choices of $\hat{H}$, corresponding to choices of $\rho_q$ for this purpose.
In Definition \ref{MC_homotopy}, we will write down another particular choice (suitable for later purposes) in the case when the open subset $U$ is spherical (see Section \ref{fig:spherical_nbh}).
\end{remark}

In the rest of this section, we will fix $q \in H_-$ in the half space $H_-$ and impose the gauge fixing condition $\hat{P}\facs= 0$ to solve for $\facs$ satisfying \eqref{gaugeequation}; in other words, we look for a solution satisfying $\facs = \hat{H} \pdb \facs + \pdb \hat{H} \facs = \hat{H} \pdb \facs$
to solve the equation \eqref{gaugeequation} order by order; here $\hat{H}\facs = 0$ by degree reasons. This is possible because of the following lemma which we learn from \cite{manetti2005differential}.
\begin{lemma}\label{gauge_fixing_lemma}
Among all solutions of $e^\facs * 0 = \onewall$, there exists a unique one satisfying $\hat{P}\facs = 0$.
\end{lemma}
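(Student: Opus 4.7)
The plan is to convert the gauge equation \eqref{gaugeequation} into a fixed-point equation on $\widehat{\mathbf{G}}^0(U)\otimes\mathbf{m}$ via the homotopy operator $\hat{H}$, and then iterate in the $\mathbf{m}$-adic topology. First I would record that the operators in Definition \ref{real2homotopy} satisfy the standard chain-homotopy identity
$$\text{Id} = \iota\hat{P} + \pdb\hat{H} + \hat{H}\pdb,$$
inherited from the classical formula $dH + Hd = \text{Id} - \text{ev}_q$ for the contraction $\rho_q$ and transported Fourier-mode-by-Fourier-mode using $\pdb(\bmc^m\alpha\,\check{\partial}_n) = \bmc^m(d\alpha)\check{\partial}_n$. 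Since $\facs$ lives in degree $0$ and $\hat{H}$ decreases degree by one, $\hat{H}\facs = 0$ automatically; together with the gauge-fixing $\hat{P}\facs = 0$, the identity collapses to $\facs = \hat{H}\pdb\facs$. Inverting the universally invertible formal operator $\frac{e^{ad_\facs}-\text{Id}}{ad_\facs}$ (which is a power series in the nilpotent-modulo-$\mathbf{m}^{N+1}$ endomorphism $ad_\facs$, starting with $\text{Id}$) turns \eqref{gaugeequation} into $\pdb\facs = -\frac{ad_\facs}{e^{ad_\facs}-\text{Id}}\,\onewall$, so the whole problem becomes
$$\facs = -\hat{H}\Bigl(\tfrac{ad_\facs}{e^{ad_\facs}-\text{Id}}\,\onewall\Bigr).\eqno(\ast)$$

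For existence I would run Picard iteration: set $\facs^{(0)} := 0$ and
$$\facs^{(k+1)} := -\hat{H}\Bigl(\tfrac{ad_{\facs^{(k)}}}{e^{ad_{\facs^{(k)}}}-\text{Id}}\,\onewall\Bigr).$$
Since $\onewall \in \widehat{\mathbf{G}}^1(U)\otimes\mathbf{m}$, an easy induction shows $\facs^{(k+1)} - \facs^{(k)} \in \widehat{\mathbf{G}}^0(U)\otimes\mathbf{m}^{k+1}$, so the sequence converges $\mathbf{m}$-adically to a limit $\facs$ solving $(\ast)$. By construction $\hat{P}\facs = 0$ (the image of $\hat{H}$ lies in $\ker\hat{P}$, as follows from the homotopy identity applied to $\hat{H}(\,\cdot\,)$), and retracing the derivation of $(\ast)$ shows that $\facs$ solves the original gauge equation $e^{\facs}*0 = \onewall$.

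For uniqueness, suppose $\facs_1,\facs_2$ both satisfy $e^{\facs_i}*0 = \onewall$ and $\hat{P}\facs_i = 0$. Then $e^{\facs_2} = e^{\facs_1}e^{\xi}$ as formal group elements for some $\xi \in \widehat{\mathbf{G}}^0(U)\otimes\mathbf{m}$, and $e^{\xi}*0 = 0$ forces $\pdb\xi = 0$ (again because the operator $\frac{e^{ad_\xi}-\text{Id}}{ad_\xi}$ is invertible). I would then show by induction on $N$ that $\xi \equiv 0 \pmod{\mathbf{m}^{N+1}}$. If $\xi \equiv 0 \pmod{\mathbf{m}^{N}}$, then Baker–Campbell–Hausdorff gives $\facs_2 - \facs_1 \equiv \xi \pmod{\mathbf{m}^{N+1}}$, so $\hat{P}\xi \equiv 0 \pmod{\mathbf{m}^{N+1}}$. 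But a $\pdb$-closed degree-$0$ element has every Fourier coefficient $d$-closed, hence constant on the connected open set $U$, and for such constants $\hat{P}$ is the identity; therefore $\xi \equiv 0 \pmod{\mathbf{m}^{N+1}}$, completing the induction and yielding $\facs_1 = \facs_2$.

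The only potentially subtle point is making sure $(\ast)$ and its iteration really take place inside $\widehat{\mathbf{G}}^0(U)\otimes\mathbf{m}$ — in particular that $ad_\facs$ acts nilpotently modulo each $\mathbf{m}^{N+1}$ so that the power series $\frac{x}{e^x-1}$ may be evaluated on it — but this is a formal consequence of $\facs \in \mathbf{m}$ and $\onewall \in \mathbf{m}$, so I expect no real obstacle; the argument is essentially the standard Kuranishi fixed-point recipe adapted to the gauge (rather than the Maurer--Cartan) equation.
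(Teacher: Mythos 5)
Your uniqueness argument matches the paper's in substance: both identify the stabilizer of $0$ under the gauge action as $\{e^\xi : \pdb\xi = 0\}$, and then use Baker--Campbell--Hausdorff together with the fact that a $\pdb$-closed degree-$0$ element has constant Fourier coefficients (hence is fixed by $\hat P$) to force $\xi = 0$ order by order in $\mathbf{m}$. For existence, however, you take a genuinely different and more ambitious route. The paper does not construct the gauge from scratch; it invokes the fact, stated just before the lemma, that $\check{X}_0(U) \cong U \times T^n$ has no non-trivial deformations, so \emph{some} solution of $e^{\facs} * 0 = \onewall$ already exists, and the lemma's proof merely normalizes it: $e^{\facs \bullet \sigma} * 0 = \onewall$ for any $\pdb$-closed $\sigma$, and $\hat P(\facs \bullet \sigma) = 0$ is then solved order by order for $\sigma$. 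Your Picard iteration aims to produce the gauge directly, merging the lemma with the iteration scheme \eqref{vartheta_s_definition} that the paper runs afterward.

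The gap is the sentence ``retracing the derivation of $(\ast)$ shows that $\facs$ solves the original gauge equation.'' The derivation of $(\ast)$ from the gauge equation is one-directional: from $\pdb\facs = -\mu$ with $\mu := \frac{ad_\facs}{e^{ad_\facs}-\mathrm{Id}}\onewall$ you applied $\hat H$ and used $\hat P\facs = \hat H\facs = 0$. Going the other way, a fixed point of $(\ast)$ gives
\[
\pdb\facs = -\pdb\hat H\mu = -\mu + \iota\hat P\mu + \hat H\pdb\mu = -\mu + \hat H\pdb\mu,
\]
using the homotopy identity and $\hat P\mu = 0$ for degree reasons; the residue $\hat H\pdb\mu$ is not automatically zero. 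Indeed $\pdb\onewall = 0$ does \emph{not} give $\pdb\mu = 0$, since $\pdb$ does not commute with $ad_\facs$. Closing this gap requires an order-by-order argument exploiting that both $\onewall$ and $e^{\facs^s}*0$ satisfy the Maurer--Cartan equation, so their difference at the first nontrivial order is $\pdb$-closed (this is exactly the content of Remark \ref{pathindependent}, and is the gauge-theoretic analogue of Proposition \ref{pseudoMC_to_MC} for the Kuranishi scheme); alternatively one may impose side conditions such as $\hat H^2 = 0$ on the homotopy. You should supply one of these. A smaller inaccuracy: $\hat P\hat H = 0$ does not follow from the homotopy identity as you claim --- it follows from the explicit form of $\hat H$ once one chooses the contraction $\rho_q$ to fix the basepoint, i.e.\ $\rho_q(t,q) = q$ for all $t$, so that the integration path at $q$ is constant.
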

\begin{proof}
Notice that for any $\sigma = \sigma_1 + \sigma_2 + \cdots \in t \cdot \widehat{\mathbf{G}}^*(U)$ with $\pdb \sigma = 0$, we have $e^\sigma * 0 = 0$, and hence $e^{\facs \bullet \sigma} * 0 = \onewall$ is still a solution for the same equation. With $\facs \bullet \sigma$ given by the Baker-Campbell-Hausdorff formula as $\facs \bullet \sigma = \facs+\sigma + \half \{ \facs,\sigma\} + \cdots$, we can then solve the equation $\hat{P} (\facs \bullet \sigma) = 0$ order by order under the assumption that $\pdb \sigma = 0$.
\end{proof}

Under the gauge fixing condition $\hat{P}\facs = 0$, setting
\begin{equation}\label{vartheta_s_definition}
\facs_{s+1} := -\hat{H} \left( \onewall + \sum_{k\geq 0 }\frac{ad_{\facs^s}^k}{(k+1)!} \pdb\facs^s \right)_{s+1},
\end{equation}
where the subscript $s+1$ on the RHS means taking the coefficient of $t^{s+1}$ and $\facs^s := \facs_1 + \cdots \facs_s$,
defines $\facs = \facs_1 + \facs_2 +\cdots$ inductively.
\begin{remark}\label{pathindependent}
Notice that
$$
\pdb \left( \onewall + \sum_{k\geq 0 }\frac{ad_{\facs^s}^k}{(k+1)!} \pdb\facs^s \right)_{s+1} = 0,
$$
so the operator $\hat{H}$, which is defined by integration along paths, is independent of the paths chosen upon applying to these terms.
\end{remark}

\begin{remark}
We also observe that the terms $\facs_s$'s vanish on the direct summand $(\Omega^*(U) \bmc^{\hat{m}})\otimes_\inte N$ whenever $\hat{m} \neq km$. Furthermore, we can see that each $\pdb\facs_s$ (and all its derivatives) decay exponentially as $O_{s,K}(e^{-c_{s,K}/\hp})$ on any compact subset $K \subset H_-$ away from $P$.
\end{remark}

We are going to analyze the behavior of $\facs$ as $\hp\rightarrow 0$ to show that it admits an asymptotic expansion with leading order term exactly given by $\psi$ on $\check{X}_0(U)\setminus \check{p}^{-1}(P)$.

%--------------------------------------------------------------------------------------------------------------------------
\subsubsection{Asymptotic analysis for the gauge $\facs$}\label{sec:analysis_one_wall}
Observe that when we are considering a single wall $\mathbf{w} = (m, P, \Theta)$, the Maurer-Cartan solution $\onewall$ and hence the gauge $\facs$ will be non-trivial only for the summand $(\Omega^*(U) \bmc^{\hat{m}}) \otimes_\inte N$ where $\hat{m} = km$ for some $k \in \inte_{>0}$. We use the affine coordinates $u = (u_1, \dots, u_n)$ from Notations \ref{orthonormalcoordinates} for each component $U$ for all these summands.

%As we usually treat $\Omega^*(\mathcal{M}(U),T\mathcal{M})$ as module over $\Omega^*(U)$, it will also be convenient to define a new homotopy operator $\hat{H}$ acting on $\Omega^*(U)$. The operator $\hat{H}$ depends on the wall $\mathbf{w}$, or more precisely the fixed direction $km$ as we are using the coordinates $(u_1, \ldots, u_n)$ from $U_{km}$ as a coordinates on $U$. We let
%\begin{equation}
%(\hat{H} \alpha)(u_1,\uperp) = \int_{0}^1 \rhoperp^*(\alpha_0 (u^1,\cdot)) +\int_{u^0_1}^{u_1} \alpha_1(s,\uperp^0) ds
%\end{equation}
%acting on $\alpha \in \Omega^*(U)$ where $\alpha$ can be written as $\alpha = \alpha_0 + du_1 \wedge \alpha_1$.

By Remark \ref{pathindependent}, when dealing with closed 1-forms, we can replace the operator $\hat{H}_{\hat{m}}$ by the path integral over any path with the same end points. Let us consider the path $\varrho_{u}$ defined by
$$
\varrho_{u} = \varrho_{u_1,\uperp}(t)
= \left\{
\begin{array}{ll}
((1-2t)u_1^0 + 2tu_1, \uperp^0) & \text{if $t \in [0,\half]$},\\
(u_1, (2t-1)\uperp + (2-2t)\uperp^0) &  \text{if $t\in [\half,1]$},
\end{array}\right.
$$
where $u^0 = q$ (see the left picture of Figure \ref{fig:two_integrals}). From now on, we will assume that $\varrho_u$ is contained in the contractible open set $U$ by shrinking $U$ if necessary. Then we define the operator $\hat{I}$ by
\begin{equation}\label{I_integral}
\hat{I}(\alpha) = :\int_{\varrho_{u}} \alpha.
\end{equation}
By what we just said, we have $\hat{H}_{\hat{m}}  (\alpha \bmc^{\hat{m}})  = \hat{I}(\alpha) \bmc^{\hat{m}} = (\int_{\varrho_{u}} \alpha) \bmc^{\hat{m}}$ for closed $1$-forms $\alpha$.

We are going to apply $\hat{I}$, instead of $\hat{H}$, to the closed 1-form
$$
\left( \onewall + \sum_{k\geq 0 }\frac{ad_{\facs^s}^k}{(k+1)!} \pdb\facs^s \right)_{s+1}
$$
to solve for $\varphi_{s+1}$ because this could somewhat simplify the asymptotic analysis below.

%We can apply $\hat{I}$ to
%$$
%\left( \onewall + \sum_{k\geq 0 }\frac{ad_{\facs^s}^k}{(k+1)!} \pdb\facs^s \right)_{s+1}
%$$
%to solve for $\varphi_{s+1}$ since the above expression is a closed $1$-form. The reason of doing that is to allow us to use the result for the operator $I$ (whose difference with our operator $\hat{I}$ is explained in Remark \ref{difference_of_integral}), which is a path integral across the wall $P$ when applying to $1$-forms, in Lemma \ref{integral_lemma} to analyze the asymptotic behaviour of each individual term in the above expression.

First of all, the first term $\facs_1$ can be explicitly expressed as $\facs_1 = \sum_k  a_{1k} \hat{I}(\delta_m) \bmc^{km} \check{\partial}_n$,
where $\delta_m$ is the 1-form defined in \eqref{deltadefinition} and $\check{\partial}_n$ is the affine vector field defined in \eqref{holo_vector_field_2}.\footnote{Note that there is a factor $\frac{\hp}{4\pi}$ in front of the expression of $\check{\partial}_n$ in \eqref{holo_vector_field_2}, which will become important later when we count the $\hp$ orders in the asymptotic expansions.}
Since
$$
\hat{I}(\delta_m) = \int_{\varrho_{u}}\delta_m = \left(\frac{1}{\hp \pi}\right)^{\half} \int_{\varrho_{u}} e^{-\frac{u_2^2}{\hp}} du_2
= \left\{
\begin{array}{ll}
1 + O_{loc}(\hp) & \text{if $u \in H_+$},\\
O_{loc}(\hp) & \text{if $u \in H_-$},
\end{array}\right.
$$
we see that $\facs_1$ has the desired asymptotic expansion, with leading order term given by the coefficient of $t^1$ in $\psi $ given in \eqref{eqn:step_function_gauge}, where the notation $O_{loc}(\hp)$ means the following:

\begin{notation}\label{O_loc}
We say that a function $f(x,\hp)$ on an open subset $U\times \real_{>0} \subset B_0 \times \real_{>0}$ belongs to $O_{loc}(\hp^l)$ if it is bounded by $C_K \hp^l$ for some constant $C_K$ (independent of $\hp$) on every compact subset $K \subset U$.
\end{notation}

Next we consider the second term $\facs_2$. Notice that $\left[ \bmc^{k_1 m } \check{\partial}_{n}, \bmc^{k_2m} \check{\partial}_{n} \right] = 0$ for all positive $k_1,k_2$. Therefore we have
\begin{equation}\label{lemma_s_equal_2}
\left[ \facs_1 , \onewall_1 \right]  =
- \sum_{k_1,k_2} a_{1k_1} a_{1k_2} \left( \hat{I}(\delta_m) (\nabla_{\partial_n} \delta_m) \check{\partial}_{n} - \delta_m \nabla_{\partial_n}(\hat{I}(\delta_m)) \check{\partial}_{n} \right) \bmc^{(k_1+k_2)m},
\end{equation}
where $\onewall_s$ refers to the coefficient of $t^s$ in $\onewall$ and $\partial_n$ was introduced in \eqref{affine_vector_field}.

To compute the order of $\hp$ in each term in \eqref{lemma_s_equal_2}, we first have $|\hat{I}(\delta_m)| \leq 2$ from the definition of $\delta_m$ in \eqref{deltadefinition}, and using the formula $\delta_{m} = (\frac{1}{\hp \pi})^{\half}(e^{-\frac{u_2^2}{\hp}} du_2)$, we get
$$
|\hat{I} (\hat{I}(\delta_m)\nabla_{\partial_n} \delta_m)|
=  \frac{\hp}{4\pi} \big| \int_{\varrho_{u}}\hat{I}(\delta_m) \nabla_{g_{jk} n^j \dd{x_k}} (\delta_m) \big| \leq C \hp^{1/2} \big| \int_{\varrho_{u}} (\nabla_{g_{jk} n^j \dd{x_k}} e^{-\frac{u_2^2}{\hp}} du_2) \big|
 \leq C \hp^{1/2}.
$$
This follows from the fact that $\nabla (u_2)^2$ vanishes along $P$ up to first order, giving an extra $\hp^{1/2}$ upon integrating against $e^{-\frac{u_2^2}{\hp}}$. Similarly, we can show that $|\hat{I} \left( \delta_m \nabla_{\partial_n}(\hat{I}(\delta_m)) \right) |\leq C \hp^{1/2}$.
Therefore we have
$$
\facs_2
=\begin{cases}
\displaystyle \sum_{k \geq 1} a_{2k} \bmc^{km} \check{\partial}_n +\sum_{k\geq 1} O_{loc}(\hp^{1/2}) \bmc^{km} \check{\partial}_{n}  &\text{on $\check{p}^{-1}(H_+)$},\\
\displaystyle \sum_{k \geq 1} O_{loc}(\hp^{1/2}) \bmc^{km} \check{\partial}_{n}  & \text{on $\check{p}^{-1}(H_-)$},
\end{cases}
$$
where the notation $O_{loc}(\hp^{1/2})\bmc^{km}\check{\partial}_n$ means a finite sum of terms of the form $\phi\bmc^{km}\check{\partial}_n$ with $\phi \in O_{loc}(\hp^{1/2})$.

We would like to argue that the same kind of asymptotic formula holds for a general term $\facs_s$ as well. To study the order of $\hp$ in derivatives of the function $e^{-\frac{u_2^2}{\hp}}$, we need the following stationary phase approximation (see e.g. \cite{dimassi1999spectral}).

\begin{lemma}\label{stat_phase_exp}
Let $U\subset \real^n$ be an open neighborhood of $0$ with coordinates $x_1,\ldots,x_n$. Let $\varphi:U\rightarrow\real_{\geq 0}$ be a Morse function with unique minimum $\varphi(0)=0$ in $U$. Let $\tilde{x}_1,\ldots,\tilde{x}_n$ be a set of Morse coordinates near $0$ so that $\varphi(x) = \frac{1}{2}(\tilde{x}_1^2 + \cdots + \tilde{x}_n^2)$. For every compact subset $K\subset U$, there exists a constant $C=C_{K,N}$ such that for every $u \in C^{\infty}(U)$ with $\text{supp}(u) \subset K$, we have
\begin{equation}\label{stat_ineq}
 |(\int_K e^{-\varphi(x)/\he} u ) - (2\pi\hp)^{n/2}\left(\sum_{k=0}^{N-1} \frac{ \hp^{k}}{2^kk!} \tilde{\Delta}^k(\frac{u}{\Im})(0) \right)|\leq C \he^{n/2+N} \sum_{|\alpha|\leq 2N+n+1} \sup|\pd^{\alpha}u|,
\end{equation}
where $\tilde{\Delta} = \sum\ppd{\tilde{x}_j}$, $ \Im = \pm\det(\frac{d\tilde{x}}{dx})$ and $\Im(0)=(\det\Hess\varphi(0))^{1/2}$. In particular, if $u$ vanishes at $0$ up to order $L$, then we can take $N = \lceil L/2 \rceil$ and get $|\int_K e^{-\varphi(x)/\he} u | \leq C \he^{n/2+\lceil L/2 \rceil}$.
\end{lemma}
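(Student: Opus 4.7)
The plan is to reduce the integral to a standard Gaussian via the Morse lemma and then to expand the resulting amplitude in a Taylor series, controlling the tail by Gaussian moment estimates. This is the standard proof of stationary phase with a real (Laplace-type) phase, so I will mostly indicate the structure and where the error bound comes from.

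\emph{Step 1: reduction to the quadratic phase.} By the Morse lemma, the coordinate change $x \mapsto \tilde{x}$ is a smooth diffeomorphism on a neighborhood of $0$ with $\varphi = \tfrac12 |\tilde{x}|^2$. Shrinking $U$ (or, more honestly, using a cutoff $\chi$ supported in the Morse chart and treating the complementary piece separately, where $\varphi \geq c > 0$ and the integral is $O(e^{-c/\hp})$ uniformly, hence absorbed in the error), we may assume $u$ is supported in the Morse chart. Changing variables yields
\begin{equation*}
\int_K e^{-\varphi/\hp}\, u\, dx \;=\; \int_{\real^n} e^{-|\tilde{x}|^2/(2\hp)}\, v(\tilde{x})\, d\tilde{x},\qquad v(\tilde{x}) := \frac{u(x(\tilde{x}))}{\Im(x(\tilde{x}))},
\end{equation*}
where $\Im = \pm\det(d\tilde{x}/dx)$ so that $|dx/d\tilde{x}| = 1/\Im$, and at the origin $\Im(0) = (\det\mathrm{Hess}\,\varphi(0))^{1/2}$ as claimed.

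\emph{Step 2: Taylor expansion and Gaussian moments.} Write $v(\tilde{x}) = T_{2N-1}(\tilde{x}) + R_{2N}(\tilde{x})$, the Taylor polynomial of degree $2N-1$ at $0$ plus its integral remainder, which satisfies $|R_{2N}(\tilde{x})| \leq C|\tilde{x}|^{2N}\sum_{|\alpha| = 2N}\sup|\partial^\alpha v|$. Integrating $T_{2N-1}$ against the Gaussian, all odd moments vanish and the even moments give the identity
\begin{equation*}
\int_{\real^n} e^{-|\tilde{x}|^2/(2\hp)}\, p(\tilde{x})\, d\tilde{x} \;=\; (2\pi\hp)^{n/2} \sum_{k\geq 0} \frac{\hp^k}{2^k k!}\,\tilde{\Delta}^k p(0),
\end{equation*}
valid for polynomials $p$ (only finitely many terms are nonzero). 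Applied to $p = T_{2N-1}$, this reproduces the sum on the left-hand side of \eqref{stat_ineq} because the $k$-th Laplacian of a polynomial of degree $<2k$ vanishes at the origin, so $\tilde{\Delta}^k T_{2N-1}(0) = \tilde{\Delta}^k v(0)$ for $k < N$.

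\emph{Step 3: the error.} The remainder contribution is
\begin{equation*}
\Bigl|\int_{\real^n} e^{-|\tilde{x}|^2/(2\hp)} R_{2N}(\tilde{x})\, d\tilde{x}\Bigr| \;\leq\; C\,\bigl(\sup_{|\alpha|=2N}|\partial^\alpha v|\bigr) \int_{\real^n} e^{-|\tilde{x}|^2/(2\hp)} |\tilde{x}|^{2N}\, d\tilde{x} \;=\; C'\, \hp^{n/2+N}\,\sup_{|\alpha|=2N}|\partial^\alpha v|.
\end{equation*}
Converting the sup of $\partial^\alpha v$ back to sups of derivatives of $u$ through the chain rule on the compact set $K$ produces the factor $\sum_{|\alpha|\leq 2N+n+1}\sup|\partial^\alpha u|$ in \eqref{stat_ineq} (the extra $n+1$ derivatives give room to absorb the smooth Jacobian and to bound the contribution of the cutoff complement). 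This yields the main inequality.

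\emph{Step 4: the vanishing case.} If $u$ vanishes at $0$ up to order $L$, then so does $v$, so $\tilde{\Delta}^k v(0) = 0$ for all $2k \leq L-1$, i.e.\ for all $k < \lceil L/2\rceil$. Taking $N = \lceil L/2\rceil$ in \eqref{stat_ineq} kills every term of the sum on the left-hand side and leaves only the error $C\hp^{n/2+\lceil L/2\rceil}$, as required. The only delicate point is the bookkeeping of how many derivatives of $u$ are needed to bound the Taylor remainder after the Morse change of coordinates, but this is routine; no nontrivial obstacle arises.
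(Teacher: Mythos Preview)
The paper does not actually prove this lemma; it is quoted as a standard stationary phase (Laplace) estimate with a reference to Dimassi--Sj\"ostrand, and no argument is supplied. Your proposal reproduces exactly the textbook proof one finds there: Morse-lemma change of variables, Taylor expansion of the amplitude, exact Gaussian moments for the polynomial part, and a Gaussian bound on the remainder. It is correct.

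One small remark: in Step~3 you integrate $R_{2N}$ against the Gaussian over all of $\real^n$, but $R_{2N}$ is only the Taylor remainder of $v$ on the (compact) support of $v$; outside that support $v=0$ while $T_{2N-1}$ does not vanish. The clean way is to integrate $T_{2N-1}$ over all of $\real^n$ (Gaussian moments), note that the Gaussian tail outside $\operatorname{supp}(v)$ times a polynomial is $O(e^{-c/\hp})$ and hence absorbed, and bound $R_{2N}$ only on $\operatorname{supp}(v)$ where your pointwise Taylor estimate holds. You allude to this in Step~1, so this is only a matter of presentation. The specific derivative count $2N+n+1$ in the statement is the one from Dimassi--Sj\"ostrand's version and your sketch does not derive that exact number, but any fixed finite bound of the form $2N + c(n)$ suffices for all downstream uses in the paper.
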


We will keep track of the order of $\hp$ in solving the general equation \eqref{vartheta_s_definition}, and will see that the leading order contribution of $\facs_{s+1}$ simply comes from $- H (\onewall_{s+1})$. From the above calculation, we learn that for the 1-form $\delta_m$ defined in \eqref{deltadefinition}, any differentiation $\nabla_{\check{\partial}_n} (\delta_m)$ will contribute an extra vanishing of order $\hp^{1/2}$, and hence can be considered as an error term. Systematic tracking of these $\hp$ orders during the iteration \eqref{vartheta_s_definition} is necessary. So we extract such properties of $\delta_m$ which we need later in the following lemma.

Given a wall $P \subset U$, there is an affine foliation $\{P_q\}_{q \in N}$ of $U$, where each $P_q$ is a tropical hyperplane parallel to $P$ and $N$ is an affine line transversal to $P$ which parametrizes the leaves, as shown in Figure \ref{fig:foliation}. Given any point $p \in P$ and a neighborhood $V \subset U$ containing $p$, there is an induced affine foliation $\{(P_{V,q})\}_{q \in N}$ on $V$.
\begin{lemma}\label{gmnormestimate}
Using $u_2$ as a coordinate for $N$ so that $q = u_2 \in N$ (recall that specific affine coordinates $(u_1, \dots, u_n)$ in $U$ have been chosen in Notations \ref{orthonormalcoordinates}), and considering the function $g := (u_2)^2$, we have the integral estimate
$$
\int_{N} u_2^r \left(\sup_{P_{V, u_2}}| \nabla^j (e^{-\frac{g}{\hp}})| \right) du_2  \leq  C_{j,r,V} \hp^{-\frac{j-r}{2} + \half}
$$
for any $j, r\in \inte_{\geq 0}$.
\end{lemma}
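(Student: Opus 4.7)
The plan is to exploit the special structure of $g = u_2^2$, which depends only on the single normal coordinate $u_2$. Since $(u_1, \ldots, u_n)$ are affine coordinates on $U$, the connection $\nabla$ reduces to iterated partial derivatives in these coordinates, so among all components of $\nabla^j(e^{-g/\hbar})$ the only one that does not vanish identically is $\partial_{u_2}^j(e^{-u_2^2/\hbar})$. In particular, this component is independent of $u_1, u_3, \ldots, u_n$, so the supremum over the leaf $P_{V, u_2}$ reduces to a pointwise value (up to a multiplicative constant coming from the chosen norm on symmetric tensors, which can be absorbed into $C_{j,r,V}$ since $V$ is relatively compact).

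Next I would compute the derivative explicitly. A routine induction shows
\[
\partial_{u_2}^j\bigl(e^{-u_2^2/\hbar}\bigr) = \hbar^{-j/2}\, H_j\!\left( u_2/\sqrt{\hbar}\right) e^{-u_2^2/\hbar},
\]
where $H_j$ is a polynomial of degree $j$ in its argument (a rescaled Hermite polynomial). Plugging this into the integral and performing the rescaling $v = u_2/\sqrt{\hbar}$, so that $du_2 = \sqrt{\hbar}\, dv$ and $u_2^r = \hbar^{r/2} v^r$, converts the left-hand side of the desired inequality into
\[
\hbar^{-(j-r)/2 + 1/2} \int_{N'} v^r\, \bigl|H_j(v)\bigr|\, e^{-v^2}\, dv,
\]
where $N'$ is the image of $N \cap \{u_2 : P_{V,u_2} \neq \emptyset\}$ under the rescaling.

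To conclude I would bound the remaining Gaussian integral by extending $N'$ to all of $\mathbb{R}$; the integrand is non-negative and $v \mapsto v^r |H_j(v)| e^{-v^2}$ is absolutely integrable, producing a finite constant that depends only on $j$ and $r$. This yields the desired bound $C_{j,r,V}\, \hbar^{-(j-r)/2 + 1/2}$, with the $V$-dependence absorbing only the choice of tensor norm on the compact piece of $V$. There is no substantive analytic obstacle: the entire estimate is a one-dimensional Gaussian moment calculation, available precisely because the exponent $g = u_2^2$ is a function of one coordinate so that the problem collapses to a transverse direction.
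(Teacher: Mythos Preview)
Your argument is correct, but the route differs from the paper's. The paper does not compute $\partial_{u_2}^j(e^{-u_2^2/\hp})$ explicitly; instead it expands $\nabla^j(e^{-g/\hp})$ by the chain rule into terms of the form $\hp^{-M}\bigl(\prod_{i=1}^M \nabla^{s_i}g\bigr)e^{-g/\hp}$ with $\sum_i s_i = j$, counts the vanishing order of the polynomial prefactor along $P$, identifies the extremal terms, and then invokes the stationary phase estimate (Lemma~\ref{stat_phase_exp}) to read off the power of $\hp$. That argument is deliberately phrased so as to apply to a general Morse-type $g$ vanishing to second order along $P$, and it is written to motivate the weighted estimate in Definition~\ref{asypmtotic_support_def} that immediately follows.

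Your approach exploits the very concrete form $g = u_2^2$: the only surviving component of $\nabla^j$ is $\partial_{u_2}^j$, the Hermite identity gives the derivative in closed form, and a single rescaling $v = u_2/\sqrt{\hp}$ reduces everything to a finite Gaussian moment, bypassing stationary phase altogether. This is shorter and entirely elementary for the lemma as stated; the trade-off is that it does not generalise directly to the setting of Definition~\ref{asypmtotic_support_def}. One small correction: in your last step you say ``the integrand is non-negative'' before extending $N'$ to $\mathbb{R}$, but $v^r$ changes sign when $r$ is odd; simply bound by $|v|^r\,|H_j(v)|\,e^{-v^2}$ instead, which is still integrable and yields the same constant.
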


\begin{proof}
First we notice that $\nabla^j (e^{-g/\hp})$ consists of terms of the form $\hp^{-M}\left( \prod_{i=1}^M (\nabla^{s_i} g) \right) e^{-g/\hp}$, where $\sum_{i} s_i  = j$. We see that $\nabla^l \left( \prod_{i=1}^M (\nabla^{s_i} g) \right) |_{P} \equiv 0$ for $l \leq \sum_{i=1}^{M} \max(0,2-s_i) =: L$. We observe that the terms contributing to the lowest $\hp$ power are either of the form $\hp^{-\lfloor\frac{j+1}{2}\rfloor} u_2^r \prod_{i=1}^{\lfloor\frac{j+1}{2}\rfloor}( \nabla^{s_i}g) e^{-g/\hp}$ having $s_i \leq 2$, or of the form $\hp^{-j} u_2^r \prod_{i=1}^j (\nabla^{s_i} g) e^{-g/\hp}$ having $s_i = 1$. In both cases, applying the stationary phase approximation in Lemma \ref{stat_phase_exp} and counting the vanishing order along $P$, we obtain
$
\int_{N} u_2^r \left(\sup_{P_{V,q}}| \nabla^j (e^{-\frac{g}{\hp}})| \right) du_2  \leq C_{j,r,V} \hp^{1/2 + (r+j)/2 - j} = C_{j,r,V} \hp^{(r-j)/2+1/2}.
$
\end{proof}

%\begin{remark}
The reader may notice that taking the supremum $\sup_{P_{V,u_2}}$ in Lemma \ref{gmnormestimate} is redundant because $g$ is constant along the leaves of the foliation $\{(P_{V,q})\}_{q \in N}$; we write it in this way in order to match Definition \ref{asypmtotic_support_def} below.
%\end{remark}

\begin{remark}
The order $\hp^{-\frac{j-r}{2} + \half}$ which appears in Lemma \ref{gmnormestimate} is related to a similar weighted $L^2$ norm in \cite{HelSj1}.
\end{remark}

\subsubsection{Differential forms with asymptotic support}\label{sec:asy_support}

Motivated by the procedure of tracking the $\hp$ orders as in Lemma \ref{gmnormestimate}, we would like to formulate the notion of a differential $k$-form
%$\alpha \in \Omega^*_\hp(U)$ (see Notation \ref{hp_diff_forms} below)
having {\em asymptotic support on a closed codimension $k$ tropical polyhedral subset $P \subset U$}; by a tropical polyhedral subset we mean a connected locally convex subset which is locally defined by affine linear equations or inequalities over $\mathbb{Q}$, as in the codimension 1 case above (Definition \ref{wall}).
Before doing so, we first need to define the notion of a differential $k$-form having exponential decay, or more precisely, having exponential order $O(e^{-c/\hp})$; the error terms which appear in our later discussion will be of such shape:

\begin{notation}\label{hp_diff_forms}
We will use the notation $\Omega^*_\hp(B_0)$ (similarly for $\Omega^*_\hp(U)$) to stand for $\Gamma(B_0 \times \real_{>0},\bigwedge^*T^*B_0)$, where the extra $\real_{>0}$ direction is parametrized by $\hp$.
\end{notation}

\begin{definition}\label{exponential_decay}
We define $\filt^{-\infty}_k(U) \subset \Omega^k_\hp(U)$ to be those differential $k$-forms $\alpha \in \Omega^k_\hp(U)$ such that for each point $q \in U$, there exists a neighborhood $V$ of $q$ where we have
$\|\nabla^j \alpha\|_{L^\infty(V)} \leq D_{j,V} e^{-c_V/\hp}$
for some constants $c_V$ and $D_{j,V}$. The association $U \mapsto \filt^{-\infty}_k(U)$ defines a sheaf over $B_0$ which is denoted by $\filt^{-\infty}_k$.
\end{definition}

We will also consider differential forms which only blow up at polynomial orders in $\hp^{-1}$:

\begin{definition}\label{polynomial_growth}
We define $\filt^{\infty}_k(U) \subset \Omega^k_\hp(U)$ to be those differential $k$-forms $\alpha \in \Omega^k_\hp(U)$ such that for each point $q \in U$, there exists a neighborhood $V$ of $q$ where we have
$\|\nabla^j \alpha\|_{L^\infty(V)} \leq D_{j,V} \hp^{-N_{j,V}}$
for some constant $D_{j,V}$ and $N_{j,V} \in \inte_{>0}$. The association $U \mapsto \filt^{\infty}_k(U)$ defines a sheaf over $B_0$ which is denoted by $\filt^{\infty}_k$.
\end{definition}

Notice that the sheaves $\filt^{\pm \infty}_k$ in Definitions \ref{exponential_decay} and \ref{polynomial_growth} are closed under application of $\nabla_{\dd{x}}$, the deRham differential $d$ and wedge product of differential forms. We also observe the fact that $\filt^{-\infty}_k$ is a differential graded ideal of $\filt^{\infty}_k$; this will be useful later in Section \ref{sec:solve_MC_two_walls}. In particular, we can consider the sheaf of differential graded algebras $\filt^{\infty}_*/\filt^{-\infty}_*$, equipped with the deRham differential.

The following lemma will be useful in Section \ref{sec:key_lemmas} (readers may skip it until Section \ref{sec:key_lemmas}):

\begin{lemma}\label{path_independent}
\begin{enumerate}
\item
Suppose that $\alpha \in (\filt^{\infty}_k/\filt^{-\infty}_k)(U) = \filt^{\infty}_k(U)/\filt^{-\infty}_k(U)$ (note that the sheaves $\filt^{\pm \infty}_*$ are both soft sheaves) satisfies $d\alpha = 0$ in $(\filt^{\infty}_{k+1}/\filt^{-\infty}_{k+1})(U)$. Then for any compact family of smooth $(k+1)$-chain $\{\gamma_u\}_{u \in K}$ in $U$, we have
$$
|\int_{\partial \gamma_u} \hat{\alpha} | \leq D_{K,\hat{\alpha}} e^{-c_{K,\hat{\alpha}}/\hp}
$$
for any $u \in K$, where $\hat{\alpha}$ is any choice of lifting of $\alpha$ to $\filt^{\infty}_k(U)$.

\item
For any $\alpha \in (\filt^{\infty}_1/\filt^{-\infty}_1)(U)$ and a fixed based point $x^0 \in U$, the path integral $f_\alpha := \int_{x^0}^x \hat{\alpha}$, defined locally by first choosing a contractible compact subset $K\subset U$, then a family of paths $\varrho:[0,1]\times K \rightarrow U$ joining $x^0$ to $x \in K$, and also a lifting $\hat{\alpha}$ of $\alpha$ to $\filt^{\infty}_1(U)$, gives a well-defined element in $(\filt^{\infty}_0/\filt^{-\infty}_0)(U)$, meaning that for different choices of $K$, $\varrho$ and $\hat{\alpha}$, the path integrals only differ by elements in $\filt^{-\infty}_0(U)$.
\end{enumerate}
\end{lemma}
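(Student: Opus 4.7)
The plan is to deduce both statements directly from Stokes' theorem together with the definition of $\filt^{-\infty}_*$. For part (1), observe first that the de Rham differential $d$ preserves both $\filt^{\infty}_*$ and $\filt^{-\infty}_*$, hence descends to the quotient $\filt^{\infty}_*/\filt^{-\infty}_*$. The condition $d\alpha = 0$ in $(\filt^{\infty}_{k+1}/\filt^{-\infty}_{k+1})(U)$ therefore says that any lift $\hat{\alpha} \in \filt^{\infty}_k(U)$ has $d\hat{\alpha} \in \filt^{-\infty}_{k+1}(U)$. I would then apply Stokes' theorem fiberwise in $u$ to obtain
\begin{equation*}
\int_{\partial \gamma_u} \hat{\alpha} \;=\; \int_{\gamma_u} d\hat{\alpha}.
\end{equation*}
Since $\{\gamma_u\}_{u\in K}$ is a compact family, its total image $\bigcup_{u \in K} \mathrm{Image}(\gamma_u)$ is a compact subset of $U$, which I cover by finitely many of the neighborhoods $V$ appearing in Definition \ref{exponential_decay}. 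On each such $V$, $\|d\hat{\alpha}\|_{L^\infty(V)} \leq D_V e^{-c_V/\hp}$, and combining with the uniformly bounded volume of $\gamma_u$ yields a bound $D_{K,\hat{\alpha}} e^{-c_{K,\hat{\alpha}}/\hp}$ with $c_{K,\hat{\alpha}} = \min_V c_V$, as desired.

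For part (2), I would show well-definedness in two steps. First, fix a lift $\hat{\alpha}$; any two families of paths $\varrho, \varrho'$ from $x^0$ to $x \in K$ (for the same compact set $K$) can, after possibly refining $K$ to a smaller contractible compact neighborhood, be joined by a smooth homotopy $h:[0,1]^2 \times K \to U$ to produce a family of smooth $2$-chains $\gamma_x$ whose boundary is $\varrho_x - \varrho'_x$ (up to the constant endpoints). Applying part (1) to $\hat{\alpha}$ and the family $\{\gamma_x\}_{x \in K}$ gives
\begin{equation*}
\Bigl|\int_{\varrho_x}\hat{\alpha} - \int_{\varrho'_x}\hat{\alpha}\Bigr| \;=\; \Bigl|\int_{\partial \gamma_x}\hat{\alpha}\Bigr| \;\leq\; D\,e^{-c/\hp},
\end{equation*}
so the two resulting path integrals differ by an element of $\filt^{-\infty}_0(U)$. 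A parallel argument handles the derivatives $\nabla^j$ by differentiating under the integral sign in $x$ (using that the family $\varrho$ is smooth and $d\hat{\alpha} \in \filt^{-\infty}_{k+1}$), giving the full exponential decay required by Definition \ref{exponential_decay}. Second, two lifts $\hat{\alpha}$ and $\hat{\alpha}'$ differ by $\eta \in \filt^{-\infty}_1(U)$, and integrating $\eta$ over the compact family $\{\varrho_x\}_{x \in K}$ (together with its $x$-derivatives) again yields an element of $\filt^{-\infty}_0(U)$ by the same uniform-covering argument as in part (1). Combining these two independence statements shows that $f_\alpha$ is a well-defined class in $(\filt^{\infty}_0/\filt^{-\infty}_0)(U)$.

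The only mildly subtle point, which I would flag as the main technical obstacle, is ensuring uniformity of the constants $c_V, D_{j,V}$ across the compact family: this requires that the finite cover of $\bigcup_u \mathrm{Image}(\gamma_u)$ by neighborhoods $V$ from Definition \ref{exponential_decay} is chosen once and for all, and that when differentiating the path integral in the parameter $x$, the derivatives fall either on the parameterization of $\varrho_x$ (producing uniformly bounded factors on compact sets) or on $d\hat{\alpha}$ (which remains in $\filt^{-\infty}_{k+1}$). Once this bookkeeping is in place, both conclusions follow immediately from Stokes' theorem.
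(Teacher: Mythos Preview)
Your proposal is correct and follows essentially the same line as the paper's proof: both parts rest on Stokes' theorem combined with the exponential decay of $d\hat\alpha\in\filt^{-\infty}_{k+1}(U)$, and part (2) is handled by constructing a homotopy between the two path families, differentiating under the integral to control $\nabla^j$, and treating the change of lift separately. The paper carries out the Stokes computation directly in part (2) rather than quoting part (1), and it also spells out the case of two different compact sets $K_1,K_2$ by restricting to $K_1\cap K_2$; you gesture at this with ``after possibly refining $K$'', which is the same idea but could be made explicit. One small remark: both your argument and the paper's proof of part (2) use $d\hat\alpha\in\filt^{-\infty}_2(U)$, i.e.\ $d\alpha=0$ in the quotient, even though this hypothesis is not stated in the lemma---it is implicit from the intended applications.
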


\begin{proof}
For the first statement, the equation $d\alpha = 0$ in $(\filt^{\infty}_k/\filt^{-\infty}_k)(U)$ means that we have $d \hat{\alpha} = \beta$ for some $\beta \in \filt^{-\infty}_{k+1}(U)$. Stokes' Theorem then implies that
$\int_{\partial \gamma_u } \hat{\alpha} = \int_{\gamma_u} \beta = O_{K,\beta}(e^{-c_{K,\beta}/\hp})$.

For the second statement, we first fix a point $x \in U$ and a contractible compact subset $K\subset U$ such that $x \in \text{int}(K)$, and also a lifting $\hat{\alpha}$ with $d \hat{\alpha} = \beta \in \filt^{-\infty}_2(U)$. Suppose that we have two families of paths $\varrho_1, \varrho_2 : [0,1] \times K \rightarrow U$ parametrized by $K$. Using contractibility of $U$, we have a homotopy $h : [0,1]^2 \times K \rightarrow U$ between $\varrho_1$ and $\varrho_2$ satisfying $h(0,\cdot) = \varrho_0$, $h(1,\cdot) = \varrho_1$, $h(\cdot,0) = x^0$ and $h(\cdot,1) = id_K$. Therefore we have $\varrho_1 - \varrho_0 = \partial h$, and hence the difference of the two path integrals is given by
$$
%f_{1,\hat{\alpha}} - f_{2,\hat{\alpha}}=
\int_{ \varrho_{1}(\cdot,x) } \hat{\alpha} - \int_{\varrho_0(\cdot,x)} \hat{\alpha} = \int_{h(\cdot,x)} d\hat{\alpha} = \int_{h(\cdot,x)}\beta = \int_{[0,1]^2} h^*(\beta).
$$
Taking the covariant derivatives by $\nabla^j$ of this difference, we have
%To see that $f_{1,\hat{\alpha}} - f_{2,\hat{\alpha}} \in \filt^{-\infty}_*(U)$ we have to take the covariant derivatives $\nabla^j$ on it. We have
$
\nabla^j\left(\int_{ \varrho_{1}(\cdot,x) } \hat{\alpha} - \int_{\varrho_0(\cdot,x)} \hat{\alpha}\right) = \int_{[0,1]^2} \nabla^j(h^*(\beta)).
$
From the fact that $\beta \in \filt^{-\infty}_2(U)$, we have $|\nabla^j(h^*(\beta))|(s,t,u) \leq D_{j,K,h} e^{-c_{j,K,h}/\hp}$ for any point $(s,t,u) \in [0,1]^2 \times K$, and therefore $\|\nabla^j(f_{1,\hat{\alpha}} - f_{2,\hat{\alpha}})\|_{L^{\infty}(K)} \leq D_{j,K,h}e^{-c_{j,K,h}/\hp}$. Hence $f_\alpha$, as an element of $(\filt^{\infty}_0/\filt^{-\infty}_0)(U)$, is independent of the choice of the family of paths $\varrho$.

Now if $K_1, K_2$ are two contractible compact subsets with $x \in \text{int}(K_1 \cap K_2)$, we can change the family of paths parametrized by each $K_i$ to an auxiliary one parametrized by $K_1 \cap K_2$. By above, the path integral will only differ by elements in $\filt^{-\infty}_0(U)$.
Finally, for two different liftings $\hat{\alpha}_1, \hat{\alpha}_2$ of $\alpha$, we have $\hat{\alpha}_1 - \hat{\alpha}_2 \in \filt^{-\infty}_1(U)$ and so
$\int_{x_0}^x \hat{\alpha}_1 - \hat{\alpha}_2 \in \filt^{-\infty}_0(U)$. This completes the proof of the second statement.
\end{proof}

%The next Definition \ref{asypmtotic_support_def} of differential forms with asymptotic support that we are going to define are considered to be differential forms with polynomial order in $\hp$ on $P$.

\begin{notation}\label{foliation_notation}
Let $P \subset U$ be a closed codimension $k$ tropical polyhedral subset.
\begin{enumerate}
\item
There is a natural foliation $\{P_q\}_{q \in N}$ in $U$ obtained by parallel transporting the tangent space of $P$ (at some interior point in $P$) to every point in $U$ by the affine connection $\nabla$ on $B_0$.
%Even in the case when the boundary $\partial P$ of $P$ is non-empty, the leaves $P_q$ in the foliation are affine subspaces without boundary.

We let $\nu_P \in \Gamma(U,\bigwedge^k (N^*_P))$ be a top covariant constant form (i.e. $\nabla(\nu_P) = 0$) in the conormal bundle $N^*_P$ of $P$ (which is unique up to scaling by constants); we regard $\nu_P$ as a volume form on space of leaves $N$ if it admits a smooth structure. We also let $\nu_P^\vee \in \wedge^k N_P$ be a volume element dual to $\nu_P$, and choose a lifting of $\nu_P^\vee$ as an element in $\wedge^k TU$ (which will again be denoted by $\nu_P^\vee$ by abusing notations).
%for defining property $(3)$ in the following Definition \ref{asypmtotic_support_def}(We will see that property $(3)$ will not depends on choice of such lifting from property $(2)$ of Definition \ref{asypmtotic_support_def}).

\begin{figure}[h]
\centering
\includegraphics[scale=0.4]{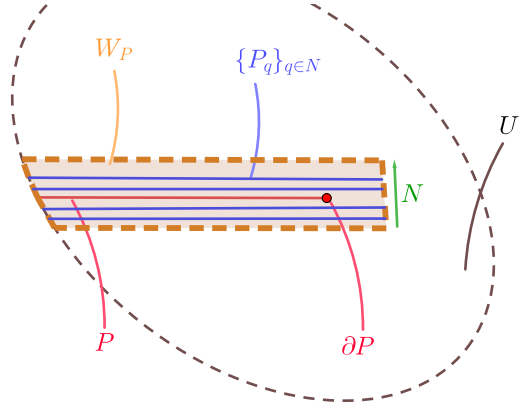}
\caption{The foliation near $P$}
\label{fig:foliation}
\end{figure}

\item
For any point $p \in P$, we choose a sufficiently small convex neighborhood $V \subset U$ containing $p$ so that there exists a slice $N_V \subset V$ transversal to the foliation $\{P_q \cap V\}$ given by intersection of $\{P_q\}_{q \in N}$ with $V$, i.e. a dimension $k$ affine subspace which is transversal to all the leaves in $\{P_q \cap V\}$; we denote this foliation on $V$ by $\{(P_{V,q})\}_{q \in N_V}$, using $N_V$ as the parameter space. See Figure \ref{fig:foliation} for an illustration.

In $V$, we take local affine coordinates $x = (x_1,\dots x_n)$ such that $x' := (x_1, \dots, x_k)$ parametrizes $N_V$ with $x' = 0$ corresponding to the unique leaf containing $P$.
Using these coordinates, we can write $\nu_P = dx_1\wedge\cdots\wedge dx_k$ and $\nu_P^\vee = \dd{x_1}\wedge\cdots \wedge\dd{x_k}$.
\end{enumerate}
\end{notation}

\begin{definition}\label{asypmtotic_support_def}
A differential $k$-form $\alpha \in \filt^{\infty}_k(U)$ is said to have {\em asymptotic support on a closed codimension $k$ tropical polyhedral subset $P \subset U$} if the following conditions are satisfied:
\begin{enumerate}
\item
For any $p \in U \setminus P$, there is a neighborhood $V \subset U \setminus P$ of $p$ such that $\alpha|_V \in \filt^{-\infty}_k(V)$ on V.

\item
There exists a neighborhood $W_P$ of $P$ in $U$ such that we can write
$$\alpha =  h(x,\hp) \nu_P + \eta,$$
where $\nu_P$ is the volume form Notations \ref{foliation_notation}(1), $h(x,\hp) \in C^\infty(W_P \times \real_{>0})$ and $\eta$ is an error term satisfying $\eta \in \filt^{-\infty}_k(W_P)$ on $W_P$ (see Figure \ref{fig:delta_function}).

\item
For any $p \in P$, there exists a sufficiently small convex neighborhood $V$ containing $p$ such that using the coordinate system chosen in Notations \ref{foliation_notation}(2) and considering the foliation $\{(P_{V, x'})\}_{x' \in N_V}$ in $V$, we have, for all $j \in \inte_{\geq 0}$ and multi-index $\beta = (\beta_1,\dots,\beta_k) \in \inte_{\geq 0}^k$, the estimate
\begin{equation}\label{estimate_order_s}
\int_{x'\in N_V}  (x')^\beta \left(\sup_{P_{V,x'}}|\nabla^j (\iota_{\nu_P^\vee} \alpha)| \right) \nu_P \leq D_{j,V,\beta} \hp^{-\frac{j+s-|\beta|-k}{2}},
\end{equation}
for some constant $D_{j,V,\beta}$ and some $s \in \inte$, where $|\beta| = \sum_l \beta_l$ is the vanishing order of the monomial $(x')^{\beta}= x_1^{\beta_1}\cdots x_k^{\beta_k}$ along $P_{x'=0}$.
\end{enumerate}
\end{definition}

\begin{remark}
Note that condition (3) in Definition \ref{asypmtotic_support_def} is independent of the choice of the convex neighborhood $V$, the transversal slice $N_V$ and the choice of the local affine coordinates $x = (x_1,\dots x_n)$ (although the constant $D_{j,V,\beta}$ may depends these choices). Therefore this condition can be checked by choosing a sufficiently nice neighborhood $V$ at every point $p \in P$.
%and further shrinking of the chart $V$. Therefore, once the condition $(3)$ is checked for a covering $\{V_{i}\}_{i\in \mathbb{I}}$ of $P$ in $U$, we know that it is true for any $p \in P$ with a small enough affine coordinates chart $V$ containing $p$.
\end{remark}

\begin{remark}
The idea of putting the weight $(x')^\beta$ and the differentiation $\nabla^j$ in condition (3) in Definition \ref{asypmtotic_support_def} comes from a similar weighted $L^2$ norm used in \cite{HelSj1}. In this paper, instead of $L^2$ norms, we use a mixture of $L^\infty$ and $L^1$ norms for the purpose of Lemma \ref{support_product}.
\end{remark}

The estimate in condition (3) of Definition \ref{asypmtotic_support_def} defines the following filtration%(we will drop the $U$ dependence whenever it is clear from the context)
\begin{equation}\label{filtration}
\filt^{-\infty}_k \cdots \subset \filt^{-s}_P\subset \cdots \filt^{-1}_P\subset \filt^0_P \subset \filt^1_P \subset \filt^2_P\subset \cdots \subset \filt^s_P \subset \cdots \subset \filt^{\infty}_k \subset \Omega^k_\hp(U),
\end{equation}
where, for any given $s \in \inte$, $\filt^s_P = \filt^s_P(U)$ denotes the set of $k$-forms $\alpha \in \filt^{\infty}_k(U)$ with asymptotic support on $P$ such that the estimate \eqref{estimate_order_s} holds with the given integer $s$. Note that the degree $k$ of the differential forms has to be equal to the codimension of $P$. Also note that the sets $\filt^{\pm \infty}_k(U)$ are independent of the choice of $P$.
This filtration keeps track of the polynomial order of $\hp$ for $k$-forms with asymptotic support on $P$, and it provides a convenient tool for us to prove and express our results in the subsequent asymptotic analysis.
In these terms, Lemma \ref{gmnormestimate} simply means $\delta_m \in \filt^1_P(U)$, where $P$ is the tropical hyperplane supporting a wall.

The filtration satisfies
$
\nabla_{\dd{x_l}} \filt^s_P(U) \subset \filt^{s + 1}_P(U)
$
for any $l = 1, \dots, n$, and
$
(x')^{\beta}\filt^s_P(U) \subset \filt^{s-|\beta|}_P(U)
$
for any affine monomial $(x')^{\beta}$ with vanishing order $|\beta|$ along $P$, so we have the nice property that
\begin{equation}\label{asy_support_basic_property}
(x')^{\beta} \nabla_{\dd{x_{l_1}}}\cdots \nabla_{\dd{x_{l_j}}} \filt^s_P(U) \subset \filt^{s+j-|\beta|}_P(U).
\end{equation}

\begin{lemma}\label{support_product}
For two closed tropical polyhedral subsets $P_1, P_2 \subset U$ of codimension $k_1, k_2$ respectively, we have $\filt^s_{P_1}(U) \wedge \filt^r_{P_2}(U) \subset \filt^{r+s}_{P}(U)$ for any codimension $k_1+k_2$ polyhedral subset $P$ containing $P_1 \cap P_2$ normal to $\nu_{P_1} \wedge \nu_{P_2}$ if they intersect transversally,\footnote{In particular we can take $P = P_1 \cap P_2$ if $\text{codim}_\real(P_1\cap P_2) = k_1+k_2$.} and $\filt^s_{P_1}(U) \wedge \filt^r_{P_2}(U) \subset \filt^{-\infty}_{k_1 + k_2}(U)$ if their intersection is not transversal.
\end{lemma}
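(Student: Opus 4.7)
I will verify membership in $\filt^{r+s}_P(U)$ (transverse case) or $\filt^{-\infty}_{k_1+k_2}(U)$ (non-transverse case) directly from Definition~\ref{asypmtotic_support_def}, starting from the structural decomposition $\alpha_i = h_i\nu_{P_i} + \eta_i$ on $W_{P_i}$ with $\eta_i\in\filt^{-\infty}_{k_i}(W_{P_i})$ given by condition (2). Expanding
\begin{equation*}
\alpha_1\wedge\alpha_2 \;=\; h_1h_2\,(\nu_{P_1}\wedge\nu_{P_2}) \;+\; h_1\nu_{P_1}\wedge\eta_2 \;+\; \eta_1\wedge h_2\nu_{P_2} \;+\; \eta_1\wedge\eta_2
\end{equation*}
on $W_{P_1}\cap W_{P_2}$, the key observation is that $h_i = \iota_{\nu_{P_i}^\vee}(\alpha_i - \eta_i)$ is polynomially bounded in $\hp^{-1}$ (since $\alpha_i\in\filt^{\infty}_{k_i}$) and is exponentially decaying away from $P_i$ within $W_{P_i}$ (by condition (1) for $\alpha_i$). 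Consequently the last three cross terms lie in $\filt^{-\infty}_{k_1+k_2}$, being products of an $\filt^\infty$ factor with an $\filt^{-\infty}$ factor, and overall polynomial growth of $\alpha_1\wedge\alpha_2$ on $U$ is automatic.

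\textbf{Non-transverse case.}
Parallel-transport invariance of the two foliations forces $N^*_{P_1}\cap N^*_{P_2}$ to contain a nonzero covector at every point of $U$, so $\nu_{P_1}\wedge\nu_{P_2}\equiv 0$. Only the three cross terms survive on $W_{P_1}\cap W_{P_2}$, and outside this set any point admits a neighborhood where at least one $\alpha_i$ already decays exponentially; a partition of unity then yields $\alpha_1\wedge\alpha_2\in\filt^{-\infty}_{k_1+k_2}(U)$.

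\textbf{Transverse case.}
Now $\nu_{P_1}\wedge\nu_{P_2}$ is a nonzero scalar multiple of $\nu_P$, so the leading piece of $\alpha_1\wedge\alpha_2$ is $c\,h_1h_2\cdot\nu_P$. Condition (1) holds at any $p\notin P\supseteq P_1\cap P_2$ since at least one $\alpha_i$ decays exponentially there. The substantive step is condition (3) at $p\in P_1\cap P_2$: pick adapted affine coordinates so that $x''=(x_1,\ldots,x_{k_1})$ parametrizes $N_{V_1}$, $x'''=(x_{k_1+1},\ldots,x_{k_1+k_2})$ parametrizes $N_{V_2}$, and $x'=(x'',x''')$ parametrizes $N_V$. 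Applying Leibniz to $\nabla^j(h_1h_2)$ and the pointwise inequalities $\sup_{P_{V,x'}}|\nabla^l h_1|\leq\sup_{P_{V_1,x''}}|\nabla^l h_1|$ and $\sup_{P_{V,x'}}|\nabla^m h_2|\leq\sup_{P_{V_2,x'''}}|\nabla^m h_2|$, the integral in condition (3) splits by Fubini:
\begin{equation*}
\int_{N_V}(x')^\beta\sup_{P_{V,x'}}|\nabla^j(h_1h_2)|\,\nu_P \;\leq\; \sum_{l+m=j}\binom{j}{l}\Bigl(\int_{N_{V_1}}(x'')^{\beta''}\sup|\nabla^l h_1|\,\nu_{P_1}\Bigr)\Bigl(\int_{N_{V_2}}(x''')^{\beta'''}\sup|\nabla^m h_2|\,\nu_{P_2}\Bigr).
\end{equation*}
Each factor is controlled by the hypothesis $\alpha_i\in\filt^{s_i}_{P_i}$ (with $s_1=s$, $s_2=r$), producing total order $\hp^{-(j+(r+s)-|\beta|-(k_1+k_2))/2}$, exactly what is required for $\filt^{r+s}_P$.

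\textbf{Main obstacle.}
The delicate point is not the Fubini--Leibniz estimate but the verification of condition (2) for $\alpha_1\wedge\alpha_2$ on a neighborhood $W_P$ of \emph{all} of $P$, because $P$ can extend strictly beyond $P_1\cap P_2$ while the decompositions of $\alpha_i$ only live on $W_{P_i}$. My plan is to fix a cutoff $\chi$ supported in a small neighborhood of $P_1\cap P_2$ inside $W_{P_1}\cap W_{P_2}$, define $h := \chi\, c\, h_1 h_2$ (extended by zero, with $c$ the scalar satisfying $\nu_{P_1}\wedge\nu_{P_2}=c\nu_P$), and set $\eta := \alpha_1\wedge\alpha_2 - h\,\nu_P$ on a neighborhood $W_P$ of $P$. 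On $\{\chi\equiv 1\}$ the error $\eta$ reduces to the three cross terms already in $\filt^{-\infty}_{k_1+k_2}$; on the complement, $\alpha_1\wedge\alpha_2$ and $h_1h_2$ are both exponentially small (the first by being outside a neighborhood of $P_1\cap P_2$, the second because each $h_i$ decays exponentially away from $P_i$), so $\eta\in\filt^{-\infty}_{k_1+k_2}(W_P)$. Once condition (2) is in place, the integral estimate of condition (3) at points of $P\setminus(P_1\cap P_2)$ is automatic from exponential smallness, and the proof is complete.
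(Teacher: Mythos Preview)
Your proof is correct and follows essentially the same approach as the paper: both use the decomposition $\alpha_i = h_i\nu_{P_i} + \eta_i$ from condition (2), observe that cross terms involving $\eta_i$ land in $\filt^{-\infty}$, treat the non-transverse case via $\nu_{P_1}\wedge\nu_{P_2}=0$, and establish condition (3) in the transverse case by the same Fubini--Leibniz splitting in adapted coordinates $(x'',x''')$ normal to $P_1$ and $P_2$ respectively. Your treatment of condition (2) for a $P$ strictly larger than $P_1\cap P_2$ (via the cutoff $\chi$) is in fact more explicit than the paper's, which simply takes $W_Q = W_{P_1}\cap W_{P_2}$ and leaves the extension to larger $P$ implicit.
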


Before giving the proof, let us clarify that, when we say two closed tropical polyhedral subsets $P_1, P_2 \subset U$ of codimension $k_1, k_2$ are {\em intersecting transversally}, we mean the affine subspaces containing $P_1, P_2$ and of codimension $k_1, k_2$ respectively are intersecting transversally; this definition also applies to the case when $\partial P_i \neq \emptyset$, as shown in Figure \ref{fig:double}.
\begin{figure}[h]
\centering
\includegraphics[scale=0.55]{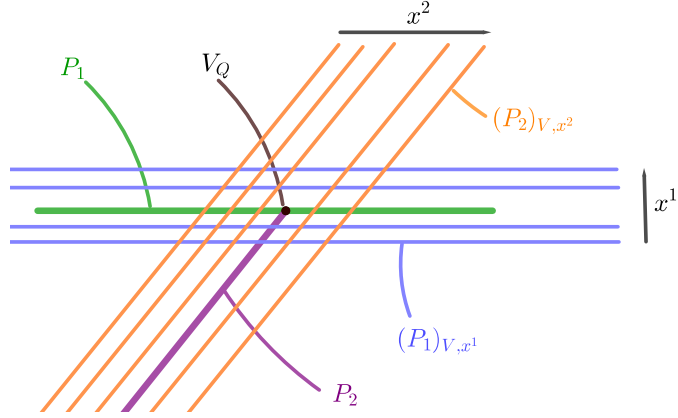}
\caption{Foliation in the neighborhood $V$}
\label{fig:double}
\end{figure}

\begin{proof}[Proof of Lemma \ref{support_product}]
We first consider the case when $P_1$ and $P_2$ are not intersecting transversally. Part $(2)$ of Definition \ref{asypmtotic_support_def} says that we have neighborhoods $W_{P_i}$ of $P_i$ such that we can write $\alpha_i = h_i \nu_{P_i} + \eta_i$ for $i = 1, 2$. Since $\nu_{P_1} \wedge \nu_{P_2} = 0$ in $W_{P_1} \cap W_{P_2}$ by the non-transversal assumption, we have $\alpha_1 \wedge \alpha_2 \in \filt^{-\infty}_k(W_{P_1} \cap W_{P_2})$ near $P_1 \cap P_2$, and hence $\alpha_1 \wedge \alpha_2 \in \filt^{-\infty}_k(U)$ by condition (1) in Definition \ref{asypmtotic_support_def} and the fact that $\filt^{-\infty}_k(U)$ is a differential ideal of $\filt^{\infty}_k(U)$.

Next we assume that $P_1 \pitchfork P_2 = Q$. Let $\alpha_1 \in \filt^{s}_{P_1}(U)$ and $\alpha_2 \in \filt^r_{P_2}(U)$. Using again the fact that $\filt^{-\infty}_k(U)$ is a differential ideal of $\filt^{\infty}_k(U)$, same reasoning as above shows that condition (1) in Definition \ref{asypmtotic_support_def} holds for $\alpha_1 \wedge \alpha_2 \in \filt^{r+s}_{Q}(U)$. Condition (2) in Definition \ref{asypmtotic_support_def} is also satisfied because in this case we have $\nu_{Q} = \nu_{P_1}\wedge \nu_{P_2}$ in $W_Q = W_{P_1}\cap W_{P_2}$. So it remains to prove condition (3) in Definition \ref{asypmtotic_support_def}.

Fixing a point $p \in Q$, we take an affine convex coordinate chart given by $V (\subset T_p U \cong M_\real) \rightarrow U$ centered at $0 \in T_p U$. Then $V_Q := V \cap T_p Q$ is a neighborhood of $0$ in $T_p Q$. We take $\mathbb{Q}$-affine bases $m_2^1, \dots, m_2^{k_2}$ of $T_p P_1 /T_p Q$ and $m_1^1, \dots, m_1^{k_1}$ of $T_p P_2 /T_p Q$ respectively, and the corresponding dual bases in $(T_p U / T_p P_2)^*$ and $(T_p U / T_p P_1)^*$. We use $x^i \cdot m_i = \sum_{j=1}^{k_i} x^i_j m_i^j$, $i = 1, 2$ to stand for the natural pairing between $x^1 = (x^1_1, \dots, x^1_{k_1}) \in (T_p U / T_p P_1)^*$ and $m_{1}^1, \dots, m_1^{k_1}$, and that between $x^2 = (x^2_1, \dots, x^2_{k_2}) \in (T_p U / T_p P_2)^*$ and $m_2^1, \dots, m_2^{k_2}$, respectively.
By shrinking $V$ if necessary, we can write it as
$V = \bigcup_{\substack{x^1 \in (-\delta,\delta)^{k_1}\\x^2 \in (-\delta,\delta)^{k_2}}} \left( x^1\cdot m_1+x^2 \cdot m_2 + V_Q \right)$
for some small $\delta > 0$, as shown in Figure \ref{fig:double}. Then we can parametrize the foliations induced by $Q$, $P_1$ and $P_2$ respectively as $Q_{V,(x^1,x^2)}  = x^1\cdot m_1+x^2 \cdot m_2 + V_Q$, $(P_1)_{V,x^1} = x^1\cdot m_1 + \bigcup_{x^2 \in (-\delta,\delta)^{k_2}}\left( x^2 \cdot m_2 + V_Q \right)$ and $(P_2)_{V,x^2}  = x^2\cdot m_2 + \bigcup_{x^1 \in (-\delta,\delta)^{k_1}}\left( x^1 \cdot m_1 + V_Q \right)$. We also extend $(x^1,x^2)$ to local affine coordinates
$(x^1_1, \dots, x^1_{k_1}, x^2_1, \dots, x^2_{k_2}, x_{k_1 + k_2 + 1}, \dots, x_n)$
on $V$.

Now for $\alpha_1 \in \filt^r_{P_1}(U)$ and $\alpha_2 \in \filt^s_{P_2}(U)$, we first observe that we can write
$
\alpha_i = h_i(x,\hp) dx^i + \eta_i = h_i(x,\hp) dx^i_1 \wedge \cdots dx^i_{k_i} + \eta_i
$
for $i = 1, 2$, and we have
$
\nabla^j (h_1  h_2) = \sum_{j_1+j_2 = j }(\nabla^{j_1} h_1)(\nabla^{j_2} h_2).
$
Also, any affine monomial $(x')^\beta$ (in the coordinates $(x^1_1, \dots, x^1_{k_1}, x^2_1, \dots, x^2_{k_2}$) with vanishing order $|\beta|$ along $Q$ can be rewritten in the form $(x^1)^{\beta_1} (x^2)^{\beta_2}$, where $(x^i)^{\beta_i}$ has vanishing order $|\beta_i|$ along $Q$.

Since the error terms $\eta_i$'s are not contributing when we count the polynomial order in $\hp^{-1}$, it remains to estimate a term of the form $(x^1)^{\beta_1} (x^2)^{\beta_2}(\nabla^{j_1} h_1)(\nabla^{j_2} h_2)$. We have
\begin{align*}\displaystyle
     & \int_{(x^1,x^2)\in (-\delta,\delta)^{k_1+k_2}} (x^1)^{\beta_1} (x^2)^{\beta_2} \sup_{Q_{V,(x^1,x^2)}} |(\nabla^{j_1} h_1) (\nabla^{j_2} h_2)| dx^1 dx^2\\
=    & \int_{x^2\in (-\delta,\delta)^{k_2}} (x^2)^{\beta_2} \left( \int_{x^1\in (-\delta,\delta)^{k_1}} (x^1)^{\beta_1}\sup_{Q_{V,(x^1,x^2)}} |(\nabla^{j_1} h_1) (\nabla^{j_2} h_2)| dx^1 \right) dx^2 \\
\leq & \int_{x^2} (x^2)^{\beta_2} \sup_{(P_2)_{V,x^2}}|(\nabla^{j_2} h_2)|  \left( \int_{x^1}  (x^1)^{\beta_1}\sup_{Q_{V,(x^1,x^2)}} |(\nabla^{j_1} h_1)| dx^1 \right) dx^2 \\
\leq & D_{j_1,V,(x^1)^{\beta_1}} \hp^{-\frac{j_1+s-|\beta_1|-k_1}{2}} \int_{x^2} (x^2)^{\beta_2} \sup_{(P_2)_{V,x^2}}|(\nabla^{j_2} h_2)| dx^2 \\
\leq & D_{j_1,V,(x^1)^{\beta_1}} D_{j_2,V,(x^2)^{\beta_2}} \hp^{-\frac{j_1+s-|\beta_1|-k_1}{2}} \hp^{-\frac{j_2+r-|\beta_2|-k_2}{2}}
\leq  D_{j,V,x^{\beta}} \hp^{-\frac{j+s+r-|\beta|-k}{2}},
\end{align*}
which gives the desired estimate in condition (3) of Definition \ref{asypmtotic_support_def}.
\end{proof}

%The next lemma is going to tell us the effect of the integral operator $\hat{I}$, defined in \eqref{I_integral}, on $\filt^s_P(U)$, when we identify $U \cong U_m$ as we did earlier in this section. More generally,
For a given closed tropical polyhedral subset $P \subset U$, we choose a reference tropical hyperplane $R \subset U$ which divides the base $U$ as
$U\setminus R = U_+ \cup U_-$
such that $P \subset U_+$, together with an affine vector field $v$ (meaning $\nabla v = 0$) not tangent to $R$ pointing into $U_+$.
We let
\begin{equation}\label{defining_IP}
I(P) := (P + \real_{\geq 0}v) \cap U
\end{equation}
be the image swept out by $P$ under the flow of $v$.

By shrinking $U$ if necessary, we can assume that for any point $p \in U$, the unique flow line of $v$ in $U$ passing through $p$ intersects $R$ uniquely at a point $x \in R$.
Then the time-$t$ flow along $v$ defines a diffeomorphism
$\tau : W \rightarrow U,\ (t, x) \mapsto \tau(t,x),$
where $W \subset \real \times R$ is the maximal domain of definition of $\tau$ (namely, for any $x \in R$, there is a maximal time interval $I_x \subset \real$ so that the flow line through $x$ has its image lying inside $U$).
For any point $x \in R$, we denote by $\tau_x(t) := \tau(t,x)$ the flow line of $v$ passing through $x$. Figure \ref{fig:integral_flow} illustrates the situation.

\begin{figure}[h]
\centering
\includegraphics[scale=0.45]{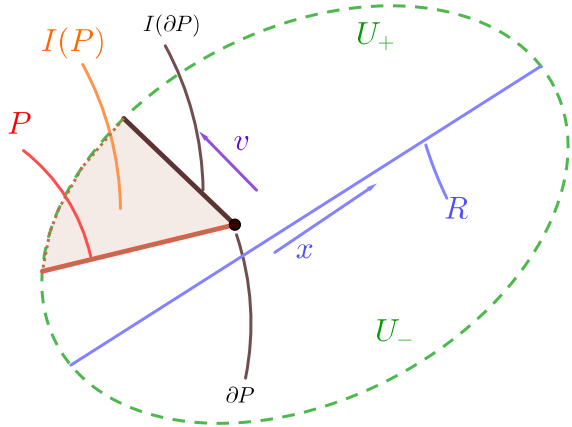}
\caption{The flow along $v$ and $I(P)$}
\label{fig:integral_flow}
\end{figure}

%From the above assumption we have $W$ contains the whole interval $(a,b) \times \{x\}$ whenever we have $(a,x),(b,x) \in W$ for any $x\in R$, and
%$\tau$ is a diffeomorphism.
We now define an integral operator $I$ as
\begin{equation}\label{general_integral_operator}
I(\alpha)(t,x) := \int_{0}^t \iota_{\dd{s}}(\tau^*(\alpha))(s,x) ds.
\end{equation}
Note that $I$ depends on the choice of the tropical hyperplane $R$.

\begin{lemma}\label{integral_lemma}
For $\alpha \in \filt^s_P(U)$, we have $I(\alpha) \in \filt^{-\infty}_{k-1}(U)$ if $v$ is tangent to $P$, and $I(\alpha) \in \filt^{s-1}_{I(P)}(U)$ if $v$ is not tangent to $P$, where $I(P)$ is defined in \eqref{defining_IP}.
\end{lemma}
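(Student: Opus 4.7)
The plan is to verify, in each of the two cases, the three conditions of Definition \ref{asypmtotic_support_def} for $I(\alpha)$ with respect to the appropriate tropical polyhedral subset: $P$ itself in Case 1 (yielding exponential decay, i.e.\ membership in $\filt^{-\infty}_{k-1}$) and $I(P)$ in Case 2. The starting observation common to both cases is that $\tau$ is the flow of $v$, so $\iota_{\partial_s}\tau^*\alpha = \tau^*(\iota_v \alpha)$, reducing everything to a contraction of $\alpha$ with the covariant constant vector $v$. I would also invoke the local decomposition $\alpha = h\,\nu_P + \eta$ with $\eta \in \filt^{-\infty}_k(W_P)$ on a neighborhood $W_P$ of $P$ provided by condition (2) of Definition \ref{asypmtotic_support_def}.

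For Case 1 ($v$ tangent to $P$), since $v$ is covariant constant and lies in $TP$ while $\nu_P \in \wedge^k N_P^*$, the contraction $\iota_v \nu_P$ vanishes identically on $U$. Hence on $W_P$ we have $\iota_v \alpha = \iota_v \eta \in \filt^{-\infty}_{k-1}(W_P)$, and on $U\setminus P$ we already have $\alpha \in \filt^{-\infty}_k$, giving $\iota_v \alpha \in \filt^{-\infty}_{k-1}$ there. Gluing via the sheaf property gives $\iota_v \alpha \in \filt^{-\infty}_{k-1}(U)$; since flow times are bounded on compact subsets and differentiation in $(t,x)$ preserves the exponential bounds of the integrand, one concludes $I(\alpha) \in \filt^{-\infty}_{k-1}(U)$.

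For Case 2 ($v$ not tangent to $P$), near any point of $I(P)$ the plan is to choose affine coordinates $(x_1,\ldots,x_n)$ such that $P = \{x_1 = \cdots = x_k = 0\}$ and $v = \partial_{x_k}$; then $\iota_v \nu_P = \pm \nu_{I(P)} = \pm\, dx_1\wedge\cdots\wedge dx_{k-1}$, and locally
\[
I(\alpha) = \pm H\, \nu_{I(P)} + (\text{term in }\filt^{-\infty}_{k-1}),
\qquad H(x) = \int_{x_k^0}^{x_k} h(x_1,\ldots,x_{k-1},s,x_{k+1},\ldots,x_n)\, ds.
\]
Condition (1) of Definition \ref{asypmtotic_support_def} for $I(\alpha)$ follows from the observation that if $p\notin I(P)$ then the flow trajectory from $R$ to $p$ does not meet $P$, so $\iota_v\alpha$ is exponentially small along it and integration preserves this decay. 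Condition (2) follows directly from the local expression above for $I(\alpha)$.

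The main task, and the expected main obstacle, is verifying condition (3) with the correct polynomial order in $\hp$. Writing $x'_I = (x_1,\ldots,x_{k-1})$ for the transversal coordinates to $I(P)$ and $\beta = (\beta',0)$ so that $|\beta| = |\beta'|$, what must be shown is
\[
\int_{x'_I}(x'_I)^{\beta'} \sup_{(x_k,x_{k+1},\ldots,x_n)} |\nabla^j H|\, dx'_I \leq D\, \hp^{-\frac{j+s-|\beta'|-k}{2}}.
\]
I would split into two subcases. When $\nabla^j$ includes at least one $\partial_{x_k}$, use $\partial_{x_k}H = h$ to reduce the derivative on $H$ by one and hit $\nabla^{j-1}h$; then apply the elementary inequality
\[
\sup_{x_k\in[a,b]}|f(x_k)| \leq \frac{1}{b-a}\int_a^b |f|\, dx_k + \int_a^b |\partial_{x_k} f|\, dx_k
\]
fiber-wise in $(x_{k+1},\ldots,x_n)$ to trade the $\sup$ in $x_k$ for $L^1$-integrals in $x_k$, and combine with the $\filt^s_P$-estimate for $\alpha$ applied with $\beta = (\beta',0)$. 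When $\nabla^j$ contains no $\partial_{x_k}$, then $\nabla^j H = \int \nabla^j h\, ds$ and
\[
\sup_{x_k,x_{k+1},\ldots,x_n}|\nabla^j H| \leq \int \sup_{x_{k+1},\ldots,x_n}|\nabla^j h|\, ds
\]
reduces the problem directly to the $\filt^s_P$-estimate for $\alpha$. In both subcases the key bookkeeping is that integrating $h$ along the $v$-direction drops the codimension by one and the filtration index also by one (from $s$ to $s-1$), matching the conclusion $I(\alpha) \in \filt^{s-1}_{I(P)}(U)$.
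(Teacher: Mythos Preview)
Your proof is correct and follows essentially the same route as the paper: verify conditions (1)--(3) of Definition \ref{asypmtotic_support_def} for $I(\alpha)$, and for condition (3) split according to whether $\nabla^j$ involves a derivative in the $v$-direction. The only minor technical difference is that in the subcase with at least one $\partial_{x_k}$, the paper rewrites $\nabla^{j-1}h(t,x)$ via the fundamental theorem of calculus as $\int_{a-\delta}^t \nabla^j h\,ds$ plus a boundary term at $s=a-\delta$ (a point away from $P$, hence in $\filt^{-\infty}$), rather than using your Sobolev-type inequality --- both devices achieve the same reduction to the $\filt^s_P$-estimate for $\alpha$.
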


\begin{proof}
In order to simplify notations in this proof, we will omit $\tau^*$ in the definition \eqref{general_integral_operator} of $I$ by treating $\tau:W \to U$ as an affine coordinate chart.

Suppose that $v$ is tangent to $P$. By condition (2) of Definition \ref{asypmtotic_support_def}, we have a neighborhood $W_P \subset U$ such that $\alpha = h \nu_P + \eta$. For each point $x \in R$, the path $\tau_x(t)$ is tangent to the foliation $\{P_q\}_{q\in N}$ in $W_P$ whenever $\tau_x(t) \in W_P$ by the tangency assumption. This means $\iota_{\dd{t}}(\nu_P) = 0$ in $\tau_x^{-1}(W_P)$ and hence we have
\begin{align*}
I(\alpha)(t,x) = \int_{[0,t]} \iota_{\dd{s}}\alpha(s,x) ds = \int_{[0,t] \cap \tau_x^{-1}(U \setminus W_P)} \iota_{\dd{s}}\alpha(s,x) ds + \int_{[0,t] \cap \tau_x^{-1}(W_P)} \iota_{\dd{s}}\eta(s,x) ds.
\end{align*}
So we have $I(\alpha) \in \filt^{-\infty}_{k-1}(U)$ by conditions (1) and (2) of Definition \ref{asypmtotic_support_def}.

Now suppose that $v$ is not tangent to $P$. Let
$I(W_P) := \bigcup_{t \geq 0 } (W_P + t \cdot v) \cap U,$
which gives an open neighborhood of $I(P)$.
Concerning condition (1) in Definition \ref{asypmtotic_support_def}, we take $\tau(t_0,x_0) \in U \setminus I(P)$, and then a neighborhood $V$ of $\tau(t_0,x_0)$ in $U \setminus I(P)$ and a neighborhood $W_P' \subset W_P$ of $P$, such that, for any point $\tau(t,x) \in V$, the flow line joining $\tau(t,x)$ to $R$ does not hit $\overline{W_P'}$. This implies that $I(\alpha)|_{V} \in \filt^{-\infty}_{k-1}(V)$ since we have $\alpha|_{U \setminus \overline{W_P'}} \in \filt^{-\infty}_k(U \setminus \overline{W_P'})$ and
$$
I(\alpha)(t,x) = \int_{0}^t \iota_{\dd{s}}\alpha(s,x) ds = \int_{[0,t] \cap \tau_x^{-1}(U \setminus \overline{W_P'})} \iota_{\dd{s}}\alpha(s,x) ds.
$$
So condition (1) in Definition \ref{asypmtotic_support_def} holds for $I(\alpha)$.

Concerning condition (2) in Definition \ref{asypmtotic_support_def}, we first note that $v = \dd{t}$ is tangent to $I(P)$, so by parallel transporting the form $\iota_{\dd{t}} \nu_P$ to the neighborhood $I(W_P)$, we obtain a volume element in the normal bundle of $I(P)$, which we denote by $\nu_{I(P)}$. For a point $q \in I(W_P)$, we take a small neighborhood $V$ near $q$, and for $\tau(t,x) \in V$, we write
\begin{align*}
I(\alpha)(t,x) = \int_{0}^t \iota_{\dd{s}}\alpha(s,x) ds
=  \int_{[0,t] \cap \tau_x^{-1}(U \setminus W_P)} \iota_{\dd{s}}\alpha(s,x) ds + \int_{[0,t] \cap \tau_x^{-1}(W_P)} \iota_{\dd{s}}(h \nu_P + \eta)(s,x)ds\\
=  \left( \int_{[0,t] \cap \tau_x^{-1}(W_P)} h(s,x) ds \right) \nu_{I(P)} + \int_{[0,t] \cap \tau_x^{-1}(W_P)} \iota_{\dd{s}}\eta(s,x)ds
    + \int_{[0,t] \cap \tau_x^{-1}(U \setminus W_P)} \iota_{\dd{s}}\alpha(s,x)ds,
\end{align*}
where the last two terms are in $\filt^{-\infty}_{k-1}(V)$, and condition (2) in Definition \ref{asypmtotic_support_def} holds for $I(\alpha)$..

Concerning condition (3) in Definition \ref{asypmtotic_support_def}, we fix a point $p = \tau(b,x) \in I(P)$ and let $p' = \tau(a,x) \in P$ be the unique point such that $p ,p'$ lie on the same flow line $\tau_x$. We take local affine coordinates $x = (x_1, \dots, x_{k-1}, x_k, \dots, x_{n-1}) \in (-\delta,\delta)^{n-1}$ of $R$ centered at $p'$ (meaning that $p' = (a,0)$) such that $x' = (x_1, \dots, x_{k-1})$ are normal to the tropical polyhedral subset $p_R(\tau^{-1}(P))\subset R$, where $p_R: W (\subset \real \times R) \rightarrow R$ is the natural projection.

By taking $\delta$ small enough, we have $\tau : (a-\delta,b+\delta) \times (-\delta,\delta)^{n-1} \rightarrow U$ mapping diffeomorphically onto its image, such that it contains the part the flow line $\tau_0|_{[a,b]}$ joining $p'$ to $p$. We can also take $V = \tau((b-\delta,b+\delta)\times (-\delta,\delta)^{n-1})$ with $\tau(b,0)=p$, and arrange that $V' = \tau((a-\delta,a+\delta)\times (-\delta,\delta)^{n-1}) \subset W_P$ with $\tau(a,0)=p'$.  Notice that there is a possibility that $p = p' \in P$ and therefore $a=b$ in the above description which means $V = V'$. Figure \ref{fig:integral_lemma_pic} illustrates the situation.

\begin{figure}[h]
\centering
\includegraphics[scale=0.5]{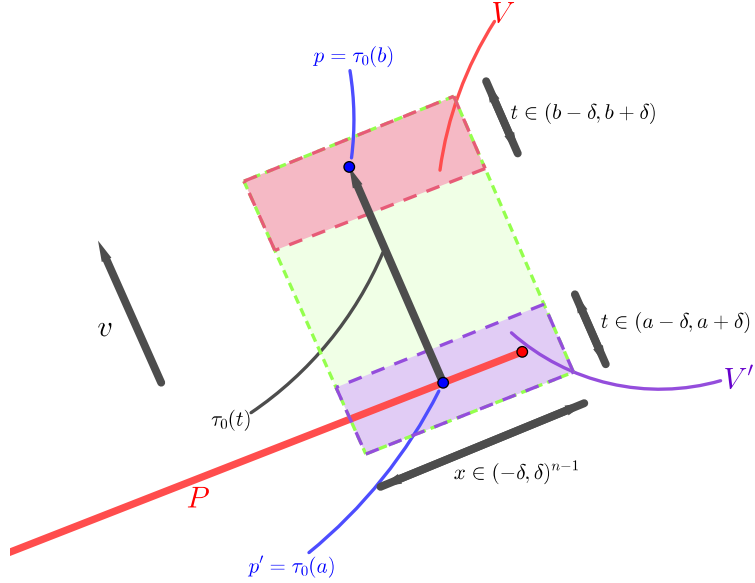}
\caption{Neighborhood along the flow line $\tau_0(t)$}
\label{fig:integral_lemma_pic}
\end{figure}

%Since $\alpha$ satisfies condition (3) in Definition \ref{asypmtotic_support_def}, we have
Recall that there is a foliation $\{P_{q}\}_{q\in N}$ codimension $k$ affine subspaces parallel to $P$. Then the induced foliation $\{P_{t,x'}\}_{(t,x')\in N_{V'}}$ of the neighborhood $V'$ can be parametrized by $N_{V'} := (a-\delta, a+\delta) \times (-\delta,\delta)^{k-1}$. Therefore the foliation of $V$ induced by $I(P)$ is parametrized as $\{I(P)_{x'}\}_{x'\in N_V}$, where
$I(P)_{x'} = \bigcup_{t \in (b-\delta,b+\delta)} (P_{0,x'} +  tv)$
and $N_V = (-\delta,\delta)^{k-1}$.

For $\alpha \in \filt^s_P$, we consider $I(\alpha) = \int_{0}^t \iota_{\dd{s}}\alpha(s,x) ds $ in the neighborhood $V$, and what we need to estimate is the term
$\int_{N_V}(x')^\beta \sup_{I(P)_{x'}}|\nabla^j \iota_{\nu_{I(P)}^\vee}I(\alpha)|\nu_{I(P)}.
$
The integral $I(\alpha)$ can be split into two parts as $\int_0^t = \int_{0}^{a-\delta} + \int_{a-\delta}^t$ and we only have to control the second part
$
I_{a-\delta}(\alpha) := \int_{a-\delta}^{t} \iota_{\dd{t}}\alpha(s,x) ds
$
because $\alpha \in \filt^s_P(U)$ satisfies condition (1) in Definition \ref{asypmtotic_support_def}, so $\int_{0}^{a-\delta} \iota_{\dd{s}}\alpha(s,x) ds$, as a function of $(t,x)$ which is constant in $t$, lies in $\filt^{-\infty}_{k-1}(U)$ (as the integral misses the support $P$ of $\alpha$). Writing $\nabla^j = \nabla_{\perp}^{j_1} \nabla_{\dd{t}}^{j_2}$, where $\nabla_{\perp} (t) = 0$, we have two cases depending on whether $j_2 = 0$ or $j_2 > 0$:

Case 1: $j_2 =0$. Then we have
$$
|\nabla_{\perp}^j \iota_{\nu_{I(P)}^\vee}(I_{a-\delta}(\alpha))| \leq \int_{a - \delta}^{a + \delta} |\nabla_{\perp}^j(\iota_{\nu_{P}^\vee}\alpha)| ds + \int_{a + \delta}^{b + \delta}|\nabla_{\perp}^j(\iota_{\nu_{P}^\vee}\alpha)| ds.
$$
The latter term can be dropped because the domain $\int_{a + \delta}^{b + \delta}$ misses the support of $P$, so it lies in $\filt^{-\infty}_{k-1}$.
For the first term, we treat $\int_{a-\delta}^{a+\delta} |\nabla_{\perp}^j(\iota_{\nu_{P}^\vee}\alpha)| ds$ as a function of $(t,x)$ on $V$ which is constant along the $t$-direction. Therefore we estimate
\begin{align*}
    & \int_{x'} (x')^\beta \sup_{I(P)_{x'}} \left( \int_{a-\delta}^{a+\delta} |\nabla_{\perp}^j(\iota_{\nu_{P}^\vee}\alpha)| ds \right) \nu_{I(P)}
=     \int_{x'} (x')^\beta  \sup_{P_{0,x'}+bv} \left( \int_{a-\delta}^{a+\delta} |\nabla_{\perp}^j(\iota_{\nu_{P}^\vee}\alpha)| ds \right) \nu_{I(P)}\\
\leq&  \int_{x'} \sup_{P_{0,x'}+bv}\left( \int_{a-\delta}^{a+\delta} (x')^\beta \sup_{P_{s,x'}}|\nabla_{\perp}^j(\iota_{\nu_{P}^\vee}\alpha)| ds \right) \nu_{I(P)}
=     \int_{x'} \left( \int_{a-\delta}^{a+\delta} (x')^\beta \sup_{P_{s,x'}}|\nabla_{\perp}^j(\iota_{\nu_{P}^\vee}\alpha)| ds \right) \nu_{I(P)}\\
\leq&  C_{j,V',\beta} \hp^{-\frac{j+s-|\beta|-k}{2}},
\end{align*}
where the first inequality follows from the inequality
$$
\int_{a-\delta}^{a+\delta}|\nabla_{\perp}^j(\iota_{\nu_{P}^\vee}\alpha)|ds \leq \int_{a-\delta}^{a+\delta}\sup_{P_{t,x'}}|\nabla_{\perp}^j(\iota_{\nu_{P}^\vee}\alpha)|ds,
$$
and the second equality is due to the fact that $\int_{a-\delta}^{a+\delta}\sup_{P_{t,x'}}|\nabla_{\perp}^j(\iota_{\nu_{P}^\vee}\alpha)|ds$, treated as function on $V$, is constant along the leaf $P_{0,x'} + bv$.
Writing $j + s - |\beta| - k = j + (s-1) - |\beta| - (k-1)$, we obtain the desired estimate so that $\alpha \in \filt^{s-1}_{I(P)}(U)$.

Case 2: $j_2>0$. Then we have $\nabla_{\dd{t}}^{j_2} \iota_{\nu_{I(P)}^\vee}(I_{a-\delta}(\alpha)) =  \nabla_{\dd{t}}^{j_2-1} (\iota_{\nu_{P}^\vee} \alpha)$.
We can rewrite it as
$$
\nabla_\perp^{j_1}\nabla_{\dd{t}}^{j_2} \iota_{\nu_{I(P)}^\vee}(I_{a-\delta}(\alpha))(t,x)
= \int^t_{a-\delta} \nabla^{j}(\iota_{\nu_{P}^\vee} \alpha)(s,x) ds + \left(\nabla_{\dd{t}}^{j_2-1}\nabla_\perp^{j_1}(\iota_{\nu_{P}^\vee} \alpha)\right)(a-\delta,x),
$$
where the latter term lies in $\filt^{-\infty}_{k-1}$ because it misses the support $P$ of $\alpha$, and
the first term is bounded by
$$
|\int^t_{a-\delta} \nabla^{j}(\iota_{\nu_{P}^\vee} \alpha)(s,x) ds| \leq \int^{a+\delta}_{a-\delta} |\nabla^{j}(\iota_{\nu_{P}^\vee} \alpha)|(s,x) ds + \int^{b+\delta}_{a+\delta} |\nabla^{j}(\iota_{\nu_{P}^\vee} \alpha)|(s,x) ds.
$$
The same argument as Case 1 can then be applied to get the desired estimate.
\end{proof}

\begin{remark}
Lemmas \ref{support_product} and \ref{integral_lemma} say that we can relate the differential-geometric operations $\wedge$ and $I$ to intersection and suspension of asymptotic supports. These properties are essential for relating Maurer-Cartan solutions, which are differential-geometric in nature, to combinatorics of scattering diagrams.
\end{remark}

In order to apply the notion of asymptotic support to keep track of the $\hp$ order in asymptotic expansions of the gauge element $\facs^s = \facs_1  + \facs_2 + \cdots + \facs_s$, we will restrict our attention to the dg Lie subalgebra
$\left(\bigoplus_m \filt^\infty_*(U)\bmc^m\right) \otimes_\inte N \subset \mathbf{G}^*(U),$
whose elements are finite sums of the form
$\sum_{m, n} \alpha_m^n \bmc^m \check{\partial}_n,$
where $\alpha_m^n \in \filt^{\infty}_*(U)$. Restriction of $\hat{H}$ defined in Definition \ref{real2homotopy} to $\left(\bigoplus_m \filt^\infty_*(U)\bmc^m\right) \otimes_\inte N$ gives the homotopy operator
$\hat{H}: \left(\bigoplus_m \filt^\infty_*(U)\bmc^m\right) \otimes_\inte N \rightarrow \left(\bigoplus_m \filt^\infty_{*-1}(U)\bmc^m\right) \otimes_\inte N$ defined as
$
\hat{H}\left(\sum_{m, n} \alpha_m^n \bmc^m \check{\partial}_n \right) = \sum_{m, n} \int_{0}^1 \rho_q^*(\alpha_m^n) \bmc^m \check{\partial}_n,
$
using $\rho_q$ in Definition \ref{real2homotopy}. We also write $\hat{I}\left(\sum_{m, n} \alpha_m^n \bmc^m \check{\partial}_n\right) = \sum_{m, n} \hat{I}(\alpha_m^n) \bmc^m \check{\partial}_n$ when the $\alpha_m^n$'s are $1$-forms. Extending Lemma \ref{support_product} to this dg Lie subalgebra, we have the following:

\begin{lemma}\label{filtrationlemma}
Given $m_1, m_2 \in M$, $n_1 ,n_2 \in N$, and $\alpha_1 \in \filt^s_{P_1}(U)$ and $\alpha_2 \in \filt^r_{P_2}(U)$. If $P_1$ of codimension $k_1$ intersects $P_2$ of codimension $k_2$ transversally, then we have
$$
[\alpha_1 \bmc^{m_1} \check{\partial}_{n_1}, \alpha_2 \bmc^{m_2} \check{\partial}_{n_2} ] \in \alpha_1\wedge \alpha_2 \bmc^{m_1+m_2} \check{\partial}_{( m_2,n_1 ) n_2 - ( m_1 ,n_2 ) n_1} + \filt^{r+s-1}_{P}(U) \bmc^{m_1+m_2} \otimes_\inte N
$$
and $\alpha_1 \wedge \alpha_2 \in \filt^{r+s}_{P}(U)$ for any codimension $k_1+k_2$ polyhedral subset $P$ containing $P_1 \cap P_2$ normal to $\nu_{P_1} \wedge \nu_{P_2}$. If the intersection is not transversal, then we have
$
[\alpha_1 \bmc^{m_1} \check{\partial}_{n_1}, \alpha_2 \bmc^{m_2} \check{\partial}_{n_2} ] \in \filt^{-\infty}_k(U) \bmc^{m_1+m_2} \otimes_\inte N.
$
\end{lemma}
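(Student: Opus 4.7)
My plan is to expand the Lie bracket via the Leibniz rule on $KS_{\check{X}_0}$, isolate a ``main'' contribution matching the stated combinatorial formula, and then show that the remaining derivative corrections gain an extra factor of $\hp$ from the base component $\partial_n$, which drops their filtration index by exactly one.

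I would first expand the bracket. Writing $\alpha_i = f_i \, d\bar z^{I_i}$ locally, the Kodaira--Spencer bracket rule $[\phi \, d\bar z^I,\psi \, d\bar z^J] = [\phi,\psi] \, d\bar z^I\wedge d\bar z^J$ reduces the problem to computing the vector-field bracket $[f_1 \bmc^{m_1}\check{\partial}_{n_1}, f_2 \bmc^{m_2}\check{\partial}_{n_2}]$ as derivations on $\mathcal{O}_{\check X_0}$. Invoking \eqref{vertex_lie_algebra} and the fact that, among the components of $\check{\partial}_{n_i}$ listed in \eqref{holo_vector_field_2}, only the base part $\partial_{n_i}$ from \eqref{affine_vector_field} acts nontrivially on pullbacks from $U$, I expect to obtain
\begin{equation*}
\begin{split}
\left[\alpha_1 \bmc^{m_1}\check{\partial}_{n_1}, \alpha_2 \bmc^{m_2}\check{\partial}_{n_2}\right]
=\ & \alpha_1 \wedge \alpha_2 \, \bmc^{m_1+m_2}\check{\partial}_{(m_2,n_1)n_2 - (m_1,n_2)n_1} \\
   & + \alpha_1 \wedge (\nabla_{\partial_{n_1}}\alpha_2) \, \bmc^{m_1+m_2}\check{\partial}_{n_2}
     - (\nabla_{\partial_{n_2}}\alpha_1) \wedge \alpha_2 \, \bmc^{m_1+m_2}\check{\partial}_{n_1},
\end{split}
\end{equation*}
whose specialization to $m_1 = m_2$, $n_1 = n_2 \in m^\perp$ recovers exactly the computation \eqref{lemma_s_equal_2} and serves as a cross-check.

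Next, I would bound the filtration level of the two correction terms. The decisive observation is that $\partial_n = \tfrac{\hp}{4\pi}\sum_j n^j g_{jk}\dd{x_k}$ carries an explicit factor of $\hp$. By \eqref{asy_support_basic_property}, each covariant derivative in a $\dd{x_k}$-direction sends $\filt^t_{P_j}(U)$ into $\filt^{t+1}_{P_j}(U)$, while multiplication by $\hp$ sends $\filt^{t+1}_{P_j}(U)$ into $\filt^{t-1}_{P_j}(U)$ (since an element of $\filt^s_P(U)$ is controlled by $\hp^{-(j+s-|\beta|-k)/2}$, an extra factor of $\hp$ lowers two filtration levels). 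Composing, $\nabla_{\partial_{n_i}}$ maps $\filt^{t}_{P_j}(U)$ into $\filt^{t-1}_{P_j}(U)$, and Lemma \ref{support_product} applied to the resulting wedge products places each correction term in $\filt^{r+s-1}_P(U)\bmc^{m_1+m_2}\otimes_\inte N$, as required. The statement $\alpha_1\wedge \alpha_2 \in \filt^{r+s}_P(U)$ is immediate from Lemma \ref{support_product}. For the non-transversal case, the same lemma forces $\alpha_1\wedge\alpha_2 \in \filt^{-\infty}_k(U)$, and the identical reasoning applied to $\alpha_1\wedge \nabla_{\partial_{n_1}}\alpha_2$ and $\nabla_{\partial_{n_2}}\alpha_1 \wedge \alpha_2$ (whose underlying asymptotic supports remain $P_1$ and $P_2$) shows that every term of the expanded bracket lies in $\filt^{-\infty}_k(U)\bmc^{m_1+m_2}\otimes_\inte N$.

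The main delicate point is the careful bracket expansion: I must confirm that only the base component $\partial_n$ of $\check{\partial}_n$ differentiates the base-pullback $\alpha_j$, and that no further terms arise from Lie-differentiating the $d\bar z^I$ factors (the Kodaira--Spencer bracket treats them purely combinatorially). Once this identification is secured, everything else is a bookkeeping application of Lemma \ref{support_product} together with the filtration property \eqref{asy_support_basic_property}.
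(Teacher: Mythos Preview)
Your proposal is correct and follows essentially the same route as the paper: expand the bracket into the main term plus two derivative corrections, then use that $\partial_n$ carries an explicit $\hp$ (so $\nabla_{\partial_{n_i}}$ maps $\filt^t_{P_j}$ into $\filt^{t-1}_{P_j}$ via \eqref{asy_support_basic_property} and \eqref{affine_vector_field}) together with Lemma \ref{support_product} for both the transversal and non-transversal cases. The paper's proof states $\nabla_{\partial_{n_i}}\alpha_j \in \filt^{t-1}_{P_j}(U)$ without spelling out the $\hp$-counting you give, but the underlying mechanism is identical.
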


\begin{proof}
From the definition of the Lie bracket we have
\begin{align*}
[ \alpha_1 \bmc^{m_1} \check{\partial}_{n_1}, \alpha_2 \bmc^{m_2} \check{\partial}_{n_2} ] = 
& \alpha_1\wedge \alpha_2 \bmc^{m_1+m_2} \check{\partial}_{( m_2,n_1 ) n_2 - ( m_1 ,n_2 ) n_1}\\
& \quad\quad + \alpha_1 \wedge \nabla_{\partial_1}(\alpha_2) \bmc^{m_1+m_2} \check{\partial}_{n_2} \pm \alpha_2 \wedge \nabla_{\partial_2}(\alpha_1) \bmc^{m_1+m_2} \check{\partial}_{n_1} %\footnote{Here $\partial_n$ is introduced in equation \eqref{affine_vector_field}.}.
\end{align*}
When $P_1$ and $P_2$ are intersecting transversally and let $P$ as above, Lemma \ref{support_product} says that $\alpha_1 \wedge \alpha_2 \in \filt^{r+s}_{P}(U)$, so it remains to show that the last two terms are lying in $\filt^{r+s-1}_{P}(U) \bmc^{m_1+m_2} \otimes_\inte \Lambda^\vee_{B_0}$. Notice that we have $\nabla_{\partial_{n_1}}(\alpha_1) \in \filt^{s-1}_{P_1}(U)$ and $\nabla_{\partial_{n_1}}(\alpha_2) \in \filt^{r-1}_{P_2}(U)$ and hence result follows by applying Lemma \ref{support_product} again. When $P_1$ and $P_2$ are not intersecting transversally, it follows from the non-transversal case of Lemma \ref{support_product} that all the terms lie in $\filt^{-\infty}_k(U) \bmc^{m_1+m_2} \otimes_\inte N$.
\end{proof}

\begin{remark}
The terms $\bmc^{m_1+m_2} \check{\partial}_{( m_2,n_1 ) n_2 - ( m_1 ,n_2 ) n_1}$ which appear in Lemma \ref{filtrationlemma} come from $[\bmc^{m_1} \check{\partial}_{n_1}, \bmc^{m_2} \check{\partial}_{n_2} ]$ using the formula \ref{vertex_lie_algebra}. In particular, if we have both $(m_1,n_1) = 0$ and $(m_2,n_2) = 0$ (which means that both $\bmc^{m_1} \check{\partial}_{n_1}$ and $\bmc^{m_2} \check{\partial}_{n_2}$ are elements in the tropical vertex group), then the leading order term of $[\alpha_1 \bmc^{m_1} \check{\partial}_{n_1}, \alpha_2 \bmc^{m_2} \check{\partial}_{n_2} ]$ is given by $\alpha_1 \wedge \alpha_2 \bmc^{m_1+m_2} \check{\partial}_{( m_2,n_1 ) n_2 - ( m_1 ,n_2 ) n_1}$, and $\bmc^{m_1+m_2} \check{\partial}_{( m_2,n_1 ) n_2 - ( m_1 ,n_2 ) n_1}$ is an element in the tropical vertex group as well. This property will be important to us in Section \ref{sec:key_lemmas}.
\end{remark}

At this point we are ready to go back to the asymptotic analysis of the gauge $\facs$. Recall that there are two integral operators: $\hat{I}$ defined in \eqref{I_integral} and $I$ defined in \eqref{general_integral_operator}. If we restrict ourselves to differential 1-forms, we can treat both $\hat{I}$ and $I$ as path integrals, where the choices of paths differ only by a path lying inside $R$, as shown in Figure \ref{fig:two_integrals}. (Indeed, the requirement that, for any point $p \in U$, the unique flow line of $v$ in $U$ passing through $p$ intersects $R$ uniquely at a point $x \in R$ when we define $I$ is equivalent to the condition that $U$ contains the path $\varrho_u$ when we define $\hat{I}$.)
\begin{figure}[h]
\centering
\includegraphics[scale=0.3]{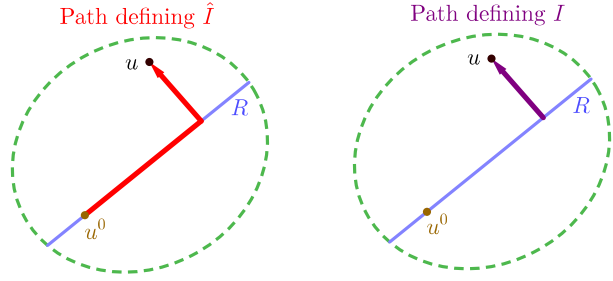}
\caption{The difference between $I$ and $\hat{I}$}
\label{fig:two_integrals}
\end{figure}

The key observation is that Lemma \ref{integral_lemma}, which applies to the operator $I$, can be applied to $\hat{I}$ as well because $R$ is chosen so that $R \cap P = \emptyset$, and hence integration of terms with asymptotic support on $R$ over any path in $R$ will produce elements in $\filt^{-\infty}_*(U)$.

We show by induction that the term
$\hat{I} \left( \sum_{k\geq 0 } \frac{ad^k_{\facs^s}}{(k+1)!} \pdb \facs^s \right)_{s+1}$
does not contribute to the leading $\hp$ order term in $\facs_{s+1}$ defined in \eqref{vartheta_s_definition}. For that we take $P$ to be codimension $1$ hyperplane in $U$.

\begin{lemma}\label{leadingorderlemma}
For the gauge $\facs = \facs_1  + \facs_2  + \cdots$ defined iteratively by \eqref{vartheta_s_definition}, we have
$$\facs_s  \in \bigoplus_{k\geq 1} \filt^{0}_{I(P)}(U) \bmc^{km} \check{\partial}_n t^s; \quad ad^l_{\facs^s}(\pdb \facs^s)  \in \bigoplus_{\substack{k \geq 1\\ 1 \leq j \leq s(l+1)}} \filt^{0}_P(U) \bmc^{km}\check{\partial}_n t^j$$
for all $s \geq 1$ and $l \geq 1$, where $\facs^s = \facs_1  + \facs_2 + \cdots + \facs_s$. %Here $\displaystyle \bigoplus_{j,l\geq 1} \filt^{0}_P(U) \cdot  \bmc^{lm}\check{\partial}_n t^j$ refers to a finite sum of terms.
\end{lemma}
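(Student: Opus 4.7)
The plan is to induct on $s$, using Lemmas \ref{integral_lemma}, \ref{support_product} and \ref{filtrationlemma} to track the filtration index, and crucially exploiting the identity $(km,n)=0$ (which holds since $n\in m^\perp$) to kill every algebraic contribution of the Lie bracket. Throughout, I use that $\hat{H}$ agrees with $\hat{I}$ on closed 1-forms (by Remark \ref{pathindependent}), and that the base point $q\in H_-$ and the reference hyperplane $R$ in \eqref{general_integral_operator} can be arranged so that $R\cap P=\emptyset$, so that paths contributing to the difference between $\hat{H}$ and $\hat{I}$ lie away from $P$ and integrate to elements of $\filt^{-\infty}_*(U)$.

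For the base case $s=1$, one has the explicit formula $\facs_1=\sum_k a_{1k}\hat{I}(\delta_m)\bmc^{km}\check{\partial}_n\, t$. Since $\delta_m\in\filt^1_P(U)$ (by Lemma \ref{gmnormestimate}) and the flow direction $v$ used in $\hat{I}$ is transversal to $P$, Lemma \ref{integral_lemma} yields $\hat{I}(\delta_m)\in\filt^0_{I(P)}(U)$, giving the first assertion. For the second, the first order gauge equation gives $\pdb\facs_1=-\onewall_1\in\filt^1_P(U)\bmc^{km}\check{\partial}_n\, t$. Expanding $[\facs_1,\pdb\facs_1]$ via Lemma \ref{filtrationlemma}, the algebraic part vanishes because all Fourier modes are multiples of $m$ and $n\in m^\perp$, so only the two derivative terms $\facs_1\wedge\nabla_{\partial_n}(\pdb\facs_1)-\pdb\facs_1\wedge\nabla_{\partial_n}(\facs_1)$ survive; using the factor $\hp$ built into $\partial_n$ from \eqref{affine_vector_field} together with \eqref{asy_support_basic_property}, each such term lies in $\filt^0_P(U)$. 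Iterated brackets $ad^l_{\facs_1}$ for $l\ge 2$ only lower the filtration index further.

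For the inductive step, suppose both claims hold for $\facs_1,\ldots,\facs_s$. The recursive formula \eqref{vartheta_s_definition} expresses $\facs_{s+1}$ as $-\hat{H}$ applied to $\onewall_{s+1}+\bigl(\sum_{k\ge 0}\frac{ad^k_{\facs^s}}{(k+1)!}\pdb\facs^s\bigr)_{s+1}$. The first summand lies in $\filt^1_P(U)\bmc^{km}\check{\partial}_n\, t^{s+1}$ and the second, by the inductive hypothesis applied termwise, lies in $\filt^0_P(U)\bmc^{km}\check{\partial}_n\, t^{s+1}\subset\filt^1_P(U)\bmc^{km}\check{\partial}_n\, t^{s+1}$. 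Both are closed 1-forms, so replacing $\hat{H}$ by $\hat{I}$ and invoking Lemma \ref{integral_lemma} yields $\facs_{s+1}\in\filt^0_{I(P)}(U)\bmc^{km}\check{\partial}_n\, t^{s+1}$, completing the first statement. For the second, note $\pdb\facs_{s+1}$ equals minus the integrand, hence lies in $\filt^1_P(U)\bmc^{km}\check{\partial}_n\, t^{s+1}$. Any iterated bracket $ad^l_{\facs^{s+1}}(\pdb\facs^{s+1})$ expands into nested brackets with $l$ factors drawn from $\facs^{s+1}\in\filt^0_{I(P)}(U)$ and one factor from $\pdb\facs^{s+1}\in\filt^1_P(U)$; every algebraic bracket contribution in Lemma \ref{filtrationlemma} again vanishes because all Fourier modes are integer multiples of $m$ with contracting vector in $m^\perp$, and each remaining derivative term costs one covariant derivative times the $\hp$ in $\partial_n$, reducing the filtration index by $1$ per bracket via \eqref{asy_support_basic_property}. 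Hence the result sits in $\filt^{1-l}_P(U)\subset\filt^0_P(U)$, and the $t$-degree bound $1\le j\le s(l+1)$ follows by counting, since each $\facs_r$ carries $t$-degree at most $s$ and there are $l+1$ factors.

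The main technical obstacle is the careful bookkeeping of how each operation shifts the filtration index: the vanishing $(km,n)=0$ must be checked at every bracket so that only the derivative portion of Lemma \ref{filtrationlemma} contributes, and one must verify that the codimension-zero asymptotic support on $I(P)$ behaves as a bounded multiplier against partners with asymptotic support on $P$, preserving the $\filt^0_P$ class. Everything else is a mechanical application of Lemmas \ref{integral_lemma}, \ref{support_product} and \ref{filtrationlemma}.
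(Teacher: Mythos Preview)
Your argument is correct and follows essentially the same route as the paper's proof: induction on $s$, with Lemma~\ref{integral_lemma} supplying the first assertion and Lemma~\ref{filtrationlemma} the second, the crucial point in both being that the algebraic leading term of each bracket vanishes because every factor lies in $\bmc^{km}\check{\partial}_n$ with $(km,n)=0$. The only blemish is the $t$-degree bookkeeping in your last sentence: in the inductive step you are proving the claim for $\facs^{s+1}$, so each $\facs_r$ has $t$-degree at most $s+1$ (not $s$) and the bound should read $(s+1)(l+1)$; this is a harmless indexing slip.
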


\begin{proof}
We prove by induction on $s$. The $s = 1$ case concerns the term $\facs_1 = -\hat{I}(\onewall_1)$, and we have $ad_{\facs_1}^l (\pdb \facs_1) = -ad_{\facs_1}^l (\onewall_1)$.
Now $\onewall_1 \in \bigoplus_{k \geq 1} \filt^1_{P}(U) \bmc^{km}\check{\partial}_n t^1$ from Definition \ref{ansatz}, so we have
$\facs_1 = -\hat{I}(\onewall_1) \in \bigoplus_{k\geq 1} \filt^{0}_{I(P)}(U) \bmc^{km}\check{\partial}_n t^1$
by Lemma \ref{integral_lemma}.
Applying Lemma \ref{filtrationlemma} $l$ times, together with the fact that $I(P)$ and $P$ intersect transversally, we have
$
-ad_{\facs_1}^l (\onewall_1) \in \bigoplus_{\substack{k \geq 1 \\ 1 \leq j \leq l+1}} \filt^{0}_P(U)  \bmc^{km}\check{\partial}_n t^j.
$
The key here is that all the terms are linear combinations of $\bmc^{km} \check{\partial}_n$'s, between which the Lie bracket vanish since $m$ is tangent to $P$ and $n$ is normal to $P$, and hence the leading contribution in Lemma \ref{filtrationlemma} vanishes.

Now we assume that the statement is true for all $s' \leq s$. The induction hypothesis together with the fact that $\onewall_{s+1} \in \bigoplus_{k \geq 1} \filt^{1}_P(U)  \bmc^{km}\check{\partial}_n t^{s+1}$ imply that
$$
\pdb \facs_{s+1}  = -\left( \onewall + \sum_{l\geq 0 }\frac{ad_{\facs^s}^l}{(l+1)!} \pdb\facs^s \right)_{s+1} \in \bigoplus_{k\geq 1} \filt^{1}_P(U) \bmc^{km}\check{\partial}_nt^{s+1}.
$$
Applying Lemma \ref{integral_lemma} to $\facs_{s+1} = - \hat{I}\left( \onewall + \sum_{l\geq 0 }\frac{ad_{\facs^s}^l}{(l+1)!} \pdb\facs^s \right)_{s+1}$ then gives
$
\facs_{s+1} \in \bigoplus_{k\geq 1} \filt^0_{I(P)}  \bmc^{km}\check{\partial}_n t^{s+1}.
$
The second statement follows by applying Lemma \ref{filtrationlemma} multiple times with the same reasoning as above. This completes the proof.
\end{proof}

By Lemma \ref{leadingorderlemma}, we have
\begin{equation}
\facs_s \in - \hat{I}(\onewall_s) + \bigoplus_{l\geq 1} \hat{I} (\filt^0_P)(U) \bmc^{lm}\check{\partial}_n t^s
\end{equation}
for all $s$. Lemma \ref{integral_lemma} tells us that $\hat{I}(\filt^0_P(U)) \subset \filt^{-1}_{I(P)}(U)$, so $-\hat{I}(\onewall_s) \in \filt^{0}_{I(P)}(U)$ is the only term which contributes to the leading order in $\hp$. Since $I(P)$ is of codimension $0$, $\filt^{-1}_{I(P)}(U) \subset O_{loc}(\hp^{1/2})$ (where $O_{loc}(\hp^{1/2})$ is defined in Notations \ref{O_loc}). We conclude that:

\begin{prop}\label{prop:MC_sol_one_wall}
For the gauge $\facs = \facs_1 + \facs_2 + \cdots$ defined iteratively by \eqref{vartheta_s_definition}, we have
\begin{equation*}
\facs_s
\in \left\{
\begin{array}{ll}
\displaystyle \sum_{k \geq 1 } a_{sk} \bmc^{km} \check{\partial}_n t^s + \bigoplus_{k\geq 1} \filt^{-1}_{I(P)}(U) \bmc^{km} \check{\partial}_n t^s & \text{on $\check{p}^{-1}(H_+)$},\\
\displaystyle \bigoplus_{k\geq 1} \filt^{-\infty}_0(U) \bmc^{km} \check{\partial}_n t^s & \text{on $\check{p}^{-1}(H_-)$};
\end{array}\right.
\end{equation*}
which implies that
$
\facs = \psi + \sum_{j, k\geq 1} O_{loc}(\hp^{1/2}) \bmc^{km} \check{\partial}_n t^j
$
over $\check{p}^{-1}(U \setminus P)$, or equivalently,
$
\check{\facs} := \mathcal{F}^{-1}(\facs) = \check{\psi} + \sum_{j, k\geq 1} O_{loc}(\hp^{1/2}) \bmc^{km}\check{\partial}_n t^j
$
over $\check{X}_0 \setminus \check{p}^{-1}(P)$.
\end{prop}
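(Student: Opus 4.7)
The plan is to bootstrap directly from Lemma \ref{leadingorderlemma}. By the iterative formula \eqref{vartheta_s_definition} and the induction hypothesis in that lemma, I can decompose
$$\facs_s = -\hat{I}(\onewall_s) \;-\; \hat{I}\Bigl(\sum_{l \geq 1} \tfrac{ad^l_{\facs^{s-1}}}{(l+1)!}\,\pdb\facs^{s-1}\Bigr)_s.$$
The second summand is controlled by Lemma \ref{leadingorderlemma}: it lives in $\bigoplus_{k\geq 1}\hat{I}(\filt^0_P(U))\,\bmc^{km}\check{\partial}_n t^s$. Since the reference hyperplane $R$ used to define $\hat{I}$ can be chosen with $R \cap P = \emptyset$, the integral operator $\hat{I}$ behaves like $I$ of \eqref{general_integral_operator}, so Lemma \ref{integral_lemma} applies and gives $\hat{I}(\filt^0_P(U)) \subset \filt^{-1}_{I(P)}(U)$. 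Because $I(P)$ has codimension zero in $U$, condition (3) of Definition \ref{asypmtotic_support_def} with $k=0$, $j=0$, $\beta=0$, $s=-1$ immediately yields $\filt^{-1}_{I(P)}(U) \subset O_{loc}(\hp^{1/2})$.

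For the leading contribution, I would substitute Definition \ref{ansatz} to write $-\hat{I}(\onewall_s) = \sum_{k\geq 1} a_{sk}\,\hat{I}(\delta_m)\,\bmc^{km}\check{\partial}_n\, t^s$, and analyze $\hat{I}(\delta_m) = \int_{\varrho_u}\delta_m$ using the explicit path $\varrho_u$. For $u \in H_+$ the path crosses $P$ exactly once transversally; writing $\delta_m = (\hp\pi)^{-1/2}e^{-u_2^2/\hp}du_2$ and observing that only the second leg of $\varrho_u$ (where $u_\perp$ varies) contributes a nonzero integral, the change of variables plus stationary-phase (Lemma \ref{stat_phase_exp}) yields $\hat{I}(\delta_m)(u) = 1 + O(e^{-c/\hp})$. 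For $u \in H_-$, convexity of $H_-$ together with $q \in H_-$ (from Definition \ref{real2homotopy}) forces both legs of $\varrho_u$ to stay inside $H_-$, hence uniformly away from $P$, so $e^{-u_2^2/\hp}$ and all its derivatives along $\varrho_u$ are of order $e^{-c/\hp}$, giving $\hat{I}(\delta_m)|_{H_-} \in \filt^{-\infty}_0(U)$.

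Combining the two estimates produces exactly the case split in the statement: on $\check{p}^{-1}(H_+)$ the leading term $\sum_{k\geq 1} a_{sk}\bmc^{km}\check{\partial}_n t^s$ appears with remainder in $\filt^{-1}_{I(P)}(U)\bmc^{km}\check{\partial}_n t^s$, while on $\check{p}^{-1}(H_-)$ everything is absorbed into $\filt^{-\infty}_0(U)\bmc^{km}\check{\partial}_n t^s$ (the second summand above also lies in $\filt^{-\infty}_0(U)$ on $H_-$, since its asymptotic support sits inside $I(P)$ but evaluated on $H_-$ it decays exponentially by condition (1) of Definition \ref{asypmtotic_support_def}). Summing over $s$ and recalling the form \eqref{wallcrossingfactor} of $\text{Log}(\Theta)$ and \eqref{eqn:step_function_gauge} of $\psi$ gives $\facs = \psi + \sum_{j,k\geq 1} O_{loc}(\hp^{1/2})\bmc^{km}\check{\partial}_n t^j$ on $\check{p}^{-1}(U \setminus P)$, with the mirror statement for $\check{\facs} = \mathcal{F}^{-1}(\facs)$ following from Definition \ref{def:fourier_transform}.

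The main obstacle I anticipate is the bookkeeping distinguishing $\filt^{-1}_{I(P)}$ (the genuine scale that survives on $H_+$) from $\filt^{-\infty}_0$ (the exponentially small remainder on $H_-$); both come from the same intermediate expression $\hat{I}$ applied to elements in $\filt^0_P$, but their behavior is radically different on the two half-spaces. The clean way to handle this is to observe that an element of $\filt^{-1}_{I(P)}(U)$ automatically lies in $\filt^{-\infty}_0(V)$ for any open $V \subset U \setminus I(P)$ by Definition \ref{asypmtotic_support_def}(1), and since $H_- \subset U \setminus I(P)$ by construction of $I(P) = (P + \real_{\geq 0}v) \cap U$ with $v$ pointing into $H_+$, the $H_-$ case follows automatically from the $H_+$ analysis.
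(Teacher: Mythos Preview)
Your proposal is correct and follows essentially the same approach as the paper: the paper's argument is exactly the paragraph preceding the proposition, which cites Lemma~\ref{leadingorderlemma} for the decomposition $\facs_s \in -\hat{I}(\onewall_s) + \bigoplus_{l\geq 1}\hat{I}(\filt^0_P(U))\bmc^{lm}\check{\partial}_n t^s$, invokes Lemma~\ref{integral_lemma} to put the error in $\filt^{-1}_{I(P)}(U)$, and reads off $O_{loc}(\hp^{1/2})$ from codimension zero. Your additional observation that $H_-\subset U\setminus I(P)$, so the $\filt^{-1}_{I(P)}$ error automatically becomes $\filt^{-\infty}_0$ on $H_-$ via condition~(1) of Definition~\ref{asypmtotic_support_def}, is a clean way to handle the $H_-$ case that the paper leaves implicit; two minor quibbles are that the $1+O(e^{-c/\hp})$ on $H_+$ should be $O_{loc}$ (the constant depends on the distance to $P$), and the tail estimate for $\hat{I}(\delta_m)$ is a direct Gaussian-tail bound rather than a genuine application of Lemma~\ref{stat_phase_exp}.
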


\begin{remark}\label{general_input}
Recall that the ansatz in Definition \ref{ansatz} is defined by multiplying $\delta_m$ to the wall crossing factor $\text{Log}(\Theta)$. But indeed the only properties that we need are $\delta_m \in \filt^1_P(U)$, and $\hat{I}(\delta_m)$ has its leading $\hp$ order term given by $1$ on $H_+$.

So Proposition \ref{prop:MC_sol_one_wall} still holds for any solution to the Maurer-Cartan equation in Proposition \ref{A_mcequation} (or more generally, to the Maurer-Cartan equation of the quotient dgLa $\widehat{\mathbf{g}^*/\mathcal{E}^*}(U)$ to be introduced in Section \ref{sec:MC_error}) of the form
$
\onewall \in - \sum_{j, k \geq 1} (a_{jk} \delta_{jk} + \filt^0_{P}(U)) \bmc^{km}\partial_{n} t^{j}
$
with $\pdb\onewall = 0$ such that each $\delta_{jk}^{(i)} \in \filt^1_{P_i}(U)$ and can be written as $\delta_{jk}^{(i)} = (\pi\hp)^{-1/2} e^{-\frac{x^2}{\hp}} dx$ in some neighborhood $W_{P_i}$ of $P_i$,
where $x$ is some affine linear function on $W_{P_i}$ such that $P_i$ is defined by $x = 0$ locally and $\iota_{\nu_{P_i}} dx > 0$.
\end{remark} 
\section{Maurer-Cartan solutions and scattering diagrams}\label{twowalls}
In this section, we interpret the local scattering process, which produces a consistent extension $\mathcal{S}(\mathscr{D})$ of a scattering diagram $\mathscr{D}$ consisting of two non-parallel walls, as arising from semiclassical limits (as $\hp \rightarrow 0$) of a solution of the Maurer-Cartan (MC) equation.

\subsection{Solving Maurer-Cartan equations in general}\label{algebraicsolveMC}
Let us begin by reviewing the process of solving MC equations in a general dgLa $(\mathbf{G}^*, \pdb, [\cdot, \cdot])$. We will apply Kuranishi's method \cite{Kuranishi65} to solve the MC equation using a homotopy which retracts $\mathbf{G}^*$ to its cohomology and acts as the gauge fixing (see e.g. \cite{Morrow-Kodaira_book}). %(From this point on, we will drop the dependence on $U$ in our notations.)

Suppose that we are given an input
$
\incoming = \incoming_1 + \incoming_2 + \cdots \in \mathbf{G}^1 \otimes \mathbf{m}
$
satisfying $\pdb \incoming = 0$, where $\incoming_k \in \mathbf{G}^1 \otimes \mathbf{m}^k $ is homogeneous of degree $k$ in $t$.
We attempt to find
$
\varXi = \varXi_2 + \varXi_3 + \cdots \in \mathbf{G}^1  \otimes \mathbf{m},
$
where $\varXi_k \in \mathbf{G}^1 \otimes \mathbf{m}^k $ is homogeneous of degree $k$ in $t$,
such that
$
\Phi := \Phi_1 + \Phi_2 + \cdots \in \mathbf{G}^1  \otimes \mathbf{m},
$
where each term $\Phi_k := \incoming_k + \varXi_k \in \mathbf{G}^1 \otimes \mathbf{m}^k$ is homogeneous of degree $k$ in $t$, gives a solution of the following MC equation, i.e.
\begin{equation}\label{MCequation}
\pdb \Phi + \half[\Phi,\Phi] = 0.
\end{equation}

\begin{comment}
The equation can be solved recursively.
The first equation
$$
d\Phi +\half [ \Phi,\Phi] = 0 \ (\text{mod $\mathbf{m}^2$}) \Longleftrightarrow d \incoming_1 = 0
$$
is automatic.
Writing $\Phi^k = \Phi_1 + \Phi_2 + \cdots + \Phi_k$ and suppose that the MC equation is solved up to the $k$-th order:
$$
d\Phi^k + \half[\Phi^k,\Phi^k] = 0 \ (\text{mod $\mathbf{m}^{k+1}$}),
$$
we let
$$
\mathfrak{O}_{k+1} := d\Phi^k + \half[\Phi^k,\Phi^k] \ (\text{mod $\mathbf{m}^{k+2}$}).
$$
Then solving the MC equation up to the $(k+1)$-st order amounts to solving
\begin{equation}\label{iterateMC}
d\varXi_{k+1} + \mathfrak{O}_{k+1} = 0.
\end{equation}
We observe that $d \mathfrak{O}_{k+1} = 0 \ (\text{mod $\mathbf{m}^{k+2}$})$ since
\begin{align*}
d \mathfrak{O}_{k+1} = [d\Phi^k, \Phi^k] = [\mathfrak{O}_{k+1}, \Phi^k] - \half[[\Phi^k,\Phi^k], \Phi^k] = 0 \ (\text{mod $\mathbf{m}^{k+2}$}).
\end{align*}
Therefore we can solve the MC equation to the $(k+1)$-st order in $t$ if and only if the $k$-th obstruction class $\mathfrak{O}_{k+1}$ vanishes in $H^2(L,d)$.

The solutions $\varXi_{k+1}$ for the equation \eqref{iterateMC} may differ by a $d$-closed element in $L^1$. This ambiguity can be fixed by choosing a homotopy retract of $L$ onto its cohomology $H^*(L)$.
\end{comment}

We assume that there are chain maps $\iota$, $\mathcal{P}$ and homotopy $H$
\begin{equation}\label{homotopy_diagram}
\xymatrix@1{H^*(G^*) \ar@/^/[rr]^{\iota} && G^* \ar@/^/[ll]^{\mathcal{P}} \ar@(ur,dr)^{H}
},
\end{equation}
such that $\mathcal{P} \circ \iota = \text{Id}$, and $\text{Id} - \iota \circ \mathcal{P} = \pdb H + H \pdb$. Then, instead of the MC equation, we look for solutions $\Phi$ of the equation
\begin{equation}\label{pseudoMC}
\Phi= \incoming - \half H [\Phi,\Phi].
\end{equation}
This originates from a method of Kuranishi \cite{Kuranishi65} used to solve the MC equation of the Kodaira-Spencer dgLa. His method can be generalized to a general dgLa as follows (see e.g. \cite{manetti2005differential})
\begin{prop}\label{pseudoMC_to_MC}
Suppose that $\Phi$ satisfies the equation \eqref{pseudoMC}. Then $\Phi$ satisfies the MC equation \eqref{MCequation} if and only if $\mathcal{P}[\Phi,\Phi] = 0$.
\end{prop}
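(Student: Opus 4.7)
The plan is to introduce the \emph{obstruction} $E := \pdb\Phi + \half[\Phi,\Phi]$ and derive a single self-referential identity for $E$ from equation \eqref{pseudoMC}, from which both directions of the equivalence will follow almost mechanically.

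First, I would apply $\pdb$ to both sides of \eqref{pseudoMC}. Using $\pdb\incoming = 0$ and the homotopy relation $\pdb H + H\pdb = \text{Id} - \iota\mathcal{P}$, this rewrites $\pdb\Phi$ as $-\half[\Phi,\Phi] + \half \iota\mathcal{P}[\Phi,\Phi] + \half H\pdb[\Phi,\Phi]$. Next, using graded Leibniz to write $\pdb[\Phi,\Phi] = 2[\pdb\Phi,\Phi]$ (valid since $|\Phi|=1$), substituting $\pdb\Phi = E - \half[\Phi,\Phi]$, and killing the resulting term $H[[\Phi,\Phi],\Phi]$ via the graded Jacobi identity (which forces $[[\Phi,\Phi],\Phi]=0$ for $\Phi$ of odd degree), I expect to arrive at the key identity
\begin{equation*}
E \;=\; \half\,\iota\mathcal{P}[\Phi,\Phi] \;+\; H[E,\Phi].
\end{equation*}

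For the $(\Leftarrow)$ direction, assuming $\mathcal{P}[\Phi,\Phi]=0$ reduces this to $E = H[E,\Phi]$. Since $\Phi \in \mathbf{G}^1\otimes\mathbf{m}$, bracketing with $\Phi$ raises the $\mathbf{m}$-adic order by one; a straightforward induction on the $\mathbf{m}$-adic filtration then shows $E \equiv 0 \pmod{\mathbf{m}^k}$ for every $k$, so $E = 0$ in the $\mathbf{m}$-adic completion and the MC equation \eqref{MCequation} holds. For the $(\Rightarrow)$ direction, the MC equation yields $[\Phi,\Phi] = -2\pdb\Phi$; applying $\mathcal{P}$, which by \eqref{homotopy_diagram} is a chain map from $(\mathbf{G}^*,\pdb)$ to $(H^*(\mathbf{G}^*),0)$, gives $\mathcal{P}[\Phi,\Phi] = -2\mathcal{P}\pdb\Phi = 0$.

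I do not anticipate a serious obstacle: the argument is a manipulation of the homotopy identity combined with two standard identities in graded Lie algebras. The only points requiring care are the signs in the graded Leibniz and Jacobi rules for a degree-one element, and the $\mathbf{m}$-adic completeness needed to turn the order-by-order vanishing of $E$ into honest vanishing --- both of which are routine in this formal setting over $R = \comp[[t]]$.
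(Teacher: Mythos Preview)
Your argument is correct and is essentially the standard proof of this fact (as in the reference \cite{manetti2005differential} that the paper cites). The paper itself does not supply a proof of this proposition; it simply states the result with a reference, so there is no ``paper's own proof'' to compare against beyond noting that you have reproduced the classical obstruction-calculus argument: derive $E = \half\iota\mathcal{P}[\Phi,\Phi] + H[E,\Phi]$ from the homotopy identity, then use the $\mathbf{m}$-adic filtration for one direction and $\mathcal{P}\pdb = 0$ for the other.
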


%\begin{proof}
%Applying $d$ on both sides of Equation \eqref{pseudoMC}, we obtain
%$$
%d\Phi + \half [\Phi,\Phi]= \half( Hd[ \Phi,\Phi] + \iota\circ \mathcal{P} [\Phi,\Phi] ).
%$$
%Suppose that $\Phi$ satisfies the MC equation. Then we see that $d[\Phi,\Phi] = -[[\Phi, \Phi], \Phi] = 0$ and hence $\mathcal{P}[\Phi, \Phi] = 0$.

%For the converse, we let $\delta = d\Phi + \half[\Phi,\Phi]$. It follows from the assumption $\mathcal{P}[\Phi,\Phi] = 0$ that
%$$
%\delta = H[d\Phi,\Phi] = H [ \delta, \Phi] = (H\circ ad_{\Phi})^m (\delta)
%$$
%for any $k\in \inte_{>0}$. Then by the fact that $\Phi \in L \otimes \mathbf{m}$, we have $\delta = 0$ $(\text{mod %$\mathbf{m}^k$})$ by comparing the lowest order term.
%\end{proof}
In general, the $k$-th equation of the above equation \eqref{pseudoMC} is given by
\begin{equation}\label{MCiterationequation}
\varXi_k + \sum_{j+l = k} \half H [\Phi_j, \Phi_l] = 0,
\end{equation}
and $\varXi_k$ (recall that $\varXi = \Phi - \incoming$) is uniquely determined by $\varXi_j$, $j < k$. In this way, the solution $\varXi$ to \eqref{pseudoMC} is uniquely determined.

%\begin{remark}
%For a compact complex manifold $\check{X}$ (this is not the case handling in this paper) equipped with a Hermitian metric, an explicit homotopy for its Kodaira-Spencer dgLa $KS_{\check{X}} = (\Omega^{0,*}(\check{X},T^{1,0}\check{X}) , \pdb, [\cdot,\cdot])$ can be chosen as follows. The Hermitian metric define $\pdb^*$ and the corresponding Green's operator $G$ for $\Delta_{\pdb}$. We can then take $H^*(KS_{\check{X}}) = \mathcal{H}^{0,*}(\check{X},T^{1,0}\check{X})$ to be the space of harmonic forms, and let $\iota$ be the natural embedding and $P$ be the harmonic projection. The homotopy operator is explicitly given by $H = \pdb^* G$. The homotopy operator we are going to define in Section \ref{con_propagator} is motivated from the Green's operator, using the semi-classical approximation of Green's operator developed in \cite{klchan-leung-ma}, followed by taking a Legendre transform in Section \ref{semiflat_kahler}.

%In the case of a compact complex manifold, one can solve the MC equation by iteratively solving equation \eqref{pseudoMC}. It can further be shown, using elliptic estimates, that the formal power series in $t_i$'s obtained indeed converges for small enough $t_i$'s. This was originally due to Kuranishi \cite{Kuranishi65}; we refer the readers to e.g. \cite{Morrow-Kodaira_book} for more details of the story.
%\end{remark}

There is a beautiful way to express the unique solution $\varXi$ as a sum of terms involving the input $\incoming$ over directed trees (reminiscent of a Feynman sum). To this end, we will introduce the notions of {\em a directed tree} and {\em a directed tree with ribbon structure}, following \cite{fukaya2003deformation}.

\begin{definition}\label{k_tree_def}
A {\em (directed) $k$-tree} $\tr$ consists of the following data:
\begin{itemize}
\item
a finite set of vertices $\bar{\tr}^{[0]}$ together with a decomposition
$\bar{\tr}^{[0]} = \tr^{[0]}_{in} \sqcup \tr^{[0]} \sqcup \{v_o\},$
where $\tr^{[0]}_{in}$, called the set of incoming vertices, is a set of size $k$ and $v_o$ is called the outgoing vertex (we also write $\tr^{[0]}_\infty := \tr^{[0]}_{in} \sqcup \{v_o\}$),
\item
a finite set of edges $\bar{\tr}^{[1]}$, and
\item
two boundary maps $\partial_{in} , \partial_o : \bar{\tr}^{[1]} \rightarrow \bar{\tr}^{[0]}$ (here $\partial_{in}$ stands for incoming and $\partial_o$ stands for outgoing)
\end{itemize}
satisfying all of the following conditions:
\begin{enumerate}
\item
Every vertex $v \in \tr^{[0]}$ is trivalent, and satisfies $\# \partial_{o}^{-1}(v) = 2$ and $\# \partial_{in}^{-1}(v) = 1$.

\item
Every vertex $v \in \tr^{[0]}_{in}$ has valency one, and satisfies $\# \partial_{o}^{-1}(v) = 0$ and $\# \partial_{in}^{-1}(v) = 1$; we let $\tr^{[1]} := \bar{\tr}^{[1]}\setminus \partial_{in}^{-1}(\tr^{[0]}_{in})$.

\item
For the outgoing vertex $v_o$, we have $\# \partial_{o}^{-1}(v_o) = 1$ and $\# \partial_{in}^{-1}(v_o) = 0$; we let $e_o := \partial_o^{-1}(v_o)$ be the outgoing edge and denote by $v_r \in \tr^{[0]}_{in} \sqcup \tr^{[0]}$ the unique vertex (which we call the root vertex) with $e_o = \partial^{-1}_{in}(v_r)$.

\item
The topological realization
$|\bar{\tr}| := \left( \coprod_{e \in \bar{\tr}^{[1]}} [0,1] \right) / \sim$
of the tree $\tr$ is connected and simply connected; here $\sim$ is the equivalence relation defined by identifying boundary points of edges if their images in $\tr^{[0]}$ are the same.
\end{enumerate}

Two $k$-trees $\tr_1$ and $\tr_2$ are {\em isomorphic} if there are bijections $\bar{\tr}^{[0]}_1 \cong \bar{\tr}^{[0]}_2$ and $\bar{\tr}^{[1]}_1 \cong \bar{\tr}^{[1]}_2$ preserving the decomposition $\bar{\tr}^{[0]}_i = \tr^{[0]}_{i,in} \sqcup \tr^{[0]}_i \sqcup \{v_{i,o}\}$ and boundary maps $\partial_{i,in}$ and $\partial_{i,o}$. The set of isomorphism classes of $k$-trees will be denoted by $\tree{k}$. For a $k$-tree $\tr$, we will abuse notations and use $\tr$ (instead of $[\tr]$) to denote its isomorphism class.
\end{definition}

\begin{definition}\label{def:ribbon_k_tree}
A {\em ribbon structure} on a $k$-tree is a cyclic ordering of $\partial_{in}^{-1}(v) \sqcup \partial_o^{-1}(v)$ for each $v \in \tr^{[0]}$. Equivalently, it can be regarded as an embedding $|\tr| \hookrightarrow D$ of $|\tr|$ into the unit disc $D \subset \real^2$ mapping $\tr^{[0]}_\infty$ to $\partial D$, from which the cyclic ordering is induced by the clockwise orientation on $D$. We will use $\rtr$ to denote a ribbon $k$-tree, and $\underline{\rtr}$ to denote the $k$-tree underlying $\rtr$.

Two ribbon $k$ trees $\rtr_1$ and $\rtr_2$ are {\em isomorphic} if they are isomorphic as $k$-trees and the isomorphism preserves the cyclic ordering. The set of isomorphism classes of ribbon $k$-trees will be denoted by $\rtree{k}$.
We will again abuse notations by using $\rtr$ to denote an isomorphism class of ribbon $k$-trees.
\end{definition}

\begin{definition}\label{r_tree_operation}
Given a ribbon $k$-tree $\rtr \in \rtree{k}$, we label the incoming vertices by $v_1,\dots,v_k$ according to its cyclic ordering (or the clockwise orientation on $D$ if we use the embedding $|\tr| \hookrightarrow D$). We define the operator
$
\mathfrak{l}_{k,\rtr}: L[1]^{\otimes k} \rightarrow L[1]
$
by
\begin{enumerate}
\item
aligning the inputs $\zeta_1,\dots,\zeta_k \in L$ at the vertices $v_1,\dots,v_k$ respectively,
\item
applying $m_2$ at each vertex in $\rtr^{[0]}$, where $m_2: L[1] \otimes L[1] \to L[1]$ is the the graded symmetric operator on $L[1]$ (= $L$ shifted by degree $1$) defined by $m_2(\alpha,\beta):= (-1)^{\bar{\alpha}(\bar{\beta}+1)}[\alpha,\beta]$ (here $\bar{\alpha}$ and $\bar{\beta}$ denote degrees of the elements $\alpha, \beta \in L$ respectively), and
\item
applying the homotopy operator $-H$ to each edge in $\rtr^{[1]}$.
\end{enumerate}
We then define $\mathfrak{l}_{k}: L[1]^{\otimes k} \rightarrow L[1]$ by
$\mathfrak{l}_{k} := \sum_{\rtr \in \rtree{k}} \frac{1}{2^{k-1}}\mathfrak{l}_{k,\rtr}.$
\end{definition}

The operation $\mathfrak{l}_{k,\rtr}$ can be symmetrized to give the following operation $\mathfrak{I}_{k,T}$ associated to a $k$-tree $\tr \in \tree{k}$:

\begin{definition}\label{def:tree_operation}
Given a $k$-tree $\tr \in \tree{k}$, let $\rtr \in \rtree{k}$ be a ribbon tree whose underlying tree is $\underline{\rtr} = \tr$. We consider the set $\Sigma_k := \{\sigma \mid \sigma: \tr^{[0]}_{in} \rightarrow \{1,\dots,k\}\}$. Then we define the operator
$
\mathfrak{I}_{k,\tr}: \text{Sym}^k(L[1])\rightarrow L[1]
$
by
\begin{equation*}
\mathfrak{I}_{k,\tr}(\zeta_1,\dots,\zeta_k) := \sum_{\sigma \in \Sigma_k} (-1)^{\chi(\sigma,\vec{\zeta})} \mathfrak{l}_{k,\rtr}(\zeta_{\sigma(1)},\dots,\zeta_{\sigma(k)});
\end{equation*}
here the sign $(-1)^{\chi(\sigma,\vec{\zeta})}$ is determined by the rule that, when the permutation $(\zeta_1,\dots,\zeta_k) \mapsto (\zeta_{\sigma(v_1)},\dots,\zeta_{\sigma(v_k)})$ is decomposed as a product of transpositions, each transposition interchanging $\zeta_i$ and $\zeta_j$ contributes $(-1)^{(\bar{\zeta}_i+1)(\bar{\zeta}_j+1)}$ (where $\bar{\zeta}_i$ denotes the degree of $\zeta_i \in L$).
Note that $\mathfrak{I}_{k,\tr}$ is independent of the choice of the ribbon tree $\rtr$.
We then define $\mathfrak{I}_{k}: \text{Sym}^k(L[1])\rightarrow L[1]$ by
$
\mathfrak{I}_{k} = \sum_{\tr \in \tree{k}} \frac{\mathfrak{I}_{k,\tr}}{|\text{Aut}(\tr)|},
$
where $|\text{Aut}(\tr)|$ is the order of the automorphism group of a $k$-tree $\tr$.
\end{definition}

Setting
\begin{equation}\label{solve_sum_over_trees_2}
\varXi := \sum_{k\geq 2 } \frac{1}{k!}\mathfrak{I}_{k}(\incoming, \ldots, \incoming)= \sum_{k\geq 2} \mathfrak{l}_k(\incoming, \ldots, \incoming),
\end{equation}
then
\begin{equation}\label{solve_sum_over_trees}
\Phi := \incoming + \varXi = \sum_{k\geq 1 } \frac{1}{k!}\mathfrak{I}_{k}(\incoming, \ldots, \incoming) = \sum_{k\geq 1} \mathfrak{l}_k(\incoming, \ldots, \incoming),
\end{equation}
is the unique solution to the equation \eqref{pseudoMC} obtained from recursively solving \eqref{MCiterationequation}

The equality between the two sums in \eqref{solve_sum_over_trees_2} (and hence those in \eqref{solve_sum_over_trees}) follows from the facts that the inputs are all the same and of degree 1, and simple combinatorial arguments in counting of trees.
%These sum-over-trees formulas \eqref{solve_sum_over_trees} are obtained from modifications of similar formulas appearing in \cite{fukaya2003deformation}, as mentioned in the following Remark \ref{rem:L_infinity_structure}.
Also note that the sums in \eqref{solve_sum_over_trees} are finite sums $(\text{mod $\mathbf{m}^{N+1}$})$ for every $N \in \inte_{>0}$ because $\incoming = \incoming_1 + \incoming_2 + \cdots$ and $\incoming_k \in \mathbf{G}^1 \otimes_R \mathbf{m}^{k}$ so that, modulo $\mathbf{m}^{N+1}$, there are only finitely many trees and finitely many $\incoming_k$'s involved.

\begin{remark}
Both the operators $\mathfrak{I}_{k,\tr}$ and $\mathfrak{l}_{k,\rtr}$ will be used, but for different purposes: $\mathfrak{l}_{k,\rtr}$ does not involve automorphisms of trees, so it will be used in Section \ref{sec:leading_order_MC} to simplify some of the notations; while $\mathfrak{I}_{k,\tr}$ is conceptually more relevant to operations on dgLa's, as we will see later.
\end{remark}

\begin{remark}\label{rem:L_infinity_structure}
Sum-over-trees formulas similar to \ref{solve_sum_over_trees} appear quite often in the literature, in particular in applications of the homological perturbation lemma \cite{kontsevich00} and study of $L_\infty$ (or $A_\infty$) structures \cite{fukaya2003deformation}.
%The homotopy $H$ in \eqref{homotopy_diagram} allows us to use the homological perturbation lemma in \cite{kontsevich00} to obtain an $L_\infty$ structure $(H^*(L),m_k)$ on the cohomology together with an $L_\infty$ morphism
%$$
%\mathfrak{I}_k : \text{Sym}^k(H^*(L)[1]) \rightarrow L[1].
%$$
%These structures can be expressed as a sum over trees in a way similar to the $A_\infty$ case discussed in \cite{kontsevich00}. In the case when $H^{>1}(L) \equiv 0 $, the MC equation of $(H^*(L)[[t]],m_k)$ is trivially unobstructed, and any element $\incoming \in H^*(L)$ is a MC solution.

%In this case one have $\mathfrak{I}(e^{\incoming})$ mapping to a MC solution in $(L^*,d,[\cdot,\cdot])$. The above formula \ref{solve_sum_over_trees} agrees with $\mathfrak{I}(e^{\incoming})$ in the case $\incoming \in H^*(L)[[t]]$, while we may not necessary to restrict ourself to this case in the rest of this Section. For more details on the $L_\infty$ algebra structures, reader may see \cite{fukaya2003deformation}.
\end{remark}

%-------------------------------------------------------------------------------------------------------------------------
\subsection{Scattering of two non-parallel walls}\label{sec:solve_MC_two_walls}

Suppose we are given two non-parallel walls
$\mathbf{w}_1 = (m_1, P_1, \Theta_1)$, and  $\mathbf{w}_2 = (m_2, P_2, \Theta_2)$ where $P_1, P_2$ are oriented tropical hyperplanes intersecting in a codimension $2$ tropical subspace $Q := P_1 \cap P_2$ in an affine convex coordinate chart $U \subset B_0$.
The ansatz in Definition \ref{ansatz} gives two Maurer-Cartan (abbrev. MC) solutions $\incoming_{\mathbf{w}_i} \in \widehat{\mathbf{G}}^1(U)$, $i=1, 2$, but their sum
$\incoming := \incoming_{\mathbf{w}_1} + \incoming_{\mathbf{w}_2} \in \widehat{\mathbf{G}}^1(U)$
does {\em not} solve the MC equation \eqref{MCequation}.

As we mentioned in the Introduction, the method of Kuranishi \cite{Kuranishi65} with a specific choice of the homotopy operator allow us to construct from $\incoming$ a MC solution $\Phi$ of $\widehat{\mathbf{G}}^1(U)$ {\em up to errors terms with exponential order in $\hp^{-1}$}, i.e. terms of the form $O(e^{-c/\hp})$.\footnote{$\incoming$ is not a MC solution even up to such errors terms.} More precisely, we will construct MC solutions of the dgLa $\widehat{\mathbf{g}^*/\mathcal{E}^*}(U)$, which is a quotient of a sub-dgLa of $\widehat{\mathbf{G}}^*(U)$, and show that they naturally give rise to consistent scattering diagrams.

We will first introduce the dgLa $\widehat{\mathbf{g}^*/\mathcal{E}^*}(U)$ in Section \ref{sec:MC_error} and construct a specific homotopy operator $H$ in Section \ref{con_propagator}, before starting the asymptotic analysis of the MC solutions we constructed in Sections \ref{sec:leading_order_MC} and \ref{sec:tropical_leading_order}. The key results are Theorem \ref{asy_support_theorem} and Lemma \ref{lem:semi_classical_integral}.

\subsubsection{Solving the MC equation modulo error terms with exponential order in $\hp^{-1}$}\label{sec:MC_error}

\begin{definition}\label{finite_Lie_alg}
We define a dg-Lie subalgebra in $\mathbf{G}^*_N(U)$ by
\begin{equation*}
\mathbf{g}^*_N(U) := \left(\bigoplus_m \filt^\infty_*(U)\bmc^m\right) \otimes_\inte \Lambda^\vee_{B_0}(U) \otimes_\comp (R/\mathbf{m}^{N+1}),
\end{equation*}
where $\filt^{\infty}_*(U) \subset \Omega^*_\hp(U)$ is the space of differential forms with polynomial $\hp^{-1}$ order defined in \ref{polynomial_growth}.
A general element of $\mathbf{g}^*_N(U)$ is a finite sum of the form
$
\sum_{j} \sum_{m, n} \alpha_{jm}^n \bmc^{m} \check{\partial}_n t^j,
$
where $\alpha_{jm}^n \in \filt^{\infty}_*(U)$. We have the inverse limit $\hat{\mathbf{g}}^*(U) : = \varprojlim \mathbf{g}^*_N(U)$.

There is a dg-Lie ideal $\mathcal{E}^*_N(U)$ of $\mathbf{g}^*_N(U)$ containing exponentially decay errors terms in $\hp^{-1}$:
\begin{equation}\label{error_ideal}
\mathcal{E}^*_N(U) := \left(\bigoplus_m \filt^{-\infty}_*(U)\bmc^m\right) \otimes_\inte \Lambda^\vee_{B_0}(U)\otimes_\comp (R/\mathbf{m}^{N+1}),
\end{equation}
where $\filt^{-\infty}_*(U) \subset \Omega^*_\hp(U)$ is the space of differential forms with exponential $\hp^{-1}$ order as in \ref{exponential_decay}.

Then we take the quotient
\begin{equation}
\mathbf{g}^*_N(U)/\mathcal{E}^*_N(U) = \left(\bigoplus_m \left(\filt^\infty_*(U)/ \filt^{-\infty}_*(U)\right) \bmc^m\right) \otimes_\inte \Lambda^\vee_{B_0}(U) \otimes_\comp (R/\mathbf{m}^{N+1})
\end{equation}
and define the dgLa $\widehat{\mathbf{g}^*/\mathcal{E}^*}(U)$ as the inverse limit
$\widehat{\mathbf{g}^*/\mathcal{E}^*}(U) := \varprojlim (\mathbf{g}^*_N(U)/\mathcal{E}^*_N(U)).$
\end{definition}

\begin{remark}\label{rem:delta_function_alg}
%We are work with the quotient sheaf $\filt^{\infty}_*/\filt^{-\infty}_*$ in order to extract essential information relating to scattering diagrams. In the local case concerning scattering of two walls that we are handling in this paper, indeed the MC equation of either the Kodaira-Spencer dgLa $KS_{\check{X}_0}(U)$ or the A-side dgLa $L_{X_0}(U)$ can only solved up to exponentially decaying error terms while these error terms will not contribute to the scattering diagrams.
The advantage of working with the quotient $\filt^{\infty}_*/\filt^{-\infty}_*$ is that, given any element $\alpha \in \filt^s_{P}(U)$ and any cut off function $\chi$ (independent of $\hp$) such that $\chi \equiv 1$ in a neighborhood of $P$, we have $\alpha = \chi \alpha$ in the quotient $\filt^s_{P}(U)/\filt^{-\infty}_*(U)$, so an element in $\filt^s_{P}(U)/\filt^{-\infty}_*(U)$ can be treated as a delta function supported along $P$.
\end{remark}

%We will try to solve the MC equation of the dgLa $\mathbf{g}^*_N(U)/\mathcal{E}^*_N(U)$ (or $\widehat{\mathbf{g}^*/\mathcal{E}^*}(U)$).
%The next lemma says that any MC solution of the dgLa $\mathbf{g}^*_N(U)/\mathcal{E}^*_N(U)$ (or $\widehat{\mathbf{g}^*/\mathcal{E}^*}(U)$) is gauge equivalent to 0 in a constructible $U$.
%the gauge can be uniquely determined by Lemma \ref{gauge_fixing_lemma}.

\begin{lemma}\label{cohomology_lemma}
For the dgLa $\mathbf{g}^*_N(U)/\mathcal{E}^*_N(U)$ in a contractible open subset $U$, we have
$H^{>0}(\mathbf{g}^*_N(U)/\mathcal{E}^*_N(U)) = 0$
and
$$H^0(\mathbf{g}^*_N(U)/\mathcal{E}^*_N(U)) = \left(\bigoplus_m H^0(\filt^\infty_*(U)/\filt^{-\infty}_*(U)) \bmc^m\right) \otimes_\inte \Lambda^\vee_{B_0}(U) \otimes_\comp (R/\mathbf{m}^{N+1})$$
where
$$
H^0(\filt^\infty_*(U)/\filt^{-\infty}_*(U)) = \frac{\{f:\real_{>0}\rightarrow \real \mid |f(\hp)| \leq C \hp^{-N} \text{ for some $C$ and $N$}\}}{  \{f:\real_{>0}\rightarrow \real \mid |f(\hp)| \leq C e^{-c/\hp} \text{ for some $c$ and $C$}\}}.
$$
\end{lemma}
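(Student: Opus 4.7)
The strategy is to reduce the calculation to a Poincar\'e-type argument on the quotient complex of differential forms. First, the differential $\pdb$ preserves the Fourier mode decomposition and acts as the de Rham differential $d$ on the form coefficient of each mode $\bmc^m \check{\partial}_n$ (as noted after Definition \ref{def:fourier_transform}). Hence the ideal $\mathcal{E}^*_N(U)$ splits along Fourier modes, and the quotient complex decomposes as
\[
\mathbf{g}^*_N(U)/\mathcal{E}^*_N(U) \;\cong\; \bigoplus_m \bigl(\filt^\infty_*(U)/\filt^{-\infty}_*(U),\, d\bigr)\,\bmc^m \otimes_\inte \Lambda^\vee_{B_0}(U) \otimes_\comp (R/\mathbf{m}^{N+1}).
\]
It therefore suffices to compute $H^*(\filt^\infty_*(U)/\filt^{-\infty}_*(U), d)$.

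Next, I would build a Poincar\'e-type homotopy on the quotient complex. Using the contraction $\rho_q : [0,1]\times U \to U$ to a basepoint $q$ from Definition \ref{real2homotopy}, set
\[
h(\alpha) := \int_0^1 \iota_{\dd{t}}\,\rho_q^*(\alpha)\,dt, \qquad \hat{P}(\alpha) := \alpha|_q \text{ in degree }0,
\]
so that Cartan's formula yields $dh + hd = \mathrm{Id} - \iota\hat{P}$ on all of $\Omega^*_\hp(U)$. The crucial point is to verify that $h$ preserves both $\filt^\infty_*(U)$ and $\filt^{-\infty}_*(U)$: the map $\rho_q$ is smooth and $\hp$-independent and sends any compact $K \subset U$ to a compact subset of $U$, so pulling back by $\rho_q$ and integrating over the compact interval $[0,1]$ preserves both polynomial $\hp^{-1}$ growth and exponential $\hp^{-1}$ decay of a differential form together with all its covariant derivatives. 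Consequently $h$ descends to a degree $-1$ operator on the quotient, still satisfying $dh + hd = \mathrm{Id} - \iota\hat{P}$ modulo $\filt^{-\infty}_*(U)$.

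From this formula it follows immediately that $H^{>0}(\filt^\infty_*(U)/\filt^{-\infty}_*(U))=0$ and that $H^0$ coincides with the image of $\iota\hat{P}$, which consists precisely of classes of $\hp$-dependent constants $f:\real_{>0}\to\real$ on $U$. The stated formula for $H^0(\filt^\infty_*(U)/\filt^{-\infty}_*(U))$ then follows from the definitions: for a function constant in the $U$-direction all spatial covariant derivatives vanish, so the polynomial-growth condition in Definition \ref{polynomial_growth} and the exponential-decay condition in Definition \ref{exponential_decay} reduce to the scalar bounds $|f(\hp)|\leq C\hp^{-N}$ and $|f(\hp)|\leq Ce^{-c/\hp}$ respectively. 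Tensoring back by the Fourier modes and $\Lambda^\vee_{B_0}(U) \otimes_\comp R/\mathbf{m}^{N+1}$ produces the claimed description of $H^0(\mathbf{g}^*_N(U)/\mathcal{E}^*_N(U))$.

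The main technical obstacle is the filtration-preservation step for $h$. Once $\rho_q$ is taken to be jointly smooth in $(t,x)$ and independent of $\hp$, this is essentially a chain-rule computation; the care needed lies in the fact that $\filt^{\pm\infty}_*$ are defined via estimates over arbitrarily small compact neighborhoods of each point, so one must track how the image $\rho_q([0,1]\times V)$ of a neighborhood of $q' \in U$ fits inside a chosen compact neighborhood where the original estimates on $\alpha$ hold. No deeper analytic input is required, since both the polynomial-growth class (stable under differentiation by $\hp$-independent vector fields and under integration in $t$) and the exponential-decay class (likewise stable) are manifestly preserved by these operations.
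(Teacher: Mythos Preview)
Your proposal is correct and follows essentially the same route as the paper: decompose by Fourier modes so that $\pdb$ becomes the de Rham $d$ on $\filt^\infty_*(U)/\filt^{-\infty}_*(U)$, then use a Poincar\'e-type homotopy coming from a contraction of $U$ to a point and check it preserves both $\filt^{\pm\infty}_*$, so the homotopy identity descends to the quotient. The paper writes its proof with the specific operator $H_m$ of Definition~\ref{MC_homotopy} in the spherical case but explicitly notes that the general contractible case proceeds via the contraction $\rho_{x^0}$ exactly as you outline.
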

The proof of the above Lemma \ref{cohomology_lemma} relies on construction of homotopy operator. We will give the proof using the homotopy operator $H$ constructed in Section \ref{con_propagator} when $U$ is a spherical neighborhood as described in Notation \ref{choice_of_U}; the proof of a general contractible $U$ works exactly in the same way by using the homotopy operator constructed from the map $\rho_{x^0} : [0,1] \times U \rightarrow U$ which contracts $U$ to a point $x^0$.

\subsubsection{Construction of the homotopy operator}\label{con_propagator}

Recall from Section \ref{algebraicsolveMC} that a homotopy operator $H$ (sometimes called a propagator) is needed for gauge fixing if we want to apply Kuranishi's method to solve the MC equation. To define this (and other operators), we may need to shrink $U$ to a spherical neighborhood as follows.

\begin{notation}\label{choice_of_U}
Suppose that we have two non-parallel walls $\mathbf{w}_i = (m_i, P_i, \Theta_i)$ ($i = 1, 2$) intersecting transversally in a codimension $2$ tropical subspace $Q := P_1 \cap P_2$ in an affine convex open subset $V \subset B_0$.
We fix a point $q_0 \in Q$.
By reversing the orientations on $P_i$'s (and replacing $\Theta_i$ by $\Theta_i^{-1}$ accordingly) if necessary, we can choose the oriented normals of $P_1$ and $P_2$ to be $-m_1 = -\nu_{P_2}$ and $\ -m_2 = \nu_{P_1}.$ We orient the rank $2$ normal bundle $NQ$ by the ordered basis $\{-m_1, -m_2\}$.
By identifying an open neighborhood of the zero section in the normal bundle $NQ$ with a tubular neighborhood of $Q$ in $B_0$, we see that the two walls are dividing $V \cap NQ$ into $4$ quadrants; this can be visualized in the 2-dimensional slice $V \cap NQ_{q_0}$ in $V \cap NQ$, as shown in Figure \ref{fig:quadrant}

\begin{figure}[h]
\begin{center}
\includegraphics[scale=0.35]{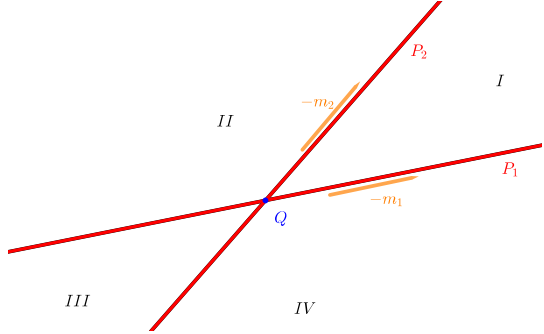}
\end{center}
\caption{The slice $V \cap NQ_{q_0}$ in $V \cap NQ$.}\label{fig:quadrant}
\end{figure}

Now we fix local affine coordinates in $V$ near $q_0$, and choose an affine flat metric $g_V$ with the property that $m_1$, $m_2$ and $TQ$ are perpendicular to each other.
Then we choose a point $x^0$ in the third quadrant in $NQ_{q_0}$ (see Figure \ref{fig:quadrant}) with $x^0 \notin (P_1 \cup P_2)$ and a ball $U \subset V$ (defined using the metric $g_V$) centered at $x^0$ which contains $q_0$.
We fix this neighborhood $U$ centered at $x^0$ and call it a {\em spherical neighborhood}; see Figure \ref{fig:spherical_nbh}. From this point on, we will work with a spherical neighborhood $U \subset B_0$ for the rest of this paper.

\begin{figure}[h]
\begin{center}
\includegraphics[scale=0.3]{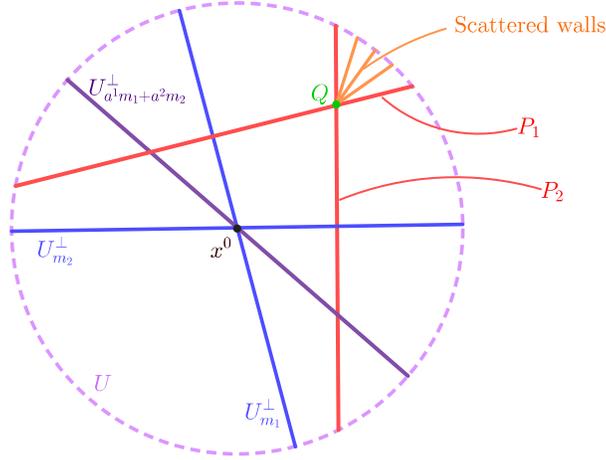}
\end{center}
\caption{The spherical neighborhood $U$.}\label{fig:spherical_nbh}
\end{figure}
\end{notation}

%\begin{remark}\label{spherical_neigh_remark}
%The above choice of neighborhood is arranged to write down the homotopy operator $H$ in order for the solution of equation \eqref{pseudoMC} to have the desired wall structures as $\mathcal{S}(\mathscr{D})$ given by Kontsevich-Soibelman's theorem \ref{KSscatteringtheorem}. In principle, the only canonical homotopy operator is supposed to be the one globally defined on whole $B_0$, by integrating along the integral flow determined by the Fourier modes all the way back to time $t=-\infty$ as mentioned in Remark \ref{homotopy_meaning}. The current choice made here is to inherit the necessary property from such a flow which responsible for giving a scattering diagram in the local case that we are considering.
%\end{remark}

Since $TU = \Lambda_{B_0} \otimes_\inte \real$ and $\Gamma(U, \Lambda_{B_0}) \cong M$, we can identify an element $0 \neq m \in M$ with an affine vector field $m \in \Gamma(U, \Lambda_{B_0}) \subset \Gamma(U, TU)$ (with respect to the affine structure on $B_0$). We denote by $U_m^{\perp}$ the tropical hyperplane perpendicular to $m$ with respect to the metric $g_V$. Then $U_m^{\perp}$ divides $U$ into two half-spheres $U_m^+$ and $U_m^-$, which are named so that $-m$ is pointing into $U_m^+$. The property that $m_1$, $m_2$ and $TQ$ are perpendicular to each other then implies that
\begin{equation}\label{eqn:Q_intersect_U_a}
Q \cap U \subset U_{a^1m_1 + a^2m_2}^+
\end{equation}
for all $(a^1,a^2) \in (\inte_{\geq 0})^2 \setminus \{0\}$; see Figure \ref{fig:spherical_nbh}.

To define a homotopy operator on $\mathbf{g}^*_N(U)$, we will first define one on the direct sum $\bigoplus_m \filt^\infty_*(U)\bmc^m$ and extend it by taking tensor product.
For each $m \in M$, recall that we have $\pdb(\alpha \bmc^{m}) = d(\alpha) \bmc^{m}$, where $d$ is the deRham differential on $U$. So the cohomology
$H^*(\filt^\infty_*(U)\bmc^m, \pdb) = \{f:\real_{>0}\rightarrow \real \mid |f(\hp)| \leq C \hp^{-N} \text{ for some $C$ and $N$}\}$
is represented by functions depending only on $\hp$ and with polynomial growth in $\hp^{-1}$.

We are going to construct a homotopy operator $H_m$ on $\filt^\infty_*(U)\bmc^m$ which retracts to its cohomology $H^*(\filt^\infty_*(U)\bmc^m) = H^*(\filt^\infty_*(U))\bmc^m$.
The hyperplane $U_m^{\perp}$ we chose above is playing the role of the reference hyperplane $R$ when we define the operator $I$ in \eqref{general_integral_operator} in Section \ref{sec:asy_support}.
In the current situation, we need a family of reference hyperplanes $U_{a^1m_1 + a^2m_2}^{\perp}$, and the condition \eqref{eqn:Q_intersect_U_a} is to ensure that we can define $H_m$ in the same way as $I$ and apply Lemma \ref{integral_lemma} for each $m = a^1m_1 + a^2m_2$.

Now for $0 \neq m \in M$, as in Lemma \ref{integral_lemma}, we use flow lines along the affine vector field $-m$ to define a diffeomorphism
$\tau_m : W_m \rightarrow U,$
where $W_m \subset \real \times U^\perp_m$ is the maximal domain of definition of $\tau$.
Under the diffeomorphism $\tau_m$, we obtain affine coordinates $(t =: u_{m,1}, u_{m,2}, \dots, u_{m,n})$ on $U$ such that $x^0 = (0, \dots, 0)$. Note that these coordinates satisfy the condition in Notations \ref{orthonormalcoordinates} and we will set $\uperp := (u_{m,2}, \dots, u_{m,n})$.
For $m = 0 \in M$, we will choose an arbitrary set of local affine coordinates $(u_{0,1}, u_{0,2}, \dots, u_{0,n})$ in defining the homotopy operator $H$.

In the coordinates $(u_{m,1}, \dots, u_{m,n})$, we decompose a differential form $\alpha \in \filt^\infty_*(U)$ uniquely as
\begin{equation}\label{eqn:alpha_decomposition}
\alpha = \alpha_0 + du_{m,1} \wedge \alpha_1,
\end{equation}
where $\iota_{\dd{u_{m,1}}} \alpha_0 = \iota_{\dd{u_{m,1}}} \alpha_1 = 0$.
We define a contraction $\rhoperp : \real \times U^{\perp}_{m} \rightarrow U^{\perp}_{m}$ by $\rhoperp(r,\uperp) := r\uperp$.

\begin{definition}\label{MC_homotopy}
We define the homotopy operator $H_m : \filt^\infty_*(U) \bmc^m \rightarrow \filt^\infty_{*-1}(U) \bmc^m$ by
$H_m(\alpha \bmc^m) := \left(I_{m,er}(\alpha) + I_m(\alpha)\right)\bmc^{m},$
where we set
\begin{align*}
I_{m,er}(\alpha)(u_{m,1},\uperp) & := \int_{0}^1 \rhoperp^*(\alpha_0(0,\cdot )) = \int_{0}^1 \big( \iota_{\dd{r}}\rhoperp^*(\alpha_0|_{U_m^{\perp}}) \big) dr,  \\
I_m(\alpha)(u_{m,1},\uperp) & := \int_{0}^{u_{m,1}} \alpha_1(s,\uperp) ds
\end{align*}
using the decomposition of differential forms $\alpha \in \filt^\infty_*(U)$ specified in \eqref{eqn:alpha_decomposition}.

We define the projection $\mathcal{P}_m : \filt^\infty_*(U) \bmc^m \rightarrow H^*(\filt^\infty_*(U)) \bmc^m$ by
by
$$\mathcal{P}_m (\alpha \bmc^m) := \begin{cases} (\alpha|_{x^0})\bmc^{m} & \text{for $\alpha$ of degree $0$},\\  0 & \text{otherwise},\end{cases}$$
where $\alpha|_{x^0}$ is evaluation of $\alpha$ at the point $x^0$ and is to be treated as a constant function along $U$, and
the operator $\iota_m : H^*(\filt^\infty_*(U))\bmc^m \rightarrow \filt^\infty_*(U) \bmc^m$ by
$\iota_m (\alpha \bmc^m) := \iota(\alpha) \bmc^m,$ where $\iota: H^*(\filt^\infty_*(U)) \hookrightarrow \filt^\infty_*(U)$ is the embedding of constant functions over $U$ at degree $0$ and $0$ otherwise.

We will abuse notations by treating $H_m$, $\mathcal{P}_m$ and $\iota_m$ as acting on the spaces $\filt^\infty_*(U)$ and $H^*(\filt^\infty_*(U))$.
\end{definition}

\begin{prop}\label{homotopy_prop}
The operator $H_{m}$ is a homotopy retract of $\filt^\infty_*(U) \bmc^m$ onto its cohomology $ H^*(\filt^\infty_*(U)) \bmc^m$, i.e. we have
$
\text{Id} - \iota_m \mathcal{P}_m = \pdb H_m + H_m \pdb.
$
\end{prop}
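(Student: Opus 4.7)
The plan is to verify the homotopy formula $\mathrm{Id}-\iota_m\mathcal{P}_m=\pdb H_m+H_m\pdb$ by viewing $H_m$ as a composition of two classical Poincar\'e-lemma homotopies: first $I_m$ contracts $U$ onto the hyperplane $U_m^\perp$ along the $u_{m,1}$-direction, then $I_{m,er}$ contracts $U_m^\perp$ radially onto the basepoint $x^0\in U_m^\perp$ via $\rhoperp$. Since $\pdb(\alpha\bmc^m)=(d\alpha)\bmc^m$ and $H_m$, $\iota_m$, $\mathcal{P}_m$ all act by the identity on the $\bmc^m$ factor, it is enough to check $dH_m+H_md=\mathrm{Id}-\iota\mathcal{P}$ as operators on $\filt^\infty_*(U)$.

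First, I would fix $\alpha\in\filt^\infty_*(U)$, decompose it as $\alpha=\alpha_0+du_{m,1}\wedge\alpha_1$ with $\iota_{\partial/\partial u_{m,1}}\alpha_0=\iota_{\partial/\partial u_{m,1}}\alpha_1=0$, and split $d=du_{m,1}\wedge\partial_{u_{m,1}}+d'$ where $d'$ differentiates only in the $\uperp$-variables. A direct computation, identical to the standard Poincar\'e lemma for a trivial interval bundle, yields
$$
dI_m(\alpha)+I_m(d\alpha)=\alpha-\pi_0^*\alpha,
$$
where $\pi_0:U\to U$ denotes the projection $(u_{m,1},\uperp)\mapsto(0,\uperp)$. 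Note the integrals are over the compact interval $[0,u_{m,1}]$, so polynomial $\hp^{-1}$ bounds are preserved and $I_m$ indeed maps $\filt^\infty_*(U)$ to $\filt^\infty_{*-1}(U)$.

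Second, I would observe that $I_{m,er}(\alpha)$ depends only on $\alpha_0|_{U_m^\perp}$, namely $I_{m,er}(\alpha)=\pi^*\bigl(I'_{m,er}(\alpha_0|_{U_m^\perp})\bigr)$, where $\pi:U\to U_m^\perp$, $(u_{m,1},\uperp)\mapsto\uperp$, and $I'_{m,er}$ is the classical Poincar\'e homotopy on the convex set $U_m^\perp$ for the radial contraction to $0=x^0$. Since $(d\alpha)_0|_{U_m^\perp}=d_{U_m^\perp}(\alpha_0|_{U_m^\perp})$ and since $I_{m,er}\alpha$ has no $du_{m,1}$ component and is independent of $u_{m,1}$ (so $d$ and $d_{U_m^\perp}$ can be interchanged through $\pi^*$), the standard Poincar\'e identity on $U_m^\perp$ gives
$$
dI_{m,er}(\alpha)+I_{m,er}(d\alpha)=\pi^*(\alpha_0|_{U_m^\perp})-\pi^*\bigl(\mathrm{ev}_{x^0}(\alpha_0|_{U_m^\perp})\bigr)=\pi_0^*\alpha-\iota\mathcal{P}(\alpha),
$$
where the last equality uses that $\mathrm{ev}_{x^0}\alpha_0|_{U_m^\perp}=\alpha|_{x^0}$ for $0$-forms and vanishes in positive degree.

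Summing the two identities telescopes $\pi_0^*\alpha$ and yields $dH_m(\alpha)+H_m(d\alpha)=\alpha-\iota\mathcal{P}(\alpha)$, which is the claim. The only substantive obstacle is bookkeeping: tracking how the decomposition $\alpha=\alpha_0+du_{m,1}\wedge\alpha_1$ interacts with $d$ (in particular the interior term $\partial_{u_{m,1}}\alpha_0$ that arises in $d\alpha$ and is annihilated by $I_{m,er}$ but absorbed by $I_m$), and confirming that $I_{m,er}$ legitimately implements the Poincar\'e homotopy on $U_m^\perp$ even though it is written as an operator on forms over the whole $U$.
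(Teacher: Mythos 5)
Your proof is correct. The paper states Proposition \ref{homotopy_prop} without proof, treating it as a standard computation, and your argument is exactly the expected one: you split $H_m = I_m + I_{m,er}$ and verify that $I_m$ implements the Poincar\'e homotopy for the interval direction (giving $dI_m + I_m d = \mathrm{Id} - \pi_0^*$) while $I_{m,er}$ implements the radial Poincar\'e homotopy on $U_m^\perp$ pulled back via $\pi$ (giving $dI_{m,er} + I_{m,er}d = \pi_0^* - \iota\mathcal{P}$), so the two telescope. The decomposition bookkeeping you flag is handled correctly: in $d\alpha$, the $(d\alpha)_1$-component $\partial_{u_{m,1}}\alpha_0 - d'\alpha_1$ is what $I_m$ sees, producing the boundary term $\alpha_0 - \alpha_0(0,\cdot)$, while the $(d\alpha)_0$-component $d'\alpha_0$ restricted to $u_{m,1}=0$ equals $d_{U_m^\perp}(\alpha_0|_{U_m^\perp})$, which is what $I_{m,er}$ sees — so both pieces close up as you claim. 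One small addition worth making explicit: the iterated-integral expression for $I_m$ requires that the segment from $(0,u_\perp)$ to $(u_{m,1},u_\perp)$ lie in $U$, and the contraction $\rho_\perp$ requires $U_m^\perp$ star-shaped about $x^0$; both hold because $U$ was chosen in Notations \ref{choice_of_U} to be a metric ball centered at $x^0$ in the coordinates $(u_{m,1},u_\perp)$.
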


The integral operator $H_m$ preserves $\filt^{-\infty}_*(U)$ because the path integrals preserve terms with exponential decay in $\hp$ as one can see from Definition \ref{exponential_decay}. Also, we have the natural identifications
\begin{align*}
H^*(\filt^\infty_*(U),d) & = \{f: \real_{>0} \rightarrow \real \mid |f(\hp)| \leq C \hp^{-N} \text{ for some $C$ and $N$}\},\\
H^*(\filt^{-\infty}_*(U),d) & = \{f: \real_{>0} \rightarrow \real \mid |f(\hp)| \leq C e^{-c/\hp} \text{ for some $c$ and $C$}\},
\end{align*}
under which we see that the operators $\mathcal{P}_m$ and $\iota_m$ can be descended to the quotient:
\begin{eqnarray*}
\mathcal{P}_m &: \filt^{\infty}_*(U)/ \filt^{-\infty}_*(U) \rightarrow H^*(\filt^{\infty}_*(U),d)/H^*(\filt^{-\infty}_*(U),d),\\
\iota_m &: H^*(\filt^{\infty}_*(U),d)/H^*(\filt^{-\infty}_*(U),d) \rightarrow \filt^{\infty}_*(U)/ \filt^{-\infty}_*(U),
\end{eqnarray*}
again in view of Definition \ref{exponential_decay}. Therefore, Proposition \ref{homotopy_prop} holds in the quotient $\filt^{\infty}_*(U)/ \filt^{-\infty}_*(U)$ as well.

\begin{definition}\label{pathspacehomotopy}
We define the operators
$H := \bigoplus H_m$, $\mathcal{P} := \bigoplus \mathcal{P}_m$ and $\iota := \bigoplus \iota_m$
acting on the direct sum $\bigoplus_m \filt^\infty_*(U)\bmc^m$ and its cohomology.
These operators extend naturally to the tensor product $\mathbf{g}^*_N(U) = \left(\bigoplus_m \filt^\infty_*(U)\bmc^m\right) \otimes_\inte \Lambda^\vee_{B_0}(U) \otimes_\comp (R/\mathbf{m}^{N+1})$ and descend to the quotient $\mathbf{g}^*_N(U)/\mathcal{E}^*_N(U)$ (since $\mathcal{E}^*_N(U)$ is a dg ideal of $\mathbf{g}^*_N(U)$).
We then take the inverse limit to define the operators $H$, $\mathcal{P}$ and $\iota$ acting on $\widehat{\mathbf{g}^*/\mathcal{E}^*}(U)$.
\end{definition}

\begin{remark}
We remark that the homotopy operators defined above depend on the choices of $U$, the affine coordinates, etc, and so does the MC solution $\Phi$ that we are going to construct. However, the scattering diagram $\mathscr{D}(\Phi)$ associated to $\Phi$ is independent of these choices.
\end{remark}

\begin{proof}[Proof of Lemma \ref{cohomology_lemma}]
We prove this lemma for each direct summand $(\filt^{\infty}_*(U)/\filt^{-\infty}_*(U)) \bmc^m$, which will be identified with $(\filt^{\infty}_*(U)/\filt^{-\infty}_*(U))$
so that the Witten differential $\pdb$ becomes the usual deRham differential $d$.
We have the following operators
\begin{align*}
H_m: & \filt^{\infty}_*(U)/\filt^{-\infty}_*(U)\rightarrow \filt^{\infty}_{*-1}(U)/\filt^{-\infty}_{*-1}(U)\\
\mathcal{P}_m: & \filt^{\infty}_*(U)/\filt^{-\infty}_*(U) \rightarrow
\frac{\{f: \real_{>0} \rightarrow \real \mid |f(\hp)| \leq C \hp^{-N} \text{ for some $C$ and $N$}\}}{\{f: \real_{>0} \rightarrow \real \mid |f(\hp)| \leq C e^{-c/\hp} \text{ for some $c$ and $C$}\}}\\
\iota_m: & \frac{\{f:\real_{>0}\rightarrow \real \mid |f(\hp)| \leq C \hp^{-N} \text{ for some $C$ and $N$}\}}{  \{f:\real_{>0}\rightarrow \real \mid |f(\hp)| \leq C e^{-c/\hp} \text{ for some $c$ and $C$}\}}\rightarrow \filt^{\infty}_*(U)/\filt^{-\infty}_*(U)
\end{align*}
defined in Definition \ref{pathspacehomotopy}. By descending the formula in Proposition \ref{homotopy_prop} to the quotient by $\filt^{-\infty}_*(U)$, we have
$
I-\iota_m \circ \mathcal{P}_m = \pdb H_m + H_m \pdb.
$
The result follows by a natural extension of this homotopy equation to $\mathbf{g}^*_N(U)$.
\end{proof}

%\begin{remark}
%We should impose a rapid decay assumption on $\Omega^*(\mathcal{M}(U), T\mathcal{M})$: for $\alpha\in \Omega^*(\mathcal{M}(U), T\mathcal{M})$, we should have $\sup_{pr(\gamma) \in K}|\dot{\gamma}|^k |\alpha(\gamma)| \rightarrow 0$ as $|\dot{\gamma}| \rightarrow \infty$ for all $k\in \inte_{\geq0}$ and compact $K \subset U$. Therefore $H^*(\mathcal{M}(U),T\mathcal{M})$ refers to those sum of these locally constant functions over all Fourier modes(i.e. constant on each connected component $U_m$) satisfying the rapid decay assumption. Obviously the operators $H$, $P$ and $\iota$ preserve this decay condition.
%\end{remark}

%\begin{remark}\label{homotopy_meaning}
%The choice of $H$ is motivated from Witten-Morse theory \cite{fukaya05, HelSj4, witten82, klchan-leung-ma}, when we treat $f_m$ to be multivalued "Morse" function, which indicate that integration along gradient flow lines on $\check{B}_0$ is a natural choice of propagator when $\hp \rightarrow 0$. Taking Legendre transform in Section \ref{semiflat_kahler} that corresponds to integration along $m$ direction on each component $U_m$. The integral in the normal direction $\uperp$ to $m$ in Definition \ref{MC_homotopy} is just an arbitrary choice in this local setting when we restrict ourself to $U \subset B_0$. Conceptually one simply integrate all the way along the backward flow on $\mathcal{M}$ globally, while that definition requires a precise description of the behaviour near singularities of $B$ which is unknown to us at the moment.
%\end{remark}

\subsubsection{Asymptotic analysis of Maurer-Cartan solutions}\label{sec:leading_order_MC}

Going back to the two given non-parallel walls
$\mathbf{w}_1 = (m_1, P_1, \Theta_1)$, $\mathbf{w}_2 = (m_2, P_2, \Theta_2).$
Recall that each wall crossing factor $\Theta_i$ is of the form
$
\text{Log}(\Theta_i) = \sum_{j, k \geq 1} a^{(i)}_{jk} w^{km_i} \check{\partial}_{n_i} t^j,
$
where $n_i \in \Lambda_{B_0}^\vee(U) \cong N$ is the unique primitive element satisfying $n_i \in (TP_i)^\perp$ and $(\nu_{P_i}, n_i) < 0$, and $\nu_{P_i} \in TU$ a normal to $P_i$ so that the orientation on $TP_i \oplus \real \cdot \nu_{P_i}$ agrees with that on $U$ (see Definition \ref{wall}). As in Remark \ref{general_input} in Section \ref{onewall}, we assume that the two inputs $\incoming^{(1)}, \incoming^{(2)}$ associated to the walls $\mathbf{w}_1, \mathbf{w}_2$ are of the following form:
\begin{assum}\label{input_assumption}
We assume that there are two MC solutions $\incoming^{(1)}, \incoming^{(2)}$ of $\widehat{\mathbf{g}^*/\mathcal{E}^*}(U)$,\footnote{Obviously MC solutions of the dg-Lie subalgebra $\hat{\mathbf{g}}^*(U) \subset KS_{\check{X}_0}(U)[[t]]$ descend to the quotient to give MC solutions of $\widehat{\mathbf{g}^*/\mathcal{E}^*}(U)$.} which can be represented by elements in $\hat{\mathbf{g}}^*(U)$ of the form
\begin{equation}\label{two_wall_incoming}
\incoming^{(i)} \in -\sum_{j, k \geq 1} a^{(i)}_{jk}(\delta_{jk}^{(i)}+\filt^{0}_{P_i}(U))\bmc^{km_i} \check{\partial}_{n_i} t^j
\end{equation}
with $a^{(i)}_{jk} \neq 0$ only for finitely many $k$'s for each fixed $j$, and each $\delta_{jk}^{(i)} \in \filt^1_{P_i}(U)$ can be written as $\delta_{jk}^{(i)} = (\pi\hp)^{-1/2} e^{-\frac{\eta^2}{\hp}} d\eta$ in some neighborhood $W_{P_i}$ of $P_i$ for some affine linear function $\eta$ on $W_{P_i}$ such that $P_i$ is defined by $\eta=0$ locally and $\iota_{\nu_{P_i}} d\eta > 0$.
\end{assum}

For convenience, we will abuse notations and use $\incoming^{(i)} \in \hat{\mathbf{g}}^*(U)$ to denote its class in $\widehat{\mathbf{g}^*/\mathcal{E}^*}(U)$ as well.
We will also denote by
\begin{equation}\label{essential_incoming}
\eincoming^{(i)} := -\sum_{j, k \geq 1} a^{(i)}_{jk}\delta_{jk}^{(i)}\bmc^{km_i} \check{\partial}_{n_i } t^j
\end{equation}
the leading order term of the input $\incoming^{(i)}$ for $i = 1, 2$. Then we have
\begin{equation}\label{incoming_asy_support}
\eincoming^{(i)}\in \left(\bigoplus_{k\geq 1} \filt^{1}_{P_i}(U) \bmc^{km_i} \check{\partial}_{n_i}\right)[[t]]; \quad
\incoming^{(i)} - \eincoming^{(i)}  \in \left(\bigoplus_{k\geq 1} \filt^{0}_{P_i}(U) \bmc^{km_i} \check{\partial}_{n_i}\right)[[t]].
\end{equation}
%where $n_i \in \Lambda_{B_0}^\vee(U) \cong N$ is the unique primitive element satisfying $n_i \in (TP_i)^\perp$ and $(\nu_{P_i}, n) > 0$.

We now solve the MC equation of the dgLa $\widehat{\mathbf{g}^*/\mathcal{E}^*}(U)$ by solving the equation \eqref{pseudoMC} with the input data
$
\incoming := \incoming^{(1)} + \incoming^{(2)}.
$
Using the homotopy operator $H$ and applying the sum over trees formula \eqref{solve_sum_over_trees}, we obtain an element
\begin{equation}\label{eqn:MC_sol_Phi}
\Phi := \sum_{k\geq 1 } \frac{1}{k!}\mathfrak{I}_{k}(\incoming, \ldots, \incoming) = \sum_{k\geq 1} \mathfrak{l}_k(\incoming, \ldots, \incoming),
\end{equation}
in $\hat{\mathbf{g}}^*(U)$, whose class in $\widehat{\mathbf{g}^*/\mathcal{E}^*}(U)$ will also be denoted by $\Phi$.

\begin{lemma}\label{approximated_MC}
The solution $\Phi$ constructed from the input $\incoming = \incoming^{(1)} + \incoming^{(2)}$ using the equation \eqref{pseudoMC} and the homotopy operator $H$ defined in Definition \ref{pathspacehomotopy} is a MC solution in $\widehat{\mathbf{g}^*/\mathcal{E}^*}(U)$, i.e. we have $\mathcal{P}[\Phi,\Phi] = 0$ in $\widehat{\mathbf{g}^*/\mathcal{E}^*}(U)$.
\end{lemma}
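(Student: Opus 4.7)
The plan is to obtain this lemma as a direct consequence of the Kuranishi-style construction reviewed in Section \ref{algebraicsolveMC}, applied now to the quotient dgLa $\widehat{\mathbf{g}^*/\mathcal{E}^*}(U)$ together with the homotopy datum $(\iota,\mathcal{P},H)$ of Definition \ref{pathspacehomotopy}. The first task will be to verify that the combined input $\incoming = \incoming^{(1)} + \incoming^{(2)}$ is still a cocycle after passing to the quotient, so that the recursion \eqref{MCiterationequation} has meaning and the tree expansion \eqref{eqn:MC_sol_Phi} actually produces a solution of the pseudo-MC equation \eqref{pseudoMC}.

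For this cocycle property, I observe that each $\incoming^{(i)}$ is by hypothesis a Maurer-Cartan element of $\widehat{\mathbf{g}^*/\mathcal{E}^*}(U)$, so $\pdb \incoming^{(i)} = -\tfrac{1}{2}[\incoming^{(i)},\incoming^{(i)}]$ in the quotient. The form \eqref{two_wall_incoming} of $\incoming^{(i)}$ packages only terms with asymptotic support on the single wall $P_i$; the self-intersection $P_i \cap P_i = P_i$ is not transversal (it has codimension $1$ rather than $2$), so the non-transversal statement of Lemma \ref{filtrationlemma} forces $[\incoming^{(i)},\incoming^{(i)}] \in \mathcal{E}^*(U)$. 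Consequently $\pdb \incoming^{(i)} = 0$ in the quotient for both $i=1,2$, and hence $\pdb \incoming = 0$.

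With this in hand, the tree sum \eqref{eqn:MC_sol_Phi} makes sense termwise modulo $\mathbf{m}^{N+1}$ for every $N$ (since the input lies in $\mathbf{m}$ and only finitely many trees contribute per $t$-order), and by the standard recursion it solves \eqref{pseudoMC} in $\widehat{\mathbf{g}^*/\mathcal{E}^*}(U)$. Proposition \ref{pseudoMC_to_MC} then reduces the statement to showing $\mathcal{P}[\Phi,\Phi]=0$ in the quotient.

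This final vanishing is immediate by a degree count: $\Phi \in \widehat{\mathbf{g}^*/\mathcal{E}^*}(U)$ has form-degree $1$, so $[\Phi,\Phi]$ is a sum of form-degree $2$ contributions, while the projector $\mathcal{P}=\bigoplus_m \mathcal{P}_m$ of Definition \ref{MC_homotopy} is supported entirely on form-degree $0$. Hence $\mathcal{P}[\Phi,\Phi]=0$ automatically, completing the argument. The only real content is thus the absorption of the self-brackets $[\incoming^{(i)},\incoming^{(i)}]$ into $\mathcal{E}^*(U)$, which is precisely the feature that motivated the passage to the quotient dgLa in Section \ref{sec:MC_error}; the main obstacle I anticipate in writing this up carefully is tracking the bookkeeping that identifies the non-main terms of the self-bracket (the $\alpha_1\wedge\nabla_{\partial}\alpha_2$ pieces in the formula expanded in the proof of Lemma \ref{filtrationlemma}) with differential forms in $\filt^{-\infty}_*$ rather than merely in some $\filt^s_{P_i}$.
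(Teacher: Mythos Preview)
Your argument is correct, and it is genuinely simpler than the paper's own proof. The paper postpones the proof of Lemma \ref{approximated_MC} until after Theorem \ref{asy_support_theorem} and then argues as follows: by that theorem each $\Phi^{(a)}$ has asymptotic support on $P_a$, and since the center $x^0$ was chosen away from all the $P_a$'s, $\Phi$ (and hence $[\Phi,\Phi]$) is exponentially small near $x^0$; because $\mathcal{P}$ is evaluation at $x^0$, the result $\mathcal{P}[\Phi,\Phi]$ lies in $\mathcal{E}^*$ and so vanishes in the quotient. Your route bypasses the asymptotic‐support machinery entirely by observing that, by the explicit Definition \ref{MC_homotopy}, $\mathcal{P}_m$ already annihilates every form of positive degree, so $\mathcal{P}[\Phi,\Phi]=0$ holds on the nose (not just modulo $\mathcal{E}^*$). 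This is a strictly more elementary argument; the paper's approach has the advantage of establishing along the way that $[\Phi,\Phi]$ itself is exponentially small near $x^0$, but that is not needed for the lemma as stated.

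Two remarks on your write-up. First, the ``anticipated obstacle'' you flag is not an obstacle: the non-transversal clause of Lemma \ref{filtrationlemma} already covers \emph{all} terms of the bracket, including the $\alpha_1\wedge\nabla_{\partial}\alpha_2$ pieces, because $\nabla_{\partial}\alpha_2$ still satisfies condition~(2) of Definition \ref{asypmtotic_support_def} (it is again of the form $h'\nu_{P_i}+\eta'$), so $\nu_{P_i}\wedge\nu_{P_i}=0$ kills the main part and the error is in $\filt^{-\infty}$. Second, note that the recursion \eqref{MCiterationequation} and the tree sum \eqref{eqn:MC_sol_Phi} produce a solution of \eqref{pseudoMC} for \emph{any} input $\incoming$, closed or not; the closedness $\pdb\incoming=0$ in the quotient is only needed when invoking Proposition \ref{pseudoMC_to_MC}, which you correctly verify.
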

We postpone the proof of Lemma \ref{approximated_MC} to Section \ref{sec:leading_order_MC}. From Lemmas \ref{approximated_MC} and \ref{cohomology_lemma}, we obtain a unique element $\varphi \in \widehat{\mathbf{g}^*/\mathcal{E}^*}(U)$ satisfying $e^{\varphi}* 0 = \Phi$ and $\mathcal{P} (\varphi) = 0$ in $\widehat{\mathbf{g}^*/\mathcal{E}^*}(U)$ using Lemma \ref{gauge_fixing_lemma}.
%We will be extracting the consistency behavior of scattering diagram from $\varphi$ in the coming Section \ref{sec:key_lemmas}. Before that, a detailed description on the leading order term of the MC solution $\Phi$ is needed, which is the main discussion of the coming Section \ref{sec:leading_order_MC} and Section \ref{sec:tropical_leading_order}. In these two Sections, we will treat $\Phi = \varXi + \incoming$ as elements in $\hat{\mathbf{g}}^*(U)$, with terms in $\varXi$ are determined iteratively by Equation \eqref{MCiterationequation}, to analyze the asymptotic behavior of $\Phi \ (\text{mod $\mathbf{m}^{N+1}$})$ for each $N \in \inte_{>0}$.
To start the asymptotic analysis of $\Phi$ and $\varphi$, we first decompose the Lie bracket $[\cdot,\cdot]$ on $\widehat{\mathbf{g}^*/\mathcal{E}^*}(U)$ into three types of operators:
%on $\filt^{\infty}_*(U)$, and going to analyze the behavior of these operators when applying to forms with asymptotic supports defined in Section \ref{sec:asy_support}.

\begin{definition}\label{lie_bracket_decomp}
For $\alpha =  f \bmc^{m} \check{\partial}_{n}$ and $\beta = g \bmc^{m'} \check{\partial}_{n'}$ where $f, g \in \filt^{\infty}_*(U)$, we decompose the Lie bracket $[\cdot, \cdot]$ into three operators $\natural$, $\sharp$ and $\flat$ defined by
\begin{align*}
\natural(\alpha, \beta) & := (-1)^{\bar{f}(\bar{g}+1)}f g [\bmc^{m} \check{\partial}_{n}, \bmc^{m'} \check{\partial}_{n'}],\\
\sharp(\alpha,\beta) & :=  (-1)^{\bar{f}(\bar{g}+1)}f (\nabla_{\partial_{n}} g) \bmc^{m+m'} \check{\partial}_{n'},\\
\flat(\alpha,\beta) & := (-1)^{(\bar{f}+1)\bar{g}} g (\nabla_{\partial_{n'}} f) \bmc^{m+m'} \check{\partial}_{n};
\end{align*}
here $\bar{f}$ and $\bar{g}$ denote the degrees of $f$ and $g$ respectively.
These operators extend by linearity to $\mathbf{g}^*_N(U)$ by treating a general element as a polynomial on the basis $\{\bmc^m t^j\}$, and descend to the quotient $\mathbf{g}^*_N(U)/\mathcal{E}^*_N(U)$, and can be further extended to $\hat{\mathbf{g}}^*(U)$ and $\widehat{\mathbf{g}^*/\mathcal{E}^*}(U)$ by taking inverse limits.
\end{definition}

Next we will decompose the operation $\mathfrak{l}_{k}$ defined in Definition \ref{r_tree_operation} according to the above decomposition of the Lie bracket $[\cdot,\cdot]$, the powers of the formal variable $t$ and the Fourier modes $\{\bmc^m\}$. For this purpose, we need to introduce the notion of a {\em labeled $k$-tree}:

\begin{definition}\label{label_tree_def}
A {\em labeled ribbon $k$-tree} is a ribbon $k$-tree $\lrtr$ together with
\begin{itemize}
\item
a labeling of each trivalent vertex $v \in \lrtr^{[0]}$ by $\natural, \sharp$ or $\flat$, and
\item
a labeling of each incoming edge $e \in \partial_{in}^{-1}(\lrtr^{[0]}_{in})$ by a pair $(m_e, j_e)$,
where $m_e = k m_i$ (for some $k>0$ and $i = 1, 2$) specifies the Fourier mode and $j_e \in \inte_{>0}$ specifies the order of the formal variable $t$ (corresponding to the input term $\bmc^{km_i} t^{j_e}$ in $\hat{\mathbf{g}}^*(U)$).
\end{itemize}
Similarly, we define a {\em labeled $k$-tree} as a $k$-tree $\ltr$ together with a labeling of the trivalent vertices $\ltr^{[0]}$ by $\natural$ or $\sharp+\flat$ (as only symmetric operations are allowed if there is no ribbon structure) and the same labeling of the incoming edges $\partial^{-1}_{in}(\ltr^{[0]}_{in})$ as above.
We use $\underline{\lrtr}$ to denote the underlying labeled $k$-tree of a labeled ribbon $k$-tree $\lrtr$.

Two labeled ribbon $k$-trees $\lrtr_1$ and $\lrtr_2$ (resp. two labeled $k$-trees $\ltr_1$ and $\ltr_2$) are said to be {\em isomorphic} if they are isomorphic as ribbon $k$-trees (resp. $k$-trees) and the isomorphism preserves the labeling. The set of isomorphism classes of labeled ribbon $k$-trees (resp. labeled $k$-trees) will be denoted by $\lrtree{k}$ (resp. $\ltree{k}$).
As before, we will abuse notations by using $\lrtr$ (resp. $\ltr$) to stand for an isomorphism class of labeled ribbon $k$-trees (resp. labeled $k$-trees).
\end{definition}

\begin{notation}\label{not:edge_labeling}
For a labeled ribbon $k$-tree $\lrtr$ (resp. labeled $k$-tree $\ltr$), there is an induced labeling of all the edges in $\lrtr^{[1]}$ (resp. $\ltr^{[1]}$) by the rule that at any trivalent vertex $v \in \lrtr^{[0]}$ (resp. $v \in \ltr^{[0]}$) with two incoming edges $e_1, e_2$ and one outgoing edge $e_3$, we set
$(m_{e_3},j_{e_3}) := (m_{e_1},j_{e_1}) + (m_{e_2},j_{e_2}).$
We also write $(m_{\lrtr}, j_{\lrtr})$ (resp. $(m_{\ltr}, j_{\ltr})$) for the labeling of the unique edge $e_o$ attached to the outgoing vertex $v_o$.
\end{notation}

\begin{definition}\label{color_tree_operation}
Given a labeled ribbon $k$-tree $\lrtr$, we label the incoming vertices by $v_1,\dots,v_k$ according to its cyclic ordering.
We define the operator (similar to Definition \ref{r_tree_operation})
$
\mathfrak{l}_{k,\lrtr}: (\hat{\mathbf{g}}^*(U)[1])^{\otimes k} \rightarrow \hat{\mathbf{g}}^*(U)[1],
$
for inputs $\zeta_1, \dots, \zeta_k$ by
\begin{enumerate}
\item
extracting the coefficient of the term $\bmc^{m_{e_i}} t^{j_{e_i}}$ in $\zeta_i$ and aligning it as the input at $v_i$,
\item
applying the operators $\natural$, $\sharp$ or $\flat$ to each trivalent vertex $v \in \lrtr^{[0]}$ according to the labeling,
\item
and applying the homotopy operator $-H$ to each edge in $\lrtr^{[1]}$.
\end{enumerate}
The operator $\mathfrak{l}_{k,\lrtr}$ descends to $\widehat{\mathbf{g}^*/\mathcal{E}^*}(U)$ and will be denoted by the same notation.
\end{definition}

\begin{notation}\label{two_sets_of_trees}
We decompose the set $\lrtree{k}$ of isomorphism classes of labeled ribbon $k$-trees into two parts:
$\lrtree{k} = \lrtree{k}_0 \sqcup \lrtree{k}_1,$
where $\lrtree{k}_0$ consists of trees whose trivalent vertices are all labeled by $\natural$ and $\lrtree{k}_1 := \lrtree{k} \setminus \lrtree{k}_0$. We then consider the operators
\begin{align*}
\mathfrak{l}_{k,0} := \sum_{\lrtr \in \lrtree{k}_0} \frac{1}{2^{k-1}}\mathfrak{l}_{k,\lrtr},\
\mathfrak{l}_{k,1} := \sum_{\lrtr \in \lrtree{k}_1} \frac{1}{2^{k-1}}\mathfrak{l}_{k,\lrtr}.
\end{align*}
\end{notation}

It is easy to see that for each labeled ribbon $k$-tree $\lrtr$, the labeling $m_{\lrtr}$ associated to the unique outgoing edge is of the form
$m_{\lrtr} = l (a_1 m_1 + a_2 m_2)$
for some $l \in \inte_{>0}$ and $(a_1, a_2) \in (\inte_{\geq 0})^2_{\text{prim}}$, where $(\inte_{\geq 0})^2_{\text{prim}}$ denotes the set of all primitive elements in $(\inte_{\geq 0})^2 \setminus \{0\}$, so the solution $\Phi$ can be decomposed as a sum of Fourier modes parametrized by $(\inte_{\geq 0})^2_{\text{prim}}$.

\begin{notation}\label{two_wall_mode}
We let $m_a := a_1 m_1 + a_2 m_2$ for $a = (a_1, a_2) \in (\inte_{\geq 0})^2_{\text{prim}}$. Note that we have $m_{(1,0)} = m_1$ and $m_{(0,1)} = m_2$.
For each $a \in (\inte_{\geq 0})^2_{\text{prim}}$, we let $P_a$ be the tropical half-hyperplane
$P_a = Q - \real_{\geq 0} \cdot m_a.$
We equip each $P_a$ with a normal $\nu_{P_a}$ such that $\{-m_a,\nu_{P_a}\}$ agrees with orientation given by $\{-m_1,-m_2\}$ on $NQ$, and this gives an orientation on $P_a$ such that the orientation of $TP_a \oplus \real\cdot \nu_{P_a}$ agrees with that of $B_0$ (so that $\nu_{P_a}$ satisfies the condition in Definition \ref{wall} as well).
\end{notation}

\begin{definition}\label{solving_MC_tree}
Given the input $\incoming = \incoming^{(1)} + \incoming^{(2)} \in \hat{\mathbf{g}}^*(U)$,
we put
$
\emc := \sum_{k\geq 1 } \mathfrak{l}_{k,0}(\eincoming, \ldots, \eincoming) = \sum_{a \in (\inte_{\geq 0 })^2_{\text{prim}}} \emc^{(a)},
$
where
$$\emc^{(a)} \in \bigoplus_{\substack{k\geq 1\\ 1\leq j\leq N}}\sum_{n \in \Lambda^\vee_{B_0}(U): n \perp m_a} \filt^{\infty}_*(U) \cdot  \bmc^{km_a} \check{\partial}_{n} t^j \ (\text{mod $\mathbf{m}^{N+1}$})$$
for each $a \in (\inte_{\geq 0 })^2_{\text{prim}}$, and
$
\digamma := \sum_{k\geq 1 }( \mathfrak{l}_{k}(\incoming, \ldots, \incoming)-\mathfrak{l}_{k,0}(\eincoming, \ldots, \eincoming)  )  =\sum_{a \in (\inte_{\geq 0 })^2_{\text{prim}}} \digamma^{(a)},
$
where
$$\digamma^{(a)} \in \bigoplus_{\substack{k\geq 1\\ 1\leq j\leq N}} \sum_{n \in \Lambda^\vee_{B_0}(U)} \filt^{\infty}_*(U) \cdot \bmc^{km_a} \check{\partial}_{n} t^j \ (\text{mod $\mathbf{m}^{N+1}$})$$
for each $a \in (\inte_{\geq 0 })^2_{\text{prim}}$.
\end{definition}

Then we have
$\Phi = \emc + \digamma,$
where $\emc$ are the leading $\hp$ order terms and $\digamma$ are the error terms, as $\hp \rightarrow 0$.
We also put $\Phi^{(a)} := \emc^{(a)} + \digamma^{(a)}$.
The key result on the asymptotic analysis of $\Phi$ is the following:

\begin{theorem}\label{asy_support_theorem}
For each $a \in (\inte_{>0})^2_{\text{prim}}$, we have
$$
\emc^{(a)}  \in  \left(\bigoplus_{k\geq 1} \filt^{1}_{P_a}(U) \bmc^{km_a} \check{\partial}_{n_a}\right)[[t]]; \quad
\digamma^{(a)}  \in  \left(\bigoplus_{k\geq 1} \sum_{n \in \Lambda_{B_0}^\vee(U)} \filt^{0}_{P_a}(U) \bmc^{km_a} \check{\partial}_{n}\right)[[t]],
$$
where $n_a \in \Lambda_{B_0}^\vee(U)$ is the unique primitive normal to $P_a$ such that $(\nu_{P_a},n_a) < 0$.
\end{theorem}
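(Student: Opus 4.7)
My plan is to analyze each summand $\mathfrak{l}_{k,\lrtr}(\incoming,\ldots,\incoming)$ tree by tree, propagating information about the asymptotic support (the filtration index $s$, the codimension-$k$ polyhedral subset $P$, and the direction $n \in \Lambda^\vee_{B_0}$) from leaves to root using Lemmas \ref{filtrationlemma} and \ref{integral_lemma}. Starting from each leaf, which by Assumption \ref{input_assumption} contributes an element of $\filt^{1}_{P_i}(U)\bmc^{k m_i}\check{\partial}_{n_i}$ (for $\eincoming^{(i)}$) or $\filt^{0}_{P_i}(U)\bmc^{k m_i}\check{\partial}_{n_i}$ (for $\incoming^{(i)}-\eincoming^{(i)}$), I will show inductively that at every internal vertex of a tree in $\lrtree{k}_0$, combining two subtree outputs $\Psi_1 \in \filt^{1}_{P_{a'}}(U)\bmc^{l'm_{a'}}\check{\partial}_{n_{a'}}$ and $\Psi_2 \in \filt^{1}_{P_{a''}}(U)\bmc^{l''m_{a''}}\check{\partial}_{n_{a''}}$ via $\natural$ followed by $-H$ produces an element of $\filt^{1}_{P_b}(U)\bmc^{l m_b}\check{\partial}_{n_b}$ for the primitive $b$ and integer $l$ with $lm_b = l'm_{a'}+l''m_{a''}$.

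The support propagation uses Lemma \ref{filtrationlemma}: since $P_{a'}\pitchfork P_{a''}$ share exactly the common boundary $Q$ (for $a'\ne a''$), we get $\Psi_1\wedge\Psi_2 \in \filt^{2}_{Q}(U)$; when $a'=a''$, the Lie bracket vanishes because $(m_{a'},n_{a'})=0$. Then $-H = -(I_{lm_b} + I_{lm_b,er})$ is applied: since $m_b \perp TQ$ with respect to the chosen metric $g_V$, the flow along $-m_b$ is transverse to $Q$, and Lemma \ref{integral_lemma} yields $-I_{lm_b}(\Psi_1\wedge\Psi_2) \in \filt^{1}_{P_b}(U)$, where $P_b = (Q+\real_{\geq 0}\cdot(-m_b))\cap U$; meanwhile the contribution of $I_{lm_b,er}$ is in $\filt^{-\infty}_{*}(U)$, thanks to the placement of $x^0$ in Notation \ref{choice_of_U} so that $Q \subset U_{m_b}^+$ and hence $U_{m_b}^\perp \cap Q = \emptyset$. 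For the direction, I compute using \eqref{vertex_lie_algebra} that the output is $\check{\partial}_{(l''m_{a''},n_{a'})n_{a''} - (l'm_{a'},n_{a''})n_{a'}}$; the identities $(m_{a'},n_{a'})=(m_{a''},n_{a''})=0$ force this vector to lie in $(T P_b)^\perp$, which is one-dimensional because $TP_b = TQ \oplus \real m_b$ and $n_{a'}, n_{a''}\in (TQ)^\perp$, so it must be a scalar multiple of $n_b$.

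For the error part $\digamma^{(a)}$, two sources contribute: trees in $\lrtree{k}_0$ with at least one leaf in $\filt^{0}_{P_i}$ (the subleading part of $\incoming^{(i)}$), and trees in $\lrtree{k}_1$ containing at least one $\sharp$- or $\flat$-labeled vertex. In the first case, the same induction goes through with exactly one leaf dropping from $\filt^{1}$ to $\filt^{0}$, giving an output in $\filt^{0}_{P_a}(U)\bmc^{lm_a}\check{\partial}_{n_a}$. In the second case, I use the identity $\partial_n = (\hp/4\pi) \sum n^j g_{jk}\partial/\partial x_k$ from \eqref{affine_vector_field}, together with \eqref{asy_support_basic_property}, to see that $\sharp$ and $\flat$ act as $\hp\cdot \nabla$, which maps $\filt^{s}_P$ to $\filt^{s-1}_P$; thus each such vertex drops the filtration index by one compared with $\natural$, and the overall output lies in $\filt^{0}_{P_a}(U)$ (or lower, which is still contained in $\filt^{0}_{P_a}(U)$ by \eqref{filtration}). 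Here the output direction is no longer forced to be $n_a$, since $\sharp$ and $\flat$ preserve one of the input directions $n_{a'}$ or $n_{a''}$, which explains the wider sum over $n\in\Lambda^\vee_{B_0}(U)$.

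The main obstacle is the bookkeeping at the inductive step: verifying that the transverse intersection $P_{a'}\cap P_{a''} = Q$ fits the hypothesis of Lemma \ref{support_product} even though the $P_{a'}$'s are half-hyperplanes rather than full hyperplanes, and confirming that the ``erroneous'' half $I_{m,er}$ of the homotopy genuinely contributes only exponentially small terms. Both points rely delicately on the choice of spherical neighborhood $U$ and base point $x^0$ in Notation \ref{choice_of_U}, and on the orthogonality built into the auxiliary metric $g_V$, which together ensure that all the ``sweeping'' directions $-m_a$ point from $x^0$ toward $Q$ and that $x^0$ is separated from $Q$ by the reference hyperplanes $U_{m_a}^\perp$.
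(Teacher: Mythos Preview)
Your proposal is correct and follows essentially the same approach as the paper. The paper packages the induction into an auxiliary Lemma~\ref{tree_lemma}, which formalizes your edge-by-edge propagation by assigning to each edge $e$ a polyhedral subset $P_e$ (Notation~\ref{P_edge}) and separating out the degenerate case $P_{\lrtr}=\emptyset$ (non-transversal intersection at some internal vertex) as yielding $\filt^{-\infty}$; but the inductive engine---Lemma~\ref{support_product}/\ref{filtrationlemma} at each vertex, then the decomposition $H_m = I_m + I_{m,er}$ with Lemma~\ref{integral_lemma} on each internal edge, using \eqref{eqn:Q_intersect_U_a} to kill $I_{m,er}$---is exactly what you describe. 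Your inline verification that the output direction lies in $\inte\cdot n_a$ via \eqref{vertex_lie_algebra} is what the paper defers to the proof of Lemma~\ref{lem:semi_classical_integral}.
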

\begin{proof}
According to the definitions of $\emc^{(a)}$ and $\digamma^{(a)}$ in Definition \ref{solving_MC_tree}, this theorem is equivalent to the following statements:
\begin{equation*}
\left\{
\begin{array}{rc}
\mathfrak{l}_{k,\lrtr}(\eincoming,\dots,\eincoming) \in \filt^{1}_{P_a}(U) \bmc^{m_\lrtr} \check{\partial}_{n_a} t^{j_{\lrtr}} & \text{if $\lrtr \in \lrtree{k}_0$},\\
\mathfrak{l}_{k,\lrtr}(\incoming,\dots,\incoming) - \mathfrak{l}_{k,\lrtr}(\eincoming,\dots,\eincoming) \in \sum_{n}  \filt^{0}_{P_a}(U) \bmc^{\lrtr} \check{\partial}_{n} t^{j_{\lrtr}} & \text{if $\lrtr \in \lrtree{k}_0$},\\
\mathfrak{l}_{k,\lrtr}(\incoming,\dots,\incoming) \in \sum_{n} \filt^{0}_{P_a}(U) \bmc^{\lrtr} \check{\partial}_{n} t^{j_{\lrtr}} & \text{if $\lrtr \in \lrtree{k}_1$}.
\end{array}
\right.
\end{equation*}
The condition $n_a \perp P_a$ in the first statement follows from a simple induction argument using the formula \eqref{vertex_lie_algebra}; see the proof of Lemma \ref{lem:semi_classical_integral} for more details.
All other statements follow from Lemma \ref{tree_lemma} below.
\end{proof}

To state Lemma \ref{tree_lemma}, we fix a labeled ribbon $k$-tree $\lrtr$ whose incoming edges are $e_1, \dots, e_k$ with labeling $(m_{e_1},j_{e_1}), \dots, (m_{e_k},j_{e_k})$ respectively.

We then consider the operation
$\mathfrak{l}_{k,\lrtr}((\alpha_1 \check{\partial}_{n_1}) \bmc^{m_{e_1}} t^{j_{e_1}},\dots,(\alpha_k \check{\partial}_{n_k}) \bmc^{m_{e_k}}t^{j_{e_k}})$
for given $\alpha_1, \dots, \alpha_k \in \filt^{\infty}_*(U)$ and $n_1, \dots, n_k \in \Gamma(U,\Lambda^\vee_{B_0})$, defined in exactly the same way as in Definition \ref{color_tree_operation},
and treat it as an operation on $\alpha_1 \check{\partial}_{n_1}, \dots, \alpha_k \check{\partial}_{n_k}$ via the formula
\begin{equation}\label{label_tree_operation}
\mathfrak{l}_{k,\lrtr}(\alpha_1 \check{\partial}_{n_1},\dots,\alpha_k \check{\partial}_{n_k})\bmc^{m_{\lrtr}}t^{j_{\lrtr}} := \mathfrak{l}_{k,\lrtr}((\alpha_1 \check{\partial}_{n_1}) \bmc^{m_{e_1}} t^{j_{e_1}},\dots,(\alpha_k \check{\partial}_{n_k}) \bmc^{m_{e_k}}t^{j_{e_k}}),
\end{equation}
which will further be abbreviated as
$\mathfrak{l}_{k,\lrtr}(\vec{\alpha},\vec{n}) := \mathfrak{l}_{k,\lrtr}(\alpha_1 \check{\partial}_{n_1},\dots,\alpha_k \check{\partial}_{n_k}),$
where we put $\vec{\alpha} := (\alpha_1, \dots, \alpha_k)$ and $\vec{n} := (n_1, \dots, n_k)$.

\begin{notation}\label{P_edge}
Given a labeled ribbon $k$-tree $\lrtr$ and suppose that for each incoming edge $e_i$, we have assigned a closed codimension 1 tropical polyhedral subset $P_{e_i}$, which is either one of the two initial hyperplanes $P_1, P_2$ or one of the half-hyperplanes $P_a$'s introduced in Notation \ref{two_wall_mode}.
We then inductively assign a (possibly empty) tropical hyperplane or half-hyperplane $P_e$ to each edge $e \in \ltr^{[1]}$ as follows:

If $\acute{e}_1$ and $\acute{e}_2$ are two incoming edges meeting at a vertex $v$ with an outgoing edge $\acute{e}_3$ for which $P_{\acute{e}_1}$ and $P_{\acute{e}_2}$ are defined beforehand, we set
$P_{\acute{e}_3} := (Q - \real_{\geq 0} m_{\acute{e}_3}) \cap U$
if both $P_{\acute{e}_1}$ and $P_{\acute{e}_1}$ are non-empty and they intersect transversally at $Q := P_{\acute{e}_1} \cap P_{\acute{e}_1}$ and
$P_{\acute{e}_3} := \emptyset$
otherwise (recall that {\em transversal intersection} between two closed tropical polyhedral subsets, including the case when they have nonempty boundaries, was defined right before the proof of Lemma \ref{support_product}).

We denote the hyperplane or half-hyperplane associated to the unique outgoing edge $e_o$ by $P_{\lrtr}$. Note that if $P_\lrtr \neq \emptyset$, then $P_\lrtr = P_a$ for some $a \in (\inte_{\geq 0})^2_{\text{prim}}$.
\end{notation}

\begin{lemma}\label{tree_lemma}
Given a labeled ribbon $k$-tree $\lrtr$, each of whose incoming edges $e_i$ is assigned with a closed codimension 1 tropical polyhedral subset $P_{e_i}$, which is either one of the two initial hyperplanes $P_1, P_2$ or one of the half-hyperplanes $P_a$'s introduced in Notation \ref{two_wall_mode}. Also given $\alpha_1, \dots, \alpha_k \in \filt^{\infty}_*(U)$ and $n_1, \dots, n_k \in \Gamma(U,\Lambda^\vee_{B_0})$ and suppose that $\alpha_i$ has asymptotic support (Definition \ref{asypmtotic_support_def}) on $P_{e_i}$ with either $\alpha_i \in \filt^{1}_{P_{e_i}}(U)$ or $\alpha_i \in \filt^{0}_{P_{e_i}}(U)$ for each $i = 1, \dots, k$, then we have
\begin{equation*}
\left\{\begin{array}{ll}
\mathfrak{l}_{k,\lrtr}(\vec{\alpha},\vec{n}) \in \filt^{1}_{P_{\lrtr}}(U)\otimes_\inte \Lambda^\vee_{B_0}(U) & \text{if $\lrtr \in \lrtree{k}_0$, $P_{\lrtr} \neq \emptyset$ and $\alpha_i\in \filt^{1}_{P_{e_i}}(U)$ for all $i$},\\
\mathfrak{l}_{k,\lrtr}(\vec{\alpha},\vec{n}) \in \filt^{0}_{P_{\lrtr}}(U)\otimes_\inte \Lambda^\vee_{B_0}(U) & \text{if $\lrtr \in \lrtree{k}_0$, $P_{\lrtr} \neq \emptyset$ and $\exists i$ such that $\alpha_i\in \filt^{0}_{P_{e_i}}(U)$},\\
\mathfrak{l}_{k,\lrtr}(\vec{\alpha},\vec{n}) \in \filt^{0}_{P_{\lrtr}}(U)\otimes_\inte \Lambda^\vee_{B_0}(U) & \text{if $\lrtr \in \lrtree{k}_1$ and $P_{\lrtr} \neq \emptyset$},\\
\mathfrak{l}_{k,\lrtr}(\vec{\alpha},\vec{n}) \in \filt^{-\infty}_{1}(U)\otimes_\inte \Lambda^\vee_{B_0}(U) & \text{if $P_{\lrtr} = \emptyset$}.
\end{array}\right.
\end{equation*}
\end{lemma}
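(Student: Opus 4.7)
The plan is to prove Lemma \ref{tree_lemma} by induction on the number $k$ of incoming vertices of $\lrtr$. The base case $k=1$ is immediate: $\lrtr^{[0]} = \emptyset$, $\lrtr \in \lrtree{1}_0$, the operator $\mathfrak{l}_{1,\lrtr}$ is the identity, and $P_\lrtr = P_{e_1}$, so the conclusion coincides with the hypothesis on $\alpha_1$. For the inductive step, I would split $\lrtr$ at its root trivalent vertex $v_r$ (with label $\star_r \in \{\natural,\sharp,\flat\}$) into two labeled ribbon sub-trees $\lrtr_1,\lrtr_2$ of $k_1,k_2$ incoming vertices with $k_1+k_2=k$, so that unwinding Definition \ref{color_tree_operation} gives
\[\mathfrak{l}_{k,\lrtr}(\vec\alpha,\vec n)\bmc^{m_\lrtr}t^{j_\lrtr} = -H_{m_\lrtr}\,\star_r\!\bigl(\mathfrak{l}_{k_1,\lrtr_1}(\vec\alpha_1,\vec n_1)\bmc^{m_{\lrtr_1}}t^{j_{\lrtr_1}},\mathfrak{l}_{k_2,\lrtr_2}(\vec\alpha_2,\vec n_2)\bmc^{m_{\lrtr_2}}t^{j_{\lrtr_2}}\bigr).\]
The inductive hypothesis applied to each sub-tree produces some $s_i \in \{0,1\}$ with $\mathfrak{l}_{k_i,\lrtr_i}(\vec\alpha_i,\vec n_i) \in \filt^{s_i}_{P_{\lrtr_i}}(U)\otimes_\inte\Lambda^\vee_{B_0}(U)$, or an output in $\filt^{-\infty}_1(U)\otimes_\inte\Lambda^\vee_{B_0}(U)$ if $P_{\lrtr_i} = \emptyset$.

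I would then separate two geometric regimes. If either $P_{\lrtr_i}$ is empty, or both are non-empty but non-transversal, then either the sub-tree output is already in $\filt^{-\infty}$ or the non-transversal case of Lemma \ref{filtrationlemma} applies, placing $\star_r(\cdot,\cdot)$ in $\filt^{-\infty}_2 \otimes \Lambda^\vee_{B_0}$; since $\filt^{-\infty}_*$ is a differential ideal preserved by $-H_{m_\lrtr}$, this yields the fourth conclusion. Otherwise, the structure $P_{\lrtr_i} = Q - \real_{\geq 0}m_{\lrtr_i}$ with non-parallel $m_{\lrtr_i} \in \text{span}(m_1,m_2)$ forces $P_{\lrtr_1}\cap P_{\lrtr_2} = Q$, and Lemma \ref{filtrationlemma} places $\star_r(\cdot,\cdot)$ in $\filt^s_Q$ with $s = s_1 + s_2$ if $\star_r = \natural$ and $s = s_1 + s_2 - 1$ if $\star_r\in\{\sharp,\flat\}$. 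Next I would decompose $-H_{m_\lrtr} = I_{m_\lrtr} + I_{m_\lrtr,er}$: the error piece $I_{m_\lrtr,er}$ restricts to $U_{m_\lrtr}^\perp$, which is disjoint from $Q$ by our choice of $U$ (condition \eqref{eqn:Q_intersect_U_a}), contributing only to $\filt^{-\infty}$; while $-m_\lrtr$ is never tangent to $Q$ because $m_\lrtr$ is a nonzero vector in the $Q$-normal span of $\{m_1,m_2\}$, so Lemma \ref{integral_lemma} gives $I_{m_\lrtr}\filt^s_Q \subset \filt^{s-1}_{I_{m_\lrtr}(Q)} = \filt^{s-1}_{P_\lrtr}$.

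The four conclusions then follow by direct bookkeeping of $(s_1,s_2,\star_r)$: case (i) $\lrtr\in\lrtree{k}_0$ with all $\alpha_i\in\filt^1$ forces $s_1=s_2=1$ and $\star_r=\natural$, yielding output in $\filt^1_{P_\lrtr}$; case (ii) $\lrtr\in\lrtree{k}_0$ with some $\alpha_i\in\filt^0$ forces $s_1+s_2\leq 1$ with $\star_r=\natural$, giving $\filt^0_{P_\lrtr}$; case (iii) $\lrtr\in\lrtree{k}_1$ with root $\star_r\in\{\sharp,\flat\}$ gains an extra $-1$, also giving $\filt^0_{P_\lrtr}$; and the remaining sub-case $\lrtr\in\lrtree{k}_1$ with root $\star_r=\natural$ forces at least one $\lrtr_i\in\lrtree{k_i}_1$ and hence $s_i=0$, again producing $\filt^0_{P_\lrtr}$. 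I expect the main obstacle to be not any single analytic estimate (those are packaged in Lemmas \ref{filtrationlemma} and \ref{integral_lemma}) but rather the combinatorial bookkeeping of labeled-tree structures, together with the need to verify that at every recursion stage the choice of $U$ from Notations \ref{choice_of_U} continues to guarantee both the transversality of $-m_\lrtr$ to $Q$ and the disjointness of $U_{m_\lrtr}^\perp$ from $Q$, so that the reduction $\filt^s_Q \to \filt^{s-1}_{P_\lrtr}$ genuinely produces the intended half-hyperplane $P_\lrtr$ rather than a spurious lower- or higher-dimensional piece.
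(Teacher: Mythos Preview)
Your proposal is correct and follows essentially the same approach as the paper: induction on the size of the tree, splitting at the root vertex $v_r$ into sub-trees $\lrtr_1,\lrtr_2$, applying the induction hypothesis, then using Lemma~\ref{support_product}/\ref{filtrationlemma} for the vertex operation and Lemma~\ref{integral_lemma} together with the decomposition $H_{m_\lrtr}=I_{m_\lrtr}+I_{m_\lrtr,er}$ for the homotopy on the outgoing edge. The paper presents the $2$-tree case explicitly as a warm-up before the general splitting, but the logical structure and the case bookkeeping on $(s_1,s_2,\star_r)$ are the same as yours.
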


For the purpose of the induction argument used to prove Lemma \ref{tree_lemma}, we will temporarily relax the condition that $m_{e_i} = k m_i$ for $i = 1, 2$ on the labeling of the incoming edges $e_i$'s in Definition \ref{label_tree_def} and replace it by the condition that $m_{e_i} = k m_a$ for some $k > 0$ and some $a \in (\inte_{\geq 0})^2_{\text{prim}}$.

\begin{proof}
We prove by induction on the number of vertices of a labeled ribbon $k$-tree $\lrtr$. The initial step is trivial because $\lrtree{1} = \lrtree{1}_0$ and $\mathfrak{l}_{1,0}$ is the identity.

We illustrate the induction step by considering the simplest non-trivial case, namely, when we have a labeled ribbon $2$-tree $\lrtr$ with only one trivalent vertex $v$, two incoming edges $e_1, e_2$ and one outgoing edge $e_o$ meeting $v$. Suppose that the incoming edges $e_1, e_2$ are assigned labeling $(m_{e_1},j_{e_1}), (m_{e_2},j_{e_2})$ and inputs $\alpha_1 \check{\partial}_{n_1}\bmc^{e_1} t^{j_{e_1}}, \alpha_2 \check{\partial}_{n_2}\bmc^{e_2} t^{j_{e_2}}$ respectively.
%This initial case follows essentially from Lemma \ref{filtrationlemma}, while we will spell out the proof explicitly for an illustration. There are several cases according to the statement of this Lemma \ref{tree_lemma}.

If $\lrtr \in \lrtree{k}_0$ (i.e. with labeling $\natural$ at every $v \in \lrtr^{[0]}$) and $P_{\lrtr} \neq \emptyset$, then we have
\begin{equation*}
\mathfrak{l}_{k,\lrtr}(\vec{\alpha},\vec{n}) = \mathfrak{l}_{k,\lrtr}(\alpha_1 \check{\partial}_{n_1},\alpha_2 \check{\partial}_{n_2}) = -H_{m_{\lrtr}}(\alpha_1\wedge \alpha_2)  \check{\partial}_{n_{\lrtr}},
\end{equation*}
where $n_{\lrtr} = ( m_{e_2},n_1 ) n_2 - ( m_{e_1} ,n_2 ) n_1$ is given by the formula \eqref{vertex_lie_algebra} (here we are viewing $H_{m_{\lrtr}}$ as an operator on $\filt^\infty_*(U)$ as in Definition \ref{MC_homotopy}).

The first case is when $\alpha_1, \alpha_2 \in \filt^{1}_{P_{e_i}}(U)$.
Since $P_{\lrtr} \neq \emptyset$, the walls $P_{e_1}$ and $P_{e_2}$ are intersecting transversally at $Q$, so we have $\alpha_1 \wedge \alpha_2 \in \filt^{2}_{P_{e_1}\cap P_{e_2}}(U) = \filt^{2}_{Q}(U)$ by Lemma \ref{support_product}.
Recall the decomposition $H_{m_{\lrtr}} = I_{m_{\lrtr}} + I_{m_\lrtr,er}$ in Definition \ref{MC_homotopy}. For the second integral $I_{m_\lrtr,er}$, its domain of integration lies inside the hyperplane $U^\perp_{m_{\lrtr}}$ which does not intersect $Q$ by our choice of the spherical neighborhood $U$ in Notations \ref{choice_of_U} (see \eqref{eqn:Q_intersect_U_a} and Figure \ref{fig:spherical_nbh}), so it produces terms in $\filt^{-\infty}_*(U)$.
For the first integral $I_{m_\lrtr}$, applying Lemma \ref{integral_lemma} gives $-I_{m_\lrtr}(\alpha_1\wedge \alpha_2) \in \filt^{1}_{P_{\lrtr}}(U)$, where $P_{\lrtr} = (Q - \real_{\geq 0 }m_{\lrtr}) \cap U$ as described in notation \ref{P_edge}. This proves the first case.

For the second case, either $\alpha_1 \in \filt^0_{P_{e_1}}(U)$ or $\alpha_2 \in \filt^0_{P_{e_1}}(U)$, so we have $\alpha_1 \wedge \alpha_2 \in \filt^{1}_Q(U)$ by Lemma \ref{support_product}. The rest of the argument is the same as in the first case.

In the third case, we have $\lrtr \in \lrtree{k}_1$ and $P_{\lrtr} \neq \emptyset$. This means that either $\sharp$ or $\flat$ is applied at $v$; we will only give the proof for the case when $\sharp$ is applied because the other case is similar. In such a case, we have
$
\mathfrak{l}_{k,\lrtr}(\alpha_1 \check{\partial}_{n_1},\alpha_2 \check{\partial}_{n_2}) = -H_{m_{\lrtr}}(\alpha_1 \wedge (\nabla_{\partial_{n_1}} \alpha_2)) \check{\partial}_{n_2}.
$
Now $\nabla_{\partial_{n_1}} (\alpha_2) \in \filt^{0}_{P_{e_2}}(U)$ by \eqref{asy_support_basic_property} and \eqref{affine_vector_field}, so we get $\alpha_1 \wedge (\nabla_{\partial_{n_1}} \alpha_2) \in \filt^{1}_{Q}(U)$ and the rest of the proof is same as in the first case.

Finally, for the fourth case we have $P_{\lrtr} = \emptyset$, meaning that $P_{e_1}$ and $P_{e_2}$ not intersecting transversally. Then we have $\alpha_1 \wedge \alpha_2 \in \filt^{-\infty}_2(U)$ by Lemma \ref{support_product} and the integral operator $H_{m_{\lrtr}}$ preserves $\filt^{-\infty}_*(U)$ by its definition in Definition \ref{exponential_decay}. This completes the proof of labeled ribbon $2$-tree.

Next, suppose that we have a general labeled ribbon $k$-tree $\lrtr$, and $v_r \in \lrtr^{[0]}$ is the unique trivalent vertex adjacent to the unique outgoing edge $e_o$.
Assuming that $\acute{e}_1$ and $\acute{e}_2$ are the incoming edges connecting to $v_r$ so that the edges $\acute{e}_1, \acute{e}_2, e_o$ are arranged in clockwise orientation.
We split $\lrtr$ at $v_r$ to obtain two trees $\lrtr_1, \lrtr_2$ with outgoing edges $\acute{e}_1, \acute{e}_2$ and $k_1, k_2$ incoming edges respectively such that $k = k_1 + k_2$
%The outgoing edges $\acute{e}_i$ of $\lrtr_i$ are joining at $v_r$ as incoming edges of $\lrtr$.
We split the inputs $(\vec{\alpha},\vec{n})$ into two accordingly as $\vec{\alpha}_1 = (\alpha_1, \dots, \alpha_{k_1})$, $\vec{n}_1 = (n_1, \dots, n_{k_1})$ and $\vec{\alpha}_2 = (\alpha_{k_1+1}, \dots, \alpha_{k})$, $\vec{n}_2 = (n_{k_1+1}, \dots, n_{k})$. We then consider the operation $\mathfrak{l}_{k_i,\lrtr_i}(\vec{\alpha}_i,\vec{n}_i)$ associated to each $\lrtr_i$.

If one of $P_{\lrtr_i}$'s is empty, say, if $P_{\lrtr_1} = \emptyset$, then $\mathfrak{l}_{k_1,\lrtr_1}(\vec{\alpha}_1,\vec{n}_1) \in \filt^{-\infty}_1(U)\otimes_{\inte} \Lambda_{B_0}^\vee(U)$ by the induction hypothesis. Hence we also have $\mathfrak{l}_{k,\lrtr}(\vec{\alpha},\vec{n}) \in \filt^{-\infty}_{1}(U)\otimes_{\inte} \Lambda_{B_0}^\vee(U)$ since $\filt^{-\infty}_*(U)$ is a dg-Lie ideal of $\filt^{\infty}_*(U)$ and $H_{m}$ preserves $\filt^{-\infty}_*(U)$.

So it remains to consider the case when $P_{\lrtr_i} \neq \emptyset$ for $i = 1, 2$. Note that
$\lrtr \in \lrtree{k}_0$ if and only if $\lrtr_i \in \lrtree{k_i}_0$ for both $i = 1, 2$ and the labeling of the root vertex $v_r$ is also $\natural$, and
$P_{\lrtr} \neq \emptyset$ if and only if $P_{\acute{e}_1}$ intersects $P_{\acute{e}_2}$ transversally at $Q$.
The induction step is completed by replacing $\alpha_i \check{\partial}_{n_i}$ with $\mathfrak{l}_{k_i,\lrtr_i}(\vec{\alpha}_i,\vec{n}_i)$ for $i = 1, 2$ and using the same argument as in the case for labeled ribbon $2$-tree.

%Assuming $P_\lrtr \neq \emptyset$, from the induction hypothesis we have $\mathfrak{l}_{k_i,\lrtr_i}(\vec{\alpha}_i,\vec{n}_i) \in \filt^{l_i}_{P_{\acute{e}_i}}(U)\otimes_{\inte} \Lambda_{B_0}^\vee(U)$ with $l_i = 1$ if $\lrtr_i \in \lrtree{k_i}_0$ and all $\alpha_i \in \filt^1_{P_{e_i}}(U)$, and $l_i = 0$ if $\lrtr_i \in \lrtree{k_i}_1$ or we have some input $\alpha_i$ of $\mathfrak{l}_{k_i,\lrtr_i}$ satisfying $\alpha_i \in \filt^0_{P_{e_i}}(U)$.
\end{proof}

\begin{proof}[Proof of Lemma \ref{approximated_MC}]
Recall that the spherical neighborhood $U$ was chosen so that $x^0 \not\in P_a$ for any $a \in (\inte_{\geq 0 })^2_{\text{prim}}$, where $x^0$ is the center of $U$ (see Notations \ref{choice_of_U}, \eqref{eqn:Q_intersect_U_a} and Figure \ref{fig:spherical_nbh}). By Theorem \ref{asy_support_theorem}, we have a neighborhood $V \subset U$ of $x^0$ such that $\overline{V}$ is compact and $\Phi|_{V} \in \mathcal{E}^*_N(V)$ (mod $\mathbf{m}^{N+1}$) for every $N \in \inte_{>0}$. This implies that $[\Phi,\Phi]|_{V} \in \mathcal{E}^*_N(V)$ (mod $\mathbf{m}^{N+1}$) for every $N \in \inte_{>0}$ since $\mathcal{E}^*_N(V)$ is closed under the bracket $[\cdot,\cdot]$. As the operator $\mathcal{P}$ preserves $\mathcal{E}^*_N(V)$, we have $\mathcal{P}[\Phi,\Phi]|_{V} \in \mathcal{E}^*_N(V)$ (mod $\mathbf{m}^{N+1}$) for every $N \in \inte_{>0}$. But then this means that $\mathcal{P}[\Phi,\Phi] = 0$ in $\widehat{\mathbf{g}^*/\mathcal{E}^*}(U)$ since $\mathcal{P}$ is the evaluation at $x^0 \in V \subset U$.
\end{proof}

\subsubsection{Leading $\hp$ order terms of a Maurer-Cartan solution}\label{sec:tropical_leading_order}
Recall from Section \ref{sec:leading_order_MC} that the leading order term $\emc$ (constructed in Definition \ref{solving_MC_tree}) is a sum over labeled ribbon trees in $\lrtree{k}_0$ (i.e. those with only $\natural$ labeling on trivalent vertices; see Notation \ref{two_sets_of_trees}) with inputs $\eincoming^{(i)}$ (defined in \eqref{essential_incoming}). This operation is closely related to the tropical vertex group as well as tropical counting. We are going to discuss the precise correspondence in this subsection.

For this purpose, it would conceptually be more appropriate to use labeled trees rather than labeled ribbon trees, because tropical trees are not equipped with ribbon structures. As in the case of labeled ribbon $k$-trees, we split the set of isomorphism classes of labeled $k$-trees into two components
$\ltree{k} = \ltree{k}_0 \sqcup \ltree{k}_1,$
where $\ltree{k}_0$ consists of those whose trivalent vertices are all labeled by $\natural$ and $\ltree{k}_1 := \ltree{k} \setminus \ltree{k}_0$.
Then given a labeled $k$-tree $\ltr \in \ltree{k}_0$ and taking an arbitrary labeled ribbon $k$-tree $\lrtr$ with $\underline{\lrtr} = \ltr$, we can define the operation $\mathfrak{I}_{k,\ltr}$ by
$
\mathfrak{I}_{k,\ltr}(\zeta_1, \dots, \zeta_k) := \sum_{\sigma \in \Sigma_k} (-1)^{\chi(\sigma,\vec{\zeta})} \mathfrak{l}_{k,\lrtr}(\zeta_{\sigma(1)},\dots,\zeta_{\sigma(k)}),
$
as in Definition \ref{def:tree_operation}.
%We then define $\mathfrak{I}_{k}:  \text{Sym}^k(L[1])\rightarrow L[1]$ by
%$$
%\mathfrak{I}_{k} = \sum_{\ltr \in \tree{k}} \frac{\mathfrak{I}_{k,\ltr}}{|Aut(\ltr)|},
%$$
%where $|Aut(\ltr)|$ is the order of the automorphism group of a labeled $k$-tree $\ltr$.

Nonetheless, we prefer to work with labeled ribbon $k$-trees $\lrtr$ instead to simplify the formulas.
There is a combinatorial relation
$
\frac{1}{k!|\text{Aut}(\ltr)|}\mathfrak{I}_{k,\ltr}(\eincoming, \dots, \eincoming) = \sum_{\underline{\lrtr} = \ltr} \frac{1}{2^{k-1}} \mathfrak{l}_{k,\lrtr}(\eincoming,\dots,\eincoming),
$
where the sum is over all labeled ribbon $k$-trees $\lrtr$ with underlying labeled $k$-tree $\underline{\lrtr} = \ltr$ and $|\text{Aut}(\ltr)|$ is the order of the automorphism group of $\ltr$.
Furthermore, since $\eincoming \in \widehat{\mathbf{g}^*/\mathcal{E}^*}(U)$ has cohomological degree $1$, we have
\begin{equation}\label{eqn:relating_ribbon_or_not}
\frac{1}{k!|Aut(\ltr)|}\mathfrak{I}_{k,\ltr}(\eincoming,\dots,\eincoming) = \frac{\#\{\underline{\lrtr} = \ltr\}}{2^{k-1}} \mathfrak{l}_{k,\lrtr_0}(\eincoming,\dots,\eincoming),
\end{equation}
for an {\em arbitrary} labeled ribbon $k$-tree $\lrtr_0$ with underlying labeled $k$-tree $\underline{\lrtr_0} = \ltr$.
We will obtain results for $\mathfrak{I}_{k,\ltr}$ by working with $\mathfrak{l}_{k,\lrtr}$ through equation \eqref{eqn:relating_ribbon_or_not} in this section.

Given $\lrtr \in \lrtree{k}_0$ with $P_{\lrtr} \neq \emptyset$ (recall from Notations \ref{P_edge} that $P_{\lrtr}$ is the wall attached to the unique outgoing edge $e_o$), we have an alternative way to describe the operation $\mathfrak{l}_{k,\lrtr}$. Recall that $\lrtr^{[1]}$ is the set of edges excluding the incoming edges (but including the outgoing edge) by Definition \ref{k_tree_def}. We let $\tau^e_s$ be the flow of the affine vector field $-m_e$ for time $s$ ($s \in \real_{\leq 0}$ so it is flowing backward in time), where $(m_e, j_e)$ is the labeling of the edge $e \in \lrtr^{[0]}$.

\begin{definition}\label{sequence_flow_def}
Given a sequence of edges $\mathfrak{e} = (e_0, e_{1}, \ldots, e_{l})$, as a path which starts from $e_0$ and ends at $e_l$ following the direction of the tree $\lrtr$, we define a map
$
\tau^{\mathfrak{e}}: W_{\mathfrak{e}} \rightarrow U,
$
by
$
\tau^{\mathfrak{e}} (\vec{s},x) = \tau_{s_0}^{e_0} \circ \tau_{s_1}^{e_1} \circ \cdots \circ \tau_{s_l}^{e_l} (x),
$
where $s_{j}$ is the time coordinate for the flow of $-m_{e_{j}}$, $\lrtr^{[1]}_{\mathfrak{e}}$ is the subset $\{e_0, e_1, \ldots, e_l\} \subset \lrtr^{[1]}$ and $W_{\mathfrak{e}}\subset \real_{\leq 0}^{|\lrtr^{[1]}_{\mathfrak{e}}|} \times U$ is the maximal domain such that the image of the flow $\tau^{\mathfrak{e}}$ lies in $U$. It can also be extended naturally to a map
$
\hat{\tau}^\mathfrak{e}: \hat{W}_{\mathfrak{e}} \rightarrow \real_{\leq 0}^{|\lrtr^{[1]} \setminus \lrtr^{[1]}_{\mathfrak{e}}|} \times U,
$
where $\hat{W}_{\mathfrak{e}} := \real_{\leq 0}^{|\lrtr^{[1]} \setminus \lrtr^{[1]}_{\mathfrak{e}}|} \times W_{\mathfrak{e}}$, by taking direct product with $\real_{\leq 0}^{|\lrtr^{[1]} \setminus \lrtr^{[1]}_{\mathfrak{e}}|}$.
Notice that this definition does not depend on the ribbon structure on $\lrtr$, so it can be regarded as a definition for a labeled $k$-tree $\ltr := \underline{\lrtr}$.
\end{definition}

%\begin{remark}\label{tree_with_length}
%The cone $\real_{\leq 0 }^{|\lrtr^{[1]}|}$ is indeed parametrizing the space of metric $k$-trees endowed with a length on each edge $e \in \lrtr^{[1]}$ (possibly with length $0$ on some edges) with the same combinatorial type as $\lrtr$.
%\end{remark}

\begin{definition}\label{def:trop_tree_orientation}
We attach a differential form $\nu_e$ on $\real_{\leq 0}^{|\lrtr^{[1]}|}$ to each $e \in \bar{\lrtr}^{[1]}$ recursively by letting $\nu_e := 1$ for each incoming edge $e$, and $\nu_{e_3} =  (-1)^{\bar{\nu}_{e_2}} \nu_{e_1} \wedge \nu_{e_2} \wedge ds_{e_{3}}$ (here $\bar{\nu}_{e_2}$ is the cohomological degree of $\nu_{e_2}$) if $v$ is an internal vertex with incoming edges $e_1,e_2 \in \lrtr_0$ and outgoing edge $e_3$ such that $e_1, e_2, e_3$ is clockwise oriented. We let $\nu_{\lrtr}$ be the differential form attached to the unique outgoing edge $e_o \in \lrtr^{[1]}$, which defines a volume form or orientation on $\real_{\leq 0}^{|\lrtr^{[1]}|}$.
%Notice that this would depend on the ribbon structure on $\lrtr$.
\end{definition}

As usual, we let $v_1,\dots,v_k$ be the clockwise ordered incoming vertices of $\lrtr$ and $e_1,\dots,e_k$ the incoming edges respectively. We associate to each $e_i$ a unique sequence $\mathfrak{e}_i$ of edges in $\lrtr^{[1]}$ (excluding the incoming edge $e_i$ itself) joining $e_i$ to the outgoing edge $e_o$ (including the outgoing edge $e_o$) along the direction of $\lrtr$.

\begin{remark}\label{rem:tropical_moduli}
The subset
$W_{\lrtr}  = \big(\bigcap_{i=1}^k \hat{W}_{\mathfrak{e}_i}\big) \subset \real_{\leq 0}^{|\lrtr^{[1]}|} \times U,$
can be viewed as a moduli space of tropical trees in $U$, denoted by $\mathfrak{M}_{\underline{\lrtr}}(U)$, with prescribed slope data $\{m_{e}\}_{e \in \lrtr^{[1]}}$ (as in Notations \ref{not:edge_labeling}) as follows (this will not be necessary for the rest of the paper): A point $(\vec{s},x) \in W_{\lrtr} \subset \real_{\leq 0}^{|\lrtr^{[1]}|} \times U$ will precribe the location of the vertices $\lrtr^{[0]}\sqcup \{v_o\}$ of $\lrtr$. First, $x \in U$ is the image of the outgoing vertex $v_o$.
%mentioned in Remark \ref{tree_with_length}.
For any trivalent vertex $v \in \lrtr^{[0]}$, there is a unique sequence of edges $\mathfrak{e} = (e_0, e_1, \ldots, e_l)$ connecting $v$ to $v_o$ and $\tau^{\mathfrak{e}} (\vec{s},x)$ is the image of $v$. The images of these vertices are allowed to overlap with each other as $\vec{s}$ is taken from $\real_{\leq 0 }^{\lrtr^{[1]}}$. Figure \ref{fig:tropical_moduli} illustrates the generic situation.

\begin{figure}[h]
\centering
\includegraphics[scale=0.4]{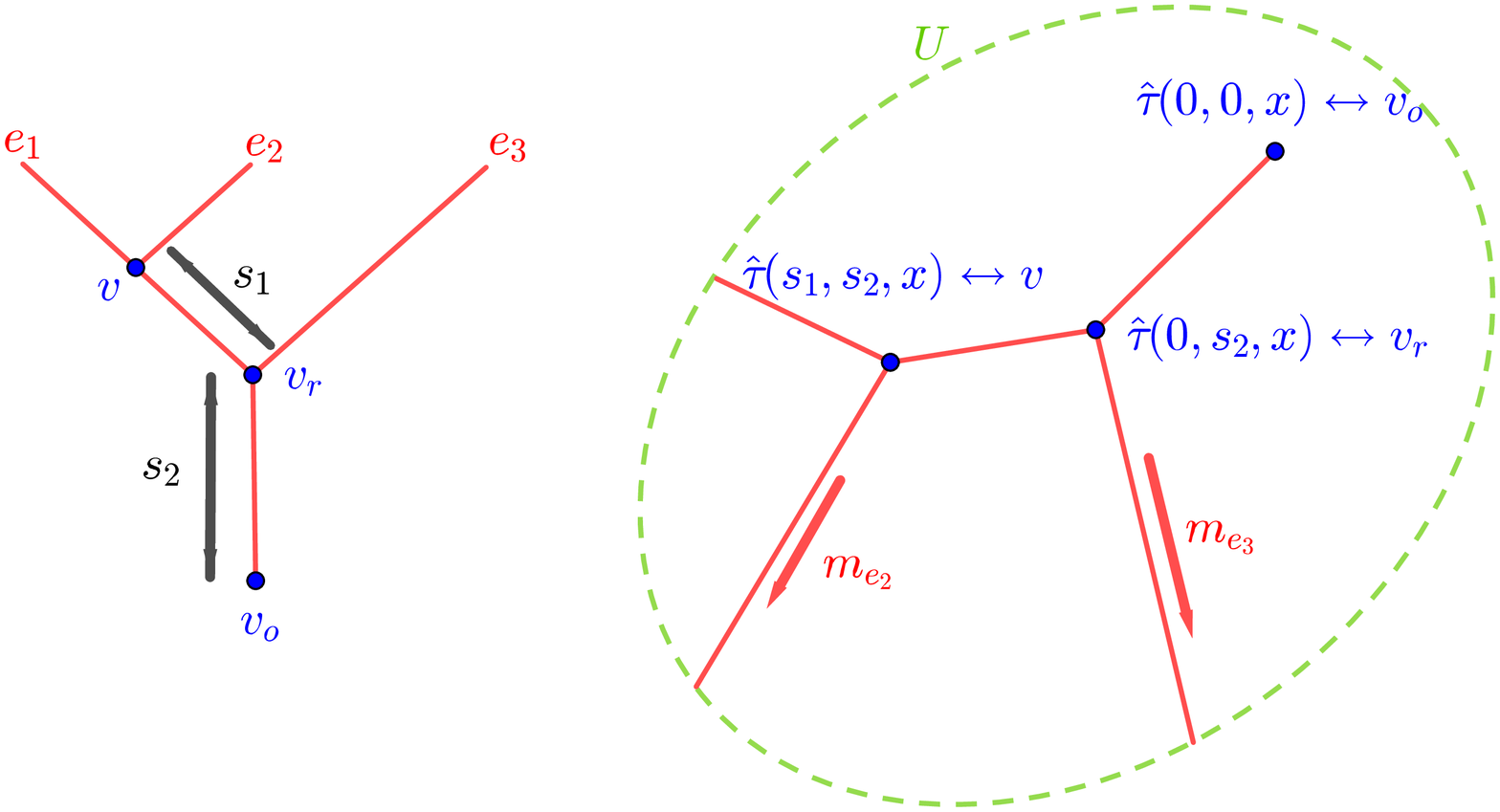}
\caption{$W_{\lrtr}$ parametrizing tropical trees in $U$}
\label{fig:tropical_moduli}
\end{figure}
\end{remark}

Recall that Definition \ref{color_tree_operation}, which defines the operator $\mathfrak{l}_{k,\lrtr}$, uses the labeling $(m_{e_i},j_{e_i})$ for each incoming edge $e \in \partial^{-1}_{in}(\lrtr^{[0]}_{in})$ to extract the coefficient of $\bmc^{m_{e_i}} t^{j_{e_i}}$ in $\eincoming$ and then treat it as the input at $v_i$. For the input $\alpha_{i}\check{\partial}_{n_{i}}$, we have $n_{i} \in TP_{e_i}^\perp$ and $(\nu_{P_i},n_i)>0$, and $\alpha_{i} = -\delta_{jk}^{(1)}$ or $-\delta_{jk}^{(2)}$ (see \eqref{essential_incoming}) so that $\alpha_i \in \filt^1_{P_{e_i}}(U)$. We decompose the output $\mathfrak{l}_{k,\lrtr}(\vec{\alpha},\vec{n})$ (defined in equation \eqref{label_tree_operation}) into a differential form part $\alpha_{\lrtr} \in \filt^{1}_{P_{\lrtr}}(U)$ and a vector field part $\check{\partial}_{n_{\lrtr}}$:
\begin{equation}\label{alpha_out_def}
\mathfrak{l}_{k,\lrtr} (\vec{\alpha},\vec{n}) = \alpha_{\lrtr} \check{\partial}_{n_{\lrtr}},
\end{equation}
where $\check{\partial}_{n_\lrtr}:= \mathfrak{l}_{k,\lrtr} (\check{\partial}_{n_1}, \ldots, \check{\partial}_{n_k})$; note that $n_\lrtr \in \inte \cdot n_a$ if we write $P_\lrtr = P_a$ for some $a \in (\inte_{\geq 0})^2_{\text{prim}}$, where $n_a \in \Lambda_{B_0}^\vee(U)$ is the unique primitive normal to $P_\lrtr = P_a$ such that $(\nu_{P_a},n_a) < 0$.
The following lemma shows how $\alpha_{\lrtr}(x)$ can be expressed as an integral over the space
$\mathcal{I}_x := \left(\real_{\leq 0}^{|\lrtr^{[1]}|} \times \{x\}\right) \cap \left(\bigcap_{i=1}^k \hat{W}_{\mathfrak{e}_i}\right),$
for any $x \in U$ up to error terms of exponential order in $\hp^{-1}$.

\begin{lemma}\label{iteratedintegral}
We have the identity
$$
\alpha_{\lrtr}(x) = (-1)^{k-1}\int_{\mathcal{I}_x} (\tau^{\mathfrak{e}_1})^*(\alpha_1) \wedge \cdots \wedge (\tau^{\mathfrak{e}_k})^*(\alpha_k)
$$
in $\filt^{\infty}_1(U)/\filt^{-\infty}_1(U)$, where we use the volume form $\nu_{\lrtr}$ on $\real_{\leq 0}^{|\lrtr^{[1]}|}$ for the integration on the right hand side.
\end{lemma}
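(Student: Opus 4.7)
The plan is to proceed by induction on the number $k$ of incoming edges of $\lrtr$, after first reducing the homotopy operator $H_m$ to the integral operator $I_m$ modulo error terms in $\filt^{-\infty}_1(U)$. The decomposition $H_m = I_m + I_{m,er}$ of Definition \ref{MC_homotopy} allows dropping $I_{m,er}$ because its path of integration lies entirely in $U^\perp_m \cap U$, which by the choice of spherical neighborhood in Notations \ref{choice_of_U} (cf. equation \eqref{eqn:Q_intersect_U_a}) avoids every wall $P_a$ appearing as the asymptotic support of intermediate sub-tree outputs from Lemma \ref{tree_lemma}. Since $\filt^{-\infty}_*(U)$ is a dg Lie ideal of $\filt^{\infty}_*(U)$ and is preserved by $\natural$ and $I_m$, such errors propagate trivially through subsequent tree operations.

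The base case $k=1$ is trivial since $\mathfrak{l}_1$ is the identity. For the induction step, split $\lrtr$ at the root trivalent vertex $v_r$ into ribbon sub-trees $\lrtr_1, \lrtr_2$ rooted at the incoming edges $\acute{e}_1, \acute{e}_2$ of $v_r$ (in clockwise order), with $k = k_1 + k_2$. By induction, each sub-tree output satisfies $\alpha_{\lrtr_j}(y) = (-1)^{k_j-1}\int_{\mathcal{I}^{(j)}_y} \omega^{(j)}$ where $\omega^{(j)} = \bigwedge_i (\tau^{\mathfrak{e}'_i})^*(\alpha_i)$ is oriented by $\nu_{\lrtr_j}$. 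At $v_r$, the $\natural$ operation reduces at the form level to the wedge product: its sign factor $(-1)^{\bar{f}(\bar{g}+1)}$ vanishes since every $\alpha_{\lrtr_j}$ is a $1$-form, hence of degree zero in $L[1]$. The composition $\tau^{e_o}_{s_{e_o}} \circ \tau^{\mathfrak{e}'_i}$ is exactly $\tau^{\mathfrak{e}_i}$ for the full tree, so applying $-I_{m_{e_o}}$ introduces the outermost integration variable $s_{e_o}$ and, via Fubini, combines the nested integrations into a single integral over $\mathcal{I}_x \subset \real_{\leq 0}^{|\lrtr^{[1]}|}$ with integrand $\bigwedge_{i=1}^{k}(\tau^{\mathfrak{e}_i})^*(\alpha_i)$.

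The main obstacle is verifying the sign and the orientation. Each of the $|\lrtr^{[1]}| = k-1$ edges contributes a factor $-1$ from $-H$, yielding the global sign $(-1)^{k-1}$. For the orientation, rearranging $\omega^{(1)} \wedge \omega^{(2)}$ to collect all $ds$-factors to the left of the $dy$-factors produces the sign $(-1)^{\bar{\nu}_{\lrtr_2}}$, where $\bar{\nu}_{\lrtr_2} = k_2 - 1$ is the degree of $\nu_{\lrtr_2}$; applying $-I_{m_{e_o}}$ then appends $ds_{e_o}$ as the last factor. This precisely matches the recursive definition $\nu_{e_o} = (-1)^{\bar{\nu}_{\acute{e}_2}}\nu_{\acute{e}_1} \wedge \nu_{\acute{e}_2} \wedge ds_{e_o}$ of the volume form in Definition \ref{def:trop_tree_orientation}. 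Once this sign/orientation compatibility is established, the recursion assembles into the claimed iterated integral formula in $\filt^{\infty}_1(U)/\filt^{-\infty}_1(U)$.
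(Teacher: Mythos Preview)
Your approach is correct and follows the same inductive strategy as the paper: split at the root vertex, apply the induction hypothesis to the two sub-trees, and reassemble via the outgoing flow, after first discarding the $I_{m,er}$ piece of $H_m$ because the reference hyperplane $U^\perp_m$ avoids all walls.

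There is, however, one technical step you glossed over. The operator $I_{m_{e_o}}$ from Definition~\ref{MC_homotopy} integrates from the reference hyperplane $U^\perp_{m_{e_o}}$ (i.e.\ from $s_{e_o}=0$), whereas the domain $\mathcal{I}_x$ is built from the \emph{maximal} flow domain $W_{\mathfrak{e}}$ in Definition~\ref{sequence_flow_def}, which extends all the way to the boundary of $U$. So replacing $-I_{m_{e_o}}$ by the integral over $\mathcal{I}^{e_o}_x$ is not automatic: one must check that the extra piece of the integral, over the portion of the flow line from $\partial U$ to $U^\perp_{m_{e_o}}$, lies in $\filt^{-\infty}_1(U)$. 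The paper handles this explicitly in the $2$-tree case by observing that this extra piece misses the asymptotic support $Q$ of $\alpha_{\lrtr_1}\wedge\alpha_{\lrtr_2}$ (which lies in $U^+_{m_{e_o}}$ by the choice of spherical neighborhood), and the same argument works at each step of the induction. Once you insert this one-line observation alongside your treatment of $I_{m,er}$, your argument is complete and matches the paper's.
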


\begin{proof}
We prove the lemma by induction on the number of vertices of a labeled ribbon $k$-tree $\lrtr$ (as in the proof of Lemma \ref{tree_lemma}). In the initial step, $\mathcal{I}_x$ is just the point $\{x\}$ and the right hand side is nothing but evaluation at $x$, so the result follows from the fact that $\mathfrak{l}_{k,\lrtr}$ is the identity.

As in the proof of Lemma \ref{tree_lemma}, we illustrate the induction step by considering the simplest non-trivial case when we are given a labeled ribbon $2$-tree $\lrtr$ with only one trivalent vertex $v$, two incoming edges $e_1, e_2$ and one outgoing edge $e_o$ meeting $v$.
Suppose that the incoming edges $e_1, e_2$ are assigned labeling $(m_{e_1},j_{e_1}), (m_{e_2},j_{e_2})$ and inputs $\alpha_1 \check{\partial}_{n_1}\bmc^{e_1} t^{j_{e_1}}, \alpha_2 \check{\partial}_{n_2}\bmc^{e_2} t^{j_{e_2}}$ respectively.
The operator $\mathfrak{l}_{k,\lrtr}$ associated to $\lrtr$ is explicitly expressed as
\begin{equation}
\alpha_{\lrtr}\check{\partial}_{n_{\lrtr}} = \mathfrak{l}_{k,\lrtr}(\alpha_1 \check{\partial}_{n_1},\alpha_2 \check{\partial}_{n_2}) = -H_{m_{\lrtr}}(\alpha_1\wedge \alpha_2)  \check{\partial}_{n_{\lrtr}},
\end{equation}
with $n_{\lrtr} = ( m_{e_2},n_1 ) n_2 - ( m_{e_1} ,n_2 ) n_1$ given by the formula \eqref{vertex_lie_algebra}.

There are two cases depending on whether $P_{e_1}$ and $P_{e_2}$ are intersecting transversally or not, as in proof of Lemma \ref{tree_lemma}. In both cases we can treat $\alpha_1\wedge \alpha_2 \in \filt^{2}_Q(U)$ (because if intersection is not transversal, then we have $\alpha_1\wedge \alpha_2 \in \filt^{-\infty}_2(U) \subset \filt^2_Q(U)$).
From Definition \ref{MC_homotopy}, we have the decomposition $H_{m_{\lrtr}} = I_{m_\lrtr} + I_{m_\lrtr,er}$. By our choice of the spherical neighborhood $U$ in Notations \ref{choice_of_U}, the domain of integration for the second integral $I_{m_\lrtr,er}$ is supported away from $Q$, so it gives a term in $\filt^{-\infty}_1(U)$. Thus we have $\alpha_\lrtr \in -I_{m_\lrtr}(\alpha_1 \wedge \alpha_2) + \filt^{-\infty}_1(U)$.

In the current case, $\lrtr^{[1]}$ consists of the unique outgoing edge $e_o$, so $\mathfrak{e}_1 = \mathfrak{e}_2 = (e_o)$ and hence the map $\tau^{\mathfrak{e}_1} = \tau^{\mathfrak{e}_2} : W_{\mathfrak{e}_i} \rightarrow U$ is simply given by the flow associated to $-m_{\lrtr} = -m_{e_o}$. Now we have the half space $U_{m_{\lrtr}}^+$ containing $Q$, as shown in Figure \ref{fig:spherical_nbh}. Using the coordinates $(t,u^\perp)$ on $U$ where $u^\perp$ are local affine coordinates on $U_{m_\lrtr}^\perp$, we obtain a maximal interval $(a_{u^\perp}, b_{u^\perp})$ for each point ${u^\perp}$ on $U_{m_\lrtr}^\perp$ such that the interval $(a_{u^\perp}, b_{u^\perp}) \times \{u^\perp\}$ lies in $U$. Then the two integrals that we want to compare are $I_m(\alpha_1 \wedge \alpha_2)(t,u^\perp)  = \int_{0}^t \iota_{\dd{s}}(\alpha_1 \wedge \alpha_2)(s,u^\perp) ds$ and $\int_{\mathcal{I}_{(t,u^\perp)}} (\tau^{\mathfrak{e}_1})^*(\alpha_1 \wedge \alpha_2)  = \int_{a_{u^\perp}}^t \iota_{\dd{s}}(\alpha_1 \wedge \alpha_2)(s,u^\perp) ds$,
the difference between which is given by $\int_{a_{u^\perp}}^0 \iota_{\dd{s}}(\alpha_1 \wedge \alpha_2)(s,u_{m_\lrtr}^\perp) ds$, which produces a term in $\filt^{-\infty}_1(U)$ because it misses the asymptotic support $Q$ of $\alpha_1 \wedge \alpha_2$. This proves the statement for the current case.

Next we consider the induction step. We will adapt the same notations as the induction step in the proof of Lemma \ref{tree_lemma} (see Notations \ref{P_edge}). So we take a general labeled ribbon $k$-tree $\lrtr \in \lrtree{k}_0$, and then split it at the unique vertex $v_r \in \lrtr^{[0]}$ adjacent to the unique outgoing edge $e_o$ to obtain two trees $\lrtr_1, \lrtr_2$ with incoming edges $e_1,\dots,e_{k_1}$ and $e_{k_1+1},\dots,e_{k}$ respectively.
We denote by $\tilde{\mathfrak{e}}_1 ,\dots,\tilde{\mathfrak{e}}_{k_1}$ (resp. $\tilde{\mathfrak{e}}_{k_1+1} ,\dots,\tilde{\mathfrak{e}}_{k}$) the sequences of edges in $\lrtr_1$ (resp. $\lrtr_2$) associated to the incoming edges $e_1,\dots,e_{k_1}$ (resp. $e_{k_1+1},\dots,e_{k}$) obtained respectively from the sequences $\mathfrak{e}_1,\dots,\mathfrak{e}_{k_1}$ (resp. $\mathfrak{e}_{k_1+1},\dots,\mathfrak{e}_{k}$) of edges in $\lrtr$ by removing the unique outgoing edge $e_o$.

By the induction hypothesis, we have
$
(-1)^{k_1-1}\alpha_{\lrtr_1}(x) = \int_{\mathcal{I}_{1,x}} (\tau^{\tilde{\mathfrak{e}}_1})^*(\alpha_1) \wedge \cdots \wedge (\tau^{\tilde{\mathfrak{e}}_{k_1}})^*(\alpha_{k_1})
$
modulo $\filt^{-\infty}_1(U)$, where $\mathcal{I}_{1,x} = \left(\real_{\leq 0}^{|\lrtr_1^{[1]}|} \times \{x\}\right) \cap \left( \bigcap_{i=1}^{k_1} \hat{W}_{\mathfrak{e}_i}^{(1)} \right)$, and
$
(-1)^{k_2-1}\alpha_{\lrtr_2}(x) = \int_{\mathcal{I}_{2,x}} (\tau^{\tilde{\mathfrak{e}}_{k_1+1}})^*(\alpha_{k_1+1}) \wedge \cdots \wedge (\tau^{\tilde{\mathfrak{e}}_{k}})^*(\alpha_k)
$
where $\mathcal{I}_{2,x} = \left(\real_{\leq 0}^{|\lrtr_2^{[1]}|} \times \{x\}\right) \cap \left( \bigcap_{i=k_1+1}^k \hat{W}_{\mathfrak{e}_i}^{(2)} \right)$; here $\hat{W}_{\mathfrak{e}_i}^{(1)} \subset \real_{\leq 0}^{|\lrtr_1^{[1]}|} \times U$ is the domain associated to $\tilde{\mathfrak{e}}_i$ for the tree $\lrtr_1$ as in Definition \ref{sequence_flow_def}.

%In the following computation, we will ignore terms from $\filt^{-\infty}_1(U)$ because they have exponential orders in $\hp^{-1}$ and will not contribute.
Fixing a point $x\in U$, we consider the flow $\tau^{e_o}: W_{e_o} (\subset \real \times U) \rightarrow U$ by $-m_{e_o} = -m_{\lrtr}$ and let $\mathcal{I}^{e_o}_x := (\real_{\leq 0} \times \{x\} )\cap W_{e_o}$. From its definition, we have, for $x \in U$,
$$
\mathcal{I}_x = \bigcup_{s \in \mathcal{I}^{e_o}_x} \mathcal{I}_{1,\tau^{e_o}_s(x)} \times \mathcal{I}_{2,\tau^{e_o}_s (x)} \times \{s\} \subset \real_{\leq 0}^{|\lrtr_1^{[1]}|} \times \real_{\leq 0}^{|\lrtr_2^{[1]}|} \times \real_{\leq 0}.
$$
Using the same reasoning as in the $2$-tree case, together with the fact that $\alpha_{\lrtr_1} \wedge \alpha_{\lrtr_2}$ is again having asymptotic support on $Q$, we have
\begin{align*}
&(-1)^{k-1}\alpha_{\lrtr}(x)
=  (-1)^{k-2}\int_{\mathcal{I}_{x}^{e_o}} (\tau^{e_o})^*(\alpha_{\lrtr_1} \wedge \alpha_{\lrtr_2})\\
= & \int_{\mathcal{I}_{x}^{e_o}} (\tau^{e_o})^* \Big( \int_{\mathcal{I}_{1,\tau^{e_o}_s(x)}} (\tau^{\tilde{\mathfrak{e}}_1})^*(\alpha_1) \cdots (\tau^{\tilde{\mathfrak{e}}_{k_1}})^*(\alpha_{k_1})
   \wedge \int_{\mathcal{I}_{2,\tau^{e_o}_s (x)}} (\tau^{\tilde{\mathfrak{e}}_{k_1+1}})^*(\alpha_{k_1+1}) \cdots (\tau^{\tilde{\mathfrak{e}}_{k}})^*(\alpha_k) \Big)\\
= &  \int_{\bigcup_{s \in \mathcal{I}^{e_o}_x} \mathcal{I}_{1,\tau^{e_o}_s(x)} \times \mathcal{I}_{2,\tau^{e_o}_s (x)} \times \{s\}} (\tau^{e_o})^* \left( (\tau^{\tilde{\mathfrak{e}}_1})^*(\alpha_1) \cdots (\tau^{\tilde{\mathfrak{e}}_{k}})^*(\alpha_k) \right)
=  \int_{\mathcal{I}_x} (\tau^{\mathfrak{e}_1})^*(\alpha_1) \cdots (\tau^{\mathfrak{e}_k})^*(\alpha_k),
\end{align*}
modulo terms in $\filt^{-\infty}_1(U)$.
This completes the proof of the lemma.
%Notice that we have chosen the orientation as in Definition \ref{def:trop_tree_orientation} so that the above formula is sign free.
\end{proof}

\begin{remark}
Geometrically, Lemma \ref{iteratedintegral} means that terms of the form $\alpha_{\lrtr}\check{\partial}_{n_{\lrtr}} \bmc^{m_\lrtr} t^{j_{\lrtr}}$ for $\lrtr \in \lrtree{k}_0$, which appear in the leading order contribution of the Maurer-Cartan solution $\Phi$ (introduced in Definition \ref{solving_MC_tree}), can be expressed as integrals over the moduli space $\mathfrak{M}_{\underline{\lrtr}}(U)$ of tropical trees in $U$.
\end{remark}

We will see in Section \ref{sec:key_lemmas} that what we essentially care about is the path integral
\begin{equation}\label{tropical_integral}
\int_{\varrho} \alpha_{\lrtr} = (-1)^{k-1}\int_{\mathcal{I}_\varrho} (\tau^{\mathfrak{e}_1})^*(\alpha_1) \cdots (\tau^{\mathfrak{e}_k})^*(\alpha_k) + O(e^{-c_{\varrho}/\hp})
\end{equation}
along an embedded affine path $\varrho: (a,b) \rightarrow U$ that crosses the wall $P_{\lrtr}$ transversally and positively (meaning that $TP_{\lrtr} \oplus \real \cdot \varrho'$ agrees with the orientation of $B_0$),
where we let
\begin{equation}\label{eqn:varrho_tropical_space}
\mathcal{I}_\varrho := \bigcup_{t \in (a,b)} \mathcal{I}_{\varrho(t)}.
\end{equation}
Here $\mathcal{I}_\varrho $ is equipped with coordinates $(\{s_e\}_{e\in \lrtr^{[1]}},t)$, where $(s_e)_e \in \real_{\leq 0 }^{|\lrtr^{[1]}|}$ and $t \in (a,b)$. We are going to calculate the integral in \eqref{tropical_integral} explicitly.

Recall that $\alpha_i = -\delta_{jk}^{(1)}$ or $-\delta_{jk}^{(2)}$, and each $\alpha_i$ has asymptotic support on $P_{e_i}$ which is either $P_1$ or $P_2$ (supports of the two initial walls). For each $i \in \{1, \dots, k\}$, we take an affine coordinate $\eta_i$ associated to the corresponding $\delta^{(i)}_{jk}$ as in Definition \ref{input_assumption}, namely, such that $\{\eta_i = 0\} = P_{e_i}$ and $\iota_{\nu_{P_{e_i}}} d\eta_i >0$, so that we can write $\alpha_i =- ( \pi\hp)^{-1/2} e^{-\frac{\eta_i^2}{\hp}} d\eta_i$ locally near $P_{e_i}$.

The flow $\tau^{e_o}$ corresponding to the outgoing edge $e_o$ is an affine map
$\tau^{e_o}|_{\varrho}: \bigcup_{t \in (a,b)} \mathcal{I}_{\varrho(t)}^{e_o} \rightarrow U,$
where $\mathcal{I}_{\varrho(t)}^{e_o} \subset \real_{\leq 0}$ is the maximal domain of backward flow associated to $-m_{e_o}$ starting from the point $\varrho(t)$. The property that $\varrho \pitchfork P_{\lrtr} $ is equivalent to the condition that the image of $\tau^{e_o}|_{\varrho}$ intersects transversally with $Q$ at a point $q_1$. For each $i \in \{1, \dots, k\}$, we let $N_i\subset U$ be the affine line through the point $q_1 \in Q$ transversal to $P_{e_i}$. Then we consider the affine space $\prod_{i=1}^k N_i$ with local affine coordinates $\eta_1, \dots, \eta_k$. See Figure \ref{fig:high_dimensional_tropical_intersection}.

\begin{figure}[h]
\centering
\includegraphics[scale=0.3]{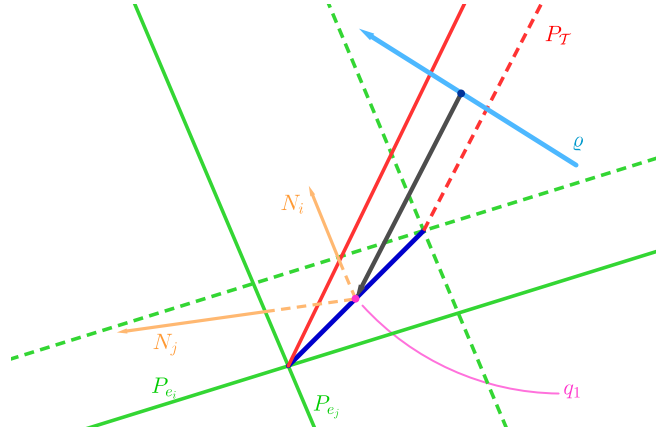}
\caption{The lines $N_i$'s through $q_1 \in Q$}
\label{fig:high_dimensional_tropical_intersection}
\end{figure}

We define the affine map
\begin{equation}\label{tau_map}
\vec{\tau} : \mathcal{I}_\varrho \rightarrow \prod_{i=1}^k N_i
\end{equation}
by requiring that $(\vec{\tau})^*(\eta_{i}) = \eta_{i} (\tau^{\mathfrak{e}_i}(\vec{s},x))$, where $\tau^{\mathfrak{e}_i}$ is defined in Definition \ref{sequence_flow_def}.

\begin{lemma}\label{lem:tau_map_iso}
There exists some constant $c>0$ such that
$
(\vec{\tau})^*(d\eta_{1} \wedge \cdots \wedge d\eta_{k}) = c (-1)^{\chi(\lrtr)} \nu_{\lrtr} \wedge dt,
$
where we set $(-1)^{\chi(\lrtr)} := \prod_{v \in \lrtr^{[0]}} (-1)^{\chi(\lrtr,v)}$ (with the convention that $(-1)^{\chi(\lrtr)} = 1$ if $\lrtr^{[0]} = \emptyset$) and $(-1)^{\chi(\lrtr,v)}$ is defined for each trivalent vertex $v$ (attached to two incoming edges $e_1, e_2$ and one outgoing edge $e_3$ so that $e_1,e_2,e_3$ are arranged in the clockwise orientation) by comparing the orientation of the ordered basis $\{-m_{e_1}, -m_{e_2}\}$ with that of $\{-m_1, -m_2\}$ of $NQ$ (cf. Notations \ref{choice_of_U}). In particular, $\vec{\tau}$ is an affine isomorphism onto its image $C(\vec{\tau}) \subset  \prod_{i=1}^k N_i$.
\end{lemma}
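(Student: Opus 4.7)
The plan is to prove the lemma by induction on $|\lrtr^{[0]}|$, the number of trivalent vertices of $\lrtr$, after first making the map $\vec\tau$ explicit as an affine map. Under the identification $TU \cong \Lambda_{B_0} \otimes_\inte \real$, each $-m_e$ is a covariant constant vector field, so the flow $\tau^e_s$ is simply translation $x \mapsto x - s m_e$. Iterating gives $\tau^{\mathfrak{e}_i}(\vec s, x) = x - \sum_{e \in \mathfrak{e}_i} s_e m_e$, and hence
$$\eta_i \circ \vec\tau(\vec s, t) = \eta_i(\varrho(t)) - \sum_{e \in \mathfrak{e}_i} s_e \, \langle d\eta_i, m_e\rangle$$
is affine linear in $(\vec s, t)$. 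The Jacobian $J$ in coordinates $(\{s_e\}_{e \in \lrtr^{[1]}}, t)$ is therefore the constant matrix with entries $J_{i,e} = -\langle d\eta_i, m_e\rangle$ if $e \in \mathfrak{e}_i$ (zero otherwise), and $J_{i,t} = d\eta_i(\varrho'(t))$. In particular, $\vec\tau$ is an affine map between affine spaces of the same dimension, so the lemma reduces to computing $\det J$. The base case $|\lrtr^{[0]}| = 1$ (i.e.\ $k = 2$) is a direct $2 \times 2$ computation: the orthogonality of $m_1, m_2$ under $g_V$ forces $d\eta_i(m_i) = 0$ and $d\eta_i(m_j) \neq 0$ for $i \neq j$, with definite signs dictated by the conventions $\nu_{P_1} = -m_2$ and $\nu_{P_2} = m_1$, so $\det J = c(-1)^{\chi(\lrtr,v_r)}$ with explicit $c > 0$.

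For the inductive step, split $\lrtr$ at the root vertex $v_r$ into subtrees $\lrtr_1, \lrtr_2$ with clockwise incoming edges $\acute e_1, \acute e_2$ and leaf sets $I_1, I_2$ of sizes $k_1, k_2$. Since for $i \in I_j$ the path $\mathfrak{e}_i$ is contained in $\lrtr_j^{[1]} \cup \{e_o\}$, the matrix $J$ acquires the block form
$$J = \begin{pmatrix} J_1^s & 0 & -M_1^{e_o} & V_1 \\ 0 & J_2^s & -M_2^{e_o} & V_2 \end{pmatrix},$$
with $J_j^s$ of size $k_j \times (k_j - 1)$ and $M_j^{e_o}, V_j$ columns in the rows $I_j$. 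Change coordinates from $(\vec s_1, \vec s_2, s_{e_o}, t)$ to $(\vec s_1, \vec s_2, y_1, y_2)$, where $y = \varrho(t) - s_{e_o} m_{e_o}$ represents the apex position of $v_r$ in $NQ$ with $(y_1, y_2)$ dual to the reference basis $\{-m_1, -m_2\}$. The Jacobian of this coordinate change is $\det(-m_{e_o}, \varrho'(t))$ with respect to $\{-m_1, -m_2\}$, which is \emph{strictly positive} because $\varrho$ crosses $P_\lrtr = Q - \real_{\geq 0} m_{e_o}$ positively and $-m_{e_o}$ is tangent to $P_\lrtr$. In the new coordinates the Jacobian becomes block-triangular and the two subtree maps $\vec\tau_j$ decouple. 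Applying the inductive hypothesis to each $\lrtr_j$, using a 1-parameter auxiliary path $\tilde\varrho_j(\tilde t_j) \subset NQ$ chosen to cross $P_{\lrtr_j}$ positively, produces $\det \widetilde J_j = \tilde c_j (-1)^{\chi(\lrtr_j)}$ with $\tilde c_j > 0$. Multiplying these together with the $2 \times 2$ gluing determinant at $v_r$, which contributes the sign $(-1)^{\chi(\lrtr, v_r)}$ from comparing the orientation of $\{-m_{\acute e_1}, -m_{\acute e_2}\}$ against the reference $\{-m_1, -m_2\}$ of $NQ$, produces $\det J = c(-1)^{\chi(\lrtr)}$ with $c > 0$ by multiplicativity.

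The main technical difficulty is sign bookkeeping: the factor $(-1)^{\bar\nu_{\acute e_2}} = (-1)^{k_2 - 1}$ from the recursion $\nu_\lrtr = (-1)^{k_2 - 1} \nu_{\lrtr_1} \wedge \nu_{\lrtr_2} \wedge ds_{e_o}$ in Definition \ref{def:trop_tree_orientation}, the row/column permutations required to bring $J$ into block-triangular form, and the vertex signs $(-1)^{\chi(\lrtr, v)}$ must all combine consistently to give the single global sign $(-1)^{\chi(\lrtr)}$. A secondary subtlety is the existence of ``virtual paths'' $\tilde\varrho_j$ in $NQ$ with positive crossing of $P_{\lrtr_j}$: this follows because the slopes $\{m_e\}_{e \in \lrtr^{[1]}}$ all lie in the positive cone $\real_{\geq 0}\langle m_1, m_2\rangle$ and the $P_{\lrtr_j}$'s inherit a nested positivity from the tree structure, so a generic direction in the $(y_1, y_2)$-plane crosses both $P_{\lrtr_1}$ and $P_{\lrtr_2}$ positively simultaneously.
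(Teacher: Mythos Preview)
Your proposal is correct and follows essentially the same inductive architecture as the paper's proof: induction on the number of trivalent vertices, splitting $\lrtr$ at the root vertex $v_r$ into subtrees $\lrtr_1,\lrtr_2$, applying the induction hypothesis along auxiliary paths crossing $P_{\lrtr_j}$ positively, and combining with the $2\times 2$ computation at $v_r$ to produce the sign $(-1)^{\chi(\lrtr,v_r)}$. The only differences are presentational --- you phrase everything in terms of the explicit Jacobian matrix and its block structure, whereas the paper works directly with pullbacks of the forms $d\eta_i$ and the recursion for $\nu_\lrtr$ --- and one small misattribution in your base case: the vanishing $d\eta_i(m_{e_i})=0$ holds because $m_{e_i}$ is \emph{tangent} to the wall $P_{e_i}$ (Definition~\ref{wall}), not because of the $g_V$-orthogonality of $m_1,m_2$.
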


\begin{proof}
Once again, we will prove by induction on the number of vertices of the labeled ribbon tree $\lrtr$. The initial step concerning labeled ribbon $1$-trees is trivial because in this case $\mathcal{I}_\varrho = (a, b)$ and $\vec{\tau} = \varrho$.

As before, we will consider the next step, or the simplest nontrivial case, namely, when we are given a labeled ribbon $2$-tree $\lrtr$ with only one trivalent vertex $v$, two incoming edges $e_1, e_2$ and one outgoing edge $e_o$ meeting $v$, to illustrate the induction step.

We first assume that the orientation of $\{-m_{e_1},-m_{e_2}\}$ agrees with that of $\{-m_1,-m_2\}$. We can treat $\eta_1, \eta_2$ as oriented affine linear coordinates on the fiber $NQ_{q_1}$ of the normal bundle $NQ$. We will use $(s,t)$ for the coordinates of $\mathcal{I}_{\varrho} = \mathcal{I}_{\varrho}^{e_o}$ defined in  \eqref{eqn:varrho_tropical_space}.
Let $x_0 := \vec{\tau}(0,t_0) \in \varrho\cap P_\lrtr$ be the unique intersection point between $\varrho$ and $P_\lrtr$. Then there exists a unique $ s_0 \leq 0$ such that $q_1 = \vec{\tau}(s_0,t_0) \in P_{e_1} \cap P_{e_2}$. We see that $\{(d\vec{\tau})_{(s_0,t_0)}(\dd{s}) = -m_\lrtr \neq 0, (d\vec{\tau})_{(s_0,t_0)}(\dd{t})\}$ is an oriented basis of $NQ_{q_1}$ by the assumption that $\varrho$ intersects positively with $P_\lrtr$; in other words, $(d\vec{\tau})_{(s_0,t_0)}$ is a linear isomorphism from $T_{(s_0,t_0)} \mathcal{I}_\varrho$ onto $NQ_{q_1}$ and we have $(\vec{\tau})^*(d\eta_1\wedge d \eta_2) = c ds \wedge dt$ for some $c>0$.

In the opposite case when the orientation of $\{-m_{e_1},-m_{e_2}\}$ disagrees with that of $\{-m_1,-m_2\}$, $\eta_2,\eta_1$ are oriented coordinates of $NQ_{q_1}$. So we get $(\vec{\tau})^*(d\eta_1\wedge d \eta_2) = - c ds \wedge dt = c(-1)^{\chi(\lrtr)} ds \wedge dt$ for some $c>0$, because $\chi(\lrtr) = 1$ in this case.

For the induction step, we again split a general $k$-tree $\lrtr \in \lrtree{k}_0$ at the root vertex $v_r$ to get two trees $\lrtr_1$ and $\lrtr_2$, as in the proof of Lemma \ref{tree_lemma}. Since $P_\lrtr \neq \emptyset$, both $P_{\lrtr_1}$ and $P_{\lrtr_2}$ are non-empty and they intersect transversally.
We take two embedded paths $\varrho_1$ and $\varrho_2$ intersecting positively with $P_{\lrtr_1}$ and $P_{\lrtr_2}$ at $q_1$ with coordinates $\eta_{\lrtr_1}$ and $\eta_{\lrtr_2}$ respectively. By the induction hypothesis, the forms
$(\tau^{\tilde{\mathfrak{e}}_1})^*(d\eta_{1}) \wedge \cdots \wedge (\tau^{\tilde{\mathfrak{e}}_{k_1}})^*(d\eta_{k_1}) = c_1 (-1)^{\chi(\lrtr_1)} \nu_{\lrtr_1} \wedge d\eta_{\lrtr_1}$
and
$(\tau^{\tilde{\mathfrak{e}}_{k_1+1}})^*(d\eta_{k_1+1}) \wedge \cdots \wedge (\tau^{\tilde{\mathfrak{e}}_k})^*(d\eta_{k}) = c_2 (-1)^{\chi(\lrtr_2)}\nu_{\lrtr_2} \wedge d\eta_{\lrtr_2}$
are non-degenerate on $\mathcal{I}_{\varrho_1}$ and $\mathcal{I}_{\varrho_2}$ respectively. Therefore we have a nontrivial product
$(\tau^{\tilde{\mathfrak{e}}_1})^*(d\eta_{1}) \wedge \cdots \wedge (\tau^{\tilde{\mathfrak{e}}_{k}})^*(d\eta_{k}) = (-1)^{\chi(\lrtr_1)+\chi(\lrtr_2)+\bar{\nu}_{\lrtr_2}} \nu_{\lrtr_1}\wedge\nu_{\lrtr_2} \wedge d\eta_{\lrtr_1}\wedge d\eta_{\lrtr_2}$
on $\mathcal{I}_{\varrho_1} \times \mathcal{I}_{\varrho_2}$, where $\bar{\nu}_{\lrtr_2}$ denotes the degree of the differential form $\nu_{\lrtr_2}$.

Assuming that the orientation of $\{-m_{P_{\lrtr_1}},-m_{P_{\lrtr_1}}\}$ agrees with that of $\{-m_1,-m_2\}$ on $NQ_{q_1}$, we can treat $\{\eta_{\lrtr_1},\eta_{\lrtr_2}\}$ as an oriented basis for $NQ_{q_1}$. Using the same argument as in the $2$-tree case, we use $(s,t)$ as coordinates for $\bigcup_{t\in (a,b)}\mathcal{I}^{e_o}_{\varrho(t)}$ and obtain the relation $(\tau^{e_o}|_{\varrho})^*(d\eta_{\lrtr_1} \wedge d \eta_{\lrtr_2}) = c ds \wedge dt$ for some $c>0$. Combining with the induction hypothesis, we get
\begin{align*}
&(\tau^{\mathfrak{e}_1})^*(d\eta_{1}) \wedge \cdots \wedge (\tau^{\mathfrak{e}_k})^*(d\eta_{k})
 = (\tau^{e_o}|_{\varrho})^* \left( (\tau^{\tilde{\mathfrak{e}}_1})^*(d\eta_1) \wedge \cdots \wedge  (\tau^{\tilde{\mathfrak{e}}_{k}})^*(d\eta_k) \right)\\
=& (-1)^{\chi(\lrtr_1)+\chi(\lrtr_2)+\bar{\nu}_{\lrtr_2}} (\tau^{e_o}|_{\varrho})^*\big( \nu_{\lrtr_1}\wedge\nu_{\lrtr_2} \wedge d\eta_{\lrtr_1}\wedge d\eta_{\lrtr_2}\big)\\
 = &(-1)^{\chi(\lrtr_1)+\chi(\lrtr_2) +\bar{\nu}_{\lrtr_2}} (\tau^{e_o}|_{\varrho})^*\big( \nu_{\lrtr_1}\wedge\nu_{\lrtr_2} \big) \wedge ds \wedge dt
 = (-1)^{\chi(\lrtr)} \nu_{\lrtr}\wedge dt;
\end{align*}
here we have $\chi(\lrtr,v_r) = 0$ because we assume that the orientation of $\{-m_{P_{\lrtr_1}},-m_{P_{\lrtr_1}}\}$ agrees with that of $\{-m_{P_{\lrtr_1}},-m_{P_{\lrtr_1}}\}$.

Reversing the orientation condition, we will have $\chi(\lrtr,v_r) = 1$, while at the same time we get an extra $(-1)$ in the above formula because $ (\tau^{e_o}|_{\varrho})^*(d\eta_{\lrtr_1}\wedge d\eta_{\lrtr_2}) = -c ds \wedge dt$, exactly as in the $2$-tree case. This completes the proof.
\end{proof}

Now $\mathcal{I}_\varrho$ is an open neighborhood of $\vec{0} \times \text{Im}(\varrho)$ in the cone $\real_{\leq 0}^{|\lrtr^{[1]}|} \times \text{Im}(\varrho)$, and we let $C(\vec{\tau}) \subset \prod_{i=1}^k N_i$ be its image under the map $\vec{\tau}$. The local diffeomorphism $\vec{\tau}$ allows us to transform the integral in \eqref{tropical_integral} to an integral over $C(\vec{\tau})$, so we have the identity
\begin{align*}
&\int_{x \in \varrho} \alpha_{\lrtr}
 = (-1)^{k-1}\int_{C(\vec{\tau})} (\alpha_1\wedge \cdots\wedge \alpha_k ) + O(e^{-c_{\varrho}/\hp})\\
 =& (-1)^{\chi(\lrtr)+1}( \pi\hp)^{-\frac{k}{2}}\int_{C(\vec{\tau})} e^{-\frac{\sum_{i=1}^k \eta_{i}^2}{\hp}} d\eta_{1} \cdots d\eta_{k} + O(e^{-c_{\varrho}/\hp})
=  (-1)^{\chi(\lrtr)+1} \lim_{\epsilon \rightarrow 0}\frac{\text{vol}(C(\vec{\tau}) \cap B_\epsilon)}{\text{vol}(B_\epsilon)} + O(\hp^{1/2}),
\end{align*}
which computes the leading order contribution of $\int_{x \in \varrho} \alpha_{\lrtr}$; here $B_\epsilon$ is the $\epsilon$-ball in $\prod_{i=1}^k N_i$ and $\text{vol}$ is the volume with respect to the standard metric $\sum_{i=1}^k d\eta_i^2$.

\begin{remark}
The meaning of the above equation is that the leading order contribution of $\int_{x \in \varrho} \alpha_{\lrtr}$ (corresponding to the effect of crossing the new wall $P_{\lrtr}$) depends on how the image $C(\vec{\tau})$ of the locus $\mathcal{I}_\varrho$ in the moduli space $\mathfrak{M}_{\underline{\lrtr}}(U)$ of tropical trees in $U$ intersects with the normals of the initial walls $P_1, P_2$. Figure \ref{fig:tropical_integral} illustrates the situation for a tree with only two incoming edges $e_1, e_2$ and one outgoing edge, where the ansatz for the initial walls $P_1, P_2$ are drawn as in Figure \ref{fig:delta_function}.
\begin{figure}[h]
\centering
\includegraphics[scale=0.4]{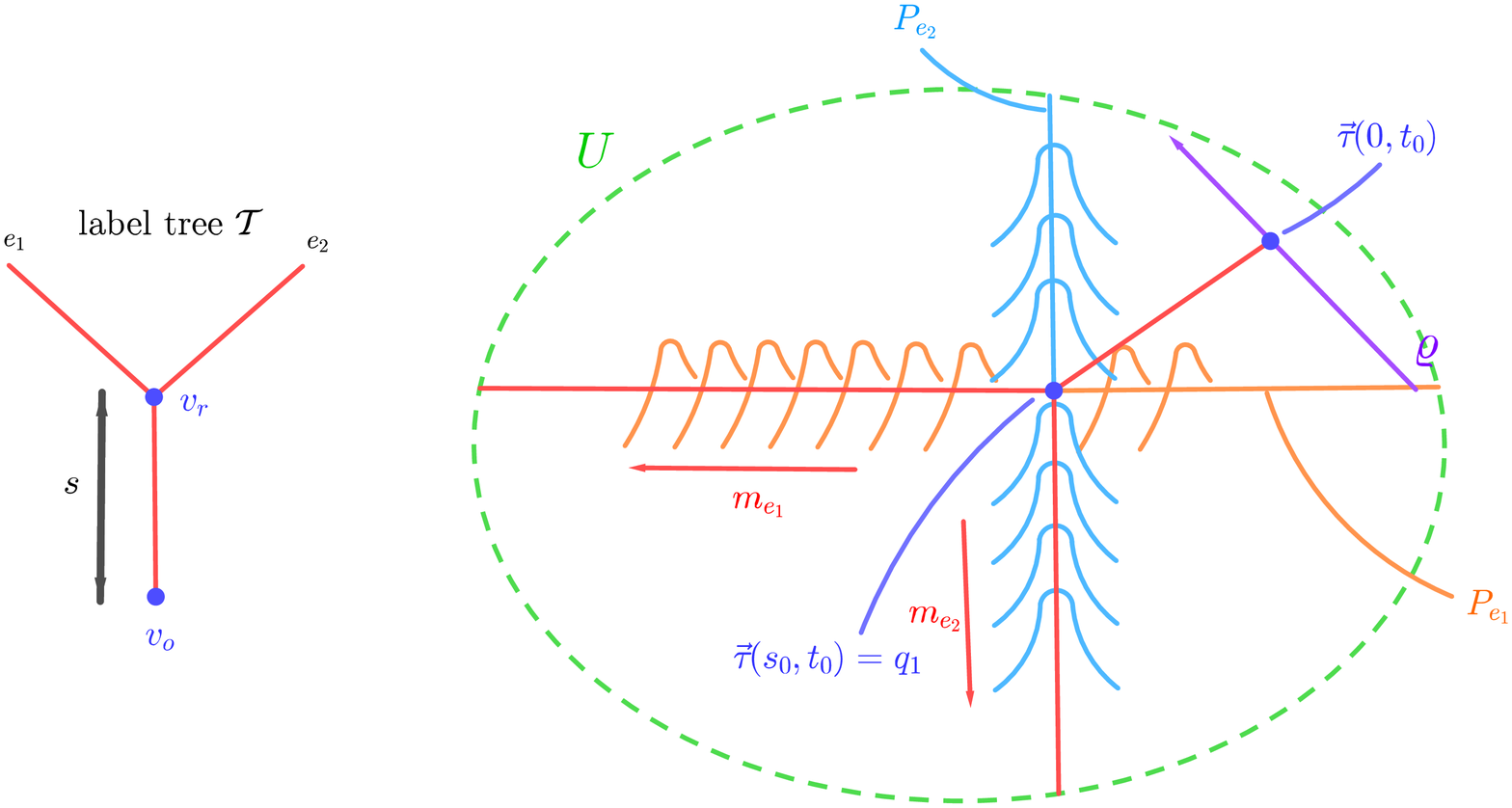}
\caption{Intersection $C(\vec{\tau})$ with $P_i$'s giving tropical counting}
\label{fig:tropical_integral}
\end{figure}
\end{remark}

The following lemma summarizes the results of this subsection:
\begin{lemma}\label{lem:semi_classical_integral}
For a labeled ribbon $k$-tree $\lrtr \in \lrtree{k}_0$ with $P_\lrtr \neq \emptyset$, we write $\alpha_\lrtr \check{\partial}_{n_\lrtr} = \mathfrak{l}_{k,\lrtr}(\vec{\alpha},\vec{n})$ as in \eqref{alpha_out_def}.
Then, for any embedded affine line $\varrho :(a,b)\rightarrow U$ intersecting transversally and positively with $P_\lrtr$, we have
$$
\int_{\varrho} \alpha_\lrtr = (-1)^{\chi(\lrtr)+1} \lim_{\epsilon \rightarrow 0}\frac{\text{vol}(C(\vec{\tau}) \cap B_\epsilon)}{\text{vol}(B_\epsilon)} + O(\hp^{1/2}),
$$
where $\text{vol}$ is the volume with respect to the standard metric $\sum_{i=1}^k d\eta_i^2$ on $\prod_{i=1}^k N_i$, and $\chi(\lrtr)$ is defined as in Lemma \ref{lem:tau_map_iso}.
Moreover, we have $n_\lrtr \in (TP_\lrtr)^\perp$ and $(-1)^{\chi(\lrtr)}(\nu_{P_\lrtr},n_\lrtr)< 0$.
\end{lemma}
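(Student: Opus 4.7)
The proof splits into two largely independent parts: the asymptotic evaluation of $\int_\varrho \alpha_\lrtr$, and the orthogonality/sign statement about $n_\lrtr$.

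My plan for the integral formula is essentially to stitch together the machinery already built up in Lemmas \ref{iteratedintegral} and \ref{lem:tau_map_iso}. First I would invoke Lemma \ref{iteratedintegral} to write $\alpha_\lrtr(x) = (-1)^{k-1}\int_{\mathcal{I}_x}(\tau^{\mathfrak{e}_1})^*(\alpha_1)\wedge\cdots\wedge(\tau^{\mathfrak{e}_k})^*(\alpha_k)$ modulo $\filt^{-\infty}_1(U)$; pairing this against $\varrho$ and using Fubini gives $\int_\varrho\alpha_\lrtr = (-1)^{k-1}\int_{\mathcal{I}_\varrho}(\tau^{\mathfrak{e}_1})^*(\alpha_1)\wedge\cdots\wedge(\tau^{\mathfrak{e}_k})^*(\alpha_k) + O(e^{-c/\hp})$, exactly as in \eqref{tropical_integral}. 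Then Lemma \ref{lem:tau_map_iso} provides the affine isomorphism $\vec\tau:\mathcal{I}_\varrho\to C(\vec\tau)\subset\prod_i N_i$ with $\vec\tau^*(d\eta_1\wedge\cdots\wedge d\eta_k)=c(-1)^{\chi(\lrtr)}\nu_\lrtr\wedge dt$, allowing me to change variables and convert the iterated flow-integral into a Gaussian integral $(-1)^{\chi(\lrtr)+1}(\pi\hp)^{-k/2}\int_{C(\vec\tau)} e^{-\sum\eta_i^2/\hp}\,d\eta_1\cdots d\eta_k$ (absorbing the positive constant $c$ into the Jacobian; the sign $(-1)^{k-1+k}=(-1)^{-1}=-1$ matching the $\alpha_i = -(\pi\hp)^{-1/2}e^{-\eta_i^2/\hp}d\eta_i$'s). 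A direct application of the stationary phase estimate (Lemma \ref{stat_phase_exp}), together with the elementary fact that a normalized Gaussian concentrated at the origin recovers the solid angle of a cone, yields the asserted limit $\lim_{\epsilon\to 0}\mathrm{vol}(C(\vec\tau)\cap B_\epsilon)/\mathrm{vol}(B_\epsilon)$ with a $O(\hp^{1/2})$ remainder.

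For the claim $n_\lrtr\in(TP_\lrtr)^\perp$, I would induct on $|\lrtr^{[0]}|$. The base case $k=1$ is the assumption that $n_i\perp TP_{e_i}$. For the inductive step, splitting $\lrtr$ at its root vertex into $\lrtr_1,\lrtr_2$, formula \eqref{vertex_lie_algebra} gives $n_\lrtr=(m_{\lrtr_2},n_{\lrtr_1})n_{\lrtr_2}-(m_{\lrtr_1},n_{\lrtr_2})n_{\lrtr_1}$. Since by induction $n_{\lrtr_i}\perp TP_{\lrtr_i}$, in particular $(m_{\lrtr_i},n_{\lrtr_i})=0$, so a one-line computation shows $(m_\lrtr,n_\lrtr)=(m_{\lrtr_1}+m_{\lrtr_2},n_\lrtr)=0$. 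Moreover, both $n_{\lrtr_1}$ and $n_{\lrtr_2}$ annihilate $TQ = TP_{\lrtr_1}\cap TP_{\lrtr_2}$ (where $Q=P_{\lrtr_1}\cap P_{\lrtr_2}$ since we assumed $P_\lrtr\ne\emptyset$), so any linear combination does too. Because $TP_\lrtr = TQ\oplus\real\cdot m_\lrtr$, this gives $n_\lrtr\perp TP_\lrtr$.

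The main obstacle, and the one I would spend the most care on, is the sign statement $(-1)^{\chi(\lrtr)}(\nu_{P_\lrtr},n_\lrtr)<0$. I would again induct on $|\lrtr^{[0]}|$, using the same splitting at the root vertex $v_r$ and noting that $\chi(\lrtr)=\chi(\lrtr_1)+\chi(\lrtr_2)+\chi(\lrtr,v_r)\pmod 2$. The key computation is to evaluate $(\nu_{P_\lrtr},n_\lrtr)$ by pairing the identity $n_\lrtr=(m_{\lrtr_2},n_{\lrtr_1})n_{\lrtr_2}-(m_{\lrtr_1},n_{\lrtr_2})n_{\lrtr_1}$ with $\nu_{P_\lrtr}$. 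Here one splits into the two sub-cases of Lemma \ref{lem:tau_map_iso}: either the orientation $\{-m_{\lrtr_1},-m_{\lrtr_2}\}$ agrees with $\{-m_1,-m_2\}$ on $NQ$ (so $\chi(\lrtr,v_r)=0$) or it is reversed (so $\chi(\lrtr,v_r)=1$). In each sub-case one relates $\nu_{P_\lrtr}$ to $\nu_{P_{\lrtr_1}},\nu_{P_{\lrtr_2}}$ using Notation \ref{two_wall_mode}, substitutes the inductive sign hypothesis $(-1)^{\chi(\lrtr_i)}(\nu_{P_{\lrtr_i}},n_{\lrtr_i})<0$, and tracks how the two scalar factors $(m_{\lrtr_2},n_{\lrtr_1})$ and $(m_{\lrtr_1},n_{\lrtr_2})$ combine. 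The verification reduces to a bookkeeping that the reversal of ordering of $m_{\lrtr_1},m_{\lrtr_2}$ flips exactly one sign, which is precisely the content of $(-1)^{\chi(\lrtr,v_r)}$; carrying this out carefully in both sub-cases completes the induction.
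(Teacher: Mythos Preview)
Your proposal is correct and follows essentially the same route as the paper. The integral formula is obtained exactly as in the paper's discussion preceding the lemma: combine Lemma~\ref{iteratedintegral} with the change of variables furnished by Lemma~\ref{lem:tau_map_iso} and then evaluate the resulting Gaussian over the cone $C(\vec\tau)$; your sign bookkeeping $(-1)^{k-1}\cdot(-1)^k\cdot(-1)^{\chi(\lrtr)}=(-1)^{\chi(\lrtr)+1}$ matches the paper's computation (note that the positive constant $c$ from Lemma~\ref{lem:tau_map_iso} is the Jacobian of $\vec\tau$ and is automatically absorbed by the change-of-variables formula, so your phrasing there is fine). For the orthogonality and sign of $n_\lrtr$ the paper likewise inducts on the number of vertices, splitting at the root vertex and handling the two orientation sub-cases of $\{-m_{\lrtr_1},-m_{\lrtr_2}\}$ versus $\{-m_1,-m_2\}$; the paper carries out the $2$-tree case explicitly by choosing $-m_{e_2}$ (respectively $-m_{e_1}$) as an oriented normal to $P_\lrtr$ and computing $(-m_{e_2},n_\lrtr)=(-m_{e_1},n_2)(-m_{e_2},n_1)$, which is precisely the ``bookkeeping'' you allude to in your final paragraph.
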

\begin{proof}
It remains to prove the last statement, which is yet another induction on the number of vertices of the labeled ribbon tree $\lrtr$. The initial step is trivially true.

For the simplest non-trivial case, we look at a labeled ribbon $2$-tree $\lrtr$ with only one trivalent vertex $v$, two incoming edges $e_1, e_2$ and one outgoing edge $e_o$ meeting $v$, as before. Since $n_i \in (TP_i)^\perp$ and $(\nu_{P_{e_i}},n_i)<0$ for $i = 1, 2$, we have, by the formula \eqref{vertex_lie_algebra},
$
n_\lrtr = ( m_{e_2},n_1 ) n_2 - ( m_{e_1} ,n_2 ) n_1 \in (TP_\lrtr)^\perp
$
since $P_\lrtr = Q-\real_{\geq} \cdot m_\lrtr$.

When the orientation of $\{-m_{e_1},-m_{e_2}\}$ agrees with that of $\{-m_1,-m_2\}$, we can choose $-m_{e_2}$ as an oriented normal to $P_\lrtr$ (here we only care about orientation so there are many different choices), so we have $(-m_{e_2},n_{\lrtr}) = ( -m_{e_1} ,n_2 )(-m_{e_2}, n_1) < 0$ which means that $(\nu_{P_\lrtr},n_\lrtr) < 0$ in this case.

When the orientation of $\{-m_{e_1},-m_{e_2}\}$ disagrees with that of $\{-m_1,-m_2\}$, we have $(\nu_{P_\lrtr},n_\lrtr)>0$ from the above argument as now $-m_{e_1}$ is chosen as an oriented normal to $P_\lrtr$. This completes the proof of the $2$-tree case.

By (once again) splitting a $k$-tree $\lrtr \in \lrtree{k}_0$ at the root vertex $v_r$ into two trees $\lrtr_1$ and $\lrtr_2$, we can prove the induction step using exactly the same argument as above with $n_1, n_2$ replaced by $n_{\lrtr_1}, n_{\lrtr_2}$ respectively.
\end{proof}

\begin{remark}
The integral in Lemma \ref{iteratedintegral} depends on the ribbon structure on $\lrtr$ because the order in taking the wedge product and the orientation of $\mathcal{I}_x$ given by Definition \ref{def:trop_tree_orientation} depend on it. Nevertheless, as Lemmas \ref{lem:tau_map_iso} and \ref{lem:semi_classical_integral} show, the whole expression $\alpha_{\lrtr} \check{\partial}_{\lrtr}$ is independent of the ribbon structure, as only the sign of $\check{\partial}_{\lrtr}$ depends on the ribbon structure and so does $\alpha_\lrtr$, and this dependence cancels out with each other. This matches our earlier observation in equation \eqref{eqn:relating_ribbon_or_not} that the term $\mathfrak{l}_{k,\lrtr}(\eincoming,\dots,\eincoming)$ is independent of the ribbon structure.
\end{remark}

%Now, because of the fact that $\check{X}_0$ has no non-trivial deformations of complex structure, the solution $\Phi$ must be gauge equivalent to 0, i.e. there exists $\facs \in \Omega^0(\mathcal{M},TB_0^\comp)\otimes \mathbf{m}$ solving the equation $e^\facs * 0 = \Phi$. Our goal in this section is to show that the semi-classical limit of $\facs$ determines a consistent scattering diagram $\mathcal{S}(\mathscr{D})$.

\subsection{Consistent scattering diagrams from Maurer-Cartan solutions}\label{sec:key_lemmas}

In this section, we apply the results we obtained in Sections \ref{sec:leading_order_MC} and \ref{sec:tropical_leading_order} to prove Theorems \ref{theorem1} and \ref{theorem2} in the Introduction.

\subsubsection{The scattering diagram associated to the MC solution $\Phi$}\label{sec:diagram_associated}

Recall that the Maurer-Cartan solution $\Phi$ constructed in \eqref{eqn:MC_sol_Phi} is decomposed as a sum of Fourier modes $\Phi^{(a)} = \emc^{(a)} + \digamma^{(a)}$ (see Definition \ref{solving_MC_tree}). The asymptotic behavior of each $\Phi^{(a)}$ is described by Theorem \ref{asy_support_theorem} and a precise expression for the leading order terms $\emc^{(a)}$ is obtained in Lemma \ref{lem:semi_classical_integral}.
Applying these results, we are going to associate a scattering diagram $\mathscr{D}(\Phi)$ to $\Phi$.

We will first construct a finite diagram $\mathscr{D}(\Phi)_N$ for each fixed $N \in \inte_{>0}$, producing a sequence $\{\mathscr{D}(\Phi)_N\}_{N \in \inte_{>0}}$ such that $\mathscr{D}(\Phi)_{N+1}$ is extension of $\mathscr{D}(\Phi)_N$ in the sense that there is an inclusion $\mathscr{D}(\Phi)_{N} \subset \mathscr{D}(\Phi)_{N+1}$ identifying walls $\text{(mod\ $\mathbf{m}^{N+1}$)}$ and each $\mathbf{w} \in \mathscr{D}(\Phi)_{N+1} \setminus \mathscr{D}(\Phi)_{N}$ has a trivial wall crossing factor $\text{(mod\ $\mathbf{m}^{N+1}$)}$.
Then we define $\mathscr{D}(\Phi)$ as the limit of this sequence.

The order $N$ scattering diagram $\mathscr{D}(\Phi)_N$ will be constructed by adding to the initial diagram $\mathscr{D}(\Phi)_1 = \{\mathbf{w}_1,\mathbf{w}_2\}$ new walls $\mathbf{w}_a$ parametrized by a finite set of $a \in (\inte_{> 0 })^2_{\text{prim}}$, where each $\mathbf{w}_a$ is supported on the half-hyperplane $P_a = Q -\real_{\geq 0 } \cdot m_a$ and equipped with a wall crossing factor $\Theta_a$ (which could be trivial) determined from the leading order term $\emc^{(a)}$ in the asymptotic expansion of $\Phi^{(a)}$.
In order to parametrize the old and new walls by the same parameter space, we introduce the following notations:
\begin{notation}\label{Rays_notation}
We set
$a\in \widetilde{(\inte_{\geq 0 })}^2_{\text{prim}} := (\inte_{\geq 0 })^2_{\text{prim}} \cup \{(-1,0),(0,-1)\},$
and use the (rather unusual) convention that
$m_{(-1,0)} = m_{(1,0)} = m_1 \text{ and } m_{(0,-1)} = m_{(0,1)} = m_2$
for the Fourier modes corresponding to the two initial walls $\mathbf{w}_1$ and $\mathbf{w}_2$. We use $\widetilde{(\inte_{\geq 0 })}^2_{\text{prim}}$ to parametrize the set of half-hyperplanes $P_a$ emanating from $Q$ with slope $-m_a = -(a_1 m_1 + a_2 m_2)$ for $a = (a_1, a_2)$, where we are regarding each initial wall $\mathbf{w}_i$ as a union of two half-hyperplanes.

For a fixed $N \in \inte_{>0}$, there will only be finitely many Fourier modes involved in the expression for the MC solution $\Phi$ in Definition \ref{solving_MC_tree}. For this purpose, we use
$$
\mathbb{W}(N):= \{a \in \widetilde{(\inte_{\geq 0})}^2_{\text{prim}} \mid l m_a = m_\lrtr \ \text{for some $l \geq 1$ and $\lrtr \in \lrtree{k}$ with $1 \leq j_\lrtr \leq N$} \},
$$
where $(m_\lrtr, j_\lrtr)$ is the labeling of the unique outgoing edge $e_o$ attached to the outgoing vertex $v_o$ in $\lrtr$ (see Definition \ref{label_tree_def} and Notations \ref{not:edge_labeling}), to parametrize the possible walls involved in $\Phi$ $\text{(mod $\mathbf{m}^{N+1}$)}$.
\end{notation}

It makes sense to regard each of the two initial walls $\mathbf{w}_1, \mathbf{w}_2$ as a union of two half-hyperplanes in Notations \ref{Rays_notation} because of the following construction:
\begin{definition}\label{cut_off_input}
Given an input term $\incoming^{(i)}$ in the form of \eqref{two_wall_incoming} having asymptotic support on $P_i$ for $i = 1, 2$, we take an affine coordinate function $u_{m_i,1}$ along $-m_i$ which assumes the value $0$ along $Q$. Then the functions
$$
\chi_{i,+}(u_{m_i,1})  := \left( \frac{1}{\hp \pi} \right)^{\half} \int_{-\infty}^{u_{m_i,1}}e^{-\frac{s^2}{\hp}} ds; \quad
\chi_{i,-}(u_{m_i,1})  := 1 - \chi_{1,+}(u_{m_i,1}) = \left( \frac{1}{\hp \pi} \right)^{\half} \int_{u_{m_i,1}}^{\infty}e^{-\frac{s^2}{\hp}} ds
$$
have asymptotic support on $\{u_{m_i,1} \geq 0 \} \cap U$ and $\{u_{m_i,1} \leq 0\} \cap U$ respectively, which implies that the cut-offs $
\Phi^{(1,0)} := \chi_{1,+} \incoming^{(1)}$, and $\Phi^{(-1,0)} := \chi_{1,-} \incoming^{(1)}$
have asymptotic support on $\{u_{m_1,1}\geq 0 \} \cap P_1$ and $\{u_{m_1,1}\leq 0 \} \cap P_1$ respectively as well;
the cut-offs $\Phi^{(0,\pm 1)}$ can be defined similarly using $\chi_{2,\pm}$ and they have asymptotic support on $\{u_{m_2,1}\geq 0 \} \cap P_2$ and $\{u_{m_2,1}\leq 0 \} \cap P_2$ respectively.
\end{definition}
From this construction, we see that both $\pdb \Phi^{(1,0)}, \pdb \Phi^{(-1,0)}$ (resp. $\pdb \Phi^{(0,1)}, \pdb \Phi^{(0,-1)}$) have asymptotic support on $\{u_{m_1,1}=0 \} \cap P_1 = Q$ (resp. $\{u_{m_2,1}=0 \} \cap P_2 = Q$).

To prove that $\mathscr{D}(\Phi)_N$ is consistent $\text{(mod\ $\mathbf{m}^{N+1}$)}$, we will remove $Q = P_1 \cap P_2 = \text{Sing}(\mathscr{D})$ from the spherical neighborhood $U$ and apply a monodromy argument on the annulus $A := U \setminus Q$ by considering the universal cover $\mathtt{p}: \tilde{A} \rightarrow A$, which is endowed with the pullback affine structure from $A$.
We use polar coordinates $(r,\ang)$ on a fiber of the normal bundle $NQ$ (identified with a slice of a tubular neighborhood around $Q$) together with a set of affine coordinates $b := (b_3,\dots,b_n)$ on $Q$ to get the coordinates
$\hat{b} := (b_1 = r, b_2 = \ang, b_3, \dots, b_n)$
on $\tilde{A}$.\footnote{Note that the polar coordinates $(r,\ang)$ are {\em not} affine coordinates.}

We fix, once and for all, an angle $\ang_0$ (chosen up to multiples of $2\pi$) such that the half-hyperplane $\mathscr{R}_{\ang_0}$ with slope $\ang_0$ through $Q$ contains the center $x^0$ of the spherical neighborhood $U$ (recall that the initial walls $\mathbf{w}_1, \mathbf{w}_2$ are dividing $U \cap NQ$ into 4 quadrants and $x^0 $ lies in the 3rd quadrant; see Figures \ref{fig:quadrant} and \ref{fig:spherical_nbh}) and also a base point $\hat{b}^0 = (r_0, \ang_0, b^0) \in \mathscr{R}_{\ang_0}$ such that $\mathtt{p}(\hat{b}_0) = x^0$.

\begin{notation}\label{order_N_neighborhood}
For each $a\in \mathbb{W}(N)$, we associate to the wall $P_a$ an angle $\ang_a$ in the branch $\{(r, \ang, \hat{b}) \mid \ang_0 < \ang < \ang_0 + 2\pi\}$ to parametrize the lifting of $P_a \cap A$ in $\tilde{A}$. We identify $P_a \cap A$ with its lift in $\tilde{A}$, and will denote it again by $P_a$ by abusing notations.
\end{notation}

We choose a sufficiently small $\epsilon_0$ and set $\mathbb{V} := \left\{(r, \ang, b) \mid \ang_0 - \epsilon_{0} + 2\pi < \ang < \ang_0 + 2\pi \right\}$
so that the open subset
$\mathbb{V} - 2\pi = \left\{(r, \ang, b) \mid \ang_0 - \epsilon_0 < \ang < \ang_0 \right\}$ stays away from all the possible walls $\{\mathbf{w}_a\}$, as shown in Figure \ref{fig:raytheta}.
\begin{figure}[h]
\begin{center}
\includegraphics[scale=0.25]{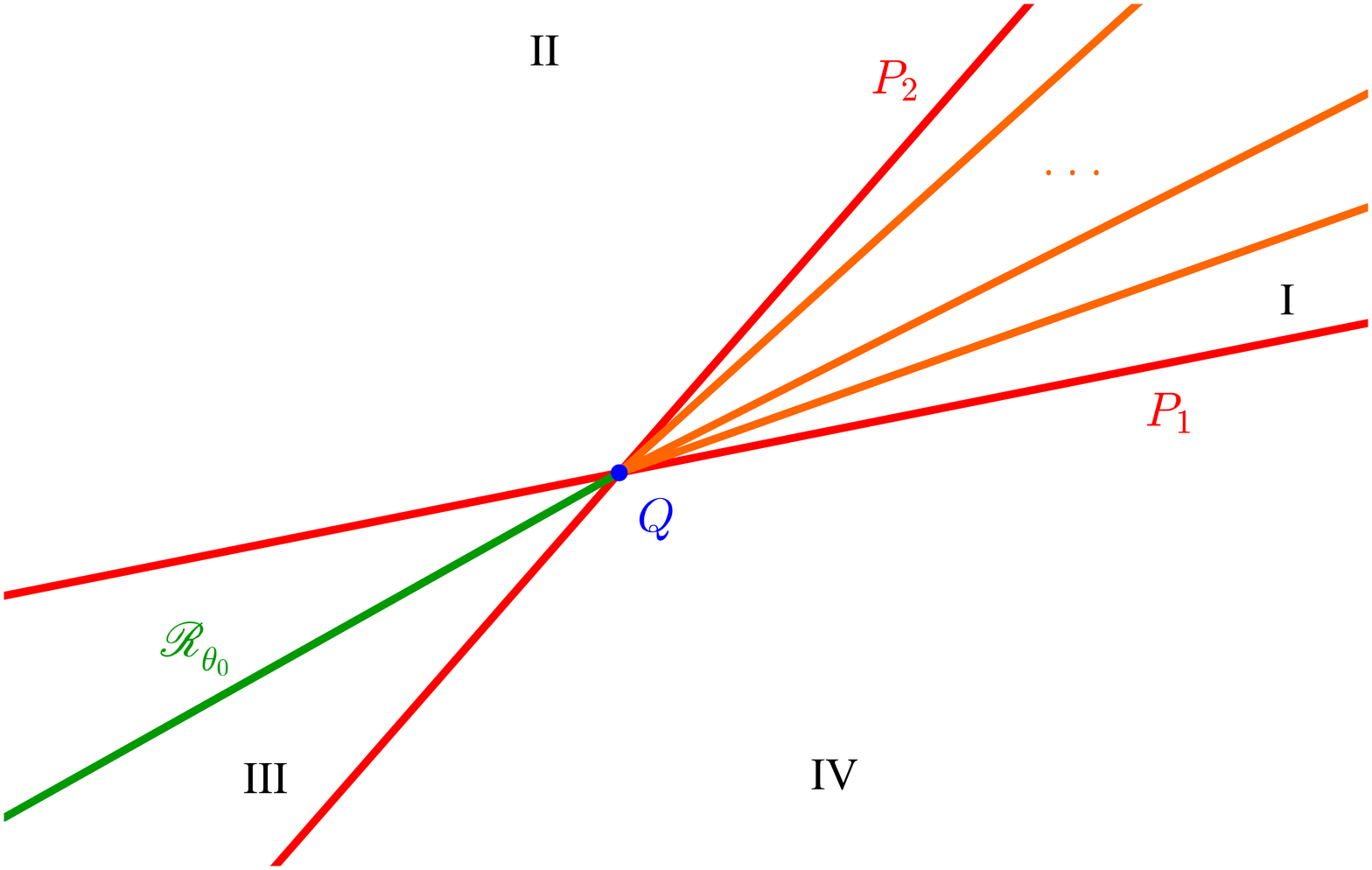}
\end{center}
\caption{}\label{fig:raytheta}
\end{figure}

For computation of some monodromy around $Q$, we consider the open subset $\tilde{A}_{0} := \{(r, \ang, b) \mid \ang_0 - \epsilon_0 < \ang < \ang_0 + 2\pi \} \subset \tilde{A}.$ Through the covering map $\mathtt{p} : \tilde{A}_0 \rightarrow A$, we pull back the dgLa's $\mathbf{g}^*_N(A)$, $\mathcal{E}^*_N(A)$ and $\mathbf{g}^*_N(A)/\mathcal{E}^*_N(A)$ to $\tilde{A}_0$, and consider $\mathbf{g}^*_N(\tilde{A}_0)$, $\mathcal{E}^*_N(\tilde{A}_0)$ and $\mathbf{g}^*_N(\tilde{A}_0)/\mathcal{E}^*_N(\tilde{A}_0)$.

%Since $\mathtt{p} : \tilde{A}\rightarrow B_0 \setminus \overline{B(r_{N_0})}$ is a covering, we can pullback the elements $\emc^{(a)}$ and $\digamma^{(a)}$ to $\tilde{A}$ to get $\tilde{\emc}^{(a)}$ and $\tilde{\digamma}^{(a)}$ respectively. Choosing $r_{N_0}$ large enough, we can divide $supp(\emc^{((1,0))})$ and $ supp(\emc^{((0,1))})$ which originally lie in small neighborhoods of $\mathbf{w}_1$ and $\mathbf{w}_2$ respectively,  into two components in $A$. Therefore we can decompose $\mathtt{p}^*{\emc}^{((1,0))}$ (resp. $\mathtt{p}^*{\emc}^{((0,1))}$) as $\tilde{\emc}^{((1,0))} + \tilde{\emc}^{((-1,0))}$ (resp. $\tilde{\emc}^{((0,1))} + \tilde{\emc}^{((0,-1))}$), with $\tilde{\emc}^{((1,0))}$ and $\tilde{\emc}^{((-1,0))}$ supported near $\mathtt{p}^{-1}(\real_{\geq 0 }\cdot m_1 \lrcorner g)$ and  $\mathtt{p}^{-1}(\real_{\geq 0 }\cdot -m_1 \lrcorner g)$ (similarly for $\tilde{\emc}^{((0,1))}$ and $\tilde{\emc}^{((0,-1))}$).

%We will see that $\left\{ \Phi^{(a)},\Phi^{(a)}\right\} = 0$, and $d_W( \Phi^{(a)})= 0 \ (\text{mod $\mathbf{m}^{N_0+1}$})$ on $pr^{-1}(A)$ in the following Lemma \ref{supportlemma}.

We write $\tilde{\Phi}^{(a)} := \mathtt{p}^*(\Phi^{(a)})$, $\tilde{\emc}^{(a)} := \mathtt{p}^*(\emc^{(a)})$, $\tilde{\digamma}^{(a)} := \mathtt{p}^*(\digamma^{(a)})$, and
$
\mathtt{p}^*(\Phi) := \sum_{a \in \mathbb{W}(N)} \mathtt{p}^*(\Phi^{(a)}) = \sum_{a \in \mathbb{W}(N)} \tilde{\emc}^{(a)} + \tilde{\digamma}^{(a)} \ (\text{mod $\mathbf{m}^{N+1}$}),
$
for the pullbacks to $\tilde{A}_0$. We then have the following lemma.
%Here we define $\emc^{(\pm 1,0)}$, $\emc^{(0,\pm 1)}$, $\digamma^{(\pm 1,0)}$ and $\digamma^{(0,\pm 1)}$ similar to that of $\Phi$ as in Definition \ref{cut_off_input}.

\begin{lemma}\label{split_MC}
For each $a \in \widetilde{(\inte_{\geq 0 })}^2_{\text{prim}}$, the Fourier mode $\tilde{\Phi}^{(a)}$ is itself a solution of the Maurer-Cartan equation \eqref{MCequation} of the dgLa $\widehat{\mathbf{g}^*/\mathcal{E}^*}(\tilde{A}_0)$ which further satisfies $\pdb \tilde{\Phi}^{(a)} = [\tilde{\Phi}^{(a)}, \tilde{\Phi}^{(a)}] = 0$.
\end{lemma}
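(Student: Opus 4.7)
The plan is to prove $\pdb\tilde{\Phi}^{(a)} = 0$ and $[\tilde{\Phi}^{(a)},\tilde{\Phi}^{(a)}]=0$ separately in $\widehat{\mathbf{g}^*/\mathcal{E}^*}(\tilde{A}_0)$, whence the Maurer-Cartan equation for $\tilde{\Phi}^{(a)}$ is immediate. The crucial geometric observation is that the half-hyperplanes $\{P_a\}_{a\in \widetilde{(\inte_{\geq 0})}^2_{\text{prim}}}$ lift to \emph{pairwise disjoint} closed subsets of $\tilde{A}_0$, each sitting at a distinct angle $\ang_a \in (\ang_0, \ang_0+2\pi)$ (Notation \ref{order_N_neighborhood}). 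In particular, the two halves $P_{(\pm 1,0)}$ of $P_1$, which together form $P_1\setminus Q$ in $U$, become separated by angle $\pi$ after lifting, and similarly for $P_{(0,\pm 1)}$. By Theorem \ref{asy_support_theorem} combined with the cut-off construction of Definition \ref{cut_off_input}, each summand $\tilde{\Phi}^{(a)}$ has asymptotic support on the lifted $P_a \subset \tilde{A}_0$.

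Granted this disjointness, Lemma \ref{filtrationlemma} gives $[\tilde{\Phi}^{(a_1)},\tilde{\Phi}^{(a_2)}]\in \mathcal{E}^*_N(\tilde{A}_0)$ for every pair $(a_1,a_2)$: for $a_1\neq a_2$ the lifts $P_{a_1},P_{a_2}$ are disjoint so the non-transversal case of Lemma \ref{support_product} applies with disjoint tubular neighborhoods $W_{P_{a_1}}\cap W_{P_{a_2}}=\emptyset$; for $a_1=a_2$, two codimension-one supports on the same hyperplane $P_a$ are themselves non-transversal. Summing over all pairs yields $[\tilde{\Phi},\tilde{\Phi}] = 0$ in the quotient, so the pull-back of the Maurer-Cartan equation already satisfied by $\Phi$ (Lemma \ref{approximated_MC}) reduces to $\pdb \tilde{\Phi} = 0$ in $\widehat{\mathbf{g}^*/\mathcal{E}^*}(\tilde{A}_0)$. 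To distribute this among the individual summands I would localize: pick pairwise disjoint open neighborhoods $V_a \supset P_a$ in $\tilde{A}_0$. Inside $V_{a_0}$, every $\tilde{\Phi}^{(a')}$ with $a'\neq a_0$ is in $\filt^{-\infty}_1(V_{a_0})$ since $P_{a'}\cap V_{a_0}=\emptyset$, hence so is $\pdb\tilde{\Phi}^{(a')}|_{V_{a_0}}$; combined with $\pdb\tilde\Phi|_{V_{a_0}} \in \filt^{-\infty}_2(V_{a_0})$, the identity
\begin{equation*}
\pdb\tilde{\Phi}^{(a_0)}|_{V_{a_0}} = \pdb\tilde{\Phi}|_{V_{a_0}} - \sum_{a'\neq a_0} \pdb\tilde{\Phi}^{(a')}|_{V_{a_0}}
\end{equation*}
forces $\pdb\tilde{\Phi}^{(a_0)}|_{V_{a_0}} \in \filt^{-\infty}_2(V_{a_0})$. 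Since $\tilde{\Phi}^{(a_0)}$ is already in $\filt^{-\infty}_1$ outside $V_{a_0}$, the locality of $\filt^{-\infty}_*$ (Definition \ref{exponential_decay}) yields $\pdb\tilde{\Phi}^{(a_0)} = 0$ in the quotient.

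The main subtle point I anticipate concerns the four initial-wall summands $a_0 \in \{(\pm 1,0),(0,\pm 1)\}$, since the cut-offs $\chi_{i,\pm}\incoming^{(i)}$ are \emph{not} individually Maurer-Cartan solutions on $U$. The reason the above argument still applies is that the apparent obstruction $\pdb(\chi_{1,+}\incoming^{(1)}) = (d\chi_{1,+})\wedge \incoming^{(1)}$ (using $\pdb\incoming^{(1)}=0$) has asymptotic support on the transversal intersection $\{u_{m_1,1}=0\}\cap P_1 = Q$ --- precisely the codimension-two locus removed in passing from $U$ to $A=U\setminus Q$ --- so it lies in $\mathcal{E}^*_N(\tilde{A}_0)$. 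This is exactly why the angular splitting of each initial wall into two half-hyperplanes becomes meaningful only after pulling back to the universal cover $\tilde{A}_0$, where the two lifts $P_{(1,0)}$ and $P_{(-1,0)}$ are genuinely disjoint.
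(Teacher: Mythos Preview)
Your proof is correct and follows essentially the same strategy as the paper: the disjointness of the lifted half-hyperplanes $P_a$ in $\tilde{A}_0$ forces every bracket $[\tilde{\Phi}^{(a)},\tilde{\Phi}^{(a')}]$ into the error ideal, after which the Maurer-Cartan equation for $\tilde{\Phi}$ localizes to each summand separately. You are in fact more explicit than the paper about the localization step and about why the cut-off halves of the initial walls cause no trouble once $Q$ is removed; one cosmetic point is that for $a_1\neq a_2$ the cross-bracket vanishes directly by condition~(1) of Definition~\ref{asypmtotic_support_def} (disjoint supports), not by the non-transversal clause of Lemma~\ref{support_product}, since the ambient affine hyperplanes through $P_{a_1},P_{a_2}$ are actually transversal.
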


\begin{proof}
 %It suffices to show that $\tilde{\Phi}^{(a)}$ solves the MC equation \eqref{MCequation} $\text{(mod $\mathbf{m}^{N+1}$)}$ for every $N \in \inte_{>0}$.
For any fixed $N \in \inte_{>0}$, $\tilde{\Phi} = \sum_{a \in \mathbb{W}(N)} \tilde{\Phi}^{(a)} \ \text{(mod $\mathbf{m}^{N+1}$)}$ by Definition \ref{solving_MC_tree} and $\mathbb{W}(N)$ is a finite set.
Now for two different $a, a'\in \mathbb{W}(N)$, we have $(P_{a} \cap P_{a'}) \cap A = \emptyset$, and hence $[\tilde{\Phi}^{(a)},\tilde{\Phi}^{(a')}] \in \mathcal{E}^2_N(\tilde{A}_0)$ which means that $[\tilde{\Phi}^{(a)},\tilde{\Phi}^{(a')}]= 0 $ in $\mathbf{g}_N^2(\tilde{A}_0)/\mathcal{E}_N^2(\tilde{A}_0)$. Therefore each $\tilde{\Phi}^{(a)}$ is itself a MC solution in $\mathbf{g}^*_N(\tilde{A}_0)/\mathcal{E}^*_N(\tilde{A}_0)$. Taking inverse limit shows that $\tilde{\Phi}^{(a)}$ is a MC solution in $\widehat{\mathbf{g}^*/\mathcal{E}^*}(\tilde{A}_0)$. Furthermore, as $\tilde{\Phi}^{(a)}$ is having asymptotic support on $P_a$, we have $[\tilde{\Phi}^{(a)},\tilde{\Phi}^{(a)}] = 0$ in $\widehat{\mathbf{g}^*/\mathcal{E}^*}(\tilde{A}_0)$ by Lemma \ref{filtrationlemma} (because $P_a$ intersects itself non-transversally), so $\pdb \tilde{\Phi}^{(a)} =\pdb \tilde{\Phi}^{(a)} + \half [\tilde{\Phi}^{(a)}, \tilde{\Phi}^{(a)}] = 0$.
\end{proof}

Lemma \ref{split_MC} says that
$\tilde{\Phi}^{(a)} = \tilde{\emc}^{(a)} + \tilde{\digamma}^{(a)}$
is a Maurer-Cartan solution in $\widehat{\mathbf{g}^*/\mathcal{E}^*}(\tilde{A}_0)$ with support concentrated along the wall $P_a$. Using a similar argument as the proof of Lemma \ref{cohomology_lemma}, we note that the higher cohomologies of the complex $\mathbf{g}^*_N(\tilde{A}_0)/\mathcal{E}^*_N(\tilde{A}_0)$ and $\widehat{\mathbf{g}^*/\mathcal{E}^*}(\tilde{A}_0)$ are all trivial. So there are no non-trivial deformations of the dgLa $\widehat{\mathbf{g}^*/\mathcal{E}^*}(\tilde{A}_0)$. In particular, the MC solution $\tilde{\Phi}^{(a)}$ is gauge equivalent to $0$, i.e. there exists $\facs_a \in \widehat{\mathbf{g}^1/\mathcal{E}^1}(\tilde{A}_0)$ such that
\begin{equation}\label{eqn:gauge_facs}
e^{\facs_a} * 0 = \tilde{\emc}^{(a)} + \tilde{\digamma}^{(a)}
\end{equation}
on $\tilde{A}_0$. As in the single wall case (Section \ref{sec:relating_wall_crossing_fac}), we need to define a homotopy operator $\hat{\mathcal{H}}$ on $\widehat{\mathbf{g}^*/\mathcal{E}^*}(\tilde{A}_0)$ in order to fix the choice of $\facs_a$.

We take a smooth homotopy $h : [0,1] \times \tilde{A}_0 \rightarrow \tilde{A}_0$ contracting $\tilde{A}_0$ to the fixed point $\hat{b}^0$ with the property that $h(1,\hat{b}) = \hat{b}$ and $h(0,\hat{b}) = \hat{b}^0$. We define the homotopy operator $\hat{\mathcal{H}} : F^{\infty}_*(\tilde{A}_0) \rightarrow F^{\infty}_{*-1}(\tilde{A}_0)$ by
\begin{equation}\label{homotopy_equation}
\hat{\mathcal{H}}(\alpha) := \int_{0}^1 h^*(\alpha)
\end{equation}
for $\alpha \in F^{\infty}_*(\tilde{A}_0)$, and we also define $\hat{\mathcal{P}}$ by the evaluation at the base point $\hat{b}^0$ and $\hat{\iota}$ by the embedding of constant functions on $\tilde{A}_0$, as before (cf. Section \ref{sec:relating_wall_crossing_fac}); one can see that these operators descend to the quotient $F^\infty_*(\tilde{A}_0) / F^{-\infty}_*(\tilde{A}_0)$ using the same argument as in the explanation for Definition \ref{pathspacehomotopy}. We extend the above operators to the complex $\mathbf{g}^*_N(\tilde{A}_0) / \mathcal{E}^*_N(\tilde{A}_0)$ as follows.

\begin{definition}\label{polarrealhomotopy}
We define the operators $\hat{\mathcal{H}}$, $\hat{\mathcal{P}}$ and $\hat{\iota}$ by extending linearly the formulas $\hat{\mathcal{H}}(\alpha \bmc^{m} \check{\partial}_n t^j)  := \hat{\mathcal{H}}(\alpha) \bmc^m \check{\partial}_n t^j$, $\hat{\mathcal{P}}(\alpha \bmc^{m} \check{\partial}_n t^j)  := \hat{\mathcal{P}}(\alpha) \bmc^m \check{\partial}_n t^j$ and $\hat{\iota}(\alpha \bmc^{m} \check{\partial}_n t^j)  := \hat{\iota}(\alpha) \bmc^m \check{\partial}_n t^j$, descending to the quotient and taking inverse limit.
\end{definition}

\begin{definition}\label{facs_a_definition}
Similar to the deduction of \eqref{vartheta_s_definition} from \eqref{gaugeequation} \cite{manetti2005differential}, we solve the equation \eqref{eqn:gauge_facs} in $\widehat{\mathbf{g}^*/\mathcal{E}^*}(\tilde{A}_0)$ iteratively to obtain the gauge:
$
\facs_{a} := - \hat{\mathcal{H}}\left(ad_{\facs_a} / (e^{ad_{\facs_a}} - \text{Id})\right) (\tilde{\emc}^{(a)} + \tilde{\digamma}^{(a)})
$
associated to $\tilde{\Phi}^{(a)} = \tilde{\emc}^{(a)}+\tilde{\digamma}^{(a)}$ which satisfies the gauge fixing condition $\hat{\mathcal{P}}(\facs_a) = 0$; this gauge is unique by Lemma \ref{gauge_fixing_lemma}.
\end{definition}

We now apply asymptotic analysis to the gauge $\facs_a$, similar to what we have done in the single wall case.
First of all, as in Section \ref{sec:analysis_one_wall} (see Remark \ref{pathindependent} and the setup before Lemma \ref{integral_lemma}), we shall replace $\hat{\mathcal{H}}$ by another operator $\hat{\mathcal{I}}$, which is defined using an integral over affine lines transversal to the wall $P_a$.
For this purpose, we consider the half-space
$\hat{\mathbb{H}}(P_a) := \{(r,\ang,b) \in \tilde{A}_0 \mid \ang \geq \ang_a\}$
in $\tilde{A}_0$, on which $\facs_a$ is possibly having asymptotic support.
Note that $\ang$ is not an affine coordinate but we can always express $\hat{\mathbb{H}}(P_a)$ as a tropical half-space in $\tilde{A}_0$ (by pulling back an affine linear function defining $P_a$ to $\tilde{A}_0$ and parallel transporting to hyperplanes parallel to $P_a$).

We write
$\tilde{\emc}^{(a)} = \sum_{s = 1}^\infty \tilde{\emc}^{(a)}_s$, $\tilde{\digamma}^{(a)} = \sum_{s = 1}^\infty \tilde{\digamma}^{(a)}_s$ and $\facs_{a} = \sum_{s = 1}^\infty \facs_{a,s}$ according to powers of the formal variable $t$. We also set $\facs_a^s  := \facs_{a,1}+\cdots+\facs_{a,s}$, $\tilde{\emc}^{(a),s}  := \tilde{\emc}^{(a)}_1 +\cdots+\tilde{\emc}^{(a)}_s$ and $\tilde{\digamma}^{(a),s}  := \tilde{\digamma}^{(a)}_1 + \cdots + \tilde{\digamma}^{(a)}_s$.
Then we have the following lemma, which is parallel to Lemma \ref{leadingorderlemma} in Section \ref{onewall}.
%and that would be enough for us to define asymptotic support on $\hat{\mathbb{H}}(P_a)$.

\begin{lemma}\label{lem:loc_constant_coeff}
The gauge $\facs_a$ has asymptotic support on the (codimension 0) tropical half-space $\hat{\mathbb{H}}(P_a) \subset \tilde{A}_0$, and we have
\begin{align*}
\facs_{a,s} &\in \bigoplus_{k\geq 1} \sum_{n \in \Lambda^\vee_{B_0}(U)} F^{0}_{\hat{\mathbb{H}}(P_a)}(\tilde{A}_0)\cdot \bmc^{km_a} \check{\partial}_{n} t^s ,\\
\facs_{a,s} + \hat{\mathcal{H}}(\tilde{\emc}^{(a)}_{s}) &\in \bigoplus_{k\geq 1 } \sum_{n \in \Lambda^\vee_{B_0}(U)} F^{-1}_{\hat{\mathbb{H}}(P_a)}(\tilde{A}_0)\cdot \bmc^{km_a} \check{\partial}_{n} t^s ,\\
ad_{\facs_a^s}^l (\pdb \facs_a^s) &\in  \bigoplus_{\substack{k\geq 1\\ 1 \leq j \leq s(l+1)}} \sum_{n \in \Lambda^\vee_{B_0}(U)} F^{0}_{P_a}(\tilde{A}_0)\cdot \bmc^{km_a} \check{\partial}_{n} t^j %\ (\text{mod $\mathbf{m}^{N+1}$}),
\end{align*}
for all $s \geq 1$ and $l \geq 1$.
\end{lemma}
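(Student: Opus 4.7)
The argument extends Lemma \ref{leadingorderlemma} from the single-wall setting, proceeding by induction on the formal-variable order $s$. The one substantive adaptation is that, since we now work on the universal cover $\tilde{A}_0$, the abstract homotopy $\hat{\mathcal{H}}$ from Definition \ref{polarrealhomotopy} is replaced throughout the asymptotic analysis by an integral operator $\hat{\mathcal{I}}$ defined by affine paths emanating from $\hat{b}^0$ and transverse to $P_a$. The forms $\tilde{\emc}^{(a)}$, $\tilde{\digamma}^{(a)}$ and (inductively) $\pdb \facs_a^s$ are all closed modulo $\filt^{-\infty}_*$, so Lemma \ref{path_independent} ensures that $\hat{\mathcal{H}}$ and $\hat{\mathcal{I}}$ agree up to $\filt^{-\infty}$ when applied to them; concretely, the two families of paths can be homotoped through the wall-free angular region $\ang_0 - \epsilon_0 < \ang < \ang_a$ in $\tilde{A}_0$. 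Under $\hat{\mathcal{I}}$, Lemma \ref{integral_lemma} applies verbatim, giving $\hat{\mathcal{I}}(\filt^s_{P_a}(\tilde{A}_0)) \subset \filt^{s-1}_{\hat{\mathbb{H}}(P_a)}(\tilde{A}_0)$ and producing the expected step-function behavior across $P_a$.

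For the base case $s=1$, Theorem \ref{asy_support_theorem} yields $\tilde{\emc}^{(a)}_1 \in \bigoplus_k \filt^1_{P_a}\bmc^{km_a}\check{\partial}_{n_a}$ with $n_a \in m_a^\perp$, and $\tilde{\digamma}^{(a)}_1 \in \bigoplus_k \sum_n \filt^0_{P_a}\bmc^{km_a}\check{\partial}_n$; applying $\hat{\mathcal{I}}$ proves the first two estimates at $s=1$, with the $\filt^0_{\hat{\mathbb{H}}(P_a)}$ leading part coming from $-\hat{\mathcal{H}}(\tilde{\emc}^{(a)}_1)$. For the third estimate, $\pdb \facs_{a,1} = -\tilde{\emc}^{(a)}_1 - \tilde{\digamma}^{(a)}_1$, and when we bracket this with $\facs_{a,1}$ via Definition \ref{lie_bracket_decomp}, the natural commutator $[\bmc^{k_1 m_a}\check{\partial}_{n_a}, \bmc^{k_2 m_a}\check{\partial}_{n_a}]$ vanishes by \eqref{vertex_lie_algebra} (because $n_a \in m_a^\perp$), the $\sharp$ and $\flat$ parts drop one filtration degree thanks to the $\hp$-factor in $\partial_n$ via \eqref{asy_support_basic_property}, and any cross-term involving $\tilde{\digamma}^{(a)}_1$ already starts at $\filt^0_{P_a}$; Lemma \ref{filtrationlemma} then places the full $ad$-output in $\filt^0_{P_a}$. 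The inductive step is structurally identical: the recursion
$$\facs_{a,s+1} = -\hat{\mathcal{H}}\Big(\tilde{\emc}^{(a)}_{s+1} + \tilde{\digamma}^{(a)}_{s+1} + \sum_{l \ge 1}\frac{ad^l_{\facs_a^s}}{(l+1)!}\pdb \facs_a^s\Big)_{s+1}$$
writes $\facs_{a,s+1}$ as the image under $\hat{\mathcal{I}}$ of a single $\filt^1_{P_a}$ piece (namely $\tilde{\emc}^{(a)}_{s+1}$) plus pieces already in $\filt^0_{P_a}$ by the induction hypothesis and the base case; Lemma \ref{integral_lemma} then delivers both the $\filt^0_{\hat{\mathbb{H}}(P_a)}$ bound and the refined $\filt^{-1}_{\hat{\mathbb{H}}(P_a)}$ bound for the difference $\facs_{a,s+1} + \hat{\mathcal{H}}(\tilde{\emc}^{(a)}_{s+1})$. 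The third estimate at order $s+1$ is proved by exactly the same split between an $\mathfrak{h}$-valued leading part (where the natural bracket vanishes) and a subleading part already at lower filtration.

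The main obstacle is the filtration bookkeeping around the natural commutator $[\bmc^{k_1 m_a}\check{\partial}_{n_1}, \bmc^{k_2 m_a}\check{\partial}_{n_2}]$, which is generically nonzero as soon as either $n_1$ or $n_2$ leaves $m_a^\perp$. Such non-orthogonal Fourier vectors can only enter through $\tilde{\digamma}^{(a)}$ or through a $\sharp$/$\flat$ operation performed earlier in the induction, and in either case the accompanying differential form has already been demoted from $\filt^1_{P_a}$ to $\filt^0_{P_a}$; this compensates precisely for the absence of cancellation and keeps the final estimate at the claimed filtration level. Organizing the resulting case analysis along the induction (in the spirit of Lemma \ref{tree_lemma}) is where most of the technical work lies, though no new ingredient beyond the tools developed in Section \ref{onewall} and Sections \ref{sec:leading_order_MC}--\ref{sec:tropical_leading_order} is required.
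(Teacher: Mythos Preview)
Your proposal is correct and follows essentially the same approach as the paper's own proof: induction on $s$, replacement of the abstract homotopy $\hat{\mathcal{H}}$ by a path-integral operator in order to invoke Lemma \ref{integral_lemma}, and then use of Lemma \ref{filtrationlemma} together with the vanishing of $[\bmc^{k_1m_a}\check{\partial}_{n_a},\bmc^{k_2m_a}\check{\partial}_{n_a}]$ when $n_a\in m_a^\perp$ to handle the third assertion. The only cosmetic difference is that the paper carries out the replacement of $\hat{\mathcal{H}}$ locally, fixing a point $\hat{b}^1\in\tilde{A}_0$, a small neighborhood $K$, and a family of \emph{piecewise} affine paths $\varrho_K$ (with two scenarios according to whether $\hat{b}^1\in\hat{\mathbb{H}}(P_a)$ or not), rather than positing a single global operator $\hat{\mathcal{I}}$; since the assertions are local and the inputs are $\pdb$-closed modulo $\filt^{-\infty}$, this amounts to the same thing.
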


\begin{proof}
We prove by induction on $s$ (the power of the formal variable $t$). In the initial case, the equation defining $\facs_{a,1}$ is
$\facs_{a,1} = -\hat{\mathcal{H}}( \tilde{\emc}^{(a)}_1+ \tilde{\digamma}^{(a)}_1)$. We want define an integral operator to replace $\hat{\mathcal{H}}$ in order to apply Lemma \ref{integral_lemma}.
Since all the assertions that we need to prove are local properties, we will work locally around any given point in $\tilde{A}_0$.

So we fix a point $\hat{b}^1 \in \tilde{A}_0$ and choose a sufficiently small pre-compact open neighborhood $K \subset \tilde{A}_0$ of $\hat{b}_1$, and then try to prove the initial case in $K$.
We will also need to choose a family of piecewise affine lines, as in the single wall case.
There are two scenarios:
\begin{enumerate}
\item
If $\hat{b}^1 \in \hat{\mathbb{H}}(P_a)$, we choose a sufficiently small pre-compact open neighborhood $K$ of $\hat{b}^1$ and
a family of paths $\varrho_K : [0,1] \times K \rightarrow \tilde{A}_0$ such that $\varrho_K(0,\hat{b}) = \hat{b}^0$, $\varrho_K(1,\hat{b}) = \hat{b}$ and there exists a partition $0 = t_0 < \cdots < t_{l-1}<t_{l} = 1 $ so that only one of the intervals $[t_{i_0-1},t_{i_0}]$ has its image possibly intersecting with $P_a$ and that $\varrho_{K}|_{[t_{i_0-1},t_{i_0}]}$ is a flow line of the affine vector field $v_K$ pointing into $\hat{\mathbb{H}}(P_a)$.
\item
If $\hat{b}^1 \not\in \hat{\mathbb{H}}(P_a)$, we choose a sufficiently small pre-compact open neighborhood $K$ of $\hat{b}^1$ with $K \cap \hat{\mathbb{H}}(P_a) = \emptyset$ and
a family of paths $\varrho_K : [0,1] \times K \rightarrow \tilde{A}_0$ such that $\varrho_K(0,\hat{b}) = \hat{b}^0$, $\varrho_K(1,\hat{b}) = \hat{b}$ and $\text{Im}(\varrho_K) \cap P_a = \emptyset$.
\end{enumerate}
Such a family always exists when $K$ is sufficiently small. Figure \ref{fig:different_path} illustrates the difference between the integral operators $\hat{\mathcal{H}}$ and $\hat{\mathcal{I}}_{a,K}$.
\begin{figure}[h]
\begin{center}
\includegraphics[scale=0.3]{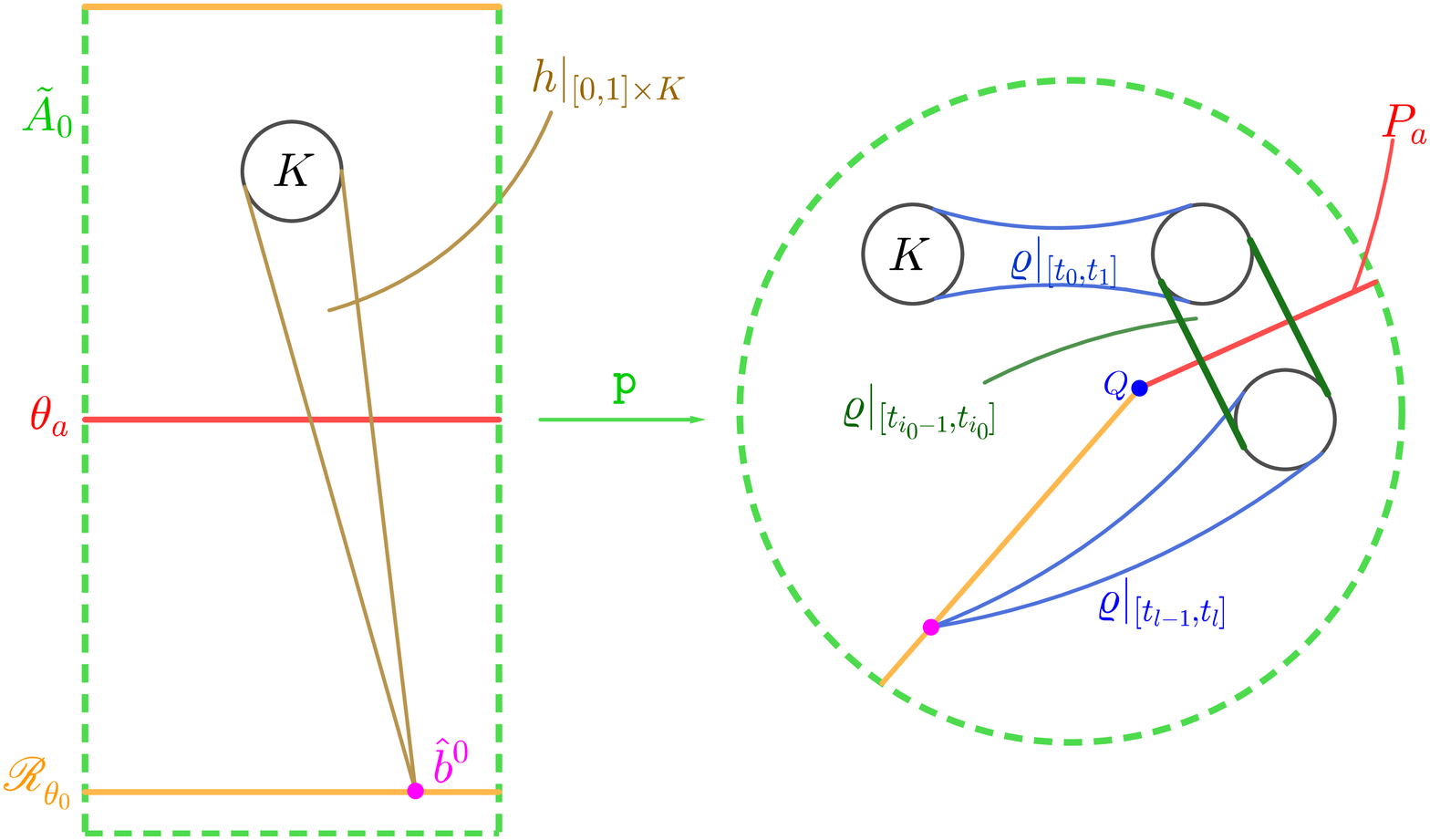}
\end{center}
\caption{}\label{fig:different_path}
\end{figure}
Then we set
$
\hat{\mathcal{I}}_{a, K}(\alpha)(\hat{b}) := \int_0^1 \varrho^*(\alpha)(s,\hat{b}).
$ Applying Lemma \ref{integral_lemma} to the piece $\varrho_K|_{[t_{i_0-1},t_{i_0}]}$ gives
$$
-\hat{\mathcal{H}}(\tilde{\emc}^{(a)}_1)  \in F^{0}_{\hat{\mathbb{H}}(P_a)}(K)\cdot (\bmc^{km_a} \check{\partial}_{n_a}) t^1;\quad
-\hat{\mathcal{H}}(\tilde{\digamma}^{(a)}_1)  \in \sum_{n \in \Lambda^\vee_{B_0}(U)} F^{-1}_{\hat{\mathbb{H}}(P_a)}(K)\cdot (\bmc^{km_a} \check{\partial}_{n}) t^1,
$$
which proves the first two assertions in the initial case.

For the third assertion, we have
$$\pdb \facs_{a,1} = -\pdb \hat{\mathcal{H}}(\tilde{\emc}^{(a)}_1 + \tilde{\digamma}^{(a)}_1) = -\tilde{\emc}^{(a)}_1-\tilde{\digamma}^{(a)}_1 \in \bigoplus_{k\geq 1} \sum_{n \in \Lambda^\vee_{B_0}(U)} F^{1}_{P_a}(\tilde{A}_0)\cdot \bmc^{km_a} \check{\partial}_{n} t^1$$
from the gauge fixing condition in Definition \ref{facs_a_definition}. Upon repeated applications of Lemma \ref{filtrationlemma}, we have
$
ad_{\facs_a^1}^l (\tilde{\digamma}^{(a)}_1) \in \bigoplus_{\substack{k\geq 1\\ 1 \leq j \leq l+1}} \sum_{n \in \Lambda^\vee_{B_0}(U)} F^{0}_{P_a}(\tilde{A}_0)\cdot \bmc^{km_a} \check{\partial}_{n} t^j, %\ (\text{mod $\mathbf{m}^{N+1}$})
$
so we only need to take care of the term $ad_{\facs_a^1}^l(\tilde{\emc}^{(a)}_1 )$. Writing $\facs_{a,1} = -\hat{\mathcal{H}}(\tilde{\emc}^{(a)}_1 + \tilde{\digamma}^{(a)}_1)$ and applying Lemma \ref{filtrationlemma} again, we see that the only term we have to consider is $ad_{-\hat{\mathcal{H}}(\tilde{\emc}^{(a)}_1)}^l(\tilde{\emc}^{(a)}_1 )$ in the expression $ad_{-\hat{\mathcal{H}}(\tilde{\emc}^{(a)}_1+ \tilde{\digamma}^{(a)}_1)}^l(\tilde{\emc}^{(a)}_1 )$, because the appearance of any of $ad_{-\hat{\mathcal{H}}(\tilde{\digamma}^{(a)}_1)}$ in the above expression will result in a term in $\bigoplus_{j, k\geq 1} \sum_{n \in \Lambda^\vee_{B_0}(U)} F^{0}_{P_a}(\tilde{A}_0)\cdot (\bmc^{km_a} \check{\partial}_{n}) t^j$. Concerning the term $ad_{-\hat{\mathcal{H}}(\tilde{\emc}^{(a)}_1)}^l(\tilde{\emc}^{(a)}_1 )$, Theorem \ref{asy_support_theorem} says that that $n_a \perp P_a$ in the expressions
$$
\hat{\mathcal{H}}(\tilde{\emc}^{(a)}_1)  \in \bigoplus_{k\geq 1}  F^{0}_{\hat{\mathbb{H}}(P_a)}(\tilde{A}_0)\cdot \bmc^{km_a} \check{\partial}_{n_a} t^1;\quad
\tilde{\emc}^{(a)}_1  \in \bigoplus_{k\geq 1} F^{1}_{P_a}(\tilde{A}_0)\cdot \bmc^{km_a} \check{\partial}_{n_a} t^1,
$$
meaning that the leading order term of $ad_{-\hat{\mathcal{H}}(\tilde{\emc}^{(a)})}(\tilde{\emc}^{(a)}_1)$ given by Lemma \ref{filtrationlemma} vanishes. Hence the third assertion follows.

%The integral in the above equation \eqref{first_equation_const_coeff} is an integral over closed $1$-form in $\mathbf{g}^*_N(\tilde{A}_0) / \mathcal{E}^*_N(\tilde{A}_0)$ and hence using Lemma \ref{path_independent}, we can replace $\hat{\mathcal{H}}$ by $\hat{\mathcal{I}}_{a,K}$ in $K$, which is given by $\facs_{a,1}(\hat{b}) = -\hat{\mathcal{I}}_{a,K}(\chi_a \tilde{\emc}^{(a)}_1+\chi_a \tilde{\digamma}^{(a)}_1) = -\int_{\varrho(\cdot,\hat{b})} \chi_a(\tilde{\emc}^{(a)}_1+\tilde{\digamma}^{(a)}_1)$. Therefore, by applying Lemma \ref{integral_lemma} to the piece $\varrho_K|_{[t_{i_0-1},t_{i_0}]}$ we see that $\facs_{a,1} \in \sum_{n \in \Lambda^\vee_{B_0}(U)} F^{0}_{\hat{\mathbb{H}}(P_a)}(K)\cdot (\bmc^{km_a} \check{\partial}_{n}) t^1 \ (\text{mod $\mathbf{m}^{N+1}$})$. Furthermore, since we use the cut off by $\chi_a$ and therefore we see that if $K \subset \hat{\mathbb{H}}(P_a) \setminus \overline{W}_a$, the integral $-\int_{\varrho(\cdot,\hat{b})} \chi_a (\tilde{\emc}^{(a)}_1+\tilde{\digamma}^{(a)}_1)$ is independent of $\hat{b}$ which give the second assertion of the Lemma. Finally we see that $\pdb \facs_a = -\pdb \hat{\mathcal{H}} (\tilde{\emc}^{(a)}_1+\tilde{\digamma}^{(a)}_1) = -\tilde{\emc}^{(a)}_1-\tilde{\digamma}^{(a)}_1$ which has the desired asymptotic support criteria.

Now we assume that the assertions hold for $s' \leq s$. We consider the equation
$
\facs_{a,s+1} =  -\hat{\mathcal{H}} \big(\tilde{\emc}^{(a)}_{s+1} + \tilde{\digamma}^{(a)}_{s+1} + \sum_{k\geq 0 }\frac{ad_{\facs_a^s}^k}{(k+1)!} \pdb\facs_a^s \big)_{s+1}
$
which determines $\facs_{a,s+1}$ iteratively. From the induction hypothesis, we have
$$
\tilde{\digamma}^{(a)}_{s+1}+\left(\sum_{k\geq 0 }\frac{ad_{\facs_a^s}^k}{(k+1)!} \pdb\facs_a^s \right)_{s+1} \in \bigoplus_{k\geq 1 } \sum_{n \in \Lambda^\vee_{B_0}(U)} F^{0}_{P_a}(\tilde{A}_0)\cdot \bmc^{km_a} \check{\partial}_{n} t^{s+1}.
$$
Applying $\hat{\mathcal{H}}$ (replacing $\hat{\mathcal{H}}$ by $\hat{\mathcal{I}}_{a,K}$ again) to this expression give the first two assertions of the induction step by Lemma \ref{integral_lemma}.

For the third assertion, we have
$\pdb \facs_{a,s+1} = -\big(\tilde{\emc}^{(a)}_{s+1} + \tilde{\digamma}^{(a)}_{s+1} + \sum_{k\geq 0 }\frac{ad_{\facs_a^s}^k}{(k+1)!} \pdb\facs_a^s \big)_{s+1}$
in $\widehat{\mathbf{g}^*/\mathcal{E}^*}(\tilde{A}_0)$, again from the gauge fixing condition in Definition \ref{facs_a_definition}. Applying Lemma \ref{filtrationlemma} as in the proof of the initial step, we see that the essential term to be considered is $ad_{-\hat{\mathcal{H}}(\tilde{\emc}^{(a),s+1})}^l(\tilde{\emc}^{(a),s+1})$. By Theorem \ref{asy_support_theorem} again, we have $n \perp P_a$ in the following expressions
$$
\hat{\mathcal{H}}(\tilde{\emc}^{(a),s+1})  \in \bigoplus_{\substack{k \geq 1 \\ 1 \leq j \leq s+1}}  F^{0}_{\hat{\mathbb{H}}(P_a)}(\tilde{A}_0)\cdot \bmc^{km_a} \check{\partial}_{n_a} t^{j}; \quad
\tilde{\emc}^{(a),s+1}  \in \bigoplus_{\substack{k \geq 1 \\ 1 \leq j \leq s+1}} F^{1}_{P_a}(\tilde{A}_0)\cdot \bmc^{km_a} \check{\partial}_{n_a} t^{j}.
$$
Then we can conclude that
$
ad_{-\hat{\mathcal{H}}(\tilde{\emc}^{(a),s+1})}^l(\tilde{\emc}^{(a),s+1}) \in \bigoplus_{\substack{k\geq 1\\1 \leq j\leq (s+1)(l+1)}}  F^{0}_{P_a}(\tilde{A}_0)\cdot \bmc^{km_a} \check{\partial}_{n_a} t^j
$
because the leading order term given by Lemma \ref{filtrationlemma} vanishes, as in the proof of the initial step. This finishes the proof of the induction step.
\end{proof}

The following lemma is parallel to Proposition \ref{prop:MC_sol_one_wall} in Section \ref{onewall}.

\begin{lemma}\label{asymptoticexpansion}
Over the half-space $\hat{\mathbb{H}}(P_a) \setminus P_a$, we have
$$
\displaystyle \facs_a \in \psi_a + \left(\bigoplus_{k\geq 1} \sum_{n \in \Lambda^\vee_{B_0}(U)} F^{-1}_{\hat{\mathbb{H}}(P_a) \setminus P_a}(\hat{\mathbb{H}}(P_a) \setminus P_a) \cdot \bmc^{km_a} \check{\partial}_{n} \right)[[t]]\cdot t,
$$
where $\psi_a = \text{Log}(\Theta_a)$ for some element $\Theta_a$ of the tropical vertex group of the form
$
\psi_a  = \sum_{j, k \geq 1} b^{(a)}_{jk} \cdot \bmc^{km_a} \check{\partial}_{n_a} t^j,
$
where $b^{(a)}_{jk}$'s are constants independent of $\hp$ with $b^{(a)}_{jk} \neq 0$ only for finitely many $k$'s for each fixed $j$
and $n_a$ is the unique primitive normal to $P_a$ satisfying $(\nu_{P_a}, n_a) < 0$;
while over the other half-space $\tilde{A}_0 \setminus \hat{\mathbb{H}}(P_a)$, we have
$
\facs_a = 0.
$
\end{lemma}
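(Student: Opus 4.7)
The statement splits into two parts: vanishing of $\facs_a$ outside $\hat{\mathbb{H}}(P_a)$, and identification of the semi-classical limit on $\hat{\mathbb{H}}(P_a)\setminus P_a$ with the prescribed element $\psi_a$ of the tropical vertex Lie algebra. The first part is essentially immediate from Lemma \ref{lem:loc_constant_coeff}: every $\facs_{a,s}$ has asymptotic support on the codimension-$0$ tropical half-space $\hat{\mathbb{H}}(P_a)$, hence by condition (1) of Definition \ref{asypmtotic_support_def} its restriction to the open set $\tilde{A}_0\setminus\hat{\mathbb{H}}(P_a)$ lies in $F^{-\infty}_0$. This means $\facs_a\equiv 0$ in the quotient $\widehat{\mathbf{g}^*/\mathcal{E}^*}(\tilde{A}_0\setminus\hat{\mathbb{H}}(P_a))$, as claimed.

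The main content is the second part. The second inclusion of Lemma \ref{lem:loc_constant_coeff} gives, in $\widehat{\mathbf{g}^*/\mathcal{E}^*}$,
\[
\facs_{a,s}\;=\;-\hat{\mathcal{H}}\bigl(\tilde{\emc}^{(a)}_s\bigr)\;+\;\text{(terms in }F^{-1}_{\hat{\mathbb{H}}(P_a)}\text{)},
\]
so it suffices to compute the leading order of $-\hat{\mathcal{H}}(\tilde{\emc}^{(a)}_s)$ at a point $\hat{b}\in\hat{\mathbb{H}}(P_a)\setminus P_a$ and check that it is a constant (independent of $\hp$) times $\bmc^{km_a}\check{\partial}_{n_a}$. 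Expanding $\tilde{\emc}^{(a)}_s$ as the sum over labeled ribbon trees $\lrtr\in\lrtree{k}_0$ with $m_\lrtr=km_a$, $j_\lrtr=s$ (Definition \ref{solving_MC_tree}), each summand has the form $\alpha_\lrtr\bmc^{km_a}\check{\partial}_{n_\lrtr}$ with $\alpha_\lrtr\in F^1_{P_a}$ and $n_\lrtr\in\inte\cdot n_a$ by Lemma \ref{lem:semi_classical_integral}. Since $\alpha_\lrtr$ is closed modulo $F^{-\infty}_*$ (being a leading-order piece of a MC solution supported on a single wall, cf.\ Lemma \ref{split_MC}), Lemma \ref{path_independent}(2) allows us to replace the homotopy integral $\hat{\mathcal{H}}(\alpha_\lrtr)(\hat{b})$ by an integral along any other path joining $\hat{b}^0$ to $\hat{b}$, up to $F^{-\infty}_0$.

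For $\hat{b}\in\hat{\mathbb{H}}(P_a)\setminus P_a$ we choose a piecewise affine path from $\hat{b}^0$ to $\hat{b}$ whose only intersection with $P_a$ is a single transversal, positively oriented crossing along a short affine segment $\varrho$; outside a neighborhood of that segment $\alpha_\lrtr$ is exponentially small. Lemma \ref{lem:semi_classical_integral} then gives
\[
-\hat{\mathcal{H}}(\alpha_\lrtr)(\hat{b})\;=\;(-1)^{\chi(\lrtr)}\lim_{\epsilon\to 0}\frac{\vol(C(\vec{\tau})\cap B_\epsilon)}{\vol(B_\epsilon)}\;+\;O_{loc}(\hp^{1/2}),
\]
which is a constant $b_\lrtr\in\comp$ on $\hat{\mathbb{H}}(P_a)\setminus P_a$ up to an $F^{-1}$-error. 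Summing over all trees contributing to a fixed $(s,k)$ and using that the corresponding $n_\lrtr$ are all integer multiples of the primitive $n_a$ (with signs controlled by $(-1)^{\chi(\lrtr)}$, so that $(-1)^{\chi(\lrtr)}(\nu_{P_a},n_\lrtr)<0$) assembles the coefficient $b^{(a)}_{sk}\check{\partial}_{n_a}$. Since every tree $\lrtr$ has $(m_\lrtr,n_\lrtr)=0$ (a direct induction using \eqref{vertex_lie_algebra}, or equivalently because every $\mathfrak{l}_{k,\lrtr}$ with only $\natural$-labels lands in the tropical vertex Lie algebra $\mathfrak{h}$), we have $(m_a,n_a)=0$ and hence $\psi_a=\sum_{j,k\geq 1}b^{(a)}_{jk}\bmc^{km_a}\check{\partial}_{n_a}t^j$ is $\text{Log}(\Theta_a)$ for an element $\Theta_a$ of the tropical vertex group. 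Finiteness of nonzero $b^{(a)}_{jk}$ in $k$ for fixed $j$ follows from the same property of the inputs $\incoming^{(i)}$ (Assumption \ref{input_assumption}) together with the fact that only finitely many labeled ribbon trees contribute modulo $\mathbf{m}^{j+1}$.

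The main technical obstacle is the last step of the second paragraph: passing from the abstract integral defining $\hat{\mathcal{H}}$ along the homotopy $h$ to an affine path crossing $P_a$ once, so that Lemma \ref{lem:semi_classical_integral} is directly applicable. This requires Lemma \ref{path_independent}(2) to control the error between the two choices of paths, and a careful check that the piecewise affine replacement can be made uniformly in $\hat{b}$ varying in a compact subset of $\hat{\mathbb{H}}(P_a)\setminus P_a$, so that the resulting $O(\hp^{1/2})$ estimate is locally uniform and thus produces a genuine element of $F^{-1}_{\hat{\mathbb{H}}(P_a)\setminus P_a}$.
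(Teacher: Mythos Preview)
Your proof follows essentially the same route as the paper's and the overall structure is correct. There is one step, however, whose justification is inadequate: you assert that each individual $\alpha_\lrtr$ is closed modulo $F^{-\infty}_*$ on $\tilde{A}_0$, citing Lemma \ref{split_MC}. But Lemma \ref{split_MC} only gives $\pdb\tilde{\Phi}^{(a)}=0$, i.e.\ closedness of the \emph{sum} $\tilde{\emc}^{(a)}+\tilde{\digamma}^{(a)}$, which says nothing about individual tree contributions. The claim is nevertheless true: a short induction on the tree shows that $d\alpha_\lrtr$ equals $\alpha_{\lrtr_1}\wedge\alpha_{\lrtr_2}$ modulo $F^{-\infty}_2(U)$ (using that the inputs satisfy $d\delta^{(i)}_{jk}\in F^{-\infty}_2(U)$ and that the cross terms $d\alpha_{\lrtr_i}\wedge\alpha_{\lrtr_j}$ fall into $F^{-\infty}$ by the non-transversal case of Lemma \ref{support_product}), and this wedge has asymptotic support on $Q$, hence lies in $F^{-\infty}_2(\tilde{A}_0)$ once $Q$ is removed.

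The paper sidesteps this entirely by reversing the order of operations: it applies path independence (replacing $\hat{\mathcal{P}}_1\circ\hat{\mathcal{H}}$ by $\int_{\varrho_K(\cdot,\hat{b}^1)}$) to the \emph{full} expression $\big(\tilde{\emc}^{(a)}_s+\tilde{\digamma}^{(a)}_s+\sum_{k\geq 0}\tfrac{ad^k_{\facs_a^{s-1}}}{(k+1)!}\pdb\facs_a^{s-1}\big)_s=-\pdb\facs_{a,s}$, which is manifestly closed, and only \emph{afterwards} decomposes the resulting path integral by linearity into the $\tilde{\emc}^{(a)}_s$-piece (handled by Lemma \ref{lem:semi_classical_integral}) and the remainder (absorbed into $F^{-1}$ via Lemma \ref{integral_lemma}). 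This ordering removes the need to verify closedness of the individual summands. Your argument for vanishing on $\tilde{A}_0\setminus\hat{\mathbb{H}}(P_a)$ via condition (1) of Definition \ref{asypmtotic_support_def} is a valid and slightly more direct variant of the paper's (which instead uses $\pdb\facs_{a,s}=0$ there together with the gauge condition $\hat{\mathcal{P}}\facs_a=0$).
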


\begin{proof}
We first consider $\facs_a$ over $\hat{\mathbb{H}}(P_a) \setminus P_a$.
From the proof of Lemma \ref{lem:loc_constant_coeff}, we see that
$$\pdb \facs_{a,s} = -\left(\tilde{\emc}^{(a)}_{s} + \tilde{\digamma}^{(a)}_{s} + \sum_{k\geq 0 }\frac{ad_{\facs_a^{s-1}}^k}{(k+1)!} \pdb\facs_a^{s-1} \right)_{s}  \in \bigoplus_{k\geq 1 } \sum_{n \in \Lambda^\vee_{B_0}(U)} F^{0}_{P_a}(\tilde{A}_0)\cdot \bmc^{km_a} \check{\partial}_{n} t^{s}$$
for every $s \geq 1$. In particular, we have $\pdb \facs_{a,s} = 0$ in $\widehat{\mathbf{g}^*/\mathcal{E}^*}(\hat{\mathbb{H}}(P_a) \setminus P_a)$.

Applying Lemma \ref{cohomology_lemma} to $\widehat{\mathbf{g}^*/\mathcal{E}^*}(\hat{\mathbb{H}}(P_a) \setminus P_a)$, we can write $\facs_{a,s} = (\hat{\iota}_1 \circ \hat{\mathcal{P}}_1)(\facs_{a,s})$ where $\hat{\mathcal{P}}_1$ is the projection operator defined by evaluating at a point $\hat{b}^1 \in \hat{\mathbb{H}}(P_a) \setminus P_a$ and $\hat{\iota}_1$ is the corresponding embedding operator, constructed similarly as $\hat{\mathcal{P}}$ in Definition \ref{polarrealhomotopy}.\footnote{Note that $\hat{\mathcal{P}}$ and $\hat{\mathcal{P}}_1$ are defined by evaluation at two {\em different} points $\hat{b}_0$ and $\hat{b}_1$ respectively.}
By Lemma \ref{lem:loc_constant_coeff} and the above discussion, it remains to show that the leading order term of the asymptotic expansion of
$-\hat{\mathcal{P}}_1 \big(\hat{\mathcal{H}}(\tilde{\emc}^{(a)}_{s} + \tilde{\digamma}^{(a)}_{s} + \sum_{k\geq 0 }\frac{ad_{\facs_a^{s-1}}^k}{(k+1)!} \pdb\facs_a^{s-1} )_{s}\big)$
is exactly of the form $\psi_a$ over $\hat{\mathbb{H}}(P_a) \setminus P_a$.

Choose a neighborhood $K \subset \tilde{A}_0$ of $\hat{b}_1$ and a family of paths $\varrho_K$, and using the operator $\hat{\mathcal{I}}_{a,K}$ as defined in the proof of Lemma \ref{lem:loc_constant_coeff},
we see that
$$\hat{\mathcal{P}}_1 \left(\hat{\mathcal{H}}\left(\tilde{\emc}^{(a)}_{s} + \tilde{\digamma}^{(a)}_{s} + \sum_{k\geq 0 }\frac{ad_{\facs_a^{s-1}}^k}{(k+1)!} \pdb\facs_a^{s-1} \right)_{s} \right) = \int_{\varrho_K(\cdot,\hat{b}^1)} \left(\tilde{\emc}^{(a)}_{s} + \tilde{\digamma}^{(a)}_{s} + \sum_{k\geq 0 }\frac{ad_{\facs_a^{s-1}}^k}{(k+1)!} \pdb\facs_a^{s-1} \right)_{s}.$$
Also we have
$$\hat{\iota}_1 \int_{\varrho_K(\cdot,\hat{b}^1)} \left(\tilde{\digamma}^{(a)}_{s} + \sum_{k\geq 0 }\frac{ad_{\facs_a^{s-1}}^k}{(k+1)!} \pdb\facs_a^{s-1} \right)_{s}\in \bigoplus_{k \geq 1} \sum_{n \in \Lambda^\vee_{B_0}(U)} F^{-1}_{\hat{\mathbb{H}}(P_a)\setminus P_a}(\hat{\mathbb{H}}(P_a) \setminus P_a) \cdot \bmc^{km_a} \check{\partial}_{n} t^s.$$
So it remains to compute $\int_{\varrho(\cdot,\hat{b}^1)}(\tilde{\emc}^{(a)}_{s})$.

Lemma \ref{tree_lemma} together with Lemma \ref{lem:semi_classical_integral} allow us to compute the leading order term of the integral $-\int_{\varrho(\cdot,\hat{b}^1)}  \tilde{\emc}^{(a)}$ explicitly as
$
-\int_{\varrho_K(\cdot,\hat{b}^1)} \tilde{\emc}^{(a)} = \sum_k \frac{1}{2^{k-1}}\sum_{\lrtr \in \lrtree{k}_0: P_{\lrtr} \neq \emptyset, m_\lrtr \parallel m_a} \int_{\varrho_K(\cdot,\hat{b}^1)} \mathfrak{l}_{k,\lrtr}(\eincoming, \dots, \eincoming).
$
From the discussion in Section \ref{sec:tropical_leading_order}, we learn that $\mathfrak{l}_{k,\lrtr}(\eincoming, \dots, \eincoming) = \alpha_\lrtr \check{\partial}_{n_\lrtr} \bmc^{m_\lrtr} t^{j_\lrtr}$ for each $\lrtr \in \lrtree{k}_0$. Therefore, restricting to the interval $[t_{i_0-1},t_{i_0}]$ of $\varrho_K(\cdot,\hat{b}^1)$ and applying Lemma \ref{lem:semi_classical_integral}, we find that the $\hp$ order expansion of $\int_{\varrho(\cdot,\hat{b}^1)}  \mathfrak{l}_{k,\lrtr}(\eincoming, \dots, \eincoming)$ is of the form
$
\int_{\varrho_K(\cdot,\hat{b}^1)}  \mathfrak{l}_{k,\lrtr}(\eincoming,\dots,\eincoming) \in  (b^{(a)}_{j_{\lrtr},k_{\lrtr}} + O(\hp^{1/2})) \bmc^{k_{\lrtr} m_a} \check{\partial}_{n_{\lrtr}} t^{j_{\lrtr}},
$
where $k_{\lrtr}m_a = m_{\lrtr}$ (here $m_{\lrtr}$, $j_{\lrtr}$ and $n_{\lrtr}$ are introduced in Definition \ref{label_tree_def} and the equation \eqref{alpha_out_def}). This proves the desired result over $\hat{\mathbb{H}}(P_a) \setminus P_a$.

Over the other half-space $\tilde{A}_0 \setminus \hat{\mathbb{H}}(P_a)$, the same reason yields $\pdb \facs_{a,s} = 0$ in $\widehat{\mathbf{g}^*/\mathcal{E}^*}(\tilde{A}_0 \setminus \hat{\mathbb{H}}(P_a))$. Therefore we have $\facs_{a,s} = (\hat{\iota} \circ \hat{\mathcal{P}})(\facs_{a,s}) = 0$
from the gauge fixing condition in Definition \ref{facs_a_definition}, where $\hat{\mathcal{P}}$ is treated as an operator acting on $\widehat{\mathbf{g}^*/\mathcal{E}^*}(\tilde{A}_0 \setminus \hat{\mathbb{H}}(P_a))$.
\end{proof}

%\begin{remark}
%The above Lemma \ref{asymptoticexpansion} morally says that the leading order expansion of $\facs_a$ is a step function with values lying in the tropical vertex Lie algebra $\mathbf{h}\otimes_R (R/\mathbf{m}^{N+1})$ (introduced in Definition \ref{trop_lie_algebra}).
%\end{remark}

Now we are ready to construct the order $N$ scattering diagram $\mathscr{D}(\Phi)_N$ for any fixed $N \in \inte_{>0}$. Given $a \in \mathbb{W}(N)$, Lemma \ref{asymptoticexpansion} says that the leading order term in the asymptotic expansion of the gauge $\facs_a$ produces the element
$
\psi_a = \sum_{\substack{k\geq 1 \\1 \leq j \leq N}} b^{(a)}_{jk} \cdot \bmc^{km_a} \check{\partial}_{n_a} t^j \ (\text{mod $\mathbf{m}^{N+1}$})
$
over $\hat{\mathbb{H}}(P_a) \setminus P_a$.

\begin{definition}\label{def:construct_diagram}
We define the order $N$ scattering diagram as
$\mathscr{D}(\Phi)_N := \{\mathbf{w}_a \mid a \in \mathbb{W}(N)\},$
where each newly added wall $\mathbf{w}_a$ is supported on the tropical half-hyperplane $P_a = Q - \real_{\geq 0 } m_a \subset U$ and equipped with the wall crossing factor $\Theta_a$ defined by
$
\text{Log}(\Theta_a) :=  \sum_{\substack{k\geq 1 \\1 \leq j \leq N}} b^{(a)}_{jk} \cdot \bmc^{km_a} \check{\check{\partial}}_{n_a} t^j \quad (\text{mod $\mathbf{m}^{N+1}$}).
$
The order $N+1$ diagram $\mathscr{D}(\Phi)_{N+1}$ is naturally an extension of the order $N$ diagram $\mathscr{D}(\Phi)_N$ because $\Phi^{(a)}$, $\emc^{(a)}$ and hence $\facs_a$ are defined for all orders of $t$. Hence this defines a scattering diagram $\mathscr{D}(\Phi)$ associated to $\Phi$.
\end{definition}

\subsubsection{Consistency of $\mathscr{D}(\Phi)$}\label{reformulatedtheorem}

We are now ready to prove Theorem \ref{theorem2}:

\begin{theorem}[=Theorem \ref{theorem2}]
\label{scatteringtheorem2}
For the Maurer-Cartan solution $\Phi$ constructed in \eqref{eqn:MC_sol_Phi}, the associated scattering diagram $\mathscr{D}(\Phi)$ defined in Definition \ref{def:construct_diagram} is consistent, i.e. we have the identity
$
\Theta_{\gamma,\mathscr{D}(\Phi)} = \prod^{\gamma}_{\mathbf{w}_a \in \mathscr{D}(\Phi)} \Theta_a = \text{Id}
$
for any embedded loop $\gamma$ in $U \setminus \text{Sing}(\mathscr{D}(\Phi))$ intersecting $\mathscr{D}(\Phi)$ generically.
\end{theorem}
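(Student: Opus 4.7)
The plan is to deduce Theorem \ref{scatteringtheorem2} from Theorem \ref{scatteringtheorem1} by verifying that $\Phi$ satisfies Assumptions \ref{asy_assumption_1} and \ref{asy_assumption_2}, then invoking the universal-cover/monodromy argument that underlies Theorem \ref{scatteringtheorem1}. For the first assumption, after using the cutoffs of Definition \ref{cut_off_input} to split the two initial inputs into the four half-wall pieces $\Phi^{(\pm 1, 0)}, \Phi^{(0, \pm 1)}$, the Fourier decomposition $\Phi = \sum_a \Phi^{(a)}$ with $\Phi^{(a)} = \emc^{(a)} + \digamma^{(a)}$ is already given by Definition \ref{solving_MC_tree}, and Theorem \ref{asy_support_theorem} produces $\emc^{(a)} \in \bigoplus_k \filt^{1}_{P_a}(U)\, \bmc^{km_a} \check{\partial}_{n_a}[[t]]$ and $\digamma^{(a)} \in \bigoplus_k \sum_n \filt^{0}_{P_a}(U)\, \bmc^{km_a} \check{\partial}_n [[t]]$, which is exactly the leading-plus-error form required.

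For Assumption \ref{asy_assumption_2}, I would expand $\emc^{(a)}$ as the sum of tree outputs $\mathfrak{l}_{k,\lrtr}(\vec{\eincoming},\vec{n})$ indexed by labeled ribbon trees $\lrtr \in \lrtree{k}_0$ with $m_\lrtr$ parallel to $m_a$ (Definition \ref{solving_MC_tree}). Lemma \ref{iteratedintegral} rewrites each output, modulo $\filt^{-\infty}_1$, as an iterated Gaussian integral over the tropical moduli space $\mathcal{I}_x$; Lemmas \ref{lem:tau_map_iso} and \ref{lem:semi_classical_integral} then compute the integral of $\mathtt{p}^* \emc^{(a)}$ along a short arc crossing $P_a$ transversally and positively as $\hp \to 0$, yielding the $\hp$-independent coefficients $b^{(a)}_{jk}$ of Lemma \ref{asymptoticexpansion}. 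Summing over all contributing trees reassembles precisely the step function $\psi_a = \mathrm{Log}(\Theta_a)$ on $\hat{\mathbb{H}}(P_a) \setminus P_a$ required by the assumption.

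With both assumptions verified, I would run the consistency argument. On $\tilde{A}_0$, Lemma \ref{split_MC} makes each $\tilde{\Phi}^{(a)}$ a Maurer--Cartan solution in its own right, so Definition \ref{facs_a_definition} together with Lemmas \ref{lem:loc_constant_coeff} and \ref{asymptoticexpansion} yields a unique gauge $\facs_a$ with $e^{\facs_a} \ast 0 = \tilde{\Phi}^{(a)}$, $\hat{\mathcal{P}}\facs_a = 0$, and semiclassical limit $\psi_a$ on $\hat{\mathbb{H}}(P_a) \setminus P_a$, vanishing on the complementary half-space. Ordering the finitely many walls $\{P_a\}_{a \in \mathbb{W}(N)}$ by angle $\ang_a$ around $Q$ and BCH-combining the $\facs_a$'s in clockwise order produces a single gauge $\facs^{\text{tot}}$ with $e^{\facs^{\text{tot}}} \ast 0 = \mathtt{p}^*(\Phi) \pmod{\mathbf{m}^{N+1}}$; this is legitimate because distinct $P_a, P_{a'}$ meet only at $Q \not\subset \tilde{A}_0$, whence $[\facs_a, \facs_{a'}] \in \mathcal{E}^*_N(\tilde{A}_0)$ by Lemma \ref{filtrationlemma}. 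Following a lift of $\gamma$ through one full turn around $Q$, the semiclassical limit of $\facs^{\text{tot}}$ accumulates the jumps $\psi_{a_k}$ in angular order, so the difference between its terminal and initial semiclassical values is exactly $\mathrm{Log}(\Theta_{\gamma, \mathscr{D}(\Phi)})$ by Section \ref{analytic_continuation}. Since $\mathtt{p}^*(\Phi)$ is invariant under the deck transformation (being pulled back from $A$) and the gauge is unique under $\hat{\mathcal{P}} = 0$ (Lemma \ref{gauge_fixing_lemma}), this difference must vanish modulo $\mathbf{m}^{N+1}$; letting $N \to \infty$ gives $\Theta_{\gamma, \mathscr{D}(\Phi)} = \mathrm{Id}$.

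The hardest step is controlling the BCH assembly of $\facs^{\text{tot}}$ and its chamber-by-chamber identification with the path-ordered product: one must verify order by order in $t$ that the BCH corrections to the $\facs_a$'s contribute only to $\mathcal{E}^*_N(\tilde{A}_0)$, that the clockwise angular BCH ordering reproduces the path-ordered product of Section \ref{analytic_continuation}, and that the higher-order $\hp$ corrections in the individual $\facs_a$'s do not contaminate the semiclassical identification of the terminal jump with $\mathrm{Log}(\Theta_{\gamma, \mathscr{D}(\Phi)})$. All three reduce to filtration bookkeeping via Lemmas \ref{filtrationlemma} and \ref{asymptoticexpansion}, using the disjointness $P_a \cap P_{a'} \subset Q \not\subset \tilde{A}_0$.
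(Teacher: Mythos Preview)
Your overall architecture matches the paper's proof closely, but there is a genuine gap in the step where you assemble the individual gauges into a single gauge for $\mathtt{p}^*(\Phi)$.

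You justify $\big(\prod_a^{\gamma} e^{\facs_a}\big)*0 = \mathtt{p}^*(\Phi)$ by claiming that $[\facs_a,\facs_{a'}]\in\mathcal{E}^*_N(\tilde A_0)$, appealing to Lemma~\ref{filtrationlemma} and the fact that $P_a\cap P_{a'}\subset Q$. This claim is false. The gauge $\facs_a$ is a degree-$0$ element with asymptotic support on the \emph{half-space} $\hat{\mathbb{H}}(P_a)=\{\ang\ge\ang_a\}$ (Lemma~\ref{lem:loc_constant_coeff}), not on the wall $P_a$ itself; for $\ang_a<\ang_{a'}$ the supports overlap on all of $\hat{\mathbb{H}}(P_{a'})$, and on that region the semiclassical limits $\psi_a,\psi_{a'}$ are nonzero constants whose bracket $[\psi_a,\psi_{a'}]$ is generically nonzero by \eqref{vertex_lie_algebra}. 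So $[\facs_a,\facs_{a'}]\notin\mathcal{E}^*_N(\tilde A_0)$, and Lemma~\ref{filtrationlemma} does not apply (it concerns forms with asymptotic support on polyhedral subsets of positive codimension). The paper establishes the needed identity (its Lemma~\ref{lem:consistency}) by a different mechanism: it uses that $\facs_{a'}\equiv 0$ on $\tilde A_0\setminus\hat{\mathbb{H}}(P_{a'})$, which contains $P_a$ whenever $\ang_a<\ang_{a'}$, so that $[\facs_{a'},\tilde\Phi^{(a)}]\equiv 0$; then one iterates $e^{\facs_{a'}}*\tilde\Phi^{(a)}=\tilde\Phi^{(a)}+\tilde\Phi^{(a')}$ along the angular order. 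The point is that one needs the bracket of a \emph{gauge} with a \emph{solution} to vanish, not the bracket of two gauges.

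A smaller imprecision: your monodromy step invokes ``deck-transformation invariance of $\mathtt{p}^*(\Phi)$ plus uniqueness under $\hat{\mathcal P}=0$''. But the deck transformation moves the basepoint $\hat b^0$, so the translated gauge no longer satisfies $\hat{\mathcal P}(\cdot)=0$ and uniqueness does not directly apply. The paper avoids this by first producing a gauge $\facs$ on all of $U$ (possible since $U$ is contractible, Lemma~\ref{cohomology_lemma}) with $\mathcal P(\facs)=0$, pulling it back to $\tilde A_0$, and then invoking uniqueness there to identify $e^{\mathtt{p}^*(\facs)}$ with $\prod_a^{\gamma} e^{\facs_a}$; monodromy-freeness is then automatic because $\facs$ lives on $U$. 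With these two fixes your outline coincides with the paper's proof.
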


\begin{proof}
Let us first recall that we are working over the open subset
$\tilde{A}_{0} = \left\{(r, \ang, b) \mid \ang_0 - \epsilon_0 < \ang < \ang_0 + 2\pi \right\},$
in the universal cover $\tilde{A}$ of $A = U \setminus Q$, where $Q = P_1 \cap P_2 = \text{Sing}(\mathscr{D}(\Phi))$.
We have also fixed a strip
$\mathbb{V} = \left\{(r, \ang, b) \mid \ang_0 - \epsilon_{0} + 2\pi < \ang < \ang_0 + 2\pi \right\}$
so that the strip $\mathbb{V} - 2\pi = \left\{(r, \ang, b) \mid \ang_0 - \epsilon_0 < \ang < \ang_0 \right\}$ stays away from all the possible walls in $\mathscr{D}(\Phi)$; see Figure \ref{fig:theorem_proof}.
\begin{figure}[h]
\begin{center}
\includegraphics[scale=0.3]{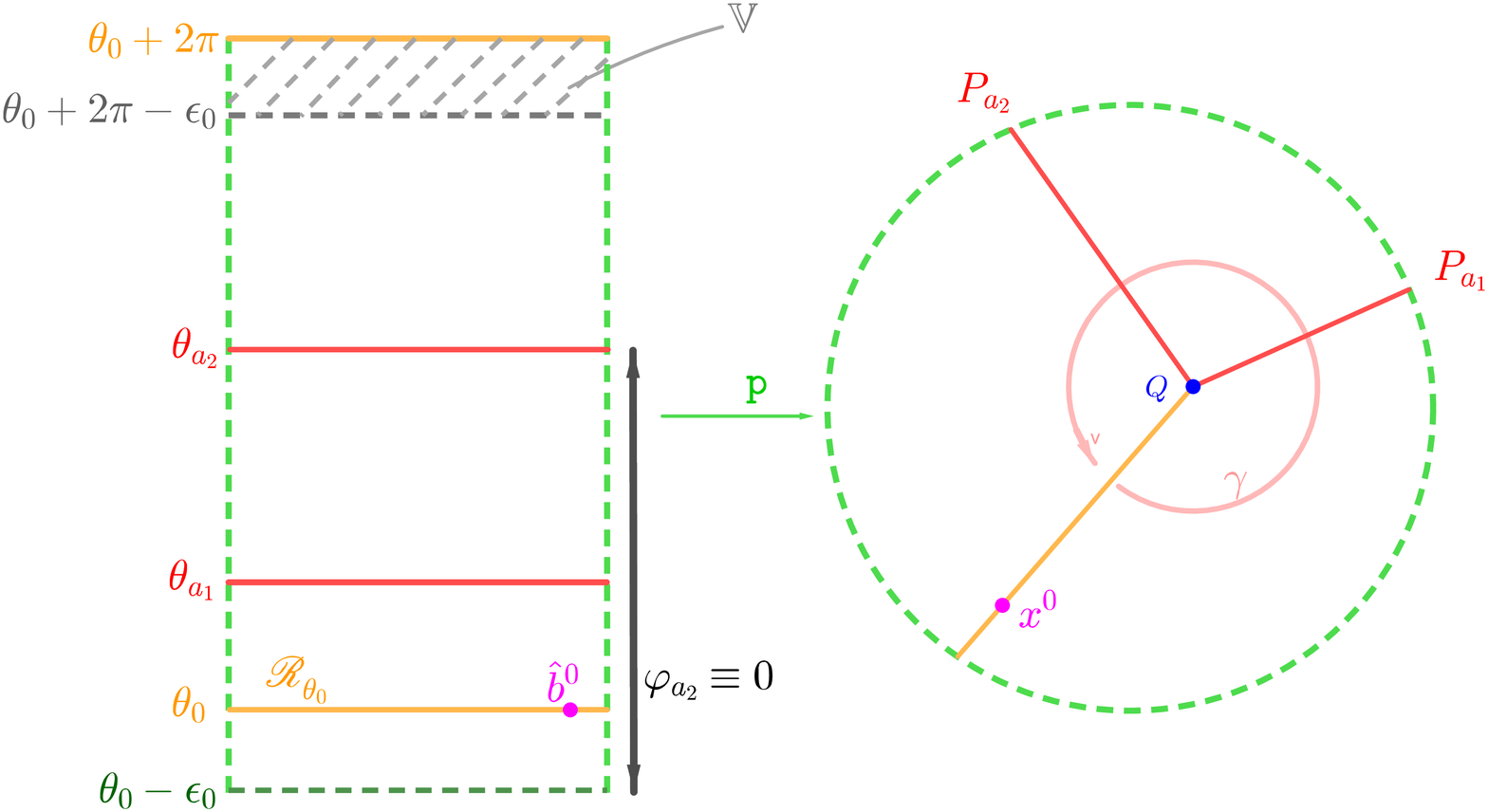}
\end{center}
\caption{}\label{fig:theorem_proof}
\end{figure}

It is enough to show that $\mathscr{D}(\Phi)_N$ is a consistent scattering diagram for each fixed $N \in \inte_{>0}$.
Recall from Definition \ref{facs_a_definition} that the gauge $\varphi_a$ is written as
$
\facs_a = - \hat{\mathcal{H}}\big(ad_{\facs_a}/(e^{ad_{\facs_a}} - \text{Id})\big) \left(\tilde{\emc}^{(a)}+\tilde{\digamma}^{(a)}\right) \ (\text{mod $\mathbf{m}^{N+1}$}),
$
and it satisfies the gauge fixing condition $\hat{\mathcal{P}}(\facs_a) = 0$ and solves the equation
$
e^{\facs_a}*0 = \tilde{\emc}^{(a)}+ \tilde{\digamma}^{(a)}
$
in $\mathbf{g}^*_N(\tilde{A}_0) / \mathcal{E}^*_N(\tilde{A}_0)$.
We first show that, given any embedded loop $\gamma$ in $U \setminus Q$ intersecting $\mathscr{D}(\Phi)$ generically (see Figure \ref{fig:theorem_proof}), we have
\begin{equation}\label{pre_consistent_equation}
\prod^{\gamma}_{a \in \mathbb{W}(N)} e^{\facs_a} = \text{Id} \quad (\text{mod $\mathbf{m}^{N+1}$}),
\end{equation}
over $\mathbb{V}$.

\begin{lemma}\label{lem:consistency}
Over $\tilde{A}_0$, we have
$
\left(\prod^{\gamma}_{a \in \mathbb{W}(N)} e^{\facs_a}\right) * 0 = \sum_{a \in \mathbb{W}(N)} \left(\tilde{\emc}^{(a)} + \tilde{\digamma}^{(a)}\right) \ (\text{mod $\mathbf{m}^{N+1}$}),
$
where the (finite) product on the left hand side is taken according to the orientation of $\gamma$.
\end{lemma}

\begin{proof}[Proof of Lemma \ref{lem:consistency}]
By Lemma \ref{asymptoticexpansion}, we have $\facs_a \equiv  0$ over the half-space $\tilde{A}_0 \setminus \hat{\mathbb{H}}(P_a) = \{(r, \ang, b) \in \tilde{A}_0 \mid \ang < \ang_a\}$ for any $a \in \mathbb{W}(N)$.
So $\text{supp}(\facs_{a'}) \cap P_{a} = \emptyset$ for any $a, a' \in \mathbb{W}(N)$ with $\ang_{a} < \ang_{a'}$ (see Figure \ref{fig:theorem_proof}).
As a result we have
$
\left[\facs_{a'} , \tilde{\emc}^{(a)} + \tilde{\digamma}^{(a)} \right] \equiv 0 \quad (\text{mod $\mathbf{m}^{N+1}$}),
$
and we get
\begin{align*}
e^{\facs_{a'}}*(\tilde{\emc}^{(a)} + \tilde{\digamma}^{(a)})
& = \left(\tilde{\emc}^{(a)} + \tilde{\digamma}^{(a)}\right) - \left(\frac{e^{ad_{\facs_{a'}}} - \text{Id}}{ad_{\facs_{a'}}}\right) \left(d\facs_{a'} + \{\tilde{\emc}^{(a)} + \tilde{\digamma}^{(a)}, \facs_{a'}\} \right)\\
& = \tilde{\emc}^{(a')} + \tilde{\digamma}^{(a')}+\tilde{\emc}^{(a)} + \tilde{\digamma}^{(a)} %\quad (\text{mod $\mathbf{m}^{N+1}$})
\end{align*}
in $\mathbf{g}^*_N(\tilde{A}_0) / \mathcal{E}^*_N(\tilde{A}_0)$.
The lemma follows by applying this argument repeatedly according to the anti-clockwise ordering (i.e. increasing values of $\theta_a$).
\end{proof}

On the other hand, since $\Phi$ is a Maurer-Cartan solution of $\mathbf{g}^*_N(U) / \mathcal{E}^*_N(U)$, whose deformations are all trivial in view of Lemma \ref{cohomology_lemma}, we can find a gauge $\varphi$ solving the equation $e^\facs * 0 = \Phi$ and satisfying the condition $\mathcal{P}(\facs) = 0$ over $U$.
Pulling back via $\mathtt{p}: \tilde{A}_0 \to A$, we get $\mathtt{p}^*(\facs)$ solving $e^{\mathtt{p}^*(\facs)} * 0 = \mathtt{p}^*(\Phi)$ and satisfying $\hat{\mathcal{P}}(\mathtt{p}^*(\facs))= 0$ (the latter using the fact that $\mathtt{p}(\hat{b}_0) = x^0$) over $\tilde{A}_0$.
Then uniqueness in Lemma \ref{gauge_fixing_lemma} and Lemma \ref{pre_consistent_equation} imply that
$e^{\mathtt{p}^*(\facs)} = \prod^{\gamma}_{a \in \mathbb{W}(N)}e^{\facs_a}$
in $\mathbf{g}^*_N(\tilde{A}_0) / \mathcal{E}^*_N(\tilde{A}_0)$.

But $\facs$ is defined over the whole spherical neighborhood $U$, instead of just over the annulus $A = U \setminus Q$, so in fact
$\prod^{\gamma}_{a \in \mathbb{W}(N)} e^{\facs_a} \in \mathbf{g}^*_N(\tilde{A}_0) / \mathcal{E}^*_N(\tilde{A}_0)$
is monodromy free. In particular, this tells us that
$$\left(\prod^{\gamma}_{a \in \mathbb{W}(N)} e^{\facs_a}\right)|_{\mathbb{V}-2\pi} = \left(\prod^{\gamma}_{a \in \mathbb{W}(N)} e^{\facs_a}\right)|_{\mathbb{V}}$$
modulo $\mathbf{m}^{N+1}$.
Note that $\mathbb{V}$ is chosen so that $\mathbb{V} - 2\pi$ stays away from $\bigcup_{a \in \mathbb{W}(N)}\hat{\mathbb{H}}(P_a)$. Thus we have $\left(\prod^{\gamma}_{a \in \mathbb{W}(N)} e^{\facs_a}\right)|_{\mathbb{V}-2\pi} = \text{Id} \ (\text{mod $\mathbf{m}^{N+1}$})$ by Lemma \ref{asymptoticexpansion}, so we obtain the identity
\begin{equation}\label{monodromy_free_equation_0}
\prod^{\gamma}_{a \in \mathbb{W}(N)} e^{\facs_a} = \text{Id} \ (\text{mod $\mathbf{m}^{N+1}$})
\end{equation}
over the strip $\mathbb{V}$.

Equation \eqref{monodromy_free_equation_0} is an identity in the Lie algebra $\bigoplus_{\substack{m \in \Lambda_{B_0}(U)\\1 \leq j \leq N}} \sum_{n \in \Lambda_{B_0}(U)} F^{0}_{\mathbb{V}}(\mathbb{V}) \cdot \bmc^{m} \check{\partial}_{n} t^j$.
Passing to the quotient by the ideal $\bigoplus_{\substack{m \in \Lambda_{B_0}(U)\\1 \leq j \leq N}} \sum_{n \in \Lambda_{B_0}(U)} F^{-1}_{\mathbb{V}}(\mathbb{V}) \cdot \bmc^{m} \check{\partial}_{n} t^j$
gives the identity
\begin{equation}\label{monodromy_free_equation}
\prod^{\gamma}_{a \in \mathbb{W}(N)}  e^{\psi_a} = \text{Id} \quad (\text{mod $\mathbf{m}^{N+1}$})
\end{equation}
in $
\bigoplus_{\substack{m \in \Lambda_{B_0}(U)\\1 \leq j \leq N}} \sum_{n \in \Lambda_{B_0}(U)} \left(F^{0}_{\mathbb{V}}(\mathbb{V})/F^{-1}_{\mathbb{V}}(\mathbb{V}) \right) \cdot \bmc^{m} \check{\partial}_{n} t^j
$ over $\mathbb{V}$.

The embedding
$\mathfrak{h}(\mathbb{V})\otimes_R (\mathbf{m}/\mathbf{m}^{N+1}) \hookrightarrow \bigoplus_{\substack{m \in \Lambda_{B_0}(U)\\1 \leq j \leq N}} \sum_{n \in \Lambda_{B_0}(U)} F^{0}_{\mathbb{V}}(\mathbb{V}) \cdot \bmc^{m} \check{\partial}_{n} t^j,$
whose image has trivial intersection with $\bigoplus_{\substack{m \in \Lambda_{B_0}(U)\\1 \leq j \leq N}} \sum_{n \in \Lambda_{B_0}(U)} F^{-1}_{\mathbb{V}}(\mathbb{V}) \cdot \bmc^{m} \check{\partial}_{n} t^j$ because coefficients of an element in the image are all constants independent of $\hp$,
so it descends to the quotient to give an embedding
$ \mathfrak{h}(\mathbb{V})\otimes_R (\mathbf{m}/\mathbf{m}^{N+1}) \hookrightarrow \bigoplus_{\substack{m \in \Lambda_{B_0}(U)\\1 \leq j \leq N}} \sum_{n \in \Lambda_{B_0}(U)} \left(F^{0}_{\mathbb{V}}(\mathbb{V})/F^{-1}_{\mathbb{V}}(\mathbb{V}) \right) \cdot \bmc^{m} \check{\partial}_{n} t^j.$
Therefore we obtain
$$
\prod^{\gamma}_{a \in \mathbb{W}(N)} \Theta_a = \text{Id} \quad (\text{mod $\mathbf{m}^{N+1}$})
$$
from \eqref{monodromy_free_equation} and completes the proof of the theorem.
\end{proof}

%\begin{remark}
%The above Theorem \ref{scatteringtheorem2} about the consistency of scattering diagram, together with Kontsevich-Soibelman's Theorem \ref{KSscatteringtheorem} in \cite{kontsevich-soibelman04}, indicate that the integral over space of tropical trees in Lemma \ref{iteratedintegral}, or the one appearing in equation \eqref{tropical_integral}, agrees with tropical counting introduced in \cite{gross2010tropical} (up to observe equivalent of scattering diagram). However, there is a more direct correspondence without using Kontsevich-Soibelman's Theorem \ref{KSscatteringtheorem}, we will carry out the correspondence more explicitly in coming Section \ref{sec:tropical_counting}.
%\end{remark}

\subsubsection{Consistent scattering diagrams from more general Maurer-Cartan solutions}\label{general_consistency}
From the proof of Theorem \ref{scatteringtheorem2}, we observe a general relation between Maurer-Cartan solutions of the dgLa $\widehat{\mathbf{g}^*/\mathcal{E}^*}(U)$ with suitable asymptotic behavior and consistent scattering diagrams in $U$.

We work with a contractible open coordinate chart $U \subset B_0$ and the dgLa's $\mathbf{g}^*_N(U) / \mathcal{E}^*_N(U)$ as well as $\widehat{\mathbf{g}^*/\mathcal{E}^*}(U)$.
We also fix a codimension $2$ tropical subspace $Q \subset U$, which plays the role of the common boundary of the walls.\footnote{One can regard $Q$ as a joint in the Gross-Siebert program \cite{gross2011real} and we are indeed considering MC solutions near a joint $Q$ in $B_0$.}
In order to obtain a consistent scattering diagram from a Maurer-Cartan solution $\Phi$ of $\widehat{\mathbf{g}^*/\mathcal{E}^*}(U)$, we put two Assumptions \ref{asy_assumption_1} and \ref{asy_assumption_2} on the asymptotic behavior of $\Phi$, the first of which is the following.

\begin{assum}\label{asy_assumption_1}
We assume that $\Phi$ admits a (Fourier) decomposition of the form
\begin{equation}\label{MC_Fourier_decomp}
\Phi = \sum_{a \in \mathbb{W}} \Phi^{(a)},
\end{equation}
where we have a partition of the index set $\mathbb{W}$ into three subsets
$\mathbb{W} = \mathbb{W}_{\text{in}} \sqcup \mathbb{W}_{\text{out}} \sqcup \mathbb{W}_{\text{un}};$
here the subscripts stand for {\em incoming walls}, {\em outgoing walls} and {\em undirectional walls} respectively, following the notations in \cite{gross2011real}.
We further assume that there is an association
$a \in \mathbb{W} \mapsto m_a \in M \cong \Lambda_{B_0}(U)$
satisfying $m_a$ is not parallel to $Q$ if $a \in \mathbb{W}_{\text{in}} \sqcup \mathbb{W}_{\text{out}}$, and $m_a$ is parallel to $Q$ if $a \in \mathbb{W}_{\text{un}}$ and an association
$$a \in \mathbb{W} \mapsto \text{a tropical half-hyperplane $P_a$ containing $Q$ in $U$}$$
satisfying $P_a = Q - \real_{\leq 0} \cdot m_a$ if $a \in \mathbb{W}_{\text{in}}$, $P_a = Q - \real_{\geq 0} \cdot m_a$ if $a \in \mathbb{W}_{\text{out}}$, and $P_a \neq P_{a'}$ if $a \neq a'$ in $\mathbb{W}$,\footnote{Note that there is no restriction on $P_a$ if $a \in \mathbb{W}_{\text{un}}$, hence the name {\em undirectional walls}.}
such that the summand $\Phi^{(a)}$ has asymptotic support on $P_a$ and admits a decomposition $\Phi^{(a)} = \emc^{(a)} + \digamma^{(a)}$, where
$$
\emc^{(a)}  \in \left(\bigoplus_{k\geq 1} F^{1}_{P_a}(U) \bmc^{km_a} \check{\partial}_{n_a}\right)[[t]];\quad
\digamma^{(a)}  \in \left(\bigoplus_{k\geq 1} \sum_{n} F^{0}_{P_a}(U) \bmc^{km_a} \check{\partial}_{n}\right)[[t]];
$$
here $n_a$ is a primitive normal to $P_a$.\footnote{Note that we do not need to specify the sign of $(\nu_{P_a},n_a)$ in this assumption.}
\end{assum}

Under Assumption \ref{asy_assumption_1}, we can solve for the gauge $\facs_a$ by the same process as in Definition \ref{facs_a_definition} and prove the same statement as in Lemma \ref{lem:loc_constant_coeff} for each $\facs_a$ (because we have $(m_a, n_a) = 0$ even for undirectional walls).

Next, we consider the annulus $A := U \setminus Q$ and the universal cover $\mathtt{p}: \tilde{A} \rightarrow A$, as before.
We choose a reference half-hyperplane $\mathscr{R}_{\ang_0}$ of the form $\mathscr{R}_{\ang_0} = Q - \real_{\geq 0 } m_{\ang_0}$ with $m_{\ang_0} \in M_\real \setminus M$, so that $\mathscr{R}_{\ang_0}$ cannot overlap with any of the possible walls. Again we combine polar coordinates $(r,\ang)$ on a fiber of $NQ$ with affine coordinates $b := (b_3,\dots,b_n)$ on $Q$ to obtain coordinates $\hat{b} = (b_1 = r, b_2=\ang, b_3, \dots, b_n)$ on $\tilde{A}$.
We consider the branch $\{(r,\ang,b) \mid \ang_0 < \ang < \ang_0 + 2\pi \}$, where $\ang_0$ is a fixed angular coordinate for the half-hyperplane $\mathscr{R}_{\ang_0}$. For each $a \in \mathbb{W}$, we let $\ang_0 < \ang_a < \ang_0 + 2\pi$ be the angular coordinate of the half-hyperplane $P_a$ and set $\hat{\mathbb{H}}(P_a) := \left\{(r,\ang,b) \mid \ang_a \leq \ang < \ang_0 + 2\pi \right\}$.

\begin{assum}\label{asy_assumption_2}
We assume that there exists an element
$\psi_a = \sum_{j,k \geq 1}  b^{(a)}_{jk} \bmc^{km_a} \check{\partial}_{n_a} t^j,$
where $b^{(a)}_{jk}$'s are constants independent of $\hp$ with $b^{(a)}_{jk} \neq 0$ only for finitely many $k$'s for each fixed $j$
and $n_a$ is a primitive normal to $P_a$, such that
$$
\hat{\mathcal{H}}(\emc^{(a)})|_{\hat{\mathbb{H}}(P_a) \setminus P_a} \in \psi_a + \left(\bigoplus_{k\geq 1} F^{-1}_{\hat{\mathbb{H}}(P_a) \setminus P_a}(\hat{\mathbb{H}}(P_a) \setminus P_a)\cdot \bmc^{km_a} \check{\partial}_{n_a}\right)[[t]]\cdot t.
$$
\end{assum}

For each fixed $N\in \inte_{>0}$, we choose a sufficiently small $\epsilon_N > 0$ such that the subset $\mathbb{V}_N := \{(r,\ang,b) \mid \ang_0 - \epsilon_{N} + 2\pi < \ang < \ang_0 + 2\pi\}$ is disjoint from all the $P_a$'s. We then restrict our attention to $\tilde{A}_0 := \{ (r,\ang,b) \mid \ang_0 - \epsilon_N < \ang < \ang_0 + 2\pi\}$ in order to apply a monodromy argument as in Section \ref{sec:diagram_associated}. We also fix the homotopy operator $\hat{\mathcal{H}}$ as in Definition \ref{polarrealhomotopy}, together with $\hat{\mathcal{P}}$ and $\hat{\iota}$. Then we can prove the same statement as in Lemma \ref{asymptoticexpansion} under Assumption \ref{asy_assumption_2}.

%The second assumption concern about the leading order expansion of $\facs_a$, as $\hp \rightarrow 0$. We denote the lifting of $\Phi^{(a)}$, $\emc^{(a)}$ and $\digamma^{(a)}$ to $\tilde{A}_0$ by $\tilde{\Phi}^{(a)}$, $\tilde{\emc}^{(a)}$ and $\tilde{\digamma}^{(a)}$ respectively.

So altogether, assuming both Assumptions \ref{asy_assumption_1} and \ref{asy_assumption_2}, we have Lemmas \ref{lem:loc_constant_coeff} and \ref{asymptoticexpansion}, and a scattering diagram $\mathscr{D}(\Phi)$ can be associated to the given Maurer-Cartan solution $\Phi$ in exactly the same way as in Definition \ref{def:construct_diagram}. Finally, the same proof as in Theorem \ref{scatteringtheorem2} gives the following:

\begin{theorem}[= Theorem \ref{theorem1}]\label{scatteringtheorem1}
Suppose that we have a Maurer-Cartan solution $\Phi$ of $\widehat{\mathbf{g}^*/\mathcal{E}^*}(U)$ satisfying both Assumptions \ref{asy_assumption_1} and \ref{asy_assumption_2}. Then the scattering diagram $\mathscr{D}(\Phi)$ associated to $\Phi$ is consistent, i.e. we have the following identity
$
\Theta_{\gamma, \mathscr{D}(\Phi)} = \prod^{\gamma}_{\mathbf{w}_a \in \mathscr{D}(\Phi)} \Theta_a = \text{Id}
$
along any embedded loop $\gamma$ in $U \setminus \text{Sing}(\mathscr{D}(\Phi))$ intersecting $\mathscr{D}(\Phi)$ generically.
\end{theorem}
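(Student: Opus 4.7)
The plan is to mirror the proof of Theorem \ref{scatteringtheorem2}, since Assumptions \ref{asy_assumption_1} and \ref{asy_assumption_2} are precisely what is needed to run that argument in this more general setting. I would proceed in four steps: (1) decompose $\Phi$ and show that each Fourier component is a separate MC solution on the universal cover; (2) construct and gauge-fix unique gauges $\varphi_a$ killing each component; (3) extract the asymptotic expansion of $\varphi_a$ to identify $\psi_a$; (4) run a monodromy argument to deduce the path-ordered-product identity.

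First I would pass to the universal cover $\mathtt{p} : \tilde{A}_0 \to A$ of $A = U \setminus Q$ and, for each $N$, work modulo $\mathbf{m}^{N+1}$ so that only finitely many summands $\Phi^{(a)}$ appear. The key observation, in the spirit of Lemma \ref{split_MC}, is that distinct half-hyperplanes $P_a, P_{a'}$ share at most the codimension-$2$ subset $Q \subset U$, which is removed in $A$; lifting to $\tilde{A}_0$ separates them further. By Lemma \ref{filtrationlemma} this forces $[\tilde{\Phi}^{(a)}, \tilde{\Phi}^{(a')}] \in \mathcal{E}^*_N(\tilde{A}_0)$ for $a \neq a'$, and $[\tilde{\Phi}^{(a)},\tilde{\Phi}^{(a)}]$ also vanishes in the quotient because a wall meets itself non-transversally. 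Combined with $\pdb \Phi + \tfrac{1}{2}[\Phi,\Phi]=0$, this shows each $\tilde{\Phi}^{(a)}$ is individually a MC solution in $\widehat{\mathbf{g}^*/\mathcal{E}^*}(\tilde{A}_0)$ with $\pdb \tilde{\Phi}^{(a)} = 0$. Since $\tilde{A}_0$ is contractible, the vanishing of higher cohomology (analog of Lemma \ref{cohomology_lemma}) together with Lemma \ref{gauge_fixing_lemma} produces a unique gauge $\varphi_a \in \widehat{\mathbf{g}^1/\mathcal{E}^1}(\tilde{A}_0)$ with $e^{\varphi_a}*0 = \tilde{\Phi}^{(a)}$ and $\hat{\mathcal{P}}(\varphi_a) = 0$.

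Next I would prove the analogs of Lemmas \ref{lem:loc_constant_coeff} and \ref{asymptoticexpansion} for each $\varphi_a$, following the same induction on the $t$-degree. Locally near any point one replaces $\hat{\mathcal{H}}$ by a piecewise-affine integral operator $\hat{\mathcal{I}}_{a,K}$, so that Lemma \ref{integral_lemma} transports the asymptotic support of the leading-order input $\tilde{\emc}^{(a)}_s$ from $P_a$ to the half-space $\hat{\mathbb{H}}(P_a)$. The crucial algebraic fact that survives verbatim is $(m_a, n_a) = 0$ --- whether $m_a$ is parallel to $Q$ (undirectional case) or transversal to it --- because $n_a$ is a primitive normal to $P_a$ and $m_a$ is tangent to $P_a$; by formula \eqref{vertex_lie_algebra} this forces the leading-order contribution of every nested bracket $\mathrm{ad}_{-\hat{\mathcal{H}}(\tilde{\emc}^{(a)})}^l(\tilde{\emc}^{(a)})$ to vanish via Lemma \ref{filtrationlemma}. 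Assumption \ref{asy_assumption_2} then identifies the leading-order term of $\varphi_a|_{\hat{\mathbb{H}}(P_a)\setminus P_a}$ with $\psi_a$, while the gauge-fixing condition $\hat{\mathcal{P}}(\varphi_a)=0$ combined with $\pdb \varphi_a \equiv 0$ outside $P_a$ forces $\varphi_a \equiv 0$ on $\tilde{A}_0 \setminus \hat{\mathbb{H}}(P_a)$. This defines $\Theta_a$ via $\mathrm{Log}(\Theta_a) = \psi_a|_{\hat{\mathbb{H}}(P_a) \setminus P_a}$, hence the diagram $\mathscr{D}(\Phi)$.

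Finally I would close the argument by a monodromy calculation. The dgLa $\widehat{\mathbf{g}^*/\mathcal{E}^*}(U)$ also has vanishing higher cohomology on the contractible chart $U$, producing a unique global $\varphi$ with $e^\varphi *0 = \Phi$ and $\mathcal{P}(\varphi)=0$; its pullback satisfies $\hat{\mathcal{P}}(\mathtt{p}^*\varphi)=0$ on $\tilde{A}_0$. An analog of Lemma \ref{lem:consistency}, obtained by ordering the $a \in \mathbb{W}(N)$ along $\gamma$ and using that $\mathrm{supp}(\varphi_{a'}) \cap P_a = \emptyset$ whenever $\theta_{a'} > \theta_a$, gives $\bigl(\prod^{\gamma}_{a} e^{\varphi_a}\bigr)*0 = \mathtt{p}^*(\Phi)$, so by uniqueness of the gauge $e^{\mathtt{p}^*\varphi} = \prod^{\gamma}_{a} e^{\varphi_a}$ modulo $\mathbf{m}^{N+1}$. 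Since $\mathtt{p}^*\varphi$ descends from $U$ it is monodromy-free on $\tilde{A}_0$; but on the strip $\mathbb{V}_N - 2\pi$ each $\varphi_a$ vanishes by the previous step, so the product is the identity there and hence on $\mathbb{V}_N$. Passing to the quotient by coefficients in $\filt^{-1}/\filt^{-\infty}$ preserves this identity, and the natural embedding of $\mathfrak{h}(\mathbb{V}_N) \otimes_R (\mathbf{m}/\mathbf{m}^{N+1})$ into the resulting graded object (whose image intersects the $\filt^{-1}$ part trivially, since its coefficients are $\hp$-independent constants) extracts $\prod^{\gamma}_{a} \Theta_a = \mathrm{Id}$ modulo $\mathbf{m}^{N+1}$. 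Letting $N \to \infty$ gives consistency. The main technical obstacle is step three: one must carefully track the interaction between $\hat{\mathcal{I}}_{a,K}$ and the filtration $\filt^{s}_{P_a}$ under repeated brackets, in particular the cancellation of the leading-order Lie-bracket terms; once this is in place, the monodromy argument is essentially formal.
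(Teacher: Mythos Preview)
Your proposal is correct and follows essentially the same approach as the paper: the paper itself states that ``the same proof as in Theorem \ref{scatteringtheorem2}'' works once Assumptions \ref{asy_assumption_1} and \ref{asy_assumption_2} are in place, and explicitly notes (as you do) that the key point for the analog of Lemma \ref{lem:loc_constant_coeff} is that $(m_a, n_a) = 0$ holds even for undirectional walls. Your four-step outline reproduces exactly the structure of Section \ref{sec:diagram_associated} and the proof of Theorem \ref{scatteringtheorem2}, including the separation of the summands via Lemma \ref{split_MC}, the gauge construction of Definition \ref{facs_a_definition}, the asymptotic analysis of Lemmas \ref{lem:loc_constant_coeff} and \ref{asymptoticexpansion}, and the monodromy argument with the strip $\mathbb{V}_N$.
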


\bibliographystyle{amsplain}
\bibliography{geometry}

\end{document}